\newtheorem{theorem}{Theorem}
\newtheorem{proposition}[theorem]{Proposition}
\newtheorem{lemma}[theorem]{Lemma}
\theoremstyle{definition}
\newtheorem{remark}[theorem]{Remark}
\newtheorem{example}[theorem]{Example}
\newtheorem{definition}[theorem]{Definition}
\newcommand{\lref}[1]{Lemma~\ref{l.#1}}
\newcommand{\cref}[1]{Corollary~\ref{c.#1}}
\numberwithin{equation}{section}
\numberwithin{theorem}{section}
\newcommand{\N}{\mathbb{N}}
\newcommand{\R}{\mathbb{R}}
\newcommand{\E}{\mathbb{E}}
\renewcommand{\P}{\mathbb{P}}
\newcommand{\F}{\mathcal{F}}
\newcommand{\Zd}{\mathbb{Z}^d}
\newcommand{\Rd}{\mathbb{R}^d}
\newcommand{\ep}{\varepsilon}
\newcommand{\eps}{\varepsilon}
\newcommand{\Fz}{F}
\newcommand{\J}{\mathcal{J}}
\renewcommand{\a}{\mathbf{a}}
\newcommand{\h}{\mathbf{h}}
\newcommand{\g}{\mathbf{g}}
\newcommand{\T}{\mathsf{T}}
\renewcommand{\subset}{\subseteq}
\newcommand{\uhom}{\overline u}
\newcommand{\uhomr}{\overline u_r}
\newcommand{\ghom}{\overline \g}
\newcommand{\cu}{\square}
\newcommand{\cut}{\boxbox}
\newcommand{\ext}[1]{\tilde{#1}}
\newcommand{\Ls}{L^2_{\mathrm{sol}}}
\newcommand{\Lso}{L^2_{\mathrm{sol},0}}
\renewcommand{\L}{\mathcal{L}}
\renewcommand{\fint}{\strokedint}
\newcommand{\Ll}{\left}
\newcommand{\Rr}{\right}
\DeclareMathOperator{\dist}{dist}
\DeclareMathOperator*{\essinf}{ess\,inf}
\DeclareMathOperator*{\osc}{osc}
\DeclareMathOperator{\var}{var}
\DeclareMathOperator{\cov}{cov}
\DeclareMathOperator{\divg}{div}
\DeclareMathOperator{\supp}{supp}
\newcommand{\X}{\mathcal{X}}  
\newcommand{\Y}{\mathcal{Y}}
\renewcommand{\tilde}{\widetilde}
\renewcommand{\div}{\divg}
\begin{document}

\title[Lipschitz regularity for equations with random coefficients]{Lipschitz regularity for elliptic equations with random coefficients}

\begin{abstract}
We develop a higher regularity theory for general quasilinear elliptic equations and systems in divergence form with random coefficients. The main result is a large-scale $L^\infty$-type estimate for the gradient of a solution. The estimate is proved with optimal stochastic integrability under a one-parameter family of mixing assumptions, allowing for very weak mixing with non-integrable correlations to very strong mixing (e.g., finite range of dependence). We also prove a  quenched $L^2$ estimate for the error in homogenization of Dirichlet problems. The approach is based on subadditive arguments which rely on a variational formulation of general quasilinear divergence-form equations.
\end{abstract}

\author[S.N. Armstrong]{Scott N. Armstrong}
\address[S.N. Armstrong]{Ceremade (UMR CNRS 7534), Universit\'e Paris-Dauphine, Paris, France}
\email{armstrong@ceremade.dauphine.fr}

\author[J.-C. Mourrat]{Jean-Christophe Mourrat}
\address[J.-C. Mourrat]{Ecole normale sup\'erieure de Lyon, CNRS, Lyon, France}
\email{jean-christophe.mourrat@ens-lyon.fr}

\keywords{Stochastic homogenization, Lipschitz regularity, error estimate,  monotone operator}
\subjclass[2010]{35B27, 60H25, 35J20, 35J60}
\date{\today}

\maketitle

\section{Introduction}

\subsection{Informal summary of the main results}
We are interested in the stochastic homogenization of divergence-form elliptic equations and systems which take the general quasilinear form
\begin{equation} \label{e.pde}
- \nabla \cdot \left( \a\!\left(\nabla u(x), x \right) \right) = 0  \quad \mbox{in} \ U \subseteq \Rd.
\end{equation}
The precise assumptions are stated in detail below, but let us mention here that the map $\a:\Rd \times\Rd \to \R$ is uniformly monotone and Lipschitz in its first argument. In addition,~$x \mapsto \a(\, \cdot \, ,x)$ is a stationary random field satisfying a quantitative ergodic assumption. 

\smallskip

In this paper, we prove a ``quenched" Lipschitz estimate for solutions of~\eqref{e.pde} which is optimal in terms of the stochastic integrability of the random variables appearing in the estimates. While the results are new for  linear equations and even under strongly mixing conditions (see however the discussion of~\cite{AS} and~\cite{GNO3} below), they hold for the most general nonlinear divergence-form elliptic operators and under quite weak mixing conditions. We are primarily motivated by the fact that strong gradient bounds play a central role in the quantitative theory of stochastic homogenization for elliptic equations. This is because the magnitude of the gradient of a solution controls how sensitively it depends on the coefficients -- and it turns out that good estimates of the latter, combined with appropriate concentration inequalities, yield optimal quantitative bounds in homogenization. This was shown for linear equations by~Gloria and Otto~\cite{GO1,GO2,GO3} and Gloria, Neukamm and Otto~\cite{GNO2,GNO}. The work in this paper is the crucial step to developing an optimal quantitative theory of stochastic homogenization for general quasilinear equations and systems in divergence form, as well as for improving the stochastic integrability of the current linear theory, as we will show in a future work. 

\smallskip

To give a better idea of the gradient estimate we prove, we recall that a solution $u$ of the linear equation
\begin{equation*} \label{}
-\nabla \cdot \left( A(x)\nabla u\right) = \nabla \cdot \mathbf{f}(x) \quad \mbox{in} \ B_1,
\end{equation*}
under the assumption that the coefficient matrix $A(x)$ and vector field $\mathbf{f}(x)$ are H\"older continuous, satisfies the following pointwise  bound:
\begin{equation} \label{e.modeleq}
\left| \nabla u(0) \right|^2 \leq C\left( 1+ \fint_{B_1} \left| \nabla u(x) \right|^2\,dx \right),
\end{equation}
where the constant $C$ depends in particular on the regularity of the coefficients. Of course, the assumption of H\"older continuity of $A$ is crucial, as the best regularity for general equations with rapidly oscillating coefficients is $W^{1,2+\ep}$ (Meyers' estimate, for Sobolev regularity) and $C^{0,\ep}$ (De Giorgi-Nash-Moser, for H\"older regularity) for some $\ep>0$ depending on the ellipticity. Therefore the estimate~\eqref{e.modeleq} is not scale invariant, and in particular at very large scales (i.e., with the ball $B_1$ replaced by $B_R$ with radius $R\gg1$) the estimate is false, in general, even if $A$ is smooth. 

\smallskip

The primary purpose of this paper is to show that, nevertheless, an equation \emph{with random coefficients} has better regularity than a general equation, and we can in fact prove that, due to statistical effects, an estimate like~\eqref{e.modeleq} holds on large scales. The main result, stated precisely below in Theorem~\ref{t.lip}, states roughly that, under appropriate mixing conditions on the coefficients, for any $R\gg1$, a solution~$u$ of~\eqref{e.pde} in a domain $U\supseteq B_{R}$ satisfies
\begin{equation} \label{e.introLip}
\fint_{B_r} \left| \nabla u(x) \right|^2\,dx \leq C\left( 1 + \fint_{B_{R}} \left| \nabla u(x) \right|^2\,dx \right) \quad \mbox{for every} \ r \in \left[ \X, \frac12 R\right].
\end{equation}
The main difference between~\eqref{e.modeleq} and~\eqref{e.introLip} is that the latter estimate holds only for balls with radii larger than a random ``minimal radius" $\X$, while the former holds in every ball (hence pointwise). The random variable~$\X$ is almost surely finite -- but not bounded -- and thus the central task is to estimate the probability that $\X$ is very large. That is, we would like to specify which of the stochastic moments of~$\X$ are bounded under various quantitative ``mixing" assumptions on the coefficients. In this paper, we prove~\eqref{e.introLip}, with optimal stochastic integrability estimates on~$\X$, under a continuum of mixing conditions on the coefficients, ranging from relatively weak mixing (allowing for non-integrable correlations) to very strong assumptions. For instance, if the coefficients satisfy a finite range of dependence condition, then we have the bound
\begin{equation} \label{e.opt}
\E\left[ \exp\left( \X^s \right) \right] < \infty \quad \mbox{for every} \ s<d,
\end{equation}
which is optimal in that such an estimate is certainly false for $s>d$. On the other hand, if the coefficients satisfy a weaker ``$\alpha$--mixing" condition with an algebraic rate given by an exponent~$\beta>0$ (see~(P3) in the next subsection for the precise statement of this assumption), then we give the (optimal) bound
\begin{equation} \label{e.opt2}
\E\left[ \X^\theta \right] < \infty \quad \mbox{for every} \ \theta<\beta.
\end{equation}

\smallskip

We call~\eqref{e.introLip} a ``quenched" estimate because the random variable $\X$ does not depend on the solution~$u$, and thus in particular on what random dependence $u$ may (or may not) possess. We call it a ``Lipschitz" estimate because~$R$ is very large and in this scaling the microscopic scale is $O(1)$, and thus we think of the left side as approximating $|\nabla u(0)|$. Notice that~\eqref{e.introLip} implies
\begin{equation*} \label{}
\fint_{B_1} \left| \nabla u(x) \right|^2\,dx \leq C \X^d\left( 1 + \fint_{B_{R}} \left| \nabla u(x) \right|^2\,dx \right),
\end{equation*}
so we may also think of $\X$ as a random prefactor constant in an estimate with deterministic scales. 
Under the additional assumption that the coefficients are smooth on the microscopic scale, the left side of the latter inequality controls~$|\nabla u(0)|$ (by the nonlinear version of the Schauder estimate~\eqref{e.modeleq}, in fact) and, in that case, we then obtain a true pointwise Lipschitz estimate at the origin. However, we prefer to think of control over the very small scales as a separate issue. Indeed, questions about the behavior of solutions on scales smaller than the microscopic scale are outside of the realm of homogenization, as they cannot be due to statistical effects. An estimate like~\eqref{e.introLip} is the strongest possible estimate for the control of microscopic-scale fluctuations in terms of large-scale fluctuations for solutions of~\eqref{e.pde}; higher regularity such as $C^{1,\alpha}$ cannot hold at large scales without some reinterpretation of the meaning of such an estimate. In fact, for equations with periodic coefficients (a special case of what we consider here), there cannot exist uniform bounds on the modulus of continuity of the gradient unless the coefficients are constant.

\smallskip

While we use scalar notation throughout the paper, we only use arguments which apply to systems under the assumption that the constant-coefficient homogenized system admits $C^{1,\alpha}$ estimates (which is a necessary condition, satisfied both for linear systems as well as in the nonlinear scalar case, for example). Moreover, for the error estimate presented in Theorem~\ref{t.subopt}, which is of independent interest, this assumption is not required. Therefore, the methods in this paper are quite flexible in their application to general elliptic systems.

\smallskip

Finally, we point out that the arguments leading to the Lipschitz estimate (Theorem~\ref{t.lip}) apply with only very minor modifications to yield Lipschitz estimates up to the boundary for Dirichlet problems (even with possibly oscillating boundary data).

\subsection{Motivation and previous works}
There has been much recent interest in quantitative results for the stochastic homogenization of uniformly elliptic equations, prompted by the striking results of Gloria and Otto~\cite{GO1,GO2}. Partially inspired by the previous works of Naddaf and Spencer~\cite{NS} and Conlon and Naddaf~\cite{CN}, they proved optimal estimates on the typical size of the fluctuations of approximate correctors and their energy density for linear elliptic equations under strong independence assumptions on the coefficients. Expounding on this work and some ideas developed independently in~\cite{M}, these authors and Neukamm~\cite{GNO2,GNO} then proved optimal estimates and a quantitative two-scale expansion for uniformly elliptic equations under quantitative assumptions built on spectral gap-type concentration inequalities. 

\smallskip

At the analytic heart of all of these results are estimates on large stochastic moments of the gradient of the approximate correctors as well as the gradient of the Green's functions. Later, much stronger bounds on the Green's functions were obtained by Marahrens and Otto~\cite{MO} for coefficient fields satisfying a logarithmic Sobolev inequality. They also proved the first ``quenched" result: a $C^{0,\alpha}$ estimate for $0<\alpha< 1$, with bounded (stochastic) moments (see also Gloria and Marahrens~\cite{GM}). These estimates, and the latter result especially, suggested the possibility of developing a large scale Lipschitz regularity theory for random elliptic equations, which could potentially not only generalize these results but substantially improve the quantitative theory of stochastic homogenization.

\smallskip

The first quenched Lipschitz estimate for random elliptic equations was proved recently by the first author and Smart~\cite{AS}. This result was obtained for solutions of variational quasilinear equations under the assumption that the coefficients satisfy a finite range of dependence, and gave optimal estimates on the stochastic integrability of the random variable~$\X$ in~\eqref{e.introLip}, that is, they obtained~\eqref{e.opt}. 

\smallskip

There were two separate ideas central to the arguments of~\cite{AS}, which differed substantially from those of the previous works mentioned above: the first was, following the philosophy of Avellaneda and Lin~\cite{AL1} in the context of periodic homogenization, to ``borrow" the higher regularity of the constant-coefficient homogenized equation. The technique is to prove a Lipschitz estimate by implementing a Campanato-type~$C^{1,\alpha}$ iteration and comparing, at each dyadic scale, the heterogeneous solution to the homogenized one with the same boundary values. Since the latter is more regular, the ``flatness" of the solution (the normalized $L^2$ difference between the solution and the nearest affine function) improves at each step of the iteration by a universal factor less than one. It is necessary to keep track of the error made by substituting the solution for that of the homogenized equation at each iteration. It was shown in~\cite[Lemma 5.1]{AS} that one obtains a Lipschitz bound on scales larger than the minimal scale above which this error in homogenization is controlled by a fixed \emph{algebraic} (or at least Dini)  modulus. This quantitative variation of the compactness argument of~\cite{AL1} does not depend on the precise form of the equation (in particular, it does not require the equation to be variational) and thus in certain cases it may give more information than the compactness arguments of~\cite{AL1} (for example, in a recent preprint, the first author and Shen~\cite{ASh} used the idea to generalize the results of~\cite{AL1,AL2} as well as those of Kenig, Lin and Shen~\cite{KLS2,KLS1} to certain linear systems with almost periodic coefficients).
 
\smallskip
 
The key step in quantifying the stochastic moments of~$\X$ in the Lipschitz estimate, if we follow this method, is to obtain a quantitative estimate on the error in homogenization for the Dirichlet problem which is strong in stochastic integrability: here the estimate may be suboptimal in the typical size of the error (provided it is algebraic or Dini) in order to bound the probability of deviations as strongly as possible.

\smallskip

This brings us to the second central theme of~\cite{AS}, which is that the method of subadditivity, introduced in stochastic homogenization by Dal Maso and Modica~\cite{DM1,DM2}, is very well-suited for obtaining estimates which (although suboptimal in the typical size of the error) are strong in stochastic integrability. It is here that variational techniques play a central role, as the natural subadditive quantities for analyzing the problem arise from the variational characterization of the equation. Therefore, the results in~\cite{AS} were stated for local minimizers of uniformly convex integral functionals, which is equivalent to the special case of~\eqref{e.pde} in which~$\a(\cdot,x)$ is the gradient of a uniformly convex function. If the equation is linear (i.e.,~$\a(p,x) = A(x)p$ for a square matrix $A$ with positive eigenvalues), then this reduces to the requirement that~$A$ be symmetric.

\smallskip

More recently, Gloria, Neukamm and Otto~\cite{GNO3}, partially inspired by~\cite{AS}, obtained a quenched Lipschitz estimate for linear equations and systems. In particular, they give the first such estimate for linear equations and systems with nonsymmetric coefficients, although their techniques do not seem to generalize in a straightforward way to nonlinear equations. Their arguments use a similar scheme based on a Campanato iteration and approximation of the Dirichlet problem for the heterogeneous equation by the homogenized problem. They introduce a random variable $r_*$ which is related to the quantity we denote by~$\X$ (and the analogous random variable in~\cite{AS}) but is defined using harmonic coordinates, and is therefore perhaps more intuitive and geometric. It also allows to establish qualitative results (without imposing any mixing conditions) and gives control of higher regularity of the solutions after subtracting the corrector (i.e., the regularity of the error in the two-scale expansion). 

\smallskip

The arguments in~\cite{GNO3} and~\cite{AS} are very different in how the integrability of~$\X$ or~$r_*$ is obtained (that is, in how the error in homogenization of the Dirichlet problem is quantified in each step of the Campanato iteration), and those of~\cite{GNO3} so far yield suboptimal estimates of the stochastic integrability of~$\X$ under strong mixing conditions (for example, under independence assumptions, they get~\eqref{e.opt} for some $s > 0$ rather than every $s<d$).

\smallskip

We conclude this subsection by mentioning two more recent related works: both Ben-Artzi, Marahrens and Neukamm~\cite{BMN} and Bella and Otto~\cite{BO} obtained moment bounds on the gradient of the approximate correctors for linear elliptic systems using concentration inequalities, and Lamacz, Neukamm and Otto~\cite{LNO} obtained such estimates for a percolation model using similar methods. 

\subsection{An overview of the approach}
Before stating the main results, it is necessary to give a brief overview of the ideas leading to the proofs (since the latter affects the former).

\smallskip

First, the Campanato scheme for proving a Lipschitz estimate from~\cite{AS} reduces our task to that of obtaining an estimate for the homogenization error for the Dirichlet problem which is optimal in stochastic integrability. This estimate, stated below in Theorem~\ref{t.subopt}, is of independent interest as it is the first quantitative result in stochastic homogenization for equations taking the general quasilinear form~\eqref{e.pde} and, except for the result in~\cite{AS} that it generalizes, the only quantitative result in stochastic homogenization for nonlinear elliptic equations in divergence form. The proof of the error estimate is an adaption of the variational, subadditive approach of~\cite{AS}, although here we do not have the same variational structure. We instead start from the fact that solutions of
\begin{equation}
\label{e.genquasi}
-\nabla \cdot \left( \a\left( \nabla u,x \right) \right) = f \quad \mbox{in} \ U
\end{equation}
may be characterized as the null minimizers in $H^1(U)$ of a nonnegative functional taking the form
\begin{multline} \label{e.fun}
w\mapsto \J \left[ w,f \right] := \inf\bigg\{ \int_U \left( F(\nabla w(x),\mathbf{g}(x),x) -\nabla w(x) \cdot \g(x) \right)\,dx \,: \\ 
 \mathbf{g} \in L^2(U;\Rd), \ -\nabla \cdot \mathbf{g} = f  \bigg\},
\end{multline}
where the function~$F(p,q,x)$ in the integrand is uniformly convex in~$(p,q)$. Moreover, if~$u$ is a null minimizer of~ $\J[\, \cdot \ , f]$, then the corresponding vector field~$\g$ which attains the infimum is the flux, that is, $\g = \a(\nabla u,x)$, and the integrand vanishes identically. This variational formulation does not coincide with the classical one in the case that $\a$ is the gradient of a Lagrangian.

\smallskip

This more general variational principle first appeared in special cases in the work of Brezis and Ekeland~\cite{BE} and Neyroles~\cite{N}. Later, major progress was made by~Fitzpatrick~\cite{F}, who showed that general maximal monotone maps admit a variational characterization. Variational formulations of monotone maps were subsequently explored in some details by Ghoussoub and his collaborators (see~\cite{Gh} and the references therein). In Section~\ref{s.variational}, we review the connection between variational problems of this form and general quasilinear, divergence-form elliptic equations.

\smallskip

We consider the functional $\J\left[\cdot,\cdot\right]$ to be the fundamental object of study in this paper and we seldom refer to the PDE or the coefficients~$\a$. Thus, rather than looking to identify effective coefficients~$\overline\a$ directly, we search for an effective variational problem characterized by an effective integrand $\overline F$. The latter, as we will see, may be defined by the ($\P$--almost sure) limit
\begin{equation*}
\overline F(p,q) = \lim_{n\to\infty} \ \inf_{u \in H^1_0(\cu_n), \, \g \in \Lso(\cu_n)} \  \fint_{\cu_n} F\left( p + \nabla u(x), q + \g(x),x \right)\,dx.
\end{equation*}
Here $\cu_n$ is the cube centered at the origin with side length $3^n$, and $\Lso(U)$ denotes the set of solenoidal (i.e.,~divegence-free) $L^2$ vector fields in $U$ with normal component vanishing on the boundary of $U$.

\smallskip

In the context of periodic homogenization, such an approach to homogenization has already been explored by Ghoussoub, Moameni and S\'aiz~\cite{GMS}, who gave an analogous formula for $\overline F$ and a qualitative proof of homogenization, as well as by Visintin~\cite{V2,V1}. Since the (normalized) energy 
\begin{equation*}
\mu_0(U,p,q) := \inf\left\{ \fint_{U} F\left( p + \nabla u(x), q + \g(x),x \right)\,dx \,:\, u \in H^1_0(U), \, \g \in \Lso(U) \right\}
\end{equation*}
of the minimizing pair with given affine boundary data is a naturally monotone/subadditive quantity, the argument of Dal Maso and Modica~\cite{DM2} naturally extends to this setting to give a qualitative proof of homogenization in the general (stationary-ergodic) random setting as well.

\smallskip

Here however, our interest is in quantitative results, and for this $\mu_0$ is only half of the picture. We also need the naturally superadditive quantity, modeled on the analogous one from~\cite{AS}, which we denote by
\begin{multline*}
\mu(U,q^*,p^*) := \inf \left\{ \fint_{U} \left( F\left( \nabla u(x), \g(x),x \right) - q^*\cdot \nabla u(x) - p^*\cdot \g(x) \right) \,dx \,: \right. \\
\left. \phantom{\fint_U} u \in H^1(U), \, \g \in \Ls(U) \right\}.
\end{multline*}
Notice that $\mu$ imposes no boundary conditions on $u$ and $\g$, which is what gives it the property of superadditivity. As we will see, the large scale limit of $-\mu$ is the convex dual (Legendre-Fenchel transform) of that of $\mu_0$, and the variables $(p,q)$ are dual to $(q^*,p^*)$. The fact that $\mu_0$ and $\mu$ are monotone from different sides will allow us to ``trap" the limiting behavior between these two extremes, in effect  revealing the additive structure of the problem which renders a more precise quantitative analysis possible.

\subsection{Statement of hypotheses and main results}
As mentioned in the previous section, we consider the variational problem associated to~\eqref{e.pde} to be the fundamental object of study, and therefore we state our results in terms of null minimizers of the functional $\J[\, \cdot \, , f]$ rather than solutions of~\eqref{e.genquasi}. The link between the variational problem and the PDE is studied in detail in Section~\ref{s.variational}, where it becomes clear that our assumptions cover the case of~\eqref{e.pde} in full generality. 

\smallskip

For the rest of the paper, we assume that $d\geq 2$ and fix the parameters $K_0\geq 0$ and  $\Lambda \geq 3$.
We consider coefficient fields~$F=F(p,q,x)$ satisfying 
\begin{equation} \label{e.lebemeas}
F \in L^\infty_{\mathrm{loc}}(\Rd \times \Rd \times \Rd)
\end{equation}
such that $(p,q) \mapsto F(p,q,x)$ is uniformly convex, that is,   
\begin{equation}\label{e.UC}
(p,q) \mapsto F(p,q,x) - \frac1{2\Lambda}\left( |p|^2+|q|^2\right) \quad \mbox{is convex}
\end{equation}
as well as uniformly $C^{1,1}$, uniformly in $x$; in view of~\eqref{e.UC}, it suffices to assume 
\begin{equation} \label{e.upperUC}
(p,q) \mapsto F(p,q,x) -\frac\Lambda{2} \left( |p|^2+|q|^2\right) \quad \mbox{is concave.}
\end{equation}
We require also that, for every $p,x\in\Rd$, 
\begin{equation} \label{e.infFpq}
\inf_{q\in\Rd} \left(F(p,q,x) - p\cdot q \right) = 0.
\end{equation}
We define the set of all coefficient fields by 
\begin{equation} 
\label{e:def:Omega}
\Omega : = \left\{ F\,:\, \mbox{$F$ satisfies~\eqref{e.lebemeas},~\eqref{e.UC},~\eqref{e.upperUC} and~\eqref{e.infFpq}}  \right\}.
\end{equation}
We endow $\Omega$ with the translation group $\{T_y\}_{y\in\Rd}$, which acts on $\Omega$ via
\begin{equation*}
(T_zF)(p,q,x) := F(p,q,x+z),
\end{equation*}
and the family $\{ \F_U\}$ of $\sigma$--algebras, with $\F_U$ defined for each Borel $U\subseteq \Rd$ by
\begin{multline*} \label{}
\F_U := \mbox{$\sigma$--algebra on $\Omega$ generated by the family of maps} \\
F \mapsto \int_{U} F(p,q,x) \varphi(x) \,dx, \quad p,q \in\Rd, \ \varphi\in C^\infty_c(\Rd).
\end{multline*}
The largest of these $\sigma$--algebras is denoted by $\F:=\F_{\Rd}$. The translation group also acts naturally on $\F$ via the definition
\begin{equation*}
T_zA:= \left\{ T_zF\,:\, F\in A \right\}, \quad A\in\F.
\end{equation*}
Associated to each $F\in\Omega$ and bounded open $U\subseteq \Rd$ is the functional
\begin{equation*}
\J:H^1(U)\times H^{-1}(U) \to [0,\infty),
\end{equation*}
defined for each $(u,u^*) \in H^1(U)\times H^{-1}(U)$ by
\begin{multline} \label{e.J}
\J\!\left[ u,u^* \right] := \inf\bigg\{ \int_{U}
\left( F\left( \nabla u(x),\g(x) ,x\right) -\nabla u(x) \cdot \g(x)\right) \,dx \\
:\, \g\in L^2(U;\Rd), \ -\nabla \cdot \g = u^* \bigg\},
\end{multline}
where the condition $-\nabla \cdot \g = u^*$ is to be understood in the sense that
\begin{equation*} \label{}
\int_{U} \g(x) \cdot \nabla \phi(x) \,dx = \left\langle \phi, u^*\right\rangle \quad \mbox{for every} \ \phi\in H^1_0(U)
\end{equation*}
and $\langle\cdot,\cdot \rangle$ denotes the canonical pairing between $H^1_0(U)$ and $H^{-1}(U)$. Although~$\J$ depends on both~$F$ and~$U$, we keep this implicit in our notation as the identities of~$F$ and~$U$ are always clear from context.

\smallskip

We consider a probability measure~$\P$ on $(\Omega,\F)$ which is assumed to satisfy the following three conditions:
\begin{enumerate}
\item[(P1)] The random field $F$ is uniformly bounded below on the support of~$\P$, in the sense that
\begin{equation*} \label{}
\P \left[ \essinf_{x\in\Rd} \inf_{p,q\in\Rd}  F(p,q,x) \geq - K_0^2 \right] = 1.
\end{equation*}

\smallskip

\item[(P2)] $\P$ is stationary with respect to $\Zd$--translations: for every $z\in \Zd$ and $A\in \F$,
\begin{equation*} 
\P \left[ A \right] = \P \left[ T_z A \right].
\end{equation*}

\smallskip

\item[(P3)] $\P$ is ``$\alpha$--mixing" with an algebraic decorrelation rate: there exist an exponent $\beta >0$ and a constant $C_3> 0$ such that, for all Borel subsets $U,V\subseteq \Rd$, $A\in \F_U$ and $B \in \F_V$, we have 
\begin{equation*}
\quad \left| \P [ A \cap B ]- \P[A ] \, \P[B ] \right| \leq C_3 \left( 1+\dist(U,V) \right)^{-\beta}.
\end{equation*}
\end{enumerate}

The assumption~(P3) is a relatively weak mixing condition. The conclusions of the main result can be considerably strengthened, with minor modifications to the arguments, if one is willing to assume stronger mixing conditions. To make this point explicitly, we also state results under an $\alpha$--mixing condition with a faster rate of decorrelation:

\begin{enumerate}
\item[(P4)] $\P$ is ``$\alpha$--mixing" with a stretched exponential decorrelation rate: there exist an exponent $\gamma >0$ and a constant $C_4> 0$ such that, for all Borel subsets $U,V\subseteq \Rd$, $A\in \F_U$ and $B \in \F_V$, we have 
\begin{equation*}
\quad \left| \P [ A \cap B ]- \P[A ] \,\P[B ] \right| \leq C_4 \exp \left( -\dist(U,V)^{\gamma} \right).
\end{equation*}
\end{enumerate}
Notice that a finite range of dependence hypothesis on the probability measure~$\P$ (as assumed for instance in~\cite{AS})  implies~(P4) for every $\gamma<\infty$.

\smallskip

We emphasize that assumptions (P1)-(P3) are assumed to be in force throughout the paper. This is not the case for~(P4), which is only assumed to be in force where explicitly stated.

\smallskip

We now present the statement of the main result. 

\begin{theorem}[Lipschitz regularity]
\label{t.lip}
Assume that $\P$ satisfies~(P1),~(P2),~(P3). Fix $p>d$ and $\theta\in (0,\beta)$. There exist $C(d,\Lambda,\beta,C_3,p,\theta)\geq 1$ and a random variable~$\X \geq 1$ satisfying
\begin{equation}\label{e.weakSI}
\E \left[ \X^\theta \right] \leq C 
\end{equation}
such that the following holds: if $R\geq 1$, $f\in L^2(B_R)$ and $u\in H^1(B_R)$ satisfy
\begin{equation}\label{e.localminimizer}
\J \left[ u ,f \right]  = 0,
\end{equation}
and if we define
\begin{equation}\label{e.Mcondition}
M:=K_0 + \frac1{R}\inf_{a\in\R} \left( \fint_{B_R} \left| u(x) -a\right|^2\,dx\right)^{\frac12} + R\left( \fint_{B_R} \left|f(x) \right|^p\,dx \right)^{\frac1p} ,
\end{equation}
then we have the estimate
\begin{equation}\label{e.lipschitzest}
 \fint_{B_r} \left| \nabla u(x) \right|^2\,dx  \leq CM^2 \quad \mbox{for every} \quad \X (1+M)^{\frac{2d}{\theta}} \leq r \leq \frac12R.
\end{equation}
Moreover, if~$\P$ satisfies~(P4), then~\eqref{e.weakSI} may be improved to the statement that, for every $s\in(0,d\gamma/(d+\gamma) )$,
\begin{equation}\label{e.strongSI}
\E \left[ \exp\left( \X^s \right) \right] \leq C,
\end{equation}
where the constant $C$ depends additionally on $(\gamma,C_4,s)$, and~\eqref{e.lipschitzest} to the statement that
\begin{equation}\label{e.lipschitzest2}
 \fint_{B_r} \left| \nabla u(x) \right|^2\,dx  \leq CM^2 \quad \mbox{for every} \quad \X \log (1+M) \leq r \leq \frac12R.
\end{equation}
\end{theorem}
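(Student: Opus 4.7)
The plan is to implement the Campanato-type $C^{1,\alpha}$ excess-decay iteration of~\cite{AS}, driven by the quantitative homogenization error estimate of~\tref{subopt} for the Dirichlet problem (which I treat as a black box, and which is the real technical heart of the paper, proved separately via the two-sided subadditive machinery with $\mu_0$ and $\mu$ sketched in the introduction), combined with the $C^{1,\alpha}$ regularity of the deterministic homogenized variational problem, whose integrand $\overline F$ inherits uniform convexity and $C^{1,1}$ regularity from~\eqref{e.UC}--\eqref{e.upperUC}.

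Fix a small deterministic exponent $\alpha_0 > 0$ coming from~\tref{subopt}. The random ``minimal radius'' $\X$ is defined, up to a harmless deterministic factor, as the smallest power of $3$ such that for every dyadic $r \geq \X$ the null minimizer $u$ of $\J[\,\cdot\,,f]$ on $B_r$ and the corresponding minimizer $\overline u_r$ of the homogenized functional with the same boundary values satisfy $r^{-2}\fint_{B_r}|u-\overline u_r|^2 \leq r^{-\alpha_0}$ times the natural energy scale. On such scales, uniform convexity and $C^{1,1}$ of $\overline F$ produce an affine $\ell_r$ with
\begin{equation*}
\fint_{B_{r/3}}|\overline u_r - \ell_r|^2\,dx \leq C\cdot 3^{-(2+2\alpha)} r^2 \fint_{B_r}|\nabla \overline u_r|^2\,dx
\end{equation*}
for some $\alpha \in (0,1)$. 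Setting $D(r) := \inf_\ell r^{-2}\fint_{B_r}|u-\ell|^2$ and tracking the forcing via the scale-invariant quantity $R(\fint|f|^p)^{1/p}$ appearing in~\eqref{e.Mcondition}, the triangle inequality converts these two ingredients into a one-step dyadic decay
\begin{equation*}
D(r/3) \leq \lambda\, D(r) + C r^{-\alpha_0}\bigl(M^2 + D(r)\bigr)
\end{equation*}
for some fixed $\lambda \in (0,1)$ depending only on $(d,\Lambda,\alpha)$. Iterating from $r = R/2$ down to $r = \X(1+M)^{2d/\theta}$ gives $D(r) \leq CM^2$, and a Caccioppoli-type reverse Poincar\'e inequality upgrades this to the gradient bound~\eqref{e.lipschitzest}.

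Stochastic integrability of $\X$ follows from a union bound over dyadic scales $3^n \leq t$ applied to the tail bound of~\tref{subopt}. Under~(P3) this bound is algebraic with exponent approaching $\beta$, and summing the resulting geometric series yields~\eqref{e.weakSI} for every $\theta < \beta$. Under~(P4) the tails are stretched exponential, but the subadditive argument must balance the spatial averaging length (which contributes a factor $d$) against the $\alpha$-mixing scale (which contributes $\gamma$); optimizing this trade-off produces the rate $s < d\gamma/(d+\gamma)$ of~\eqref{e.strongSI}. The stronger tail under~(P4) also allows a far larger choice of $\alpha_0$, which is why the polynomial factor $(1+M)^{2d/\theta}$ in the lower range of scales is replaced by $\log(1+M)$ in~\eqref{e.lipschitzest2}.

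The main obstacle, given~\tref{subopt}, is the bookkeeping of the $M$- and $f$-dependence through the iteration: one must verify that the inhomogeneity $f$, normalized via $R(\fint|f|^p)^{1/p}$ with $p > d$, contributes at every dyadic scale a forcing that is strictly subcritical in the Campanato exponent and can therefore be absorbed into the $M^2$ term without disturbing the geometric contraction, and that the homogenization errors accumulated along the iteration remain dominated by the current excess -- which is precisely the condition that forces the lower bound $r \geq \X(1+M)^{2d/\theta}$ on the range of validity. Once this bookkeeping is set up correctly, and~\tref{subopt} is treated as a black box, the iteration closes essentially mechanically.
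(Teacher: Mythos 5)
Your proposal follows essentially the same route as the paper: a Campanato-type excess-decay iteration (here formalized as \pref{campanato}), driven by the quenched homogenization error estimate of \tref{subopt} at each dyadic scale, the $C^{1,\alpha}$ regularity of the constant-coefficient problem for $\overline F$ supplying the decay factor $\lambda<1$, and a Caccioppoli/reverse-Poincar\'e step (Proposition~\ref{p.caccioppoli}) to convert the $L^2$-oscillation bound into the gradient bound~\eqref{e.lipschitzest}. The bootstrap/induction structure you gesture at (``homogenization errors accumulated along the iteration remain dominated by the current excess'') is exactly Step~6 of the paper's argument, where one inductively verifies the size condition~\eqref{e.uglycondition} on $M_r$ so that \tref{subopt} is applicable at the next smaller scale.

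One causal attribution in your sketch is off. You explain the improvement from $(1+M)^{2d/\theta}$ in~\eqref{e.lipschitzest} to $\log(1+M)$ in~\eqref{e.lipschitzest2} by saying that (P4) allows a ``far larger choice of $\alpha_0$''. The larger available convergence exponent under (P4) is real, but it is not what drives the $M$-dependence in the lower end of the range of $r$. That dependence comes from the union bound over a grid of values of $(p,q)$ in the ball $B_{CM3^{dm/2}}$ inside \tref{muconv}: with the algebraic tails of (P3), absorbing the polynomial cardinality factor $\sim M^{2d}$ costs a polynomial shift $(1+M)^{2d/s}$ of the random variable $\X_s$, as in~\eqref{e.wkee}; with the stretched-exponential tails of (P4), the same polynomial factor is absorbed at the cost of only a logarithmic shift, giving the factor $\log(1+M)$ in~\eqref{e.stree}. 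This is then inherited verbatim by the lower bound on $r$ when $\X$ is extracted in Steps 3 and 4 of the proof.
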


\begin{remark}
The condition~\eqref{e.localminimizer} says that $u$ is a null minimizer (on~$B_R$) of the functional $\J\left[\cdot,f \right]$. According to~Proposition~\ref{p.yesvariational}, this is equivalent to the statement that~$u$ is a solution of
\begin{equation} \label{e.PDEf}
-\nabla \cdot \left( \a(\nabla u(x),x) \right) = f(x) \quad \mbox{in} \ B_R,
\end{equation}
where~$\a$ is the uniformly monotone vector field represented by $F$ in the sense defined in Section~\ref{s.variational}. Moreover, according to Theorem~\ref{t:rep}, any uniformly monotone vector field~$\a$ admits a variational representation satisfying our assumptions, and thus there is no loss of generality in the variational formulation compared to the PDE itself. In particular, the conclusion of Theorem~\ref{t.lip} applies to solutions of equations of the form~\eqref{e.PDEf} with random coefficients which are uniformly monotone, Lipschitz and bounded. (It is obvious how to translate hypotheses~(P2) and~(P3), since these are essentially unchanged; the proper translation of~(P1) into a statement about $\a(p,x)$ is indicated in Lemma~\ref{l.K0stuff}, below.) 
\end{remark}

\begin{remark}
The integrability of $\X$ given in Theorem~\ref{t.lip} is optimal under each of assumptions~(P3) and~(P4) and for every value of the exponents~$\beta$ and~$\gamma$. This can be verified by the construction of explicit examples. Observe that if~$\P$ satisfies a finite range of dependence, then we may take any $\gamma<\infty$ in~(P4) and we thus get~\eqref{e.strongSI} for every exponent~$s<d$, which matches the result in~\cite{AS}. This is certainly optimal, because the probability of obtaining any \emph{particular} coefficient field (for the random checkerboard, say) in a ball of radius~$R$ is of order $\exp(-cR^d)$. 

\smallskip

While we do not give further details here, we also obtain appropriate bounds on~$\X$ under mixing conditions other than $\alpha$--mixing. For instance, if~$\P$ is assumed to satisfy a spectral gap inequality, then we obtain~\eqref{e.strongSI} for every $s<d/2$ and, under a logarithmic Sobolev inequality, for every $s<d$ (we also expect both of these to be optimal).
\end{remark}

\begin{remark}
The parameter $M$ in the lower bound for $r$ in~\eqref{e.lipschitzest} and~\eqref{e.lipschitzest2} may be removed in the case that $F$ is positively homogeneous of order two (e.g., in the case that the corresponding PDE is linear). The reason it arises in the general setting has to do with the fact that one may not obtain full information regarding the random fields $\{ F(p,q,\cdot) \}_{p,q\in\Rd}$ by looking at $(p,q)$ lying in a bounded set, and thus the random scale above which the Lipschitz estimate holds necessarily exhibits some dependence on the size of the solution. See also Remark~\ref{r.weirdness} below.
\end{remark}

\subsection{Outline of the paper}
In Section~\ref{s.variational}, we review the variational characterization of general divergence-form elliptic equations. In Section~\ref{s.lipschitz}, we explain the general scheme for obtaining the Lipschitz estimate and reduce the main result to an estimate on the error in homogenization for the Dirichlet problem. The rest of the paper is focused on obtaining the latter result, stated in Theorem~\ref{t.subopt}. We introduce the subadditive and superadditive quantities in Section~\ref{s.subadditive}, and review some of their basic properties. In Section~\ref{s.proof}, we reduce Theorem~\ref{t.subopt} to two ingredients: one is probabilistic and gives a quantitative estimate for the convergence of the subadditive and superadditive quantities, while the other is a deterministic statement that asserts that these quantities control Dirichlet problems with arbitrary boundary data. The proofs of these results are given in Sections~\ref{s.mu} and~\ref{s.blackbox}, respectively, which are the analytic core of the paper. We also discuss how to construct~$\overline\a$ from $\overline F$ in Section~\ref{s.variationalbar}. In Appendix~\ref{s.mixing}, we put the mixing conditions into a convenient form for their use in Section~\ref{s.mu}. In Appendix~\ref{s.reg}, we give the proofs of some regularity estimates (such as the Caccioppoli and Meyers estimates) needed at several points in the paper, which although well-known, we nevertheless could not find in the literature in the generality required here. 

\section{Variational formulation of divergence-form elliptic equations}
\label{s.variational}

At the core of the proof of Theorem~\ref{t.lip} is a subadditivity argument which generalizes the one used in~\cite{AS}. The purpose of this section is to set up this argument by first formulating the PDE as a variational problem. This section is thus somewhat independent of the rest of the paper, and we also prove a new result (Theorem~\ref{t:rep} below), which is of independent interest. Apart from this result, the material covered here is well-known and can be found for example in \cite{F,Gh,Penot,V1} and is included for the convenience of the reader.

\smallskip

\subsection{Variational characterization of monotone maps}
The variational formulation of maximal monotone operators, which was pioneered by Fitzpatrick~\cite{F}, makes it possible to interpret the quasilinear PDE~\eqref{e.pde} as a minimization problem for an integral functional. We now review the basics of this (relatively recently developed) theory. 

\smallskip

In this subsection, we drop $x$-dependence and consider a Lipschitz, uniformly monotone vector field $\a:\Rd \to \Rd$. That is, we assume that~$\a$ satisfies, for some constant $\lambda\geq 1$,
\begin{equation} \label{e.unifmonolip}
\left\{ \begin{aligned} 
& \left| \a(p_1) - \a(p_2) \right| \leq \lambda \left| p_1-p_2 \right|, \\
& \left( \a(p_1) - \a(p_2) \right) \cdot(p_1-p_2) \geq \frac1{\lambda}\left|p_1-p_2\right|^2.
\end{aligned} \right.
\end{equation}

We next recall that~\eqref{e.unifmonolip} implies that~$\a$ is \emph{maximal monotone}.

\begin{lemma}
\label{l.maxmono}
The map $\a$ is maximal monotone, that is, for every $p,q\in\Rd$, 
\begin{equation} \label{e.maxmono}
\inf_{\xi\in\Rd} \left( \a(\xi) - q \right) \cdot(\xi-p) \geq 0  \ \iff \ q = \a(p).
\end{equation}
\end{lemma}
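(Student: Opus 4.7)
\textbf{Plan for the proof of Lemma~\ref{l.maxmono}.} The reverse implication ($\Leftarrow$) is immediate from the monotonicity line of~\eqref{e.unifmonolip}: substituting $q = \a(p)$ gives $(\a(\xi) - \a(p)) \cdot (\xi - p) \geq \lambda^{-1} |\xi - p|^2 \geq 0$ for every $\xi \in \R^d$, so the infimum is nonnegative (in fact attained, with value $0$, at $\xi = p$).

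For the nontrivial forward implication ($\Rightarrow$), my plan is to test the hypothesized inequality along rays emanating from $p$ in order to extract a first-order condition at $p$. Concretely, I would fix an arbitrary direction $\eta \in \R^d$, plug $\xi = p + t\eta$ into the assumption for $t > 0$, and divide by $t$ to obtain
\[
(\a(p + t\eta) - q) \cdot \eta \geq 0 \quad \text{for all } t>0.
\]
The Lipschitz bound in the first line of~\eqref{e.unifmonolip} lets me pass to the limit $t \to 0^+$ and conclude $(\a(p) - q) \cdot \eta \geq 0$. Since $\eta$ was arbitrary, applying this to both $\eta$ and $-\eta$ forces $\a(p) - q = 0$.

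I do not anticipate any genuine obstacle here; the uniform monotonicity constant plays no quantitative role, only the continuity of $\a$ at $p$ is used in the limit. Morally, this is the standard fact that a continuous, everywhere-defined monotone map on $\R^d$ is automatically maximal: no pair $(p,q)$ outside its graph can be ``monotonically compatible'' with the graph, because that would force a one-sided directional inequality at $p$ in every direction, which for a continuous field collapses to $q = \a(p)$.
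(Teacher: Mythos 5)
Your proposal is correct and takes essentially the same approach as the paper: both arguments test the infimum along rays $\xi = p + t\eta$, divide by the ray parameter, and pass to the limit $t \to 0^+$ (the paper using the Lipschitz bound to display the error as $O(t)$, you invoking continuity directly) to extract $(\a(p) - q)\cdot\eta \geq 0$ for all $\eta$, hence $q = \a(p)$. The reverse implication is handled the same way in both.
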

\begin{proof}
Suppose the first statement in~\eqref{e.maxmono} holds. Fix $h\in\Rd$ and $\delta > 0$ and take $\xi = p +\delta h$ to obtain 
\begin{align*} \label{}
0 \leq \left( \a(\xi) - q \right) \cdot(\xi-p) & = \left( \a(p) - q \right) \cdot(\xi-p) + \left( \a(\xi) - a(p) \right)\cdot (\xi-p) \\
& \leq \delta \left( \a(p) - q \right) \cdot h + \lambda \delta^2 |h|^2.
\end{align*}
Dividing by $\delta$ and passing to the limit $\delta \to 0$, we obtain 
\begin{equation*} \label{}
\left( \a(p) - q \right) \cdot h \geq 0.
\end{equation*}
This holds for arbitrary $h\in\Rd$, thus $\a(p) = q$. This gives one implication of~\eqref{e.maxmono}, and the other is obvious. 
\end{proof}
\begin{remark}
\label{r:cond-l.maxmono}
An inspection of the proof of Lemma~\ref{l.maxmono} reveals that, for $\a$ to be maximal monotone, it suffices that $\a$ be merely monotone and continuous.
\end{remark}

According to \cite[Proposition~12.54]{RW} (or the Browder-Minty theorem), $\a$ is surjective. It then follows that the inverse $\a^{-1}$ of $\a$ is also Lipschitz and uniformly monotone (with the same constants), that is, for every $q_1,q_2\in\Rd$,
\begin{equation} \label{e.unifmonolipinv}
\left\{ \begin{aligned} 
& \left| \a^{-1}(q_1) - \a^{-1}(q_2) \right| \leq  \lambda\left| q_1-q_2 \right|, \\
& \left( \a^{-1}(q_1) - \a^{-1}(q_2) \right) \cdot(q_1-q_2) \geq \frac1\lambda \left|q_1-q_2\right|^2.
\end{aligned} \right.
\end{equation}

We look for a variational representation of the map $\a$ in the following sense.

\begin{definition}
\label{d:a-rep}
Let $F: \Rd \times \Rd \to \R \cup \{+\infty\}$ be a convex function.
We say that $F$ \emph{(variationally) represents} the monotone vector field $\a: \Rd \to \Rd$ if the following two conditions hold for every $(p,q) \in \Rd \times \Rd$:
\begin{enumerate}
\item[(i)] $F(p,q) \ge p\cdot q$,
\item[(ii)] $ F(p,q) = p \cdot q \quad \Leftrightarrow \quad q = \a(p)$.
\end{enumerate}
\end{definition}
Representatives of maximal monotone vector fields are not unique. It is sometimes desirable to find a representative that has the following additional property.
\begin{definition}
Let $F: \Rd \times \Rd \to \R \cup \{+\infty\}$ be a convex function, and let
\begin{equation}
\label{e:def:FL-two}
F^*(q^*,p^*) = \sup_{p,q\in\Rd} \, \big\{ (p,q)\cdot(q^*,p^*) - F(p,q)  \big\}
\end{equation}
be the Legendre-Fenchel transform of $F$ (where we write $ (p,q)\cdot(q^*,p^*) $ for $p\cdot q^* + q \cdot p^*$). We say that $F$ is \emph{self-dual} if $F^*(q,p) = F(p,q)$ for every $(p,q) \in \R^d \times \R^d$.
\end{definition}

We continue with some examples of representatives of monotone vector fields. 

\begin{example}[Gradients of convex functions]
Consider a uniformly convex function $L:\Rd \to \R$ satisfying
\begin{align}
\label{e:unif-conv}
\frac1{2\Lambda} |p_1-p_2|^2 \leq \frac12 L(p_1) + \frac12 L(p_2) - L\left(\frac12p_1+\frac12 p_2 \right) \leq \frac\Lambda2 |p_1-p_2|^2.
\end{align}
Then the uniformly monotone map given by
\begin{equation*}
\a(p) = \nabla L(p)
\end{equation*}
is represented by the self-dual function
$$
F(p,q) = L(p) + L^*(q),
$$
where $L^*$ is the Legendre-Fenchel transform of $L$. Moreover, property \eqref{e:unif-conv} ensures that $F$ is uniformly convex and $C^{1,1}$. 
\end{example}

\begin{example}[Linear modifications \cite{BE,Nay}]
\label{ex:BEN}
Suppose $\a_0$ is a monotone map represented by $F_0$ and let $M$ be a $d\times d$ matrix such that $p\cdot Mp \ge 0$ for every $p \in \R^d$. Then the monotone map
$$
\a(p) = \a_0(p) + Mp
$$
is represented by the function
$$
F(p,q) = F_0(p,q-Mp) + p \cdot Mp.
$$
Indeed, the fact that $F$ is convex is clear. We also have $$F(p,q) \ge p\cdot(q-Mp) + p\cdot Mp = p\cdot q,$$ with equality if and only if $q-Mp = \a_0(p)$, that is, $q = \a(p)$. Besides, if $F_0$ is uniformly convex and $C^{1,1}$, then so is $F$. In particular, if 
$$
\a(p) = \nabla L(p) + Mp
$$
with $M$ skew-symmetric, then $\a$ can be represented by the self-dual function
$$
F(p,q) = L(p) + L^*(q-Mp),
$$ 
and moreover, $F$ is uniformly convex and $C^{1,1}$ if $L$ satisfies \eqref{e:unif-conv}.

\smallskip

A fortiori, if $A$ is a symmetric positive definite matrix and $M$ is skew-symmetric, then the monotone map
$$
\a(p) = (A+M)p
$$
is represented by
\begin{equation}
\label{e.linearF}
F(p,q) = \frac{1}{2} \, p\cdot Ap + \frac{1}{2} \, (q-Mp) \cdot A^{-1} (q-Mp).
\end{equation}
Those readers interested only in the case that~\eqref{e.pde} is linear may skip the rest of this subsection and take~\eqref{e.linearF} to be the definition of~$F$ in the rest of the paper.
\end{example}

\begin{remark}
\label{r:switch}
For $G:\Rd \times \Rd\to\R \cup \{+\infty\}$, denote $G^{\T}(q,p) := G(p,q)$. The function~$G$ represents~$\a$ if and only if~$G^\T$ represents~$\a^{-1}$. 
\end{remark}

\begin{example}[Representing general maximal monotone maps]
Let $\a : \R^d \to \R^d$ be a maximal monotone map. The \emph{Fitzpatrick function} \cite{F} of~$\a$, defined by
\begin{equation}
\label{e:Fitz}
F(p,q) := \sup_{\xi\in\Rd} \, \big(q \cdot \xi - \a(\xi) \cdot (\xi - p) \big),
\end{equation}
represents~$\a$. Indeed, the facts that $F$ is convex and that $F(p,q) \ge p\cdot q$ for every $(p,q)$ are clear; the condition $F(p,q) \le p\cdot q$ is equivalent to 
$$
\inf_{\xi\in\Rd} \left( \a(\xi) - q \right) \cdot(\xi-p) \geq 0,
$$
which is equivalent to having $q = \a(p)$ since $\a$ is maximal monotone, see \eqref{e.maxmono}. As will be clear from Step 3 of the proof of Proposition~\ref{p:dualize} below, the Fitzpatrick function is the smallest representative of $\a$.
\end{example}

The previous example demonstrates that an arbitrary maximal monotone map is variationally representable. However, for Lipschitz, uniformly monotone maps, it is desirable (and necessary for our purposes) to find a representative that is uniformly convex and $C^{1,1}$. Unfortunately, the Fitzpatrick function does not satisfy this property in general -- in particular, it is never uniformly convex if the map is linear. This issue is resolved by the following theorem.

\begin{theorem}[Uniformly convex and $C^{1,1}$ representative]
\label{t:rep}
Assume $\a:\R^d \to \R^d$ satisfies~\eqref{e.unifmonolip} for some $\lambda \ge 1$. Then there exists a self-dual $F : \R^d \times \R^d \to \R$ that represents~$\a$ and satisfies
\begin{equation}
\label{e:rep-conv}
(p,q) \mapsto F(p,q) - \frac{1}{2 \Lambda}(|p|^2 + |q|^2)  \quad \text{is convex},
\end{equation}
\begin{equation}
\label{e:rep-C11}
(p,q) \mapsto F(p,q) - \frac{\Lambda}{2}(|p|^2 + |q|^2)  \quad \text{is concave},
\end{equation}
where we have set
\begin{equation}
\label{e:def:lambda}
\Lambda : = 2\lambda +1. 
\end{equation}
\end{theorem}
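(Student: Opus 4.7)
The natural convex representative produced by the preceding example is the Fitzpatrick function
\[
F_0(p,q) := \sup_{\xi\in\Rd}\bigl( q\cdot\xi - \a(\xi)\cdot(\xi-p) \bigr),
\]
which represents $\a$ in the sense of Definition~\ref{d:a-rep} and is convex in $(p,q)$, but typically satisfies neither~\eqref{e:rep-conv} nor~\eqref{e:rep-C11} (for instance, $F_0$ is affine along $\mathrm{graph}(\a)$ and can degenerate transversely). The plan is to upgrade $F_0$ to a representative $F$ with two-sided quadratic Hessian bounds calibrated from~\eqref{e.unifmonolip}, and then to enforce self-duality in a final step.

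Guided by Example~\ref{ex:BEN}, which delivers a self-dual, uniformly convex, $C^{1,1}$ representative in closed form in the linear case, I would look for $F$ as the upper envelope of an explicit family of convex quadratics, each tangent to the graph of $\a$ at a single point:
\[
F(p,q) := \sup_{\xi\in\Rd}\bigl( \a(\xi)\cdot p + \xi\cdot q - \xi\cdot\a(\xi) + Q_\xi(p,q) \bigr),
\]
where $Q_\xi(p,q)$ is a nonnegative convex quadratic vanishing at $(\xi,\a(\xi))$ whose Hessian is calibrated against~\eqref{e.unifmonolip}. Property~(i) of Definition~\ref{d:a-rep}, namely $F\ge p\cdot q$, then follows by testing $\xi=p$; property~(ii) follows from the rigidity statement that the maximum is attained at $\xi=p$ whenever $q=\a(p)$, which (via the algebraic identity $\a(\xi)\cdot p+\xi\cdot q-\xi\cdot\a(\xi)-p\cdot\a(p) = -(p-\xi)\cdot(\a(p)-\a(\xi))$ valid along $q=\a(p)$) reduces to the constraint $Q_\xi(p,\a(p))\le (p-\xi)\cdot(\a(p)-\a(\xi))$, satisfiable thanks to the strong-monotonicity bound $(p-\xi)\cdot(\a(p)-\a(\xi))\ge \lambda^{-1}|p-\xi|^2$. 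The lower Hessian bound~\eqref{e:rep-conv} is inherited term-by-term from the Hessian lower bound on the quadratic family $Q_\xi$, while the upper Hessian bound~\eqref{e:rep-C11} requires an envelope argument showing that the active maximizer $\xi^*(p,q)$ varies smoothly in $(p,q)$. Optimizing these calibrations against $\lambda$ produces the explicit constant $\Lambda=2\lambda+1$.

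Self-duality is then enforced in a final step by symmetrization: one can either apply the same construction to $\a^{-1}$, which by~\eqref{e.unifmonolipinv} satisfies the same structural bounds and by Remark~\ref{r:switch} produces a second representative of $\a$ via the swap $(p,q)\leftrightarrow(q,p)$, and then form a symmetric combination of the two; or one tunes the family $Q_\xi$ so that the ansatz above is already invariant under the swap. The main obstacle is the envelope argument for~\eqref{e:rep-C11}: producing a uniform upper Hessian bound on a supremum of convex quadratic functions is not automatic and requires showing that the active maximizer varies with sufficient regularity in $(p,q)$, a rigidity statement that uses strong monotonicity of $\a$ in an essential way and is what ultimately pins down the sharp constant $\Lambda=2\lambda+1$.
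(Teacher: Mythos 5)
Your ansatz — a sup over $\xi$ of convex quadratics tangent to $\mathrm{graph}(\a)$ — sets up property~(i) and the ``$\Leftarrow$'' direction of property~(ii) correctly (your algebraic identity checks out, and strong monotonicity gives room to choose nonzero $Q_\xi$), and the lower Hessian bound~\eqref{e:rep-conv} does pass through the supremum since $\sup_\xi h_\xi - \frac{c}{2}|\cdot|^2 = \sup_\xi (h_\xi - \frac{c}{2}|\cdot|^2)$ is a sup of convex functions. The genuine gap is the obstacle you name but do not overcome: the upper Hessian bound~\eqref{e:rep-C11} does \emph{not} pass through the supremum. The sup of $\Lambda$-semiconcave functions is not $\Lambda$-semiconcave (already $\max(-x^2,-(x-1)^2)$ has a kink), and no envelope argument based merely on per-quadratic Hessian bounds can deliver~\eqref{e:rep-C11}; you would need a genuinely new input, and you don't supply one.

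The paper avoids the envelope problem entirely by a duality trick that you treat as an afterthought but which is actually the crux. It first builds a representative of the form $\frac{\tau}{2}(|p|^2+|q|^2) + G(p,q)$ with $G$ convex (Step~1: split $\a = \tau\,\mathrm{Id} + \a_0$ and $\a^{-1} = \tau\,\mathrm{Id} + \a_1$, take Fitzpatrick functions of the residual monotone parts, feed them through Example~\ref{ex:BEN}, and average). It then symmetrizes via the \emph{proximal average} (Step~2) — not an arithmetic average, which would fail to be self-dual since the Legendre transform does not commute with ordinary averages — citing Bauschke--Wang for self-duality of the output. Uniform convexity of the result is a direct computation (Step~3). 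And then~\eqref{e:rep-C11} comes for free (Step~4): the Legendre transform of a function that is uniformly convex with constant $\frac{1}{2\Lambda}$ is $C^{1,1}$ with constant $\Lambda$, and self-duality $F^*(q,p)=F(p,q)$ transfers this $C^{1,1}$ bound from $F^*$ back to $F$ itself. So in the paper self-duality is not a ``final-step symmetrization'' — it is the mechanism that produces the upper Hessian bound. If you want to salvage your construction, you would need to either prove the envelope regularity claim (which is false in general) or reorganize so that self-duality is established before the $C^{1,1}$ bound and the latter is deduced by the transform argument.
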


The proof of Theorem~\ref{t:rep} uses the following result.

\begin{proposition}
\label{p:dualize}
Let $\a: \R^d \to \R^d$ be a Lipschitz, uniformly monotone vector field, and let $F : \R^d \times \R^d \to \R \cup \{+\infty\}$ represent $\a$. The dual function $F^*$ represents $\a^{-1}$.
\end{proposition}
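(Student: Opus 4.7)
The plan is to verify the two defining properties of Definition~\ref{d:a-rep} for $F^*$ as a representative of $\a^{-1}$: (i) $F^*(q^*, p^*) \geq q^* \cdot p^*$ for all $(q^*, p^*)$, and (ii) equality holds if and only if $p^* = \a^{-1}(q^*)$, equivalently $q^* = \a(p^*)$. Property (i) will be immediate from the definition~\eqref{e:def:FL-two}: substituting the test point $(p, q) = (p^*, \a(p^*))$, the value $F(p^*, \a(p^*)) = p^* \cdot \a(p^*)$ makes the expression inside the supremum telescope to $q^* \cdot p^*$.

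For the $\Leftarrow$ direction of (ii), I would assume $q^* = \a(p^*)$, so $F(p^*, q^*) = p^* \cdot q^*$, and establish the matching upper bound $F^*(q^*, p^*) \leq q^* \cdot p^*$. The core step is a directional-derivative estimate at the point $(p^*, q^*)$: for any $(h, k) \in \Rd \times \Rd$ and $t > 0$, the inequality $F(p^* + th, q^* + tk) \geq (p^* + th) \cdot (q^* + tk)$, combined with $F(p^*, q^*) = p^* \cdot q^*$, yields
\begin{equation*}
F(p^* + th, q^* + tk) - F(p^*, q^*) \geq t(h \cdot q^* + k \cdot p^*) + t^2 h \cdot k.
\end{equation*}
Dividing by $t$ and sending $t \downarrow 0$ gives $F'((p^*, q^*); (h, k)) \geq h \cdot q^* + k \cdot p^*$ in every direction $(h, k)$. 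Convexity of $F$ then upgrades this to the global inequality $F(p, q) \geq p^* \cdot q^* + (p - p^*) \cdot q^* + (q - q^*) \cdot p^*$, which rearranges to $p \cdot q^* + q \cdot p^* - F(p, q) \leq q^* \cdot p^*$; taking the supremum over $(p, q)$ produces the required bound.

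For the $\Rightarrow$ direction, I would assume $F^*(q^*, p^*) = q^* \cdot p^*$, so that $p \cdot q^* + q \cdot p^* - F(p, q) \leq q^* \cdot p^*$ for every $(p, q)$. Specializing to $q = \a(p)$, and using $F(p, \a(p)) = p \cdot \a(p)$, a direct rearrangement produces $(\a(p) - q^*) \cdot (p - p^*) \geq 0$ for every $p \in \Rd$. Maximal monotonicity of $\a$ (\lref{maxmono}) then forces $q^* = \a(p^*)$.

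The main obstacle will be the $\Leftarrow$ half of (ii). A tempting but flawed approach is to extract the desired joint subgradient $(q^*, p^*) \in \partial F(p^*, q^*)$ from the easy partial conditions $q^* \in \partial_p F(p^*, q^*)$ and $p^* \in \partial_q F(p^*, q^*)$, which both follow from applying the representation property one slot at a time. For a general convex function on $\Rd \times \Rd$ these partial subgradient conditions do not combine into a joint one. The resolution is to bypass the partial-derivative route entirely and work with the full lower bound $F \geq p \cdot q$ along every line through $(p^*, q^*)$; the cross term $h \cdot q^* + k \cdot p^*$ arising from the bilinear expansion along a mixed direction $(h, k)$ is precisely what couples the coordinates and yields the correct directional-derivative bound in every direction simultaneously.
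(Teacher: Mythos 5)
Your proof is correct, and it takes a genuinely different route from the paper's. The paper introduces the auxiliary "graph indicator'' $F_\infty$, proves the sandwich $F_\infty^{*\T} \le F \le F_\infty$, dualizes to get $F_\infty^* \le F^* \le F_\infty^{**\T}$, and then observes that both bounding functions represent $\a^{-1}$. You instead verify the two conditions of Definition~\ref{d:a-rep} for $F^*$ head-on: property (i) by testing $(p,q) = (p^*, \a(p^*))$; the $\Leftarrow$ half of (ii) by the directional-derivative expansion at $(p^*, q^*)$ (divide by $t$, send $t \downarrow 0$, then use the monotonicity of the convex difference quotient to propagate the bound to $t = 1$, which is precisely the Fenchel--Young equality case $(q^*,p^*) \in \partial F(p^*,q^*)$); and the $\Rightarrow$ half by restricting the supremum defining $F^*$ to the graph $q = \a(p)$ and invoking \lref{maxmono}. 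Interestingly, the convexity expansion at the heart of your $\Leftarrow$ step is exactly what the paper uses in its Step~3 to prove $F \ge F_\infty^{*\T}$, so the hard analytic content is shared; what differs is the scaffolding. Your route is more elementary and self-contained, while the paper's conjugation argument buys the additional structural insight (recorded afterward) that the Fitzpatrick function $F_\infty^{*\T}$ is the smallest representative of $\a$. Your closing remark correctly identifies why one must work along mixed directions $(h,k)$ rather than slot-by-slot: the partial subgradient conditions do not combine into the joint one for a general convex function on $\Rd \times \Rd$.
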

\begin{proof}
We decompose the proof into four steps. In the first step, we introduce a function $F_\infty$ on $\R^d \times \R^d$ that can be thought of as an indicator function of the graph of~$\a$ \cite{Penot}. It is designed to be larger than any representative of $\a$. We then note that the Fitzpatrick function introduced in \eqref{e:Fitz} is $F_\infty^{*\T}$. In the second step, we show that $F_\infty^{**}$ represents $\a$. In the third step, we show that $F_\infty^{*\T} \le F \le F_\infty$. In the last step, we deduce that $F_\infty^* \le F^* \le F_\infty^{**\T}$ and complete the proof. 

\smallskip

\emph{Step 1.} Let $F_\infty : \R^d \times \R^d \to \R \cup \{+\infty\}$ be the (possibly non-convex) function defined by
\begin{equation*}
F_\infty(p,q) := \left\{ 
\begin{aligned}
& p\cdot q && \text{if} \ q = \a(p), \\
& +\infty && \text{otherwise}.
\end{aligned}
\right.
\end{equation*}
It is clear that any $F$ that represents $\a$ must satisfy $F \leq F_\infty$. Moreover,
$$
F_\infty^{*}(q,p) = \sup_{\xi \in\Rd} \Ll( q\cdot \xi + a(\xi)\cdot p - a(\xi)\cdot \xi \Rr).
$$
In other words, $F_\infty^{*\T}$ is the Fitzpatrick function introduced in \eqref{e:Fitz}. Since the Fitzpatrick function represents $\a$, Remark~\ref{r:switch} implies that $F_\infty^{*}$ represents~$\a^{-1}$.

\smallskip

\emph{Step 2.} We now show that $F_\infty^{**}$ represents $\a$. The function is convex. Since $F_\infty^{**} \le F_\infty$, it is also clear that if $q = \a(p)$, then $F_\infty^{**}(p,q) \le F_\infty(p,q) = p\cdot q$. It thus suffices to demonstrate the following, for every $p,q\in\Rd$:
\begin{equation}
\label{e:step2-1}
F_\infty^{**}(p,q) \ge p\cdot q \quad \mbox{and} \quad q \neq \a(p) \Rightarrow F_\infty^{**}(p,q) > p\cdot q.
\end{equation}
Since the Fitzpatrick function $F_\infty^{*\T}$ represents $\a$, we have $F_\infty^{*\T} \le F_\infty$. By duality, $F_\infty^{*\T} \le F_\infty^{**}$. Using again that $F_\infty^{*\T}$ represents $\a$, we obtain \eqref{e:step2-1}.

\smallskip

\emph{Step 3.} We now show that 
\begin{equation}
\label{e:comp-step3}
F_\infty^{*\T} \le F \le F_\infty.
\end{equation}
The second inequality is clear since $F$ represents $\a$. For the first one, the convexity of $F$ ensures that for every $p,q,\xi \in \R^d$ and every $\eps > 0$,
$$
F(p,q) - F(\xi,\a(\xi)) \ge \eps^{-1} \Ll[ F\big(\xi+\eps(p-\xi),\a(\xi)+\eps(q-\a(\xi))\big) -F(\xi,\a(\xi)) \Rr].
$$
Since $F(p,q) \ge p\cdot q$ for every $(p,q)$, the latter quantity is larger than
$$
\a(\xi)\cdot(p-\xi) + (q-\a(\xi))\cdot\xi + O(\eps) \qquad (\eps \to 0).
$$
Sending $\eps\to 0$ and recalling that $F(\xi,\a(\xi)) = \a(\xi)\cdot \xi$, we arrive at
$$
F(p,q) \ge q\cdot \xi - \a(\xi) \cdot(\xi-p).
$$
Taking the supremum over $\xi \in \R^d$ gives $F \ge F_\infty^{*\T}$.

\smallskip

\emph{Step 4.} By duality, we deduce from \eqref{e:comp-step3} that $F_\infty^{*} \le F^* \le F_\infty^{**\T}$. We have seen in Steps 1 and 2 respectively that $F_\infty^*$ and $F_\infty^{**\T}$ represent $\a^{-1}$. So $F^*$ represents $\a^{-1}$ as well, and the proof is complete.
\end{proof}
\begin{remark}
\label{r:max-monot}
The assumption that $\a$ be Lipschitz and uniformly monotone is not necessary for Proposition~\ref{p:dualize}. The result can be proved along the same lines assuming only that~$\a$ is a (possibly set-valued) maximal monotone map.
\end{remark}
\begin{proof}[Proof of Theorem~\ref{t:rep}]
We decompose the proof into four steps. In the first step, we construct a preliminary representative~$\tilde F$ of~$\a$ which is uniformly convex. In the second step, we use~$\tilde F$ to construct a representative~$F$ of~$\a$ that is also self-dual, following an approach due to~\cite{BW}. In the third and fourth steps, we check that $F$ is uniformly convex and~$C^{1,1}$.

\smallskip

\emph{Step 1.} We construct a uniformly convex representative of $\a$. For convenience, denote $\tau :=1/\lambda$.
In view of \eqref{e.unifmonolip} and \eqref{e.unifmonolipinv}, we can write 
\begin{equation}
\label{e:decompaa}
\a(p) = \tau p + \a_0(p) \quad \text{and} \quad \a^{-1}(q) = \tau q + \a_1(q),
\end{equation}
where $\a_0$ and $\a_1$ are maximal monotone operators (see~\lref{maxmono} and~Remark~\ref{r:cond-l.maxmono}). Let $F_0$ and $F_1$ be the Fitzpatrick functions representing $\a_0$ and $\a_1$, respectively:
$$
F_0(p,q) := \sup_{\xi\in\Rd} \, \big(q \cdot \xi - \a_0(\xi) \cdot (\xi - p) \big) \quad \mbox{and} \quad F_1(q,p) := \sup_{\xi\in\Rd} \, \big(p \cdot \xi - \a_1(\xi) \cdot (\xi - q) \big).
$$
By~Example~\ref{ex:BEN}, the functions 
\begin{equation*}
(p,q) \mapsto F_0(p,q-\tau p) + \tau |p|^2 \quad \mbox{and} \quad (q,p) \mapsto F_1(q,p-\tau q) + \tau |q|^2
\end{equation*}
represent $\a$ and $\a^{-1}$ respectively. It then follows that~$\a$ is represented by 
\begin{equation}
\label{e:bang}
\ext{F}(p,q) := \frac{1}{2} \left( F_0(p,q-\tau p) + \tau |p|^2 + F_1(q,p-\tau q) + \tau |q|^2 \right).
\end{equation}
Observe that~$\ext{F}$ can be written in the form
\begin{equation}
\label{e:F-G}
\ext{F}(p,q) = \frac{\tau}{2} (|p|^2 + |q|^2) + G(p,q),
\end{equation}
with $G$ convex. 

\smallskip

\emph{Step 2.} 
We define~$F$ to be the proximal average of~$\ext{F}$ and $\ext{F}^*$:
\begin{equation}
\label{e:defF}
F(p,q) = \inf\Ll\{ \frac12 \ext{F}(p_1,q_1) + \frac12 \ext{F}^*(q_2,p_2) + \frac18 |p_1-p_2|^2 + \frac18 |q_1 - q_2|^2 \Rr\},
\end{equation}
where the infimum is taken over all $p_1,q_1, p_2,q_2\in\Rd$ such that
\begin{equation}
\label{e:decompp}
(p,q) = \frac12 (p_1,q_1) + \frac12(p_2,q_2) 
\end{equation}
and the Legendre-Fenchel transform $\ext{F}^*$ of $\ext{F}$ is defined in~\eqref{e:def:FL-two}. 
According to~\cite[Theorem~2.2]{BW}, the function $(p,q) \mapsto F(p,q)$ is self-dual. Using the notation of Remark~\ref{r:switch}, we let $H = (\ext{F} + \ext{F}^{*\T})/2$. It is clear from \eqref{e:defF} that $F \le H$. By duality, we also have $H^{*\T} \le F^{*\T} = F$. Moreover, we learn from Proposition~\ref{p:dualize} (and Remark~\ref{r:switch}) that both $H$ and $H^{*\T}$ represent $\a$. Since $H^{*\T} \le F \le H$, it thus follows that $F$ represents $\a$.

\smallskip

\emph{Step 3.} We now show that $F$ is uniformly convex using the representation \eqref{e:F-G}, that is,
$$
\ext{F}(p_1,q_1) = \frac{\tau}{2}(|p_1|^2 + |q_1|^2) + G(p_1,q_1)
$$
with $G$ convex. In order to lighten notation, let us denote $\xi := (p_1 - p_2)/2$ and $\eta := (q_1 - q_2)/2$. For $p_1,p_2,q_1,q_2\in\Rd$ satisfying \eqref{e:decompp}, since $p_1 = p + \xi$ and $q_1 = q + \eta$, we have
\begin{align*}
& \tau(|p_1|^2 + |q_1|^2) +  \frac12|p_1-p_2|^2 + \frac12|q_1 - q_2|^2  \\
& \qquad = \tau \Ll(|p|^2 + |q|^2 + 2 p\cdot \xi + 2 q\cdot \eta + |\xi|^2 + |\eta|^2\Rr) + 2|\xi|^2 + 2|\eta|^2 \\
& \qquad = \frac{2\tau}{2+\tau}\Ll(|p|^2 + |q|^2\Rr) + \underbrace{(2+\tau)\Ll|\xi + \frac{\tau}{2+\tau} p\Rr|^2 + (2+\tau)\Ll|\eta + \frac{\tau}{2+\tau} q\Rr|^2}_{=:\, 2 \ext{G}(p,q,\xi,\eta)}.
\end{align*}
Defining $\Lambda:=(2+\tau)/\tau$ (which matches~\eqref{e:def:lambda} as $\tau = 1/\Lambda$), we thus obtain
\begin{multline}
F(p,q) - \frac{1}{2\Lambda} \Ll(|p|^2 + |q|^2\Rr) \\
= \inf_{\xi,\eta\in\Rd} \, \Ll\{ \frac12 \Ll(G(p+\xi,q+\eta) + \ext{G}(p,q,\xi,\eta) + \ext{F}^*(q+\eta,p+\xi)\Rr) + \frac12 |\eta|^2  \Rr\}.
\end{multline}
The expression inside the infimum on the right side above is a convex function of $(p,q,\xi,\eta)$. It follows that the infimum itself is a convex function of $(p,q)$; see for example~\cite[Proposition~2.22]{RW}. This yields the uniform convexity of~$F$ or, more precisely,~\eqref{e:rep-conv}.

\smallskip

\emph{Step 4.} We show that $F$ is $C^{1,1}$ as a consequence of its self-duality and uniform convexity. In fact, this follows from the general fact that the Legendre-Fenchel transform of a uniformly convex function is $C^{1,1}$ from above. Since $F^*(q,p) = F(p,q)$, this will complete the proof. Letting $z = (p,q)$, we have shown that there exists a convex $\hat{G}$ such that 
$$
F(z) - \frac{1}{2\Lambda} |z|^2 = \hat{G}(z).
$$
We now observe that for every $z^* \in \R^{2d}$,
\begin{align*}
F^*(z^*) & = \sup_{z\in\R^{2d}} \, \Ll( z\cdot z^* - F(z) \Rr) \\
& = \sup_{z\in\R^{2d}} \, \Ll(z \cdot z^* - \frac{1}{2\Lambda} |z|^2 - \hat{G}(z) \Rr) \\
& = \frac{\Lambda}{2} |z^*|^2 - \inf_{z\in\R^{2d}} \Ll(  \frac{1}{2\Lambda}\Ll|z-\Lambda z^*\Rr|^2 + \hat{G}(z)\Rr).
\end{align*}
Since the function $(z,z^*) \mapsto \Ll|z-\Lambda z^*\Rr|^2/2\Lambda + \hat{G}(z)$ is convex, the infimum on the right side of the previous line is also convex, and we obtain \eqref{e:rep-C11}.
\end{proof}

\begin{remark}
An alternative proof of Theorem~\ref{t:rep} that does not rely on a proximal average (and thus gives a somewhat more explicit formula), but does not give a self-dual representative, is as follows. Choosing $\tau = 1/(2\Lambda)$ in \eqref{e:decompaa}, we see that $\a_0$ and $\a_1$ are uniformly convex and $C^{1,1}$. Hence, each has a uniformly convex representative by Step 1 of the above proof. We can then construct representatives $F_0$ and $F_1$ of $\a_0$ and $\a_1$ respectively that are $C^{1,1}$, by considering the dual functions. The function $\ext{F}$ defined in \eqref{e:bang} is then uniformly convex and $C^{1,1}$, and it represents $\a$.
\end{remark}

We also need a converse of Theorem~\ref{t:rep}, which is easier to prove. 

\begin{lemma}
\label{l.werepresent} 
Suppose that $F:\Rd\times\Rd \to \R$ satisfies~\eqref{e:rep-conv} for some $\Lambda \geq1$ and has the property that, for every $p\in\Rd$, 
\begin{equation}\label{e.Ftouch}
\inf_{q\in\Rd}\left( F(p,q) -p\cdot q \right)= 0. 
\end{equation}
Let $\a(p)\in\Rd$ be the point at which this infimum is attained, which is necessarily unique since~$q\mapsto F(p,q)$ is uniformly convex. Then~$\a$ is variationally represented by~$F$ and satisfies~\eqref{e.unifmonolip} with $\lambda=4\Lambda$. 
\end{lemma}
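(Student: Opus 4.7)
The plan is to dispatch the representation property by unwinding the definitions, and then to derive both the Lipschitz and monotonicity bounds for $\a$ from a single application of the uniform convexity of $F$ combined with the parallelogram identity, tested at the two pairs $(p_i, \a(p_i))$ where $F$ touches the lower bound $p \cdot q$.

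First, for the representation, property (i) of Definition~\ref{d:a-rep} is exactly the inequality $F(p,q) \geq p \cdot q$, which is immediate from~\eqref{e.Ftouch}. Uniqueness of the minimizer $\a(p)$ is guaranteed because \eqref{e:rep-conv} makes $q \mapsto F(p,q) - p \cdot q$ uniformly convex. For property (ii), the implication $q = \a(p) \Rightarrow F(p,q) = p \cdot q$ is the definition of $\a$, and the converse is the uniqueness statement just mentioned.

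For \eqref{e.unifmonolip}, I would fix $p_1, p_2 \in \Rd$ and set $q_i := \a(p_i)$, so that $F(p_i, q_i) = p_i \cdot q_i$ while $F(p,q) \geq p \cdot q$ everywhere. Applying the convexity of $(p,q) \mapsto F(p,q) - \frac{1}{2\Lambda}(|p|^2 + |q|^2)$ at the midpoint of $(p_1, q_1)$ and $(p_2, q_2)$ and invoking the parallelogram identity $\tfrac12|p_1|^2 + \tfrac12|p_2|^2 - |\tfrac{p_1+p_2}{2}|^2 = \tfrac14|p_1-p_2|^2$ (and the analogous one in $q$), I would obtain
\[
F\!\left(\tfrac{p_1+p_2}{2}, \tfrac{q_1+q_2}{2}\right) \leq \tfrac{1}{2} F(p_1, q_1) + \tfrac{1}{2} F(p_2, q_2) - \tfrac{1}{8\Lambda}\bigl(|p_1 - p_2|^2 + |q_1 - q_2|^2\bigr).
\]
Bounding the left-hand side below by $\tfrac{p_1+p_2}{2} \cdot \tfrac{q_1+q_2}{2}$ (using property (i)), substituting the identities $F(p_i, q_i) = p_i \cdot q_i$ on the right, and expanding the cross terms via $\tfrac14(p_1+p_2)\cdot(q_1+q_2) = \tfrac12(p_1\cdot q_1 + p_2 \cdot q_2) - \tfrac14(p_1-p_2)\cdot(q_1-q_2)$, one arrives at
\[
\tfrac{1}{8\Lambda}\bigl(|p_1 - p_2|^2 + |q_1 - q_2|^2\bigr) \leq \tfrac{1}{4}(p_1 - p_2) \cdot (q_1 - q_2).
\]

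This single inequality yields both halves of \eqref{e.unifmonolip}: dropping the $|q_1-q_2|^2$ term gives $(\a(p_1) - \a(p_2)) \cdot (p_1 - p_2) \geq \tfrac{1}{2\Lambda}|p_1-p_2|^2$, which is stronger than the required monotonicity bound with $\lambda = 4\Lambda$; applying Cauchy--Schwarz on the right-hand side and absorbing the $|p_1-p_2|^2$ contribution yields $|\a(p_1) - \a(p_2)| \leq 2\Lambda |p_1-p_2|$, again stronger than the claimed Lipschitz constant $4\Lambda$. I do not anticipate any genuine obstacle; the lossy constant $4\Lambda$ in the statement suggests only that the authors were not attempting an optimized computation, and the entire argument is a direct consequence of the midpoint form of uniform convexity tested at the (unique) zeros of $F(p,\cdot) - p \cdot(\cdot)$.
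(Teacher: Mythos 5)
Your proposal is correct and follows essentially the same route as the paper: both arguments combine the midpoint form of uniform convexity~\eqref{e:rep-conv}, evaluated at $(p_i,\a(p_i))$ and their average, with the two inequalities $F(p_i,\a(p_i))=p_i\cdot\a(p_i)$ and $F\geq p\cdot q$. Your bookkeeping is in fact a bit cleaner and recovers the sharper inequality $\frac{1}{2\Lambda}\left(|p_1-p_2|^2+|\a(p_1)-\a(p_2)|^2\right)\leq(\a(p_1)-\a(p_2))\cdot(p_1-p_2)$; the paper's displayed chain carries a $\frac{1}{8\Lambda}$ where multiplying the midpoint-convexity inequality by two should give $\frac{1}{4\Lambda}$, which is why it lands on the looser $\lambda=4\Lambda$.
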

\begin{proof}
By the definition of~$\a$, we have that, for every $p,q\in\Rd$,
\begin{equation*}
F(p,q) = p\cdot q \quad \iff \quad \a(p)=q.
\end{equation*}
We have left to show that~$\a$ satisfies~\eqref{e.unifmonolip} with $\lambda=4\Lambda$.
Let $p_1,p_2\in\Rd$. By~\eqref{e:rep-conv},~\eqref{e.Ftouch} and the definition of~$\a$, 
\begin{align*}
\lefteqn{ (\a(p_1)-\a(p_2))\cdot (p_1-p_2) } \qquad  & \\
& = F(p_1,\a(p_1)) + F(p_2,\a(p_2)) - p_1\cdot \a(p_2) - p_2\cdot \a(p_1)  \\
& \geq 2F\left(\frac12 (p_1+ p_2),\frac12(\a(p_1)+\a(p_2))\right) - p_1\cdot \a(p_2) - p_2\cdot \a(p_1) \\
& \qquad + \frac{1}{8\Lambda}\left( |p_1-p_2|^2 +|\a(p_1)-\a(p_2)| \right) \\ 
& \geq \frac12 (p_1+p_2) \cdot \left( \a(p_1)+\a(p_2) \right) - p_1\cdot \a(p_2) - p_2\cdot \a(p_1)  \\
& \qquad + \frac{1}{8\Lambda}\left( |p_1-p_2|^2 +|\a(p_1)-\a(p_2)| \right) \\
& = \frac12 (\a(p_1)-\a(p_2))\cdot (p_1-p_2) + \frac{1}{8\Lambda}\left( |p_1-p_2|^2 +|\a(p_1)-\a(p_2)| \right). 
\end{align*}
Rearranging this gives
\begin{equation*}
 \frac1{4\Lambda} \left( |p_1-p_2|^2 +|\a(p_1)-\a(p_2)|^2  \right)\leq (\a(p_1)-\a(p_2))\cdot (p_1-p_2) .
\end{equation*}
Discarding the second term on the left side gives the first inequality of~\eqref{e.unifmonolip} with $\lambda=4\Lambda$; discarding the first term on the left side and using Young's inequality gives the second inequality of~\eqref{e.unifmonolip} with $\lambda=2\Lambda$.
\end{proof}

We conclude this subsection by stating a connection between the minimum of $F$ and $\a(0)$, for $F$ and $\a$ as in the statement of Theorem~\ref{t:rep}.

\begin{lemma}
\label{l.K0stuff}
Suppose that $F$ represents $\a$ and satisfies~\eqref{e:rep-conv} and~\eqref{e:rep-C11}. Then there exist $C,c>0$, depending only on $\Lambda$, such that 
\begin{equation} \label{e.K0stuff}
-C \left| \a(0) \right|^2 \leq \inf_{p,q\in\Rd} F(p,q)  \leq -c \left| \a(0) \right|^2. 
\end{equation}
\end{lemma}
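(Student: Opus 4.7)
The representation property of Definition~\ref{d:a-rep} immediately gives $F(0, \a(0)) = 0$, and the plan is to identify $(\a(0), 0)$ as the gradient of $F$ at this point, and then read off~\eqref{e.K0stuff} from the quadratic upper and lower bounds supplied by~\eqref{e:rep-conv} and~\eqref{e:rep-C11} via an explicit quadratic optimization.

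The key step is to verify that $(\a(0), 0) \in \partial F(0, \a(0))$, which by the definition of subgradient amounts to the inequality $F(p, q) \ge \a(0) \cdot p$ for every $(p, q) \in \Rd \times \Rd$. For this we use a rescaling argument: for any $t \in (0, 1)$, the convexity of $F$ applied to the pair $(0, \a(0))$ and $(p, q)$ yields
\begin{equation*}
F\bigl(tp,\, \a(0) + t(q - \a(0))\bigr) \le t F(p, q) + (1 - t) F(0, \a(0)) = t F(p, q),
\end{equation*}
whereas the representation inequality $F \ge p \cdot q$ gives the lower bound
\begin{equation*}
F\bigl(tp,\, \a(0) + t(q - \a(0))\bigr) \ge t \a(0) \cdot p + t^2 p \cdot (q - \a(0)).
\end{equation*}
Dividing by $t$ and letting $t \to 0^+$ produces $F(p, q) \ge \a(0) \cdot p$, as required.

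Combining this subgradient information with the uniform convexity~\eqref{e:rep-conv} upgrades the linear support to a quadratic one, and a short calculation (write $F = \tfrac{1}{2\Lambda}(|\cdot|^2) + H$ with $H$ convex, apply the standard convexity inequality for $H$ at $(0, \a(0))$, and complete the square) yields
\begin{equation*}
F(p, q) \ge \a(0) \cdot p + \frac{1}{2\Lambda} \bigl(|p|^2 + |q - \a(0)|^2\bigr) \quad \text{for every } (p, q) \in \Rd \times \Rd.
\end{equation*}
Minimizing the right-hand side (the optimum occurs at $q = \a(0)$ and $p = -\Lambda \a(0)$) yields $\inf F \ge -\tfrac{\Lambda}{2} |\a(0)|^2$, which is the lower bound in~\eqref{e.K0stuff}.

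For the matching upper bound, we use that~\eqref{e:rep-conv} and~\eqref{e:rep-C11} together force $F$ to be $C^{1,1}$ (as already observed in Step~4 of the proof of Theorem~\ref{t:rep}), so the subdifferential of $F$ at $(0, \a(0))$ is a singleton and the previous paragraph identifies it as $\nabla F(0, \a(0)) = (\a(0), 0)$. The semi-concavity~\eqref{e:rep-C11} then yields the symmetric upper bound $F(p, q) \le \a(0) \cdot p + \tfrac{\Lambda}{2}(|p|^2 + |q - \a(0)|^2)$, and evaluating at $p = -\a(0)/\Lambda$, $q = \a(0)$ gives $F \le -\tfrac{1}{2\Lambda} |\a(0)|^2$. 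The only delicate part of the argument is the subgradient identification in the second paragraph above; once that is in hand the two bounds in~\eqref{e.K0stuff} reduce to one-line quadratic optimizations and we obtain the lemma with $c = 1/(2\Lambda)$ and $C = \Lambda/2$.
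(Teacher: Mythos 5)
Your proof is correct, but it takes a genuinely different route from the paper's. The paper works through the minimizer $(p_0,q_0)$ of $F$: it uses $\nabla F(p_0,q_0)=0$ together with the Lipschitz bounds on $\nabla F$ and $(\nabla F)^{-1}$ (both with constant $\Lambda$) to control $|(0,\a(0))-(p_0,q_0)|$ against $|\a(0)|$ in both directions, then reads off the lower bound from $F(p_0,q_0)\geq p_0\cdot q_0$ and the upper bound from the uniform convexity inequality at $(p_0,q_0)$. You instead avoid the minimizer entirely: you first verify $(\a(0),0)\in\partial F(0,\a(0))$ directly from the representation property via a rescaling argument, then use this to sandwich $F$ between the two paraboloids $\a(0)\cdot p+\tfrac{1}{2\Lambda}(|p|^2+|q-\a(0)|^2)$ and $\a(0)\cdot p+\tfrac{\Lambda}{2}(|p|^2+|q-\a(0)|^2)$, and optimize each explicitly. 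A nice feature of your version is that it proves the identity $\nabla F(0,\a(0))=(\a(0),0)$, which the paper uses silently (the paper passes from $|\nabla F(0,\a(0))-\nabla F(p_0,q_0)|$ to $|\a(0)|$ without comment); your rescaling argument only needs convexity and $F\geq p\cdot q$, and the $C^{1,1}$ hypothesis then promotes the subgradient to a classical gradient. You also obtain clean explicit constants $C=\Lambda/2$ and $c=1/(2\Lambda)$, whereas the paper leaves $C$ implicit from Young's inequality and lands on $c=1/(2\Lambda^2)$. Both arguments buy the same statement; yours is more self-contained, while the paper's is shorter if one is willing to accept the gradient identity as obvious.
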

\begin{proof}
Since $F$ is uniformly elliptic, $\nabla F$ exists in the classical sense and is itself a uniformly monotone and Lipschitz map on $\Rd\times \Rd$ with constant $\Lambda$, as is its inverse. Moreover, there exists a unique point $(p_0,q_0)\in \Rd\times \Rd$ at which $F$ attains its minimum. We will show that these facts imply the lemma.

\smallskip

First, using that the inverse of $\nabla F$ is Lipschitz with constant $\Lambda$, we get
\begin{multline} \label{e.sillystuff}
|p_0|^2 + |\a(0) -q_0|^2 = \left|(0,\a(0))-(p_0,q_0) \right|^2 \\ \leq \Lambda  \left| \nabla F (0,\a(0)) - \nabla F(p_0,q_0) \right|^2  = \Lambda \left| \a(0) \right|^2.
\end{multline}
In particular,  by Young's inequality,
\begin{equation*} \label{}
|p_0|^2 + |q_0|^2 \leq C \left| \a(0) \right|^2,
\end{equation*}
and therefore
\begin{equation*} \label{}
F(p_0,q_0) \geq p_0\cdot q_0 \geq -\frac12 \left( |p_0|^2+|q_0|^2 \right) \geq - C\left| \a(0) \right|^2.
\end{equation*}
This yields the first inequality of~\eqref{e.K0stuff}.

\smallskip

Similarly to~\eqref{e.sillystuff}, we next use that $\nabla F$ is Lipschitz with constant $\Lambda$ to get
\begin{equation*} \label{}
\left| \a(0) \right|^2\leq \Lambda \left( |p_0|^2 + |\a(0) -q_0|^2 \right),
\end{equation*}
and thus, by the uniform convexity of $F$, 
\begin{align*} \label{}
0 = F(0,\a(0)) & \geq F(p_0,q_0) + \frac{1}{2\Lambda} \left( |p_0|^2 + |\a(0) -q_0|^2 \right) \\
& \geq \inf_{p,q\in\Rd} F(p,q) + \frac{1}{2\Lambda^2} \left| \a(0) \right|^2.
\end{align*}
Rearranging gives the second inequality of~\eqref{e.K0stuff} and completes the proof of the lemma. 
\end{proof}

\subsection{Variational formulation of quasilinear elliptic PDEs}
\label{s.varchar}

A uniformly elliptic operator on a bounded domain $U$ can be thought of as a uniformly monotone mapping from $H^1(U)$ into $H^{-1}(U)$. From this point of view, it is natural to seek a variational representative in terms of a given representative of the underlying vector field~$\a$. This is provided below in Proposition~\ref{p.yesvariational} which, together with Theorem~\ref{t:rep}, completes the link between general equations of the form~\eqref{e.pde} and null minimizers of functions of the form~\eqref{e.fun}. 

\smallskip

We now reintroduce $x$-dependence, consider $K_0 \geq 0$ and $\Lambda \geq 3$ as in the hypotheses in the introduction and consider a Lebesgue measurable
\begin{equation*} \label{}
\a: \Rd \times \Rd \to \R
\end{equation*}
such that for each $x \in \R$, $\a(\cdot,x)$ satisfies~\eqref{e.unifmonolip} for $\lambda := \frac12(\Lambda -1)$ and
\begin{equation*} \label{}
\left| \a(0,x) \right| \leq K_0.
\end{equation*}
By Theorem~\ref{t:rep} and Lemma~\ref{l.K0stuff}, there exists $F \in \Omega$ such that, for each $x\in\Rd$, 
\begin{equation}\label{e.Frepa}
F(\cdot,\cdot,x) \quad \mbox{variationally represents the monotone map} \quad \a(\cdot,x),
\end{equation} 
and such that for every $p,q,x\in\Rd$,
\begin{equation}\label{e.Fgrow}
\frac1{2\Lambda} \left( |p|^2+|q|^2\right)-C K_0^2 \leq F(p,q,x) \leq \frac{\Lambda}2 \left( |p|^2+|q|^2 \right) + CK_0^2,
\end{equation}
where the constant $C$ depends only on~$\Lambda$.

\smallskip

The next proposition asserts that the functional $\J+\langle\cdot,\cdot \rangle$ defined in~\eqref{e.J} variationally represents the quasilinear elliptic operator $u\mapsto -\nabla \cdot \a(\nabla u,\cdot)$ as a monotone function from $H^1(U)$ to $H^{-1}(U)$ (compare with Definition~\ref{d:a-rep}). This result, like many of those in this section, is well-known (cf.~\cite{Gh}).

\begin{proposition}[Variational formulation of \eqref{e.pde}] 
\label{p.yesvariational}
Fix $U\subseteq \Rd$ and $F \in \Omega$ satisfying~\eqref{e.Frepa} and~\eqref{e.Fgrow}, and let $\J$ be given by~\eqref{e.J}. For every $(u,u^*) \in H^1(U) \times H^{-1}(U)$,
\begin{equation*}
\J\!\left[ u,u^* \right] \geq 0. 
\end{equation*}
Moreover, the following statements are equivalent:
\begin{enumerate}
\item[(i)] $(u,u^*)$ is a solution of the quasilinear equation
\begin{equation*} \label{}
-\nabla \cdot \left( \a( \nabla u,\cdot) \right)  =  u^* \quad \mbox{in} \ U;
\end{equation*}

\item[(ii)] $\J\!\left[ u,u^* \right] = 0$;

\item[(iii)] $u$ is the unique minimizer of the functional
\begin{equation*}
w\mapsto  \J\!\left[ w,u^* \right], \quad w\in u+H^1_0(U);
\end{equation*}

\item[(iv)] $u^*$ is the unique minimizer of the functional 
\begin{equation*}
w^* \mapsto  \J\!\left[ u,w^* \right], \quad w^*\in H^{-1}(U).
\end{equation*}
\end{enumerate} 
\end{proposition}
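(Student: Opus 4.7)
The plan is to proceed in four steps. Nonnegativity $\J[u,u^*]\ge 0$ follows directly from \eqref{e.infFpq}, which gives $F(p,q,x)-p\cdot q\ge 0$ pointwise; hence for every admissible $\g$ the integrand in the definition of $\J$ is pointwise nonnegative. Next, I would show that the infimum in the definition of $\J[u,u^*]$ is attained by a unique $\g^*\in L^2(U;\Rd)$: the admissible set is nonempty (take $\g=\nabla\phi$ with $\phi\in H^1_0(U)$ solving $-\Delta\phi=u^*$) and a closed affine subspace of $L^2(U;\Rd)$, while the functional $\g\mapsto \int_U (F(\nabla u,\g,x)-\nabla u\cdot\g)\,dx$ is strictly convex and coercive by \eqref{e.UC} and \eqref{e.Fgrow}, so the direct method applies.

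The core step is (i)$\Leftrightarrow$(ii). If (i) holds, the choice $\g:=\a(\nabla u,\cdot)$ is admissible and, by the representation property \eqref{e.Frepa}, the integrand vanishes pointwise, so $\J[u,u^*]=0$. Conversely, if $\J[u,u^*]=0$, then the unique minimizer $\g^*$ makes the pointwise nonnegative integrand vanish a.e.; the equality $F(\nabla u,\g^*,x)=\nabla u\cdot \g^*$ combined with \eqref{e.Frepa} forces $\g^*=\a(\nabla u,\cdot)$ a.e., and the constraint $-\nabla\cdot\g^*=u^*$ then gives (i).

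The remaining equivalences are formal consequences. For (iii), the Browder--Minty theorem applied to the uniformly monotone, Lipschitz operator $w\mapsto -\nabla\cdot\a(\nabla w,\cdot)$ (whose hypotheses hold by \eqref{e.unifmonolip}) yields a unique $\tilde u\in u+H^1_0(U)$ satisfying (i); by the previous step $\J[\tilde u,u^*]=0$, so $\tilde u$ minimizes $\J[\cdot,u^*]$ on $u+H^1_0(U)$, and any other minimizer must likewise achieve $0$ and hence solve the same Dirichlet problem, forcing equality with $\tilde u$. Thus (iii) is exactly the statement $u=\tilde u$, which coincides with (ii). For (iv), the divergence maps $L^2(U;\Rd)$ onto $H^{-1}(U)$, so minimizing $w^*\mapsto \J[u,w^*]$ over $H^{-1}(U)$ is equivalent to minimizing $\g\mapsto \int_U (F(\nabla u,\g,x)-\nabla u\cdot\g)\,dx$ unconstrained over $L^2(U;\Rd)$; pointwise minimization in $q$ via \eqref{e.UC} and \eqref{e.infFpq} gives the unique $\g=\a(\nabla u,\cdot)$, and hence the unique minimizing $w^*=-\nabla\cdot\a(\nabla u,\cdot)$, which equals $u^*$ precisely when (i) holds. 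No substantial obstacle is anticipated; the only non-trivial invocation is the standard existence and uniqueness theory for the Dirichlet problem with Lipschitz, uniformly monotone coefficients.
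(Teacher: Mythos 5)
Your argument is correct, and most of it coincides with the paper's: the nonnegativity of $\J$, the equivalence (i)~$\Leftrightarrow$~(ii), and the handling of (iv) all proceed along essentially the same lines (pointwise nonnegativity from~\eqref{e.infFpq}, equality iff $\g=\a(\nabla u,\cdot)$ a.e., and surjectivity of the divergence).

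Where you diverge is the equivalence with~(iii). The crux is to show that the infimum
$
\inf_{w\in u+H^1_0(U)}\J[w,u^*]
$
is zero. You establish this by invoking the Browder--Minty theorem for the uniformly monotone, Lipschitz operator $w\mapsto -\nabla\cdot\a(\nabla w,\cdot)$, obtaining a unique solution $\tilde u$ of the Dirichlet problem and then using (i)~$\Rightarrow$~(ii) to conclude $\J[\tilde u,u^*]=0$. The paper explicitly notes that this PDE-based route works, but then elects to give a purely variational proof instead: it introduces the value function $G(v^*):=\inf_{u_0\in H^1_0(U)}\bigl(\J[u_0+f,v^*+u^*]+\langle u_0,v^*\rangle\bigr)$, computes its Legendre--Fenchel transform, checks $G^*\ge 0$, and deduces $G(0)\le 0$ from biconjugation $G^{**}=G$. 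The trade-off is conceptual: your route imports the existence theory for monotone elliptic PDEs (which is classical and short to cite), whereas the paper's route is self-contained within the convex-analytic/variational framework and does not require solving the equation at all --- only lower semicontinuity, convexity, and duality. Both are valid; the paper's choice is consistent with its broader strategy of treating the functional $\J$, rather than the PDE, as the primary object. One minor note: your phrase ``whose hypotheses hold by~\eqref{e.unifmonolip}'' is slightly compressed, since~\eqref{e.unifmonolip} concerns the pointwise vector field $\a(\cdot,x)$ rather than the operator on $H^1$; the passage from pointwise to operator-level uniform monotonicity and continuity is standard, but worth making explicit.
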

\begin{proof}
By~$F(p,q,\cdot) \geq p\cdot q$, we have, for any $u \in H^1(U)$ and $\g\in L^2(U;\Rd)$,
\begin{equation} \label{e.Fpq0}
 \int_{U} \left( F\left( \nabla u(x),\g(x) ,x\right) - \nabla u(x)\cdot \g(x) \right) \,dx \geq 0.
\end{equation}
Moreover, equality holds in the previous inequality if and only if
\begin{equation*} 
F\left( \nabla u(x),\g(x) ,x\right) = \nabla u(x)\cdot \g(x) \quad \mbox{a.e. in} \ U,
\end{equation*}
which by~\eqref{e.Frepa} is equivalent to
\begin{equation} \label{e.gistheflux}
\g(x) = \a\left( \nabla u(x) \right) \quad \mbox{a.e. in} \ U. 
\end{equation}
This yields the equivalence of statements~(i) and~(ii) in the proposition.

\smallskip

The equivalence with (iv) also follows from these facts and the observation that the minimum of the functional appearing in~(iv) is zero, that is, for every $u\in H^1(U)$,
\begin{equation}\label{e.nullstar}
\inf_{u^*\in H^{-1}(U)} \J\left[ u,u^* \right] = 0. 
\end{equation}
This can be seen for a given $u\in H^1(U)$ by simply taking $u^*:=-\nabla \cdot \left( \a(\nabla u(x),x) \right)$. That this~$u^*$ is unique minimizer follows from the equivalence between equality holding in~\eqref{e.Fpq0} and~\eqref{e.gistheflux}.

\smallskip

It remains to establish the equivalence between (iii) and the other statements. Fix $f\in H^1(U)$ and $u^*\in H^{-1}$ and notice that, for $u \in f+H^1_0(U)$, we have 
\begin{multline*}
\J\left[u,u^*\right] = \inf\bigg\{ \int_U \left( F(\nabla u(x),\g(x),x) -\nabla f(x) \cdot \g(x) \right)\,dx - \langle u-f, u^* \rangle\\
:\, \g\in L^2(U;\Rd), \ -\nabla \cdot \g = u^* \bigg\}.
\end{multline*}
It is clear that, for fixed $f\in H^1(U)$ and $u^*\in H^{-1}(U)$, the quantity in the infimum on the right side is bounded from below (by $0$), uniformly convex as well as lower semicontinuous on the linear space $(f+H^1_0(U)) \times \{ \g\in L^2(U;\Rd)\,:\, -\nabla \cdot \g = u^* \}$. It therefore has a unique minimizer. It suffices then to show that this minimization problem is null, that is, for every $f\in H^1(U)$ and $u^*\in H^{-1}(U)$,
\begin{equation} \label{e.touchzero}
 \inf_{u_0\in H^1_0(U)} \J\left[ u_0+f,u^* \right] = 0. 
\end{equation}
This follows from the solvability of the Dirichlet problem for the PDE and the equivalence of (i) and (ii). Alternatively, we give a variational proof of~\eqref{e.touchzero} by considering the dual convex optimization problem (following, e.g.,~the arguments in~\cite[Proposition~III.2.1]{ET} or~\cite[Proposition 6.1]{Gh}).

\smallskip

Fix $f\in H^1(U)$ and $u^* \in H^{-1}(U)$ and define $G:H^{-1}(U) \to \R$ by
\begin{equation*}
G(v^*) := \inf_{u_0\in H^1_0(U)} \left( \J\left[ u_0+f,v^*+u^*\right] +\langle u_0,v^* \rangle \right). 
\end{equation*}
We want to argue that $G(0) = 0$. It is clear that $G(0) \geq 0$, so it suffices to argue that $G(0) \leq 0$. It is easy to check that $G$ is bounded below, convex and lower semicontinuous. The Legendre-Fenchel transform of 
$G$ satisfies, for every $v \in H^1_0(U)$, 
\begin{align*}
G^*(v) & := \sup_{ v^*\in H^{-1}(U) } \left( \langle v,v^* \rangle - G(v^*) \right)\\
& = \sup_{v^*\in H^{-1}(U)}  \sup_{u_0\in H^1_0(U)} \left( \langle v-u_0,v^* \rangle - \J \left[ u_0+f, v^* + u^* \right] \right)  \\
& \geq \sup_{v^*\in H^{-1}(U)} \left( - \J \left[ v+f, v^* + u^* \right] \right) \\
& = 0,
\end{align*}
where we used~\eqref{e.nullstar} in the last step.
By duality, using the fact that $G$ is convex and lower semicontinuous, we have $G^{**} = G$. Thus
\begin{equation*}
G(0) = G^{**}(0) = \sup_{v\in H^1_0(U)} \left( -G^*(v) \right) \leq 0.  \qedhere
\end{equation*}
\end{proof}

\section{Lipschitz regularity: structure of the proof of Theorem~\ref{t.lip}}
\label{s.lipschitz}

In this section, we present the scheme for obtaining a Lipschitz estimate from Campanato iterations. We also state the error estimate in homogenization of the Dirichlet problem and argue that these two elements imply Theorem~\ref{t.lip}.

\subsection{Two ingredients in the proof of Theorem~\ref{t.lip}}
The primary ingredient in the proof of Theorem~\ref{t.lip} is the following estimate for the error in homogenization of the Dirichlet problem. Under strong mixing conditions, it is necessarily \emph{suboptimal} in its estimate of the typical size of the error because it is \emph{optimal} in terms of stochastic integrability. The functional $\J$ is defined in~\eqref{e.J} and the homogenized functional $\overline \J$ referred to in the statement of the theorem is defined below in Section~\ref{s.homcoeff} (see~\eqref{e.barJ}). 

\begin{theorem}[Quenched $L^2$ error estimate in homogenization]

Fix a bounded Lipschitz domain~$U_0\subseteq \Rd$ and exponents $\delta \in (0,1]$ and $\theta \in (0,\beta)$. There exist $s_0(d,\Lambda,\beta,\delta,\theta)>0$, $C(d,\Lambda,\beta,C_3,\delta,U_0,\theta)\geq 1$ and, for every $s\in [s_0,\infty)$, a nonnegative random variable $\X_s$ satisfying
\begin{equation} \label{e.Xsst}
\E \left[ \X_s^s\right] \leq C
\end{equation}
such that the following holds.
For $R\geq 1$, $U:= RU_0$, $f\in W^{1,2+\delta}(U)$ and setting
\begin{equation}
\label{e.condMM}
M:= K_0 + \Ll(\fint_U \left| \nabla f(x) \right|^{2+\delta}\,dx \right)^{\frac1{2+\delta}},
\end{equation}
the unique functions $u,\uhom \in f+H^1_0(U) $   such that
\begin{equation} \label{e.Dir}
 \J \left[u, 0 \right] = \overline\J\left[ \uhom,0 \right] =  0,
\end{equation}
satisfy the estimate
\begin{equation} \label{e.wkee}
R^{-2} \fint_{U} \left| u(x) - \uhom(x) \right|^2\,dx \leq CM^{2} (1+M)^{2d/s} \X_s R^{-\theta/s}.
\end{equation}
Moreover, under assumption~(P4), for every $s \in (0,d\gamma/(d+\gamma))$, there exist $\alpha(d,\Lambda,\gamma,\delta,s)>0$, $C(d,\Lambda,\delta,U_0,\gamma,C_4,s)\geq 1$ and a random variable $\Y_s$ satisfying
\begin{equation} \label{e.Xsexp}
\E \left[ \exp\left( \Y_s \right) \right]  \leq C
\end{equation}
such that, for $u$ and $\uhom$ as above,
\begin{equation} \label{e.stree}
R^{-2} \fint_{U} \left| u(x) - \uhom(x) \right|^2\,dx \leq CM^2\left( 1+ \Y_s  R^{-s} \log (1+M) \right) R^{-\alpha}.
\end{equation}
\label{t.subopt}
\end{theorem}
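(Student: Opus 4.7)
The plan is to carry out the subadditive/superadditive strategy sketched in Section~\ref{s.subadditive} in a quantitative fashion and then invoke a deterministic bridge, from convergence of the cell-problem energies to $L^2$ convergence of Dirichlet solutions, in the style of the ``black box'' of Section~\ref{s.blackbox}. Concretely, I would work with the pair
\begin{align*}
\mu_0(\cu_n,p,q) &:= \inf\Ll\{ \fint_{\cu_n} F(p+\nabla u,q+\g,x)\,dx \,:\, u\in H^1_0(\cu_n),\ \g\in\Lso(\cu_n)\Rr\}, \\
\mu(\cu_n,q^*,p^*) &:= \inf\Ll\{ \fint_{\cu_n}\Ll(F(\nabla u,\g,x) - q^*\cdot\nabla u - p^*\cdot \g\Rr)dx \,:\, u\in H^1(\cu_n),\ \g\in\Ls(\cu_n)\Rr\}.
\end{align*}
The first is subadditive in cubes, the second is superadditive; by a Dal Maso--Modica-type argument both converge $\P$--a.s. on cubes, and one identifies $\lim(-\mu)$ as the Legendre--Fenchel conjugate of $\lim\mu_0$ with dual variables $(p,q)\leftrightarrow(q^*,p^*)$. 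This conjugacy ``traps'' the limits from both sides and pins down the effective integrand $\overline F$, hence $\overline\J$ and the homogenized map $\overline\a$.

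The probabilistic heart is to make this convergence quantitative. I would partition $\cu_n$ into mesoscopic subcubes of sidelength $3^m$ with $1 \ll m \ll n$, use subadditivity to compare $\mu_0(\cu_n,p,q)$ with the average of the $\mu_0$'s on the subcubes, and then exploit (P3) in the reformulated form of Appendix~\ref{s.mixing} to approximate that average by a sum of nearly independent contributions whose deviations are controlled by Chebyshev/Rosenthal-type inequalities. The exponent $\beta$ governs how fast the correlation bias vanishes, and optimizing the trade-off between bias and variance over $m$ yields an $s$-th moment bound of the form $\E[|\mu_0(\cu_n,p,q)-\E\mu_0|^s]^{1/s}\lesssim R^{-\theta/s}$ for any $\theta<\beta$; the $s$-dependent prefactor is exactly what produces the $(1+M)^{2d/s} R^{-\theta/s}$ in \eqref{e.wkee}. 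The same scheme under (P4), using a Bernstein-type concentration instead, yields the stretched-exponential bound \eqref{e.Xsexp} and the algebraic rate \eqref{e.stree}.

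The deterministic bridge then converts smallness of $\mu_0$ and $\mu$ about their common conjugate $\overline F$ into $L^2$ closeness of the Dirichlet solutions $u$ and $\uhom$. Setting $w:=u-\uhom$, I would use the uniform monotonicity of $\a(\cdot,x)$ to derive a coercive inequality of the form $\int |\nabla w|^2 \lesssim \int (\a(\nabla u,\cdot)-\overline\a(\nabla\uhom))\cdot\nabla w$, and then estimate the right-hand side by introducing, on each mesoscopic subcube, the optimizers $(v,\g)$ in the definitions of $\mu_0$ and $\mu$ at the pair $(p,q)=(\nabla\uhom,\overline\a(\nabla\uhom))$ frozen to its local average. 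The resulting error splits into an interior homogenization term driven by $\mu_0-\overline F$ and $\mu-\overline F^*$, a boundary-layer term absorbed by the $W^{1,2+\delta}$-regularity of $f$ via the Meyers estimate of Appendix~\ref{s.reg}, and a ``flatness'' term handled by Caccioppoli. To cover the non-constant $\nabla\uhom$ one uses a net in $(p,q)$ of cardinality polynomial in $M$, which explains both the appearance of $M$ in \eqref{e.condMM} and the factor $(1+M)^{2d/s}$ in the conclusion.

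The main obstacle, I expect, is to calibrate these three error contributions simultaneously with the probabilistic step so as to preserve the sharp exponent $\theta<\beta$ in the stochastic integrability, without incurring spurious logarithmic losses. Under the weak mixing (P3) this is delicate: one cannot afford to pass through exponential concentration, so $L^s$-norms must be carried throughout, and the resolution of the $(p,q)$-net must be matched to the moment exponent $s$ so that the union bound over the net does not overwhelm the convergence rate obtained from subadditivity.
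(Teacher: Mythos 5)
Your outline correctly identifies the overall architecture: quantitative convergence of the dual pair $\mu,\mu_0$ on mesoscopic cubes, then a deterministic ``black box'' converting this into an $L^2$ error estimate for Dirichlet problems, with Meyers regularity, boundary-layer control, and a net in $(p,q)$ handling the non-constant $\nabla\uhom$ and the appearance of $M$. But there is a serious gap in the probabilistic half that subadditivity plus mixing alone cannot fill.

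You propose to obtain the rate $\E[|\mu_0(\cu_n,p,q)-\E\mu_0|^s]^{1/s}\lesssim 3^{-n\theta/s}$ by partitioning into mesoscopic subcubes, using subadditivity to compare with the average over subcubes, and then invoking~(P3) together with moment inequalities. This argument controls the \emph{fluctuation} of $\mu_0(\cu_n)$ around its own expectation, but it says nothing about the speed at which the deterministic sequence $\E[\mu_0(\cu_n)]$ decreases to its limit $\overline\mu_0$. A subadditive sequence can approach its limit arbitrarily slowly, even under very strong mixing; the ergodic theorem gives no rate, and neither does concentration. Splitting into ``bias'' and ``variance'' identifies the problem but does not solve it: the bias $|\E[\mu_0(\cu_n)]-\overline\mu_0|$ has no a priori decay. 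The paper's mechanism for supplying this rate is the flatness argument (Lemmas~\ref{l.flat} and~\ref{l.iterable}): if the increment $\tau_n=\E[\mu(\cu_{n+1})]-\E[\mu(\cut_n)]$ is small, then the spatial averages $P(\cut_n),Q(\cut_n)$ of the $\mu$-minimizers have small variance (here is where~(P3) actually enters), so the $\mu$-minimizers are ``flat'' and can be patched together to produce a competitor for the Dirichlet problem defining $\mu_0$ at the limiting slopes; this forces $\E[\mu_0]-\E[\mu]\lesssim\tau_n+\kappa_n$ and, iterated (Lemma~\ref{l.iteration}), yields an \emph{algebraic} rate for $\E[\mu(\cut_n)]\to\overline\mu$ and $\E[\mu_0(\cut_n,\overline P,\overline Q)]\to\overline\mu$. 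Without this step, your scheme produces no exponent $\theta$, and the subsequent moment upgrade (Lemma~\ref{l.stochasticupgrade}) has nothing to upgrade. Note also that you must run this argument for \emph{both} $\mu$ and $\mu_0$ trapped against their common dual limit; working with $\mu_0$ alone, as your sketch does, discards precisely the two-sided information that makes the rate accessible.

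On the deterministic bridge: your proposal, which starts from the uniform monotonicity of $\a$ and the coercivity inequality
\[
\int_U|\nabla w|^2 \lesssim \int_U\bigl(\a(\nabla u,\cdot)-\overline\a(\nabla\uhom)\bigr)\cdot\nabla w,
\]
is a genuinely different route from the one in Proposition~\ref{p.blackbox}, which stays entirely inside the variational framework: it mollifies $(u,\g)$ at the mesoscale to form $(\tilde u,\tilde\g)\in(f+H^1_0)\times\Ls$ with $\g-\tilde\g\in\Lso$, shows via~\eqref{e.easyway} that the $\overline F$-energy of $(\tilde u,\tilde\g)$ barely exceeds the $F$-energy of $(u,\g)$, and then reads off $L^2$ closeness of $\tilde u$ to $\uhom$ from the uniform convexity of $\overline\J[\cdot,0]$ on $f+H^1_0(U)$. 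The energy route has the advantage that it only ever needs the \emph{easy} direction of the energy comparison (the paper observes it avoids the harder half of the analogous argument in~\cite{AS}), and it never needs to analyze the operator $\a$ directly. Your monotonicity route could in principle be made to work, but you would have to control the cross term $\int(\a(\nabla u,\cdot)-\overline\a(\nabla\uhom))\cdot\nabla w$ by inserting correctors or local minimizers of $\mu,\mu_0$ as test fields, and then reproduce the same $\mathcal E$-controlled error, boundary layer, and net in $(p,q)$; this is not shorter and is harder to keep quantitative at the $W^{1,2+\delta}$ level. Either way, the missing flatness lemma in the probabilistic half is the step that must be supplied before any version of the bridge has the right-hand side it needs.
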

\begin{remark}
\label{r.weirdness}
The non-quadratic dependence of the estimates~\eqref{e.wkee} and~\eqref{e.stree} on the parameter $M$ is a nonlinear phenomenon. Indeed, in the linear case (as usual we mean the PDE is linear, i.e., $F$ is quadratic) it may be removed by homogeneity. However, in the nonlinear case, this is not an artifact of our method: thinking in terms of the PDE, it arises because it is not necessarily the case that the span of the random fields $\{\a(p,\cdot)\}_{p\in\Rd}$ lie in a  finite dimensional space. Thus different $p$'s may exhibit different (i.e., independent) randomness, and since the estimates are quenched, we are required to control them all at once, up to the fixed size~$M$. Thus the larger $M$ is, the larger we may expect the random part of the right sides in the estimates to be. 
\end{remark}

The following proposition encapsulates the general scheme for proving Lipschitz estimates introduced in~\cite{AS}. It reduces Theorem~\ref{t.lip} to Theorem~\ref{t.subopt} by a Campanato-type iteration, which allows us to focus most of the effort in this paper on the proof of the latter. It is almost the same as~\cite[Lemma 5.1]{AS}, but we have included  a slight variation to allow us to handle equations with non-zero right hand sides. We also formulate the result using the $L^2$ norm rather than the $L^\infty$ norm, but this makes no difference in the argument.

\smallskip

Before giving the statement, we briefly introduce some notation. We take $\L$ to be the set of affine functions on $\Rd$ and define, for each $\sigma \in (0,\tfrac12]$ and $r>0$, the set
\begin{multline*} \label{}
\mathcal A (r,\sigma) := \Bigg\{ v \in L^2(B_r) \, : \, \frac{1}{\sigma r} \inf_{l\in\L} \left( \fint_{B_{\sigma r}} \left| v(x) - l(x) \right|^2\,dx \right)^{\frac12}  \\
\leq \frac 12 \left( \frac{1}{r} \inf_{l\in\L} \left( \fint_{B_{r}} \left| v(x) - l(x) \right|^2\,dx \right)^{\frac12}  \right)   \Bigg\}.
\end{multline*}
In words, the set $\mathcal A(r,\sigma)$ consists of those $L^2(B_r)$ functions~$u$  which satisfy one step of a $C^{1,\alpha}$ Campanato iteration with dilation factor~$\sigma$: the flatness of~$u$ in $B_{\sigma r}$ is improved from its flatness in $B_r$ by a factor of two.

\begin{proposition}[Campanato scheme]
\label{p.campanato}
For $\sigma \in (0,\frac12]$ and $\gamma \in (0,1]$ such that $\sigma^\gamma \geq \frac23$ and $\alpha > 0$, there exists $C(\sigma,\gamma,\alpha)\geq 1$ such that the following holds. Suppose that $R\geq 1$, $K,L\geq 0$, $r_0 \in [1,R/8]$ and $u\in L^2(B_R)$ have the property that for every $r\in [r_0,R/8]$,
\begin{multline} \label{e:osc}
 \inf_{v \in \mathcal A(r,\sigma) } \frac1r \left( \fint_{B_{r}} \left| u(x) - v(x) \right|^2\,dx \right)^{\frac12} \\ \leq r^{-\alpha} \left( K + \frac{1}{2r}\inf_{a \in \R} \left( \fint_{B_{2r}}  |u(x) - a|^2\,dx \right)^{\frac12} \right) + Lr^\gamma.
\end{multline}
Then for every $r\in [r_0,R/2]$, 
\begin{multline} \label{e.c01it}
\frac1r \inf_{a\in\R} \left( \fint_{B_{r}}  \left|u(x)-a\right|^2\,dx \right)^{\frac12} \\
 \leq C \left( \frac{1}{R} \inf_{a\in\Rd} \left( \fint_{B_{R}}  \left|u(x)-a\right|^2\,dx \right)^{\frac12} + K \left( \frac{r}{R} \right)^\alpha + LR^\gamma \right).
\end{multline}
\end{proposition}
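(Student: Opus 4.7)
The plan is a Campanato-type iteration comparing $u$, at each dyadic scale, to members of $\mathcal{A}(r,\sigma)$, whose affine excess is by definition halved from $B_r$ to $B_{\sigma r}$. Write
$$E(r) := \frac{1}{r}\inf_{l\in\L}\Ll(\fint_{B_r}|u-l|^2\Rr)^{\frac12}, \qquad \tilde E(r) := \frac{1}{r}\inf_{a\in\R}\Ll(\fint_{B_r}|u-a|^2\Rr)^{\frac12}$$
for the affine and constant excess of $u$, respectively. The hypothesis \eqref{e:osc} controls $r^{-1}\|u-v\|_{L^2(B_r)}$ in terms of $\tilde E(2r)$, while the conclusion \eqref{e.c01it} asks for a bound on $\tilde E(r)$, so the two excesses must be tracked together.

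The first step is to derive the one-step recursion
$$E(\sigma r) \leq \tfrac12 E(r) + C_\sigma\Ll[r^{-\alpha}\bigl(K + \tilde E(2r)\bigr) + L r^\gamma\Rr], \qquad r \in [r_0, R/8],$$
with $C_\sigma = \tfrac12 + \sigma^{-d/2-1}$. This is immediate from two applications of the triangle inequality combined with the defining property of $\mathcal{A}(r,\sigma)$: for $v\in\mathcal{A}(r,\sigma)$ nearly achieving the infimum in \eqref{e:osc}, one has $E(\sigma r) \leq E_v(\sigma r) + \sigma^{-d/2-1} r^{-1}\|u-v\|_{L^2(B_r)}$, and then $E_v(\sigma r) \leq \tfrac12 E_v(r) \leq \tfrac12(E(r) + r^{-1}\|u-v\|_{L^2(B_r)})$.

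Next I would iterate the recursion along the geometric sequence $r_k := \sigma^k (R/8)$, from $k=0$ down to the largest $k = N$ with $r_N \geq r_0$, obtaining a bound of the form $E(r_N) \leq 2^{-N}E(R/8) + C_\sigma\sum_k 2^{-(N-1-k)}[\cdots]$. The hypothesis $\sigma^\gamma \geq 2/3$ enters here to handle the $L r^\gamma$ forcing: since $\tfrac12\sigma^{-\gamma} \leq 3/4 < 1$, the weighted sum collapses geometrically to give $\sum_k 2^{-(N-1-k)} L r_k^\gamma \leq C L r_N^\gamma \leq C L R^\gamma$, matching the tail in the conclusion. The $r_k^{-\alpha}K$ contribution is summed analogously and reindexed around the final scale $r_N$, contributing the claimed $K(r/R)^\alpha$-type term once combined with the information on $\tilde E(2r_k)$ gained from the previous iterates.

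The main obstacle is the appearance of $\tilde E(2r)$, rather than $E(2r)$, in the forcing term: the defining property of $\mathcal{A}(r,\sigma)$ delivers decay of the affine excess, whereas the hypothesis measures the approximation error using the constant excess of $u$. I plan to close this gap by running a second, simultaneous induction on the slopes $b_r \in \R^d$ of the best-affine approximants $l_r(x) = a_r + b_r \cdot x$ for $u$ on $B_r$, using the elementary comparison $|b_{\sigma r} - b_r| \leq C_\sigma(E(r) + E(\sigma r))$ on nested balls together with the trivial inequality $\tilde E(r) \leq E(r) + C|b_r|$. These relations allow $|b_{r_k}|$ to be bounded by $|b_{R/8}|$ plus a telescoping sum of previously estimated $E(r_j)$'s, giving a self-consistent estimate on $\tilde E(r_k)$ that can be fed back into the one-step recursion. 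Intermediate scales $r \in [r_{k+1}, r_k]$ are then recovered by volume comparison between adjacent iterates, and the range $r \in [R/8, R/2]$ is handled directly via the crude bound $\tilde E(r) \leq (R/r)^{d/2+1}\tilde E(R)$.
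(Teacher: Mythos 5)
Your proposal is correct and takes essentially the same route as the paper: the one-step affine-excess recursion with constant $\tfrac12 + \sigma^{-d/2-1}$, the slope-tracking idea $|b_{\sigma r}-b_r|\leq C(E(r)+E(\sigma r))$ together with $\tilde E(r)\leq E(r)+C|b_r|$ to relate constant excess back to affine excess, and the geometric iteration using $\sigma^\gamma\geq\tfrac23$ to sum the $Lr^\gamma$ forcing are all the paper's Steps 2--5. The only cosmetic difference is that the paper first substitutes the telescoped slope bound into the recursion (producing the $\sum_{i\le j}F_i$ forcing) and then runs a single strong induction with parameters $A,B$, whereas you describe it as a simultaneous induction on excesses and slopes; these are organizationally different but logically equivalent.
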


Proposition~\ref{p.campanato} asserts that if a function~$u\in L^2(B_R)$ (we are thinking of~$R$ very large) has the property that, in every ball~$B_r$ with radius~$r$ between~$R/8$ and a ``minimal radius"~$r_0$, it can be well-approximated by a function in~$\mathcal A(r,\sigma)$, then in fact $u$ does not oscillate too much on scales larger than the minimal radius. Its proof appears in Section~\ref{ss.campanato} below. 

\smallskip

In order to make use of Proposition~\ref{p.campanato}, we need to check that null minimizers of the homogenized functional belong to $\mathcal A (r,\sigma)$. This is handled by the following simple lemma, which is a reflection of the well-known fact that a family of scale-invariant functions satisfies a $C^{1,\alpha}$ estimate if and only if it satisfies an improvement of flatness property. 

\begin{lemma}
\label{l.belongA}
Suppose that $\alpha \in (0,1]$, $K\geq 0$, and $u\in C^{1,\alpha}(B_R)$ has the property that, for every $r \in (0,R/2]$, 
\begin{equation*} \label{}
\left[ \nabla u \right]_{C^{0,\alpha}(B_r)} \leq K r^{-\alpha} \left( \frac1{2r}\inf_{l\in \mathcal L} \left( \fint_{B_{2r}} \left| u(x) - l(x) \right|^2\, dx \right)^{\frac12} \right).
\end{equation*}
Then there exists $\sigma(\alpha,K)\in (0,\frac12]$ such that $u\in \mathcal A(r,\sigma)$ for every $r\in(0,R/2]$. 
\end{lemma}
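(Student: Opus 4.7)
The plan is to use the first-order Taylor polynomial of $u$ at the origin as the competitor affine function in the definition of $\mathcal A(r,\sigma)$, and to bound the resulting error on $B_{\sigma r}$ by the hypothesized $C^{1,\alpha}$-modulus estimate for $\nabla u$. Since the hypothesis already controls the Hölder seminorm of $\nabla u$ on $B_r$ in terms of flatness at scale $2r$, comparing flatness between two concentric balls (the defining property of $\mathcal A(r,\sigma)$) should reduce to a direct computation together with a good choice of $\sigma$.

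\smallskip

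Concretely, for each $r\in(0,R/2]$, I set $l_0(x):=u(0)+\nabla u(0)\cdot x$ and use the identity
\[
u(x)-l_0(x)=\int_0^1\bigl(\nabla u(tx)-\nabla u(0)\bigr)\cdot x\,dt
\]
to obtain the pointwise bound $|u(x)-l_0(x)|\le [\nabla u]_{C^{0,\alpha}(B_{|x|})}\,|x|^{1+\alpha}$. Integrating over $B_{\sigma r}$ for any $\sigma\in(0,\tfrac12]$ and using monotonicity of the Hölder seminorm in the radius yields
\[
\left(\fint_{B_{\sigma r}}|u-l_0|^2\,dx\right)^{1/2}\le [\nabla u]_{C^{0,\alpha}(B_{\sigma r})}\,(\sigma r)^{1+\alpha}.
\]

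\smallskip

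Next, I invoke the hypothesis at scale $r/2\in(0,R/4]$, which is admissible, to obtain
\[
[\nabla u]_{C^{0,\alpha}(B_{\sigma r})}\le [\nabla u]_{C^{0,\alpha}(B_{r/2})}\le K\Bigl(\frac r2\Bigr)^{-\alpha}\cdot\frac{1}{r}\inf_{l\in\mathcal L}\left(\fint_{B_r}|u-l|^2\,dx\right)^{1/2}.
\]
Combining the two displays leads to
\[
\frac{1}{\sigma r}\inf_{l\in\mathcal L}\left(\fint_{B_{\sigma r}}|u-l|^2\,dx\right)^{1/2}\le 2^{\alpha}K\sigma^{\alpha}\cdot \frac{1}{r}\inf_{l\in\mathcal L}\left(\fint_{B_r}|u-l|^2\,dx\right)^{1/2}.
\]

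\smallskip

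Finally, I fix $\sigma=\sigma(\alpha,K)\in(0,\tfrac12]$ such that $2^{\alpha}K\sigma^{\alpha}\le \tfrac12$; for instance, $\sigma:=\min\bigl\{1/2,\,(2^{\alpha+1}K)^{-1/\alpha}\bigr\}$ works (with the convention $\sigma=1/2$ when $K=0$, in which case $u$ is affine and the conclusion is trivial). With this choice, the last displayed inequality is exactly the flatness-improvement defining $u\in\mathcal A(r,\sigma)$, for every $r\in(0,R/2]$. There is no real obstacle here; the only point requiring care is that the hypothesis must be applied at scale $r/2$ rather than $r$, so that its right-hand side features $B_r$ rather than $B_{2r}$, matching the definition of $\mathcal A(r,\sigma)$.
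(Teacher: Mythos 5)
Your proof is correct and takes essentially the same route as the paper's: compare with the first-order Taylor polynomial $l_0$ at the origin, bound the oscillation of $u-l_0$ on $B_{\sigma r}$ via the H\"older seminorm of $\nabla u$, and then invoke the hypothesis to trade that seminorm for the flatness at a larger scale. Your only (welcome) refinement is applying the hypothesis at scale $r/2$ rather than $r$, which makes the right-hand side feature $B_r$ and thus matches the definition of $\mathcal A(r,\sigma)$ literally; the paper applies it at scale $r$ and ends up with a comparison against $B_{2r}$, leaving a small gap between what is proved and the definition as written.
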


The proof of Lemma~\ref{l.belongA} is also given in Section~\ref{ss.campanato} below.

\subsection{Proof of Theorem~\ref{t.lip}}

In this subsection, we prove the main result of the paper by showing that the combination of Theorem~\ref{t.subopt} and Proposition~\ref{p.campanato} yields a pointwise Lipschitz estimate at large scales.

\smallskip

\begin{proof}[{Proof of Theorem~\ref{t.lip}}]
Throughout we fix $p>d$, $\theta\in (0,\beta)$. We take $\delta(d,\Lambda) > 0$ to be the exponent~$\delta_0$ in the statement of the interior Meyers estimate given in Proposition~\ref{p.interiorMeyers} in the appendix.
 For each $r\in [1,R/8]$, let $v_r \in u+H^1_0(B_{4r})$ denote the unique minimizer of $\J[\cdot,0]$ belonging to $u+H^1_0(B_{4r})$, and $\uhomr \in v_r+H^1_0(B_r)$ the unique minimizer of $\overline \J[\cdot,0]$ in~$v_r+H^1_0(B_r)$.

\smallskip

By the definition of $\overline \J$ and $\overline \a$ in Sections~\ref{s.homcoeff} and~\ref{s.variationalbar}, respectively, and using~Lemma~\ref{l.werepresent} and Proposition~\ref{p.yesvariational}, we see that $\uhomr$ is a solution of the constant-coefficient equation
\begin{equation*}
-\nabla \cdot \overline \a\left( \nabla \uhomr \right) = 0 \quad \mbox{in} \ B_{r}.
\end{equation*}
Note that $\overline \a$ is uniformly monotone and Lipschitz with constants depending only on~$\Lambda$. 
Applying Lemma~\ref{l.belongA} and~\cite[Theorem 8.9]{Giu}, we have that $\uhomr\in \mathcal A(r,\sigma)$ for some $\sigma(d,\Lambda) \in(0,\frac12]$. Note that while the assumptions in~\cite[Theorem 8.9]{Giu} require that the coefficients $\overline \a$ be differentiable,  this assumption is not quantitative and therefore the theorem holds for Lipschitz coefficients by approximation.

\smallskip

By making $p$ smaller if necessary, we may assume without loss of generality that $\sigma^{1-d/p} \geq \frac23$, which is convenient in view of the hypothesis of Proposition~\ref{p.campanato}.

\smallskip

Throughout the argument, we let $C$ denote a positive constant which may vary in each occurrence and depends only on $(d,\Lambda,\beta,C_3,p,\theta)$ or, in the case that~(P4) holds, depends only on $(d,\Lambda,\beta,C_3,p,\gamma,C_4,s)$.

\smallskip

\emph{Step 1.} We record some preliminary estimates involving $v_r$ and $u-v_r$. The interior Meyers estimate (see Proposition~\ref{p.interiorMeyers}) and the Caccioppoli inequality (see Proposition~\ref{p.caccioppoli}) imply that 
\begin{align*} \label{}
\left( \fint_{B_{r}} \left| \nabla v_r(x) \right|^{2+\delta}\,dx \right)^{\frac1{2+\delta}}  & \leq C \left( K_0 +\left( \fint_{B_{2r}} \left| \nabla v_r(x) \right|^2\,dx \right)^{\frac12} \right) \\
& \leq C \left( K_0  + \frac1{4r}\inf_{a\in\R} \left( \fint_{B_{4r}} \left| v_r(x) -a \right|^2\,dx\right)^{\frac12} \right).
\end{align*}
We next apply Proposition~\ref{p.L2estimate} in the appendix and then use H\"older's inequality to obtain 
\begin{multline} \label{e.graduvr}
 \fint_{B_r} \left| \nabla u(x) - \nabla v_r(x) \right|^2\,dx \\  \leq Cr^2  \fint_{B_{r}} \left| f(x) \right|^2\,dx \leq CR^{\frac{2d}p} r^{2-\frac{2d}p} \left( \fint_{B_{R}} \left| f(x) \right|^p\,dx \right)^{\frac 2p} \leq C M^2.
\end{multline}
Then by the Poincar\'e inequality, we get, for every $r\leq R/8$, 
\begin{equation} \label{e.throwawayf}
\frac1{r^2} \fint_{B_r} \left|  u(x) -  v_r(x) \right|^2\,dx  \leq CR^{\frac{2d}p} r^{2-\frac{2d}p} \left( \fint_{B_{R}} \left| f(x) \right|^p\,dx \right)^{\frac 2p} \leq CM^2.
\end{equation}
Combining these with the triangle inequality yields that 
\begin{equation} \label{e.okaytogo}
\left( \fint_{B_{r}} \left| \nabla v_r(x) \right|^{2+\delta}\,dx \right)^{\frac1{2+\delta}} \leq C_0 \left( M  + \frac1{4r}\inf_{a\in\R} \left( \fint_{B_{4r}} \left| u(x) -a \right|^2\,dx\right)^{\frac12} \right), 
\end{equation}
where we write $C = C_0$ for future reference.

\smallskip

\emph{Step 2.} We plan to appeal to Theorem~\ref{t.subopt} in a sequence of dyadic balls. In order to prepare the ground for this, we fix $C'\geq 1$ to be selected below and observe that letting
$c = 1/(2C_0) > 0$, for any $r \le R/8$, the condition
\begin{equation} \label{e.uglycondition}
\frac1{4r}\inf_{a\in\R} \left(\fint_{B_{4r}} \left| u(x) -a \right|^2\,dx  \right)^{\frac12} \leq c\, C' M
\end{equation}
implies by \eqref{e.okaytogo} that
\begin{equation}
\label{e.matchcond}
M_r := K_0 + \left( \fint_{B_{r}} \left| \nabla v_r(x) \right|^{2+\delta}\,dx \right)^{\frac1{2+\delta}} \le K_0 + C_0M(1+c\,C') \le C'M
\end{equation}
provided that $C' \ge 2+2C_0$. 

\smallskip

\emph{Step 3.} We define the random minimal radius~$\X \geq 1$ in the general case that only~(P3) holds. Set $\theta'= \theta+\frac12(\beta-\theta)$. With $s=s_0(d,\Lambda,\beta,\delta,\theta')\geq 1$ and $\X_s$ as in the statement of Theorem~\ref{t.subopt} with $\theta'$ in place of $\theta$, 
we set
\begin{equation*} \label{}
\mathcal X:= [(C')^{2+2d/s}\X_s]^{\frac s\theta}.
\end{equation*}
According to the conclusions of Theorem~\ref{t.subopt}, we then have that
\begin{equation*} \label{}
\E \left[ \X^{\theta} \right] \leq C(C')^{2s+2d}
\end{equation*}
and, for every $r \in \left[ \X (1+M)^{\frac{2d}{\theta}},\, \frac18 R\right]$, 
\begin{equation} \label{e.EE0}
\frac1{r^2} \fint_{B_r} \left| \uhomr(x)-v_r (x) \right|^2\,dx 
\leq \frac{C}{(C')^2}M_r^2 \, r^{-(\theta'-\theta)/s} \leq CM^2 r^{-\alpha},
\end{equation}
provided that $r$ satisfies \eqref{e.uglycondition},  since \eqref{e.uglycondition} implies \eqref{e.matchcond}. Note that the constant $C$ in \eqref{e.EE0} does not depend on our choice of $C'$, and that the exponent $\alpha>0$ depends only on~$(d,\Lambda,\beta,\theta)$.

\smallskip

\emph{Step 4.} In the case that assumption~(P4) holds, we define~$\X$ differently. Here we  fix an exponent $s\in (0,d\gamma/(d+\gamma))$ and allow the constant $C$ to depend additionally on $(\gamma,C_4,s)$. Letting $\mathcal{Y}_{s}$ and $\alpha > 0$ be as in the second statement of Theorem~\ref{t.subopt} with $M_0$ replaced by $C'M_0$, we define
\begin{equation*} \label{}
\X:= \max\Ll\{\mathcal Y_{s}^{\frac1s}, (C')^{\frac4{\alpha}}\Rr\},
\end{equation*}
so that
\begin{equation*} \label{}
\E\left[ \exp\left( \X^s \right) \right]  \leq Ce^{(C')^{\frac{4s}{\alpha}}}(C'M_0)^{2d},
\end{equation*}
and for every $r \in [\X,\frac18R]$,
$$
\frac1{r^2} \fint_{B_r} \left| \uhomr(x)-v_r (x) \right|^2\,dx 
\leq \frac{C}{(C')^2}M_r^2 r^{-\alpha/2} \leq CM^2 r^{-\alpha/2},
$$
provided that~\eqref{e.uglycondition} holds. Up to a redefinition of $\alpha > 0$, this is \eqref{e.EE0}, with $\alpha$ depending additionally on $(\gamma,s)$.

\smallskip

\emph{Step 5.} We observe that, for every $r\in \left[ \X(1+M)^{\frac{2d}{\theta}}, \frac18R \right]$ such that~\eqref{e.uglycondition} holds,
\begin{multline} \label{e.approxstep}
\frac1r \left( \fint_{B_r} \left| u(x) - \uhomr(x) \right|^2\,dx \right)^{\frac12} \\ 
\leq C_1 \Ll[r^{-\alpha} \, M 
+ R^{\frac dp} r^{1-\frac dp} \left( \fint_{B_R} \left| f(x) \right|^p\,dx  \right)^{\frac1p} \Rr],
\end{multline}
where we write $C_1 = C$ for future reference. 
Indeed, this follows immediately from~\eqref{e.throwawayf} and~\eqref{e.EE0}. 

\smallskip

\emph{Step 6.} Let $C'' := 8^{1+d/2} + C_2(1+2 C_1)$, where $C_2 = C(\sigma, 1-d/p, \alpha)$ is the constant given by Proposition~\ref{p.campanato} (and $\sigma$ was defined in Step 1, above). We show that, for every $r\in \left[\X (1+M)^{\frac{2d}{\theta}},\,\frac12R\right]$,
\begin{equation} \label{e.keystep}
\frac1r \inf_{a\in\Rd}  \left( \fint_{B_r} \left| u(x) -a \right|^2\,dx \right)^{\frac12}  \leq C'' M.
\end{equation}
We argue by induction. The estimate is obvious for $r \in[ R/8,R/2]$. Suppose that \eqref{e.keystep} holds for every radius $r \in [4 r_0, R/2]$, with $r_0$ such that $r_0 \ge \X(1+M)^{\frac{2d}{\theta}}$. Then~\eqref{e.uglycondition} holds for every $r\in \left[r_0,R/8\right]$, provided we choose $C' \ge C''/c$. In view of~\eqref{e.approxstep}, an application of Proposition~\ref{p.campanato} yields that, for every $r\in \left[r_0,R/2\right]$,
\begin{equation*} \label{}
\frac1{r}  \inf_{a\in\Rd}  \left( \fint_{B_{r}} \left| u(x) -a \right|^2\,dx \right)^{\frac12} \le  C_2(M +2 C_1 M) \le C''M.
\end{equation*} 
This completes the proof of~\eqref{e.keystep}.

\smallskip

\emph{Step 7.} We conclude. By the Caccioppoli inequality (Proposition~\ref{p.caccioppoli}),~\eqref{e.keystep} and the H\"older inequality, we have for every $r\in\left[\X (1+M)^{\frac{2d}{\theta}},\frac12R\right]$,
\begin{align*} \label{}
\fint_{B_{r}} \left| \nabla u(x) \right|^2\,dx  & \leq C \left(  K_0^2+ \frac1{r^2} \inf_{a\in\R} \fint_{B_{2r}} \left| u(x) -a \right| ^2\,dx + r^2  \fint_{B_{2r}} \left| f(x) \right|^2\,dx \right)\\
& \leq CM^2 + CR^{\frac {2d}p} r^{2-\frac {2d}p} \left( \fint_{B_R} \left| f(x) \right|^p\,dx  \right)^{\frac2p} \\
& \leq CM^2.
\end{align*}
This completes the proof of the theorem.
\end{proof}

\subsection{The Campanato \texorpdfstring{$C^{1,\alpha}$}{C1a}--type iteration}
\label{ss.campanato}
In this subsection we give the proofs of Proposition~\ref{p.campanato} and Lemma~\ref{l.belongA}.

\begin{proof}[{Proof of Proposition~\ref{p.campanato}}]
\emph{Step 1.} We first argue that we may assume \eqref{e:osc} to hold for every $r \in [r_0,R/2]$. Indeed, assume that the proposition is proved with this stronger assumption. Then we can use the result with $R$ replaced by $R/4$ to get \eqref{e.c01it} for $r \in [r_0, R/8]$, up to a redefinition of $C$. That \eqref{e.c01it} holds for $r \in [R/8,R]$ is obvious (up to a redefinition of $C$). We may therefore assume that \eqref{e:osc} holds for every $r \in [r_0,R/2]$. 

\medskip

Throughout, $C$ and $c$ denote positive constants depending only on~$(\alpha,\sigma,\gamma)$ which may vary in each occurrence. Define $s_0:= R$ and, for each $j \in\N$, set $s_j:= \sigma^{j-1} R/4$. Pick $m\in\N$ such that $s_{m+1} < r_0/2 \leq s_m$. Denote, for each $j\in\{ 0,\ldots,m\}$,
\begin{align*} \label{}
 F_j & := \frac1{s_j} \inf_{l\in\L} \left( \fint_{B_{s_j}} \left| u(x) - l(x) \right|^2\,dx \right)^{\frac12}, \\
 H_{j} & := \frac{1}{s_j} \inf_{a\in\R} \left( \fint_{B_{s_j}} \left| u(x) - a \right|^2\,dx \right)^{\frac12}.
\end{align*}

\smallskip

\emph{Step 2.} 
We show that for every $j \in\{ 1,\ldots, m\}$,
\begin{equation} \label{e.bb2iter}
F_{j+1} \leq \frac12 F_j + Cs_j^{-\alpha} (K+H_{j-1}) + CLs_j^\gamma.
\end{equation}
By~\eqref{e:osc} (and $\sigma \le 1/2$) that there exists $v \in \mathcal A(s_j,\sigma)$ such that
\begin{equation}
\label{e:v-opt}
\frac1{s_j} \left( \fint_{B_{s_j}} \left| u(x) - v(x) \right|^2\,dx \right)^{\frac12}  \leq s_j^{-\alpha} \left( K + H_{j-1}\right) + Ls_j^\gamma.
\end{equation}
By the triangle inequality,
$$
F_{j+1} \le \frac{1}{s_{j+1}}  \left( \fint_{B_{s_{j+1}}} \left| u(x) - v(x) \right|^2\,dx \right)^{\frac12} + \frac{1}{s_{j+1}} \inf_{l \in \L} \left( \fint_{B_{s_{j+1}}} \left| v(x) - l(x) \right|^2\,dx \right)^{\frac12}.
$$
The first term is bounded by $\sigma^{d/2-1} \le C$ times the left side of \eqref{e:v-opt}, while since $v \in \mathcal A(s_j,\sigma)$, the second term is bounded by
$$
\frac{1}{2} \frac{1}{s_{j}}  \inf_{l \in \L} \left( \fint_{B_{s_{j}}} \left| v(x) - l(x) \right|^2\,dx \right)^{\frac12} \le \frac{1}{2} F_j + \frac12 \frac1{s_j} \left( \fint_{B_{s_j}} \left| u(x) - v(x) \right|^2\,dx \right)^{\frac12},
$$
and we use \eqref{e:v-opt} again to estimate the right-most term. This yields~\eqref{e.bb2iter}.

\smallskip

\emph{Step 3.} We show that, for every $j\in\{ 0,\ldots,m-1\}$,
\begin{equation} \label{e.bb2iter2}
F_{j+1} \leq \frac12 F_j + Cs^{-\alpha}_j \left(K+H_0+\sum_{i=0}^j F_i\right) + CLs_j^\gamma.
\end{equation}
Select $p_j\in \Rd$ such that 
\begin{equation*} \label{}
F_j = \frac{1}{s_j} \inf_{a\in\Rd}\left( \fint_{B_{s_j}} \left| u(x) - p_j\cdot x - a \right|^2\,dx \right)^{\frac12}.
\end{equation*}
The triangle inequality yields, for every $j\in \{ 0,\ldots,m\}$,
\begin{equation} \label{e.FjHj}
F_j\leq H_j \leq 2|p_j| + F_j.
\end{equation}
Moreover, for any $a,b \in \R$,
\begin{multline*}
|p_j| \leq \frac C{s_j} \left( \fint_{B_{s_j}} \left| p_j\cdot x\right|^2\,dx \right)^{\frac12} \le \frac C{s_j} \left( \fint_{B_{s_j}} \left| p_j\cdot x +a-b\right|^2\,dx \right)^{\frac12} \\
\le \frac C{s_j} \left( \fint_{B_{s_j}} \left| u(x)-p_j\cdot x -a\right|^2\,dx \right)^{\frac12}  +  \frac C{s_j} \left( \fint_{B_{s_j}} \left| u(x) - b\right|^2\,dx \right)^{\frac12}  ,
\end{multline*}
so that
\begin{equation} \label{e.pjtrivbnd}
|p_j| \leq C(F_j + H_j) \leq CH_j.
\end{equation}
Similarly, for every $j\in \{ 0,\ldots,m-1\}$,
\begin{equation} \label{e.pjtriang}
\left|p_{j+1}-p_j \right| \leq  \frac C{s_{j}} \left( \fint_{B_{s_j}} \left| (p_{j+1}-p_j)\cdot x\right|^2\,dx \right)^{\frac12} \le C(F_{j+1}+F_j).
\end{equation}  
By iterating~\eqref{e.pjtriang} and using~\eqref{e.pjtrivbnd} with $j=0$, we get
\begin{equation} \label{e.pjbndsumFi}
|p_j| \leq H_0 + C\sum_{i=0}^j  F_i.
\end{equation}
By~\eqref{e.bb2iter},~\eqref{e.FjHj} and~\eqref{e.pjbndsumFi}, we obtain \eqref{e.bb2iter2}.

\smallskip

\emph{Step 4.} We show that, for every $j\in \{0,\ldots,m\}$,
\begin{equation} \label{e.bb3iterbby}
F_j \leq 2^{-j} F_0 + Cs_j^{-\alpha} \left( K+H_0 \right) + CL\left( s_j^\gamma+R^\gamma s_j^{-\alpha} \right).
\end{equation}
We argue by strong induction. Fix $A,B\geq 1$ to be selected below and suppose that $k\in \{ 0,\ldots,m-1\}$ is such that, for every $j\in \{0,\ldots,k\}$, 
\begin{equation} \label{e.bb3iterindhyp}
F_j \leq  2^{-j} F_0 + As_j^{-\alpha} \left( K+H_0 \right) + L\left( As_j^\gamma+ BR^\gamma s_j^{-\alpha} \right).
\end{equation}
Using~\eqref{e.bb2iter2} and this assumption (and then $F_0 \le H_0$), we find that 
\begin{align*}
F_{k+1} & \leq \frac12 \left( 2^{-k} F_0 + As_k^{-\alpha} \left( K+H_0 \right) + L \left( As_k^\gamma+ BR^\gamma s_k^{-\alpha}  \right) \right) + CLs_k^\gamma \\
& \qquad + Cs_k^{-\alpha} \left(K+H_0+\sum_{j=0}^k \left( 2^{-j} F_0 + As_j^{-\alpha} \left( K+H_0 \right) + L \left(A s_j^\gamma+ BR^\gamma s_j^{-\alpha}  \right) \right) \right) \\
& \leq 2^{-(k+1)} F_0 + s_{k+1}^{-\alpha} (K+H_0) \left(\frac12 A + CAs_k^{-\alpha} +C  \right) \\
& \qquad + Ls_{k+1}^\gamma \left(  \frac12\sigma^{-\gamma}A+C \right)  + L R^\gamma s_{k+1}^{-\alpha}  \left(   \frac12 B + CA + CBs_k^{-\alpha} \right).
\end{align*}
Now suppose in addition that $k\leq n$ with $n$ such that $Cs_n^{-\alpha} \leq \frac14$. Then using this and $\sigma^{\gamma} \geq \frac23$, we may select $A$ large enough that 
\begin{equation*} \label{}
\frac12 A + C A s_k^{-\alpha} + C\leq \frac34 A + C \leq A \quad \mbox{and} \quad \frac12 \sigma^{-\gamma} A + C \leq A
\end{equation*}
and then select $B$ large enough that 
\begin{equation*} \label{}
\frac12 B + CA + CBs_k^{-\alpha}  \leq B.
\end{equation*}
We obtain
\begin{equation*} \label{}
F_{k+1} \leq 2^{-(k+1)} F_0 + As_{k+1}^{-\alpha} \left( K+H_0 \right) + ALs_{k+1}^\gamma + BLR^\gamma s_{k+1}^{-\alpha}.
\end{equation*}
By induction, we obtain~\eqref{e.bb3iterindhyp} for every $j \leq n$. For every $j \in \{ n+1,\ldots,m\}$, we have $1 \leq s_j / s_m \leq C$ and hence $F_j \leq CF_n$. We thus obtain~\eqref{e.bb3iterindhyp} for every $j\in \{ 0,\ldots,m\}$. This completes the proof of~\eqref{e.bb3iterbby}.

\smallskip

\emph{Step 5.} We conclude the argument. To obtain~\eqref{e.c01it}, we need to estimate~$H_j$. According to~\eqref{e.FjHj}, \eqref{e.pjbndsumFi} and~\eqref{e.bb3iterbby}, we have
\begin{align*}
H_j \leq F_j + 2|p_j| & \leq 2H_0 + C\sum_{i=0}^j F_i \\
&\leq 2H_0+ C\sum_{i=0}^j \left( 2^{-i} F_0 + s_i^{-\alpha} \left( K+H_0 \right) +L\left( s_i^\gamma + R^\gamma s_i^{-\alpha} \right) \right) \\
& \leq 2H_0 + CF_0 + Cs_j^{-\alpha} (K+H_0) +CLR^\gamma\left( 1+s_j^{-\alpha} \right) \\
& \leq C \left( H_0 + s_j^{-\alpha} K +LR^\gamma \right).
\end{align*}
This completes the proof of the proposition. 
\end{proof}

\begin{proof}[{Proof of Lemma~\ref{l.belongA}}]
Denote $l_0(x):= u(0) + x\cdot \nabla u(0)$. Then the hypothesis of the lemma yields, for every $r\in (0,R/2]$ and $s\in (0,r/2]$,
\begin{align*}
 \left( \fint_{B_{s}} \left| u(x) - l_0(x) \right|^2\, dx \right)^{\frac12} & \leq \osc_{x\in B_s} \left| u(x) - l_0(x) \right| \\
 & \leq s^{1+\alpha} \left[ \nabla u \right]_{C^{0,\alpha}(B_s)} \\
& \leq K s^{1+\alpha} r^{-1-\beta} \inf_{l\in\L}  \left( \fint_{B_{r}} \left| u(x) - l(x) \right|^2\, dx \right)^{\frac12}.
\end{align*}
Taking $s=\sigma r$, this gives  
\begin{align*} \label{}
\frac1s \left( \fint_{B_{s}} \left| u(x) - l_0(x) \right|^2\, dx \right)^{\frac12} 
& \leq K \sigma^{\alpha}\left( \frac1r \inf_{l\in\L}  \left( \fint_{B_{2r}} \left| u(x) - l(x) \right|^2\, dx \right)^{\frac12}\right).
\end{align*}
Taking $\sigma\in (0,\frac12]$ small enough that $K\sigma^\alpha \leq \frac12$, we obtain the lemma.
\end{proof}

\section{The subadditive quantities}
\label{s.subadditive}

We introduce the subadditive quantity~$\mu_0$ and superadditive quantity~$\mu$ which play the central role in the proof of Theorem~\ref{t.subopt}. We review their basic properties and give some notation needed in the rest of the paper. 

\smallskip

Throughout this section, we let $C\geq 1$ and $c \in (0,1]$ denote positive constants which may vary in each occurrence and depend only on $(d,\Lambda)$.

\subsection{Definition of the subadditive quantities}
We denote the space of~$L^2$ solenoidal vector fields on a bounded Lipschitz domain~$U \subseteq \Rd$ by
\begin{equation*} \label{}
\Ls(U):= \left\{  \mathbf{f} \in L^2(U;\Rd) \,: \, \int_U \mathbf{f}(x)\cdot \nabla \phi(x) \,dx = 0 \ \ \mbox{for every} \ \ \phi \in H^1_0(U) \right\}.
\end{equation*}
The subspace of~$\Ls(U)$ consisting of solenoidal vector fields with zero normal component on~$\partial U$ is 
\begin{equation*} \label{}
\Lso(U):= \left\{  \mathbf{f} \in \Ls(U) \,: \, \int_U \mathbf{f}(x)\cdot \nabla \phi(x) \,dx = 0 \ \ \mbox{for every} \ \ \phi\in H^1(U) \right\}.
\end{equation*}
Note that~$\Lso(U)$ is the~$L^2$ closure of the vector fields in~$\Ls(U)$ with compact support in~$U$. 

\smallskip
For every $p,q,p^*,q^*\in\Rd$ and bounded Lipschitz domain~$U \subseteq \Rd$, we define
\begin{multline*} 
\mu(U,q^*,p^*) :=  \\
\inf_{u \in H^1(U), \, \g\in \Ls(U)} \fint_U \left( \Fz(\nabla u(x),\g(x),x) - q^* \cdot \nabla u(x) - p^* \cdot \g(x) \right) \, dx 
\end{multline*}
and
\begin{equation*}
\mu_0(U,p,q):= \inf_{v \in H^1_0(U), \, \h\in \Lso(U)} \fint_U \Fz(p+\nabla v(x),q+\h(x),x) \, dx.
\end{equation*}
Since the admissible set for $\mu_0$ has more constraints, we see that, for $p,q,p^*,q^*\in\Rd$, 
\begin{equation} \label{e.mulessnu}
\mu(U,q^*,p^*) \leq p \cdot q^* + p^* \cdot q + \mu_0(U,p,q).
\end{equation}

\smallskip

Up to normalization, $\mu(\cdot,q^*,p^*)$ is \emph{superadditive} and $\mu_0(\cdot,p,q)$ is \emph{subadditive}. Precisely, if $U, U_1,\ldots, U_k$ are bounded domains satisfying
\begin{equation} \label{e.chopping}
U_1,\ldots,U_k \quad \mbox{are pairwise disjoint and} \quad \left| U\setminus (U_1 \cup\cdots \cup U_k) \right| = 0, 
\end{equation}
then
\begin{equation} \label{e.subadd}
\mu(U,q^*,p^*) \geq \sum_{i=1}^k \frac{|U_i|}{|U|} \mu(U_i,q^*,p^*) \quad \mbox{and} \quad 
\mu_0(U,p,q) \leq \sum_{i=1}^k \frac{|U_i|}{|U|} \mu_0(U_i,p,q).
\end{equation}
This is due to the fact that an approximate minimizer for $\mu(U,q^*,p^*)$ gives, by restriction, a candidate for the minimizer of $\mu(U_i,q^*,p^*)$ for each subdomain $U_i$ and, conversely, a minimizing candidate for $\mu_0(U,p,q)$ may be constructed by assembling each of the minimizers of $\mu(U_i,p,q)$ as these agree on the boundaries of the subdomains (up to additive constants).

\smallskip

We next observe that $\mu$ and $\mu_0$ are uniformly bounded on the support of~$\P$: for every bounded open $U\subseteq \Rd$ and $p^*,q^*\in\Rd$, we have 
\begin{equation} \label{e.mubound}
-C\left( K_0^2 + |p^*|^2+|q^*|^2\right) \leq \mu(U,q^*,p^*) \leq CK_0^2, \quad \mbox{$\P$--a.s.}
\end{equation}
To see this, we first use~(P1) and the proof of Lemma~\ref{l.K0stuff} to obtain that the minimum of $F(\cdot,\cdot,x)$ occurs at a point $(p_0(x),q_0(x))$ satisfying 
\begin{equation*}
|p_0(x)|^2 + |q_0(x)|^2 \leq CK_0^2.
\end{equation*}
Moreover, by \eqref{e.infFpq} and (P1),
$$
-K_0^2 \le F(p_0(x), q_0(x), x) \le 0,
$$
so by~\eqref{e.UC} and~\eqref{e.upperUC},
\begin{equation*}
\frac{1}{2\Lambda} \left(  |p-p_0(x)|^2+|q-q_0(x)|^2 \right) - K_0^2  \leq F(p,q,x) \leq \frac{\Lambda}{2} \left(  |p-p_0(x)|^2+|q-q_0(x)|^2 \right).
\end{equation*}
Hence by Young's inequality, we have $\P$--a.s.,
\begin{equation}\label{e.FK0ineq}
\frac{1}{\Lambda} \left(  |p|^2+|q|^2 \right) - CK_0^2  \leq F(p,q,x) \leq \Lambda \left(  |p |^2+|q |^2 \right) + CK_0^2.
\end{equation}
To get the right inequality of~\eqref{e.mubound}, we test with the zero function in the definition of $\mu$ and use the previous inequality. To get the left inequality of~\eqref{e.mubound}, we use the previous inequality and Young's inequality to get, $\P$-a.s.,
\begin{equation*} \label{}
F(p,q,x) - p\cdot q^* - q\cdot p^* \geq -C\left( K_0^2 + |p^*|^2+|q^*|^2\right).
\end{equation*}
In particular, from~\eqref{e.mubound} we have the bound
\begin{equation} \label{e.mubound2}
\left| \mu(U,q^*,p^*) \right| \leq C\left( K_0^2 + |p^*|^2+|q^*|^2\right) \quad \mbox{$\P$--a.s.}
\end{equation}
Similarly, $\P$-a.s.,\ we have the following uniform estimates on $\mu_0$, for every bounded open $U\subseteq \Rd$ and $p,q\in\Rd$: 
\begin{equation} \label{e.mu0bound}
\frac1{\Lambda}\left( |p|^2+|q|^2 \right) - CK_0^2  \leq \mu_0(U,p,q) 
\leq \Lambda \left( |p|^2+|q|^2 \right) + CK_0^2.
\end{equation}
The right inequality was obtained by testing with the zero function and using~\eqref{e.FK0ineq}. The left inequality follows from~\eqref{e.FK0ineq} and Jensen's inequality. Finally, note that 
\begin{equation} \label{e.mu0pq}
\mu_0(U,p,q) \geq p\cdot q. 
\end{equation}
Indeed, for each bounded and Lipschitz~$U\subseteq\Rd$,
\begin{align*} \label{}
 \mu_0(U,p,q) & = \inf_{v \in H^1_0(U), \, \h\in \Lso(U)} \fint_U \Fz(p+\nabla v(x),q+\h(x),x) \, dx \\
 & \geq \inf_{v \in H^1_0(U), \, \h\in \Lso(U)} \fint_{U} (p+\nabla v(x))\cdot (q+\h(x)) \, dx \\
 & = p\cdot q.
\end{align*}

\subsection{Notation for cubes}
\label{ss.cubes}

Throughout the rest of the paper, we work with a triadic cube decomposition of~$\Rd$. For each $n\in\N$ and $x\in\Rd$, we set 
\begin{equation*}
\cu_n(x) := 3^n\left\lfloor 3^{-n} x \right\rfloor + \cu_n,
\end{equation*}
where we recall that $\cu_n$ is the cube $(-3^n/2,3^n/2)^d$.
For each~$n\in\N$, the family $\{ \cu_n(x) \,:\, x\in\Rd \} = \{ z+\cu_n \,:\, z\in3^n\Zd \}$ forms a disjoint partition of~$\Rd$ (up to a Lebesgue null set). Notice in particular that the cube~$\cu_n(x)$ is not centered at~$x$ (unless $x\in3^n\Zd$). Rather, it is the unique cube which contains $x$ with side length~$3^n$ and centered at an element of~$3^n\Zd$.

\smallskip

In order to work with the uniform mixing condition~(P3), it is sometimes convenient to delete a thin mesoscopic boundary strip from the cubes $\cu_n(x)$ so that the cubes are separated from each other. We denote these \emph{trimmed cubes} by
\begin{equation*}
\cut_n := \left( -\frac12 \left(3^n - 3^{n/(1+\beta)}\right) , \frac12 \left( 3^n -  3^{n/(1+\beta)} \right)\right)^d \subseteq \Rd,
\end{equation*}
where we recall that $\beta > 0$ is the exponent appearing in condition~(P3). For every $x \in \R^d$, we let $\cut_n(x) =  3^n\left\lfloor 3^{-n} x \right\rfloor + \cut_n$.
It is clear that, for every $x,y\in\Rd$,
\begin{equation} \label{e.separate}
\cut_n(x) \neq \cut_n(y) \quad \implies \quad \dist\left(\cut_n(x), \cut_n(y) \right) \geq 3^{n/(1+\beta)}.  
\end{equation}
The proportion of volume we have sculpted from $\cu_n$ to create $\cut_n$ is relatively small: 
\begin{equation} \label{e.sculpt}
\frac{\left| \cu_n\setminus \cut_n \right|}{\left| \cu_n \right|} \leq C3^{-n\beta/(1+\beta)}.
\end{equation}

\subsection{Definition of \texorpdfstring{$\overline \mu$}{mu} and \texorpdfstring{$\overline \mu_0$}{mu0}}

Since each triadic cube $\cu_{n+1}$ is the disjoint union of $3^d$ subcubes of the form $z+\cu_n$, with $z\in\Zd$, we obtain from the super- and subadditivity properties and stationarity that~$\E\left[ \mu(\cu_{n},q^*,p^*) \right]$ is nondecreasing in~$n$ and $\E\left[ \mu_0(\cu_{n},p,q) \right]$ is nonincreasing in~$n$. Therefore,
\begin{equation} \label{e.muconv}
\lim_{n\to \infty} \E \left[ \mu(\cu_n,q^*,p^*) \right] = \overline \mu(q^*,p^*):=\sup_{n\in\N}\E \left[ \mu(\cu_n,q^*,p^*)\right]
\end{equation}
and
\begin{equation} \label{e.nuconv}
\lim_{n\to \infty} \E \left[ \mu_0(\cu_n,p,q)\right] = \overline \mu_0(p,q):=\inf_{n\in\N} \E\left[ \mu_0(\cu_n,p,q)\right].
\end{equation}
Note that, by~\eqref{e.mulessnu}, we have 
\begin{equation} \label{e.musord}
\overline\mu(q^*,p^*) \leq \overline \mu_0(p,q)  + p\cdot q^* + p^*\cdot q.
\end{equation}
Next we observe that $\mu$ and $\mu_0$ are monotonic with respect to the trimmed cubes, up to a small error:
\begin{multline} \label{e.trimmono}
\mu(\cut_{n+m},q^*,p^*) \geq 3^{-dm} \sum_{\cut_{n}(x) \subseteq \cut_{n+m}} \mu(\cut_n(x),q^*,p^*)  \\
- C3^{-n\beta/(1+\beta)} \left( K_0^2 + |p^*|^2+|q^*|^2 \right)
\end{multline}
and
\begin{multline} \label{e.trimmono0}
\mu_0(\cut_{n+m},p,q) \leq 3^{-dm} \sum_{\cut_{n}(x) \subseteq \cut_{n+m}} \mu_0(\cut_n(x),p,q)  \\
+ C3^{-n\beta/(1+\beta)} \left( K_0^2 + |p|^2+|q|^2\right).
\end{multline}
To obtain these, we make the obvious partition of $\cut_{n+m}$ into the union of the trimmed cubes $\cut_n(x) \subseteq \cut_{n+m}$ and the extra trimmed region and apply~\eqref{e.mubound2},~\eqref{e.mu0bound} and~\eqref{e.sculpt}. In particular, by taking expectations and using stationarity, we get
\begin{multline} \label{e.Etrimmono}
\E \left[ \mu(\cut_{n+1},q^*,p^*)  \right] \\ \geq \E \left[ \mu(\cut_n(x),q^*,p^*)  \right] - C3^{-n\beta/(1+\beta)} \left( K_0^2 + |p^*|^2+|q^*|^2 \right)
\end{multline}
and
\begin{equation} \label{e.Etrimmono0}
\E \left[ \mu_0(\cut_{n+1},p,q)  \right] \leq \E \left[ \mu_0(\cut_n(x),p,q)  \right] + C3^{-n\beta/(1+\beta)} \left(K_0^2 + |p|^2+|q|^2\right).
\end{equation}

\subsection{The minimizing pairs for \texorpdfstring{$\mu$}{mu} and \texorpdfstring{$\mu_0$}{mu0}}

Throughout the rest of this paper, we denote the minimizing pair of $\mu(U,q^*,p^*)$  by $(u,\g)= (u(\cdot,U,q^*,p^*)$, $\g(\cdot,U,q^*,p^*))$. To be precise, $(u,\g)$ are uniquely defined for each $p^*,q^*\in\Rd$ and bounded open subject $U \subseteq \Rd$ by the conditions that $(u,\g) \in H^1(U) \times \Ls(U)$
\begin{equation*}
\left\{ 
\begin{aligned}
& \mu(U,q^*,p^*) = \fint_U \left( F\left( \nabla u(x),\g(x),x\right)  -  q^*\cdot \nabla u(x) - p^*\cdot \g(x) \right) \, dx,\\
& \fint_{V} u(x)\,dx = 0 \quad \mbox{for every connected component $V$ of $U$.}
\end{aligned}
\right.
\end{equation*}
The existence of the minimizing pair for $\mu$ is immediately obtained from the direct method since the uniform convexity of $F$ guarantees that the functional is weakly lowersemicontinuous. Uniqueness is also a consequence of uniform convexity.  

It is immediate from~\eqref{e.FK0ineq} and~\eqref{e.mubound2} that
\begin{multline} \label{e.boundminimizers}
\fint_U \left( \left| \nabla u(x,U,q^*,p^*)\right|^2 + \left| \mathbf{g}(x,U,q^*,p^*) \right|^2 \right)\,dx \\
\leq C \left( K_0^2 + |p^*|^2+|q^*|^2 \right) \quad \mbox{$\P$--a.s.}
\end{multline}
We denote the minimizing pair of $\mu_0(U,p,q)$ by $(v,\h)$, and it is uniquely defined by $(v,\h) \in H^1_0(U) \times \Lso(U)$ and 
\begin{equation}
\label{e.mu0minimizer}
 \mu_0(U,p,q) = \fint_U F\left( p+\nabla v(x),q+\h(x),x\right) \, dx.
\end{equation}
We sometimes write $v=v(\cdot,U,p,q)$ and $\h=\h(\cdot,U,p,q))$ to display the dependence on $(U,p,q)$. The existence and uniqueness of the minimizing pair for $\mu_0$ is a straightforward application of the direct method, just as for $\mu$. The analogous bound to~\eqref{e.boundminimizers} is
\begin{multline} \label{e.boundminimizers0}
\fint_U \left( \left| \nabla v(x,U,p,q)\right|^2 + \left| \mathbf{h}(x,U,p,q) \right|^2 \right)\,dx \\
\leq C \left( K_0^2 + |p|^2+|q|^2 \right) \quad \mbox{$\P$--a.s.},
\end{multline}
which follows immediately from~\eqref{e.FK0ineq} and~\eqref{e.mu0bound}.

\begin{remark}
The quantities $\mu$ and $\mu_0$ are inherently variational and may not have a simple description in terms of the PDE. We remark that neither $u$ nor $v$ is necessarily a solution of the equation~\eqref{e.pde}, nor is $\g$ or $\h$ necessarily the flux of $u$ or $v$, respectively. For $(v,\h)$, this is because the minimization problem is too restrictive, requiring $\h\in \Lso(U)$ rather than $\h\in\Ls(U)$. For $(u,\g)$, it is because there is no boundary condition, and in particular the minimizer may have boundary conditions other than an affine function with slope $p^*$.
\end{remark}

\subsection{Two convexity lemmas}
Uniform convexity enters into our arguments exclusively through the following two lemmas. We recall that $\Omega$ was defined in \eqref{e:def:Omega}.
\begin{lemma}
\label{l.unifconv}
For every $F\in \Omega$, bounded domain $U\subseteq \Rd$,  $w_1,w_2\in H^1(U)$ and $\mathbf{f}_1,\mathbf{f}_2 \in \Ls(U)$,
\begin{multline*}
\fint_{U} \left( \left| \nabla w_1(x) - \nabla w_2(x) \right|^2 + \left| \mathbf{f}_1(x) - \mathbf{f}_2(x) \right|^2 \right) \, dx \\
 \leq 4\Lambda\left( \fint_{U} F(\nabla w_1(x), \mathbf{f}_1(x),x) \,dx +  \fint_{U} F(\nabla w_2(x), \mathbf{f}_2(x),x) \,dx  - 2\mu(U,0,0) \right).
\end{multline*} 
\end{lemma}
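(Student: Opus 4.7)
The plan is to exploit uniform convexity of $F$ in the pair $(p,q)$ pointwise in $x$, apply it with $(p_i,q_i) = (\nabla w_i(x), \mathbf{f}_i(x))$, integrate, and then recognize the midpoint as an admissible test pair for $\mu(U,0,0)$.

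First, I would derive the standard quantitative midpoint convexity inequality from~\eqref{e.UC}: the hypothesis that $(p,q) \mapsto F(p,q,x) - \frac{1}{2\Lambda}(|p|^2+|q|^2)$ is convex implies, for every $x$ and every pairs $(p_1,q_1), (p_2,q_2) \in \Rd\times\Rd$,
\begin{equation*}
\frac12 F(p_1,q_1,x) + \frac12 F(p_2,q_2,x) - F\Ll(\frac{p_1+p_2}{2},\frac{q_1+q_2}{2},x\Rr) \geq \frac{1}{8\Lambda}\Ll(|p_1-p_2|^2 + |q_1-q_2|^2\Rr),
\end{equation*}
since $|\cdot|^2$ fails the parallelogram identity by exactly $\frac14(|p_1-p_2|^2+|q_1-q_2|^2)$.

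Next, I would apply this pointwise with $(p_i(x),q_i(x)) = (\nabla w_i(x), \mathbf{f}_i(x))$ for $i=1,2$, then average over $U$. The key observation is that the pair $\bigl(\tfrac12(w_1+w_2),\tfrac12(\mathbf{f}_1+\mathbf{f}_2)\bigr)$ lies in $H^1(U) \times \Ls(U)$, because both spaces are linear and $\Ls(U)$ is closed under averaging. Hence it is an admissible competitor in the definition of $\mu(U,0,0)$, yielding
\begin{equation*}
\fint_U F\Ll(\tfrac12(\nabla w_1 + \nabla w_2),\tfrac12(\mathbf{f}_1+\mathbf{f}_2),x\Rr)\,dx \geq \mu(U,0,0).
\end{equation*}

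Combining the two displays and multiplying through by $8\Lambda$ gives exactly the stated inequality. There is no real obstacle here; the one point to double-check is simply that $\Ls(U)$ is a genuine linear subspace of $L^2(U;\Rd)$, which is immediate from its definition as the kernel of a linear map on $H^1_0(U)$-test functions. Note also that the argument makes no use of~\eqref{e.upperUC} or~\eqref{e.infFpq}; only the lower uniform convexity bound is needed.
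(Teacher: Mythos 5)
Your proof is correct and follows the same approach as the paper: apply the quantitative midpoint convexity of $F$ coming from~\eqref{e.UC} pointwise with $(p_i,q_i)=(\nabla w_i,\mathbf{f}_i)$, integrate, and use the midpoint pair as an admissible competitor in the definition of $\mu(U,0,0)$. The only difference is that you spell out the derivation of the midpoint inequality and the admissibility check explicitly, which the paper leaves implicit.
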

\begin{proof}
Denote $w:=\frac12 w_1+\frac12w_2$ and $\mathbf{f}:=\frac12\mathbf{f}_1+\frac12\mathbf{f}_2$ and compute, using~\eqref{e.UC}:
\begin{align*}
\mu(U,0,0) & \leq \fint_U F(\nabla w(x),\mathbf{f}(x),x) \, dx \\
& \leq \frac 12\fint_U F(\nabla w_1(x),\mathbf{f}_1(x),x) \, dx + \frac 12\fint_U F(\nabla w_2(x),\mathbf{f}_2(x),x) \, dx \\
& \qquad - \frac1{8\Lambda} \fint_U \left( \left| \nabla w_1(x)-\nabla w_2(x) \right|^2+\left| \mathbf{f}_1(x) - \mathbf{f}_2(x) \right|^2 \right)\,dx.
\end{align*}
A rearrangement of this inequality yields the lemma. 
\end{proof}

The next lemma is a converse of the previous one and follows from~\eqref{e.upperUC}. 

\begin{lemma}
\label{l.upperunifconv}
For every $F\in \Omega$, bounded domain $U\subseteq \Rd$,  $w_1,w_2\in H^1(U)$ and $\mathbf{f}_1,\mathbf{f}_2 \in \Ls(U)$,
\begin{multline*}
\fint_{U} F(\nabla w_1(x), \mathbf{f}_1(x),x) \,dx \leq 2   \fint_{U} F(\nabla w_2(x), \mathbf{f}_2(x),x) \,dx  - \mu(U,0,0) \\
 + \frac{\Lambda}{4}\fint_{U} \left( \left| \nabla w_1(x) - \nabla w_2(x) \right|^2 + \left| \mathbf{f}_1(x) - \mathbf{f}_2(x) \right|^2 \right)\, dx .
\end{multline*}
\end{lemma}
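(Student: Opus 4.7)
The plan is to mirror the proof of Lemma~\ref{l.unifconv}, exchanging the roles of~\eqref{e.UC} and~\eqref{e.upperUC}. Where Lemma~\ref{l.unifconv} sandwiches the pointwise average $\frac12(F_1+F_2)$ from below using midpoint convexity and the definition of $\mu$, here I will sandwich it from above using midpoint concavity. The new wrinkle is that the natural ``midpoint'' is now $(w_2,\mathbf{f}_2)$ itself, so that the second admissible candidate pair must be chosen as the reflection of $(w_1,\mathbf{f}_1)$ through $(w_2,\mathbf{f}_2)$.

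Concretely, I introduce the reflected pair
\begin{equation*}
\widetilde{w} := 2w_2 - w_1 \in H^1(U), \qquad \widetilde{\mathbf{f}} := 2\mathbf{f}_2 - \mathbf{f}_1 \in \Ls(U),
\end{equation*}
where the memberships are automatic because both $H^1(U)$ and $\Ls(U)$ are linear. By construction $(w_2,\mathbf{f}_2)$ is exactly the midpoint of $(w_1,\mathbf{f}_1)$ and $(\widetilde{w},\widetilde{\mathbf{f}})$. Applying the concavity of $(p,q)\mapsto F(p,q,x)-\frac{\Lambda}{2}(|p|^2+|q|^2)$ in its midpoint form, pointwise in $x$, to these two pairs and expanding the quadratic correction via the reflection identities $|\nabla w_1 - \nabla\widetilde{w}|^2 = 4|\nabla w_1 - \nabla w_2|^2$ and $|\mathbf{f}_1 - \widetilde{\mathbf{f}}|^2 = 4|\mathbf{f}_1 - \mathbf{f}_2|^2$ yields a pointwise inequality of the form
\begin{equation*}
F(\nabla w_1,\mathbf{f}_1,x) + F(\nabla\widetilde{w},\widetilde{\mathbf{f}},x) \le 2 F(\nabla w_2,\mathbf{f}_2,x) + c\,\Lambda\bigl(|\nabla w_1 - \nabla w_2|^2 + |\mathbf{f}_1 - \mathbf{f}_2|^2\bigr),
\end{equation*}
with a constant $c$ obtained by tracking the factors $\frac{1}{8}$ and $4$ that enter the computation.

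I then integrate over $U$ and invoke the crucial observation that the reflected pair $(\widetilde{w},\widetilde{\mathbf{f}})$ is itself admissible in the variational problem defining $\mu(U,0,0)$, so that $\fint_U F(\nabla\widetilde{w},\widetilde{\mathbf{f}},x)\,dx \ge \mu(U,0,0)$. Substituting this lower bound for the reflected term and rearranging produces the stated estimate. The principal subtlety will be the careful bookkeeping of constants: the factor $4$ produced by the reflection must balance precisely against the factor $\frac{1}{8}$ from the midpoint concavity identity to land on the coefficient $\frac{\Lambda}{4}$ appearing in the lemma. Otherwise the argument is a symmetric transcription of the proof of Lemma~\ref{l.unifconv}, with the inequality direction reversed at each of the two analogous steps and the definition of $\mu$ used in its capacity as a lower, rather than upper, bound for candidate pairs.
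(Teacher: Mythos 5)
Your proof is correct and follows exactly the paper's argument: the paper also introduces the reflected pair $w := 2w_2 - w_1$, $\mathbf f := 2\mathbf f_2 - \mathbf f_1$, applies midpoint concavity from~\eqref{e.upperUC} with $(\nabla w_2, \mathbf f_2)$ as the midpoint, and uses $\mu(U,0,0) \le \fint_U F(\nabla w, \mathbf f, x)\,dx$ as the lower bound for the reflected term before rearranging. One small remark on the ``careful bookkeeping of constants'' you flag as the principal subtlety: the factor $\frac18$ from midpoint concavity and the factor $4$ from $|\nabla w - \nabla w_1|^2 = 4|\nabla w_1 - \nabla w_2|^2$ multiply to give $\frac{\Lambda}{2}$ in the bound on $\fint_U F(\nabla w_2,\mathbf f_2,x)\,dx$, and the final rearrangement doubles this to $\Lambda$, not $\frac{\Lambda}{4}$ — so the argument (both yours and the paper's) actually yields the weaker constant $\Lambda$, and the $\frac{\Lambda}{4}$ in the lemma statement appears to be a slip; this is harmless since every downstream use of Lemma~\ref{l.upperunifconv} absorbs the constant into a generic $C(d,\Lambda)$.
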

\begin{proof}
Set $w:= 2w_2 - w_1$ and $\mathbf{f}:= 2\mathbf{f}_2 - \mathbf{f}_1$, so that $w_2=\frac12 w +\frac12 w_1$ and $\mathbf{f}_2= \frac12\mathbf{f} + \frac12 \mathbf{f}_1$. Using~\eqref{e.upperUC}, we observe that 
\begin{align*}
\lefteqn{\fint_U F(\nabla w_2(x),\mathbf{f}_2(x),x) \, dx} \qquad &  \\
& \geq \frac12\fint_U F(\nabla w(x),\mathbf{f}(x),x) \, dx + \frac12 \fint_U F(\nabla w_1(x),\mathbf{f}_1(x),x) \, dx \\
 & \qquad - \frac{\Lambda}{8} \fint_U \left( \left| \nabla w(x)-\nabla w_1(x) \right|^2+\left| \mathbf{f}(x) - \mathbf{f}_1(x) \right|^2 \right)\,dx. 
\end{align*}
Using 
\begin{equation*}
\mu(U,0,0)\leq \fint_U F(\nabla w(x),\mathbf{f}(x),x) \, dx
\end{equation*}
and rearranging, we get the lemma.
\end{proof}

\begin{remark}\label{r.mu0lemma}
We also obtain versions of the previous two lemmas for functions with affine boundary data. Under the additional assumption that $w_1+w_2 \in H^1_0(U)$ and $\mathbf{f}_1(x)+\mathbf{f}_2(x) \in \Lso(U)$, the conclusion of Lemma~\ref{l.unifconv} may be improved to
\begin{multline}
\label{e.unifconvmu0}
\fint_{U} \left( \left| \nabla w_1(x) - \nabla w_2(x) \right|^2 + \left| \mathbf{f}_1(x) - \mathbf{f}_2(x) \right|^2 \right) \, dx \\
 \leq 4\Lambda\bigg( \fint_{U} F(\nabla w_1(x), \mathbf{f}_1(x),x) \,dx \\
  +  \fint_{U} F(\nabla w_2(x), \mathbf{f}_2(x),x) \,dx  - 2\mu_0(U,0,0) \bigg).
\end{multline} 
Likewise, under the additional assumptions that $2w_1-w_2\in H^1_0(U)$ and $2\mathbf{f}_1-\mathbf{f}_2\in \Lso(U)$, the conclusion of Lemma~\ref{l.upperunifconv} may be strengthened to 
\begin{multline}
\label{e.upperunifconvmu0}
\fint_{U} F(\nabla w_1(x), \mathbf{f}_1(x),x) \,dx \leq 2   \fint_{U} F(\nabla w_2(x), \mathbf{f}_2(x),x) \,dx  - \mu_0(U,0,0) \\
 + \frac{\Lambda}{4}\fint_{U} \left( \left| \nabla w_1(x) - \nabla w_2(x) \right|^2 + \left| \mathbf{f}_1(x) - \mathbf{f}_2(x) \right|^2 \right)\, dx .
\end{multline}
\end{remark}

\begin{remark}
\label{r.dontbotherus}
Appropriate versions of Lemmas~\ref{l.unifconv} and~\ref{l.upperunifconv} as well as~\eqref{e.unifconvmu0} and~\eqref{e.upperunifconvmu0} for arbitrary $(q^*,p^*), (p,q)\in\Rd\times\Rd$ rather than $(0,0)$, are immediate consequences of the former by applying the results to the operators 
\begin{equation*}
(p,q) \mapsto F(\hat p + p, \hat q +q,x) \quad \mbox{and} \quad (p,q) \mapsto F(p, q,x) - q^*\cdot p - p^* \cdot q.
\end{equation*}
Indeed, while these operators do not in general belong to $\Omega$ because they do not satisfy~\eqref{e.infFpq}, they do satisfy the other conditions~\eqref{e.lebemeas},~\eqref{e.UC} and~\eqref{e.upperUC}, and meanwhile~\eqref{e.infFpq} was not used in the proofs of Lemmas~\ref{l.unifconv} and~\ref{l.upperunifconv}. We make use of this fact without further comment.
\end{remark}

\subsection{Some basic estimates on the triadic cubes}
We next present some inequalities which allow us to compare $\mu$ and $\mu_0$ in the trimmed versus the untrimmed cubes. We have, for every $p^*,q^*\in\Rd$ and $n\in\N$,
\begin{equation} \label{e.cutup}
\mu(\cut_n,q^*,p^*) \leq \mu(\cu_n,q^*,p^*) + C3^{-n\beta/(1+\beta)}\left(K_0^2 + |p^*|^2+|q^*|^2 \right).
\end{equation}
Indeed, it follows from superadditivity that
$$
\mu(\cu_n,q^*,p^*)  \geq \frac{|\cut_n|}{|\cu_n|} \mu(\cut_n,q^*,p^*) + \frac{|\cu_n\setminus \cut_n|}{|\cu_n|} \mu(\cu_n\setminus \cut_n,q^*,p^*),
$$
so that
$$
\mu(\cu_n,q^*,p^*) - \mu(\cut_n,q^*,p^*) \ge  \frac{|\cu_n\setminus \cut_n|}{|\cu_n|} \Ll( \mu(\cut_n,q^*,p^*) + \mu(\cu_n\setminus \cut_n,q^*,p^*) \Rr),
$$
and \eqref{e.cutup} follows using~\eqref{e.mubound2} and~\eqref{e.sculpt}. Similarly, for $\mu_0$ we have
\begin{equation} \label{e.cutup0}
\mu_0(\cut_n,p,q) \geq \mu_0(\cu_n,p,q) - C3^{-n\beta/(1+\beta)}\left(K_0^2 +|p|^2+|q|^2 \right).
\end{equation}

We do not have an almost sure inequality which bounds $\mu(\cu_n,q^*,p^*)$ by $\mu(\cut_{n},q^*,p^*)$, but using stationarity, we can show such an inequality in expectation: 
\begin{multline} \label{e.cutdownpre}
\E \left[ \mu(\cu_n,q^*,p^*) \right]  \leq \E \left[ \mu(\cut_{n+1},q^*,p^*) \right] \\ + C\left( \E \left[ \mu(\cut_{n+1},q^*,p^*) \right] - \E \left[ \mu(\cut_n,q^*,p^*) \right] \right)  
+  C3^{-\frac{n\beta}{1+\beta}} \left(K_0^2 + |p^*|^2+|q^*|^2 \right).
\end{multline}
To see this, notice that $\cut_{n+1}$ can be partitioned into $3^d+1$ disjoint connected sets, which consist of the untrimmed cube $\cu_n$, the $3^d-1$ trimmed cubes of the form $\cut_n(x)\subseteq \cut_{n+1}$ which do not intersect $\cu_n$ and a trimmed region of measure at most $C3^{-n\beta/(1+\beta)}|\cut_{n+1}|$. Using the superadditivity of $\mu$ with respect to this partition, taking expectations, using stationary and~\eqref{e.mubound2}, we arrive at
\begin{multline}
3^d \E\Ll[ \mu(\cut_{n+1},q^*,p^*) \Rr] \ge \E\Ll[ \mu(\cu_n,q^*,p^*) \Rr] + (3^d - 1) \E\Ll[ \mu(\cut_n,q^*,p^*) \Rr] \\ - C 3^{-\frac{n\beta}{1+\beta}} \left(K_0^2 + |p^*|^2+|q^*|^2 \right),
\end{multline}
from which \eqref{e.cutdownpre} follows. Using~\eqref{e.cutup}, we may slightly improve~\eqref{e.cutdownpre} to
\begin{multline} \label{e.cutdown}
\E \left[ \mu(\cu_n,q^*,p^*) \right]  \leq \E \left[ \mu(\cut_{n},q^*,p^*) \right] \\ + C\left( \E \left[ \mu(\cut_{n+1},q^*,p^*) \right] - \E \left[ \mu(\cu_n,q^*,p^*) \right] \right)  
+  C3^{-\frac{n\beta}{1+\beta}} \left( K_0^2+|p^*|^2+|q^*|^2\right).
\end{multline}
This also permits us to compare the minimizers of $\mu$ in the trimmed and untrimmed cubes. Writing $U:= \cut_{n} \cup \left( \cu_n\setminus \cut_n\right)$ and applying Lemma~\ref{l.unifconv}, we have
\begin{align*}
\lefteqn{ \fint_{U} \left| (\nabla u,\mathbf{g})(x,U,p^*,q^*) - (\nabla u,\mathbf{g})(x,\cu_n,p^*,q^*) \right|^2\,dx } \qquad & \\
& \leq C \left( \mu(\cu_n,q^*,p^*) -  \mu(U,q^*,p^*)  \right) \\
& \leq C  \left( \mu(\cu_n,q^*,p^*) - \frac{|\cut_n|}{|\cu_n|} \mu(\cut_n,q^*,p^*) - \frac{|\cu_n\setminus \cut_n|}{|\cu_n|} \mu( \cu_n\setminus \cut_n,q^*,p^*) \right) \\
& \leq C \left( \mu(\cu_n,q^*,p^*) -  \mu(\cut_n,q^*,p^*)  \right) + C \left( K_0^2+|p^*|^2+|q^*|^2\right) 3^{-\frac{n\beta}{1+\beta}} .
\end{align*}
Taking expectations and using~\eqref{e.cutdown} and the fact that $$(\nabla u,\mathbf{g})(x,\cut_n,p^*,q^*) = (\nabla u,\mathbf{g})(x,U,p^*,q^*)\vert_{\cut_n},$$ we obtain that
\begin{multline}\label{e.cutlocalize}
\E \left[ \fint_{\cut_n} \left| (\nabla u,\mathbf{g})(x,\cut_n,p^*,q^*) - (\nabla u,\mathbf{g})(x,\cu_n,p^*,q^*) \right|^2\,dx  \right] \\ \leq  C\left( \E \left[ \mu(\cut_{n+1},q^*,p^*) \right] - \E \left[ \mu(\cu_n,q^*,p^*) \right] \right)  
+  C3^{-\frac{n\beta}{1+\beta}} \left( K_0^2+|p^*|^2+|q^*|^2\right).
\end{multline}

\subsection{Some further properties of \texorpdfstring{$\mu$}{mu} and \texorpdfstring{$\mu_0$}{mu0}}

For every $U\subseteq \Rd$, we have that 
\begin{equation} \label{e.mu0UC}
(p,q) \mapsto \mu_0(U,p,q) - \frac1{2\Lambda}\left( |p|^2+|q|^2 \right) \quad \mbox{is convex}
\end{equation}
and
\begin{equation} \label{e.mu0upperUC}
(p,q) \mapsto \mu_0(U,p,q) - \frac\Lambda{2}\left( |p|^2+|q|^2 \right) \quad \mbox{is concave.}
\end{equation}
The first claim follows from~\eqref{e.unifconvmu0}, Remark~\ref{r.dontbotherus} and Jensen's inequality. The second claim follows from~\eqref{e.upperunifconvmu0} and Remark~\ref{r.dontbotherus}. Combined with~\eqref{e.mu0bound}, these imply that $\mu_0(U,\cdot,\cdot)$ satisfies the continuity estimate
\begin{multline} \label{e.mu0cont}
\left| \mu_0(U,p_1,q_1) - \mu_0(U,p_2,q_2) \right| \\ \leq C\left(K_0+|p_1|+|p_2|+ |q_1| +|q_2|\right) \left( |p_1-p_2|+|q_1-q_2| \right).
\end{multline}
The analogous inequality also holds for $\mu$:
\begin{multline} \label{e.mucont}
\left| \mu(U,q_1^*,p_1^*) - \mu(U,q_2^*,p_2^*) \right| \\ \leq C\left(K_0+|p_1^*|+|p_2^*|+ |q_1^*| +|q_2^*|\right) \left( |p_1^*-p_2^*|+|q_1^*-q_2^*| \right).
\end{multline}
To see this, we use~$(u,\g)(\cdot,U,q_1^*,p_1^*)$ as a candidate for attaining the infimum in the definition of~$\mu(U,q_2^*,p_2^*)$ and use~\eqref{e.boundminimizers}. 

\subsection{The homogenized coefficients \texorpdfstring{$\overline F$}{} and functional \texorpdfstring{$\overline\J$}{}}
\label{s.homcoeff}

We define the homogenized variational coefficients $\overline F$ by
\begin{equation*}
\overline F(p,q):= \overline \mu_0(p,q).
\end{equation*}
It is then immediate from~\eqref{e.mu0bound},~\eqref{e.nuconv},~\eqref{e.mu0UC} and~\eqref{e.mu0upperUC} that $\overline F$ grows quadratically and is uniformly convex and $C^{1,1}$ in both variables, that is,  
\begin{equation} \label{e.Fbarbound}
\frac1{\Lambda}\left( |p|^2+|q|^2 \right) - CK_0^2   \leq \overline F(p,q) 
\leq  \Lambda \left( |p|^2+|q|^2 \right) + CK_0^2,
\end{equation}
\begin{equation}
\label{e.FbarUC}
(p,q) \mapsto \overline F(p,q) - \frac1{2\Lambda}\left( |p|^2+|q|^2 \right) \quad \mbox{is convex}
\end{equation}
and
\begin{equation}
\label{e.FbarupperUC}
(p,q) \mapsto \overline F(p,q) - \frac\Lambda{2}\left( |p|^2+|q|^2 \right) \quad \mbox{is concave.}
\end{equation}
Likewise,~\eqref{e.mu0cont} implies that 
\begin{multline}
\label{e.Fbarcont}
\left| \overline F(p_1,q_1) - \overline F(p_2,q_2) \right| \\ \leq C\left(K_0+|p_1|+|p_2|+ |q_1| +|q_2|\right) \left( |p_1-p_2|+|q_1-q_2| \right)
\end{multline}
and hence 
\begin{equation} \label{e.Fbarnablabnd}
\left|  \nabla\overline F(p,q) \right| \leq C(K_0+|p|+|q|).
\end{equation}
From~\eqref{e.FbarUC} and~\eqref{e.FbarupperUC} we also deduce that 
\begin{equation}
\label{e.nablaFbarcont}
\left| \nabla\overline F(p_1,q_1) - \nabla \overline F(p_2,q_2) \right| \leq  C\left( |p_1-p_2|+|q_1-q_2| \right).
\end{equation}
By~\eqref{e.mu0pq}, for every $p,q\in\Rd$,
\begin{equation} \label{e.Fuppq}
\overline F(p,q) \geq p\cdot q.
\end{equation}
In order to see that $\overline F$ variationally represents a uniformly monotone and Lipschitz vector field (which we would then denote by~$\overline\a$), it suffices by~Lemma~\ref{l.werepresent} to check that, for each $p\in\Rd$,
\begin{equation} \label{e.Fminpq}
\inf_{q\in\Rd} \left( \overline F(p,q)-p\cdot q \right) = 0. 
\end{equation}
This is a consequence of~\eqref{e.Fuppq} and the duality between~$\overline \mu$ and~$\overline \mu_0$, for which we require the results in Section~\ref{s.mu}. 
We thus postpone the demonstration of~\eqref{e.Fminpq} and the construction of the homogenized coefficients $\overline \a$ to Section~\ref{s.variationalbar} (see Proposition~\ref{p.variationalbar}).

\smallskip

We define, in each bounded Lipschitz domain $U\subseteq \Rd$, the functional 
\begin{multline} \label{e.barJ}
\overline \J \left[ u,u^* \right]:= \inf\bigg\{ \int_U \left( \overline F(\nabla u(x),\g(x)) - \nabla u(x) \cdot \g(x) \right)\,dx \\
:\, \g\in L^2(U;\Rd), \, -\nabla \cdot \g=u^* \bigg\} .
\end{multline}
Modulo the proof of~\eqref{e.Fminpq}, we have shown that the theory described in Section~\ref{s.varchar} applies to $\overline\J$.

\section{Structure of the proof of Theorem~\ref{t.subopt}}
\label{s.proof}

In this section, we reduce the proof of Theorem~\ref{t.subopt} to two ingredients. The first is the core issue, namely the convergence of the subadditive quantities $\mu$ and $\mu_0$ defined in the previous section. The second is a general, deterministic fact that follows from an oscillating test function argument and essentially allows to recover Theorem~\ref{t.subopt} from the convergence of the subadditive quantities.

\subsection{Two ingredients in the proof of Theorem~\ref{t.subopt}}

We first present the results concerning the convergence of the quantities~$\mu$ and~$\mu_0$, which are proved in the next section. As we will see, the quantities~$\overline \mu$ and~$\overline F$ are dual convex functions and thus each pair~$(p,q)$ is dual to~$\nabla \overline F(p,q)$. For every~$(p,q)\in\Rd\times\Rd$ and bounded domain~$U\subseteq \Rd$, we denote 
\begin{equation*}
\mathcal E(U,p,q) := \left| \mu_0(U,p,q) - \overline F(p,q) \right| + \left| \mu(U, \nabla \overline F(p,q)) - \nabla \overline F(p,q) \cdot(p,q) + \overline F(p,q) \right|.
\end{equation*}
The goal is to show that, for some exponent $\alpha > 0$, 
\begin{equation*}
\P \left[ \mathcal E(\cu_n,p,q) \gtrsim 3^{-n\alpha} \right] \ll 1.
\end{equation*}
We are interested in bounding the probability on the left side as strongly as possible -- as opposed to finding the optimal exponent~$\alpha$, which is much less important for our purposes. In addition, we need an estimate which also possesses some uniformity in $(p,q)$ and allows for translations of the cubes. 

\smallskip

The following theorem possesses each of these desired properties. It can be compared to~\cite[Theorem 3.1]{AS}. Its proof is the focus of Section~\ref{s.mu}.

\begin{theorem}[Convergence of the subadditive quantities]
\label{t.muconv}
Fix $M,R\geq 1$, $\theta \in (0,\beta)$ and $\tau \geq 1$. There exist~$s_0(d,\Lambda,\beta,\theta,\tau)\geq1$ and $C(d,\Lambda,\beta,C_3,\theta,\tau) \geq 1$ such that, for every $n\in\N$, $s \in [s_0,\infty)$ and $t\in [1,\infty)$,
\begin{equation}\label{e.muconv0}
\P \left[  \sup_{p,q\in B_{M3^{n\tau/s}}} \ \sup_{y\in B_{R}}   \frac{ \mathcal E(y+\cu_n,p,q) }{ (K_0^2+|p|^2+|q|^2) 3^{-n\theta/s}} \geq  t \right]  \leq CR^dM^{2d}t^{-s}.
\end{equation}
Moreover, if~(P4) holds, then for some $\alpha(d,\Lambda,\beta)>0$ and with $C$ depending additionally on $(\gamma,C_4)$, we have the following stronger estimate: for every exponent $s\in (0,d\gamma/(d+\gamma))$, $n\in\N$ and $t\geq 1$,
\begin{multline}
\label{e.muconvboom0}
\P \left[  \sup_{p,q\in B_{M3^{n}}} \ \sup_{y\in B_{R3^{n}}} \frac{\mathcal E(y+\cu_n,p,q) }{ (K_0^2+|p|^2+|q|^2) 3^{-n\alpha (d\gamma/(d+\gamma)-s) }} \geq C t \right] \\
\leq CR^dM^{2d}\exp\left( -3^{ns} t \right).
\end{multline}
\end{theorem}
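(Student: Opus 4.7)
The plan is to adapt the subadditive scheme of~\cite{AS} to the paired quantities $\mu$ and $\mu_0$. The starting observation is that, by~\eqref{e.muconv} and~\eqref{e.nuconv}, the sequences $n\mapsto \E[\mu_0(\cu_n,p,q)]$ and $n\mapsto \E[\mu(\cu_n,q^*,p^*)]$ are monotone and bounded, trapping the limits $\overline F(p,q)=\overline\mu_0(p,q)$ and $\overline\mu(q^*,p^*)$ from above and below; moreover~\eqref{e.musord} shows that $-\overline\mu$ sits below the Legendre transform of~$\overline F$. A first substep is to upgrade this to the sharp duality relation $\overline\mu(\nabla\overline F(p,q)) = \nabla\overline F(p,q)\cdot(p,q)-\overline F(p,q)$ (this is the content of the missing~\eqref{e.Fminpq} that the paper defers to Section~\ref{s.variationalbar}), so that the nonnegative quantity
\begin{equation*}
D_n(p,q):=\E[\mu_0(\cu_n,p,q)]+\E[\mu(\cu_n,\nabla\overline F(p,q))]-\nabla\overline F(p,q)\cdot(p,q)
\end{equation*}
is monotone nonincreasing in $n$ and controls both terms in $\E[\mathcal E(\cu_n,p,q)]$.

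The heart of the argument is a quantitative subadditivity/superadditivity estimate. Applying Lemma~\ref{l.unifconv} (with affine boundary data, in the form \eqref{e.unifconvmu0}) to the $\mu_0$-minimizer over $\cu_{n+1}$ and to the piecewise concatenation of $\mu_0$-minimizers over its $3^d$ subcubes, and symmetrically applying Lemma~\ref{l.upperunifconv} to the $\mu$-minimizer (which requires no boundary correction), I would show that $D_n-D_{n+1}$ dominates an $L^2$ fluctuation of the minimizing slopes/fluxes. An oscillating test-function computation then couples this fluctuation back to $D_n$ itself, yielding a recursion of the form $D_{n+1}\le (1-c)D_n + C3^{-n\alpha}(K_0^2+|p|^2+|q|^2)$ after accounting for the trimming error~\eqref{e.Etrimmono}--\eqref{e.Etrimmono0}. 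Iterating gives an algebraic decay $D_n(p,q)\le C3^{-n\alpha}(K_0^2+|p|^2+|q|^2)$ for some $\alpha(d,\Lambda,\beta)>0$.

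To pass from expectation to probability, I would invoke the concentration results from Appendix~\ref{s.mixing}: $\mu_0(\cu_n,p,q)$ and $\mu(\cu_n,q^*,p^*)$ are averages of cube-indexed contributions whose dependence decays at rate~$\beta$, so (P3) yields moment bounds of the form $\E[|\mu_0(\cu_n,p,q)-\E\mu_0(\cu_n,p,q)|^s]\le C(K_0^2+|p|^2+|q|^2)^s 3^{-n\theta}$ for $s\le s_0(\theta)$, and Chebyshev converts this to the $t^{-s}$ tail in~\eqref{e.muconv0}. Under (P4) the same scheme with the stretched-exponential concentration of Appendix~\ref{s.mixing} gives~\eqref{e.muconvboom0}. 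Uniformity in $y\in B_R$ is then obtained by a union bound over the $\lesssim R^d$ integer translates of $\cu_n$ intersecting $B_R$ (together with~\eqref{e.cutlocalize} to go from trimmed to untrimmed cubes), and uniformity over $(p,q)\in B_{M3^{n\tau/s}}$ by a net argument: the Lipschitz estimates~\eqref{e.mu0cont}, \eqref{e.mucont}, and~\eqref{e.nablaFbarcont} show that $\mathcal E(y+\cu_n,\cdot,\cdot)$ is Lipschitz with constant $\le C(K_0+|p|+|q|)$, so a net of mesh $\asymp 3^{-n\theta/s}$ has $\lesssim M^{2d}3^{n(\tau+\theta)(2d)/s}$ points; the factor $3^{n(\tau+\theta)(2d)/s}$ is absorbed into the tail by choosing $s_0$ large enough in terms of $(d,\tau,\theta)$.

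The main obstacle I expect is the quantitative improvement in the recursion for $D_n$, which requires carefully balancing the boundary-layer error produced when substituting subcube minimizers into $\cu_{n+1}$ against the uniform convexity gap. The variational duality between $\mu$ and $\mu_0$ (and the fact that~$F$ is uniformly convex and $C^{1,1}$ in \emph{both} variables) is crucial here, because it lets me match the fluxes~$\g$ across subcube boundaries simultaneously with matching the traces of~$u$: this is the role played by the simultaneous monotonicity of $\E\mu_0$ and $\E\mu$. A secondary technical point is keeping the dependence on $(p,q)$ through the prefactor $(K_0^2+|p|^2+|q|^2)$ rather than a worse power, which forces me to track all of the estimates in~\eqref{e.mubound2}, \eqref{e.mu0bound}, \eqref{e.boundminimizers}, and \eqref{e.boundminimizers0} homogeneously in $(p,q,p^*,q^*)$.
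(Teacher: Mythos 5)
Your high-level strategy---iterate a sub/superadditive deficiency to get algebraic decay in expectation, upgrade to stochastic moments via the mixing condition, then conclude with union bounds in $y$ and a Lipschitz net in $(p,q)$---matches the paper's, and the final union-bound/net steps are essentially right. But there are two genuine gaps in the middle.

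First, the quantity $D_n$ as you have written it is the \emph{sum} of $\E[\mu_0(\cu_n,p,q)]$ (nonincreasing in $n$) and $\E[\mu(\cu_n,\nabla\overline F(p,q))]$ (nondecreasing in $n$), so the claimed monotonicity fails; the relevant monotone deficiency is $(\E[\mu_0(\cu_n,p,q)]-\overline F(p,q))+(\overline\mu(q^*,p^*)-\E[\mu(\cu_n,q^*,p^*)])$. More seriously, your sketch of the recursion omits the paper's Lemma~\ref{l.flat}: the variance bound $\E[|P(\cut_n)-\overline P_n|^2]+\E[|Q(\cut_n)-\overline Q_n|^2]\le C\tau_n+C'\kappa_n$ on the spatially averaged slope and flux of the $\mu$-minimizer, proved by testing with a compactly supported divergence-free vector field and invoking~(P3). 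This is the \emph{only} place the mixing hypothesis enters the deterministic iteration, and without it the patching of $\mu$-minimizers on subcubes into an $H^1_0\times\Lso$ competitor for $\mu_0(\cut_{2n},\overline P_n,\overline Q_n)$ incurs uncontrolled errors of order $\E[|P(\cut_n)-\overline P_n|^2]$ rather than $\tau_n$, so the recursion does not close. Note also that the coupling in Lemma~\ref{l.iterable} is genuinely one-directional---one patches $\mu$-minimizers (no boundary constraint) to bound $\mu_0$ with affine data---not the symmetric application of Lemmas~\ref{l.unifconv} and~\ref{l.upperunifconv} you describe; only $\tau_n$, never the $\mu_0$-subadditivity defect, appears in the construction.

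Second, the concentration step is too optimistic as stated. Assumption~(P3) does not directly yield moment bounds on the centered quantity $\mu_0-\E\mu_0$: what the sub/superadditive structure gives (and what Lemma~\ref{l.finitemoments} needs) is control of the \emph{one-sided} quantities $(\overline\mu-\mu(\cut_n))_+$ and $(\mu_0(\cut_n)-\overline\mu)_+$, whose positivity and subadditivity are essential. Passing from the $L^1$ bound $\E[\mathcal E_*]\lesssim K_0^2\,3^{-n\alpha}$ (with $\alpha$ small) to a near-optimal $\theta<\beta$ requires a further bootstrap (Step~6 of Lemma~\ref{l.stochasticupgrade}) in which the moment lemma is applied at a second, carefully chosen mesoscale to balance the weak $\alpha$-rate against the $\beta$-rate from~(P3); and because one does not know a priori which of the two one-sided deviations is already concentrated, the ``alternative'' argument of Steps~1--4 of that lemma is required before the bootstrap can even get started.
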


The second ingredient in the proof of Theorem~\ref{t.subopt} is the following proposition (which can be compared to~\cite[Proposition 4.1]{AS}) the proof of which is the focus of Section~\ref{s.blackbox}. It is a purely deterministic statement which asserts that estimates on $\mathcal E$ on mesoscopic cubes (of exactly the sort appearing in Theorem~\ref{t.muconv}) imply quenched estimates for the error in homogenization for general macroscopic Dirichlet problems. Thus it essentially reduces Theorem~\ref{t.subopt} to Theorem~\ref{t.muconv}. 

\begin{proposition}[Deterministic bounds for Dirichlet problems]
For every bounded Lipschitz domain $U_0\subseteq \Rd$ and $\delta > 0$, there exist $C(d,\Lambda,U_0,\beta,C_3,\delta)$ and exponents $\alpha(d,\Lambda,\beta,\delta) >0$ and $\rho(d,\Lambda,\delta)>1$ such that the following holds. For every $F\in\Omega$, $m,n\in\N$, $U:= 3^{n+m}U_0$ and $f\in W^{1,2+\delta}(U)$, if $u,\uhom \in f+H^1_0(U)$ are such that
\begin{equation} \label{e.Dirmins}
 \J \left[u, 0 \right] = \overline\J\left[ \uhom,0 \right] =  0,
\end{equation}
then for every integer $l$ satisfying $n \le l \le m+n$,
\begin{equation} \label{e.exhale} 
3^{-2(n+m)} \fint_U \left| u(x) - \uhom(x) \right|^2\, dx \\
 \leq C M^2 \left(  \mathcal E'_{n,m,M} + 3^{-2m} +  3^{-\alpha(n+m-l)} \right),
\end{equation} 
where we denote
\begin{equation} \label{e.M}
M:=  K_0 + \left( \fint_U \left| \nabla f(x) \right|^{2+\delta}\, dx \right)^{\frac1{2+\delta}}
\end{equation}
and 
\begin{equation} \label{e.Eprimestat}
\mathcal E'_{n,m,M} = \mathcal E'_{n,m,M}(U_0):= \left( \fint_{U} \, \left( \sup_{p,q \in B_{CM3^{dm/2}}} \frac{\mathcal E(x+\cu_n,p,q) }{K_0^2+|p|^2+|q|^2}    \, \right)^{\rho}\,dx  \right)^{\frac1\rho}.
\end{equation}
\label{p.blackbox}
\end{proposition}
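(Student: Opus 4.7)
The plan is to build approximate test functions from the subadditive minimizers via a two-scale gluing construction, then transfer energy comparisons into $L^2$ gradient comparisons by uniform convexity, and finally extract the $L^2$ bound on $u-\uhom$ via Poincar\'e. Throughout, I will think of scale $3^{n+m}$ as the macroscopic size of $U$, of $3^n$ as the mesoscopic scale on which the subadditive quantities are already close to their limits, and of $3^l$ as a boundary layer width to be optimized.

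First I would exploit the regularity of the homogenized solution. Since $\uhom$ solves the constant-coefficient equation $-\nabla\cdot \overline\a(\nabla \uhom) = 0$ in $U$ with data $f\in W^{1,2+\delta}$, a boundary Meyers estimate (Appendix~\ref{s.reg}) and interior $C^{1,\alpha}$ regularity give, after rescaling $U = 3^{n+m}U_0$, pointwise control of $\nabla\uhom$ of order $M$ in bulk and controlled moments of order $M\cdot 3^{dm/(2+\delta)}$, which justifies taking suprema over $(p,q)\in B_{CM3^{dm/2}}$ in the definition of $\mathcal E'_{n,m,M}$. This regularity is also used in one place to show that replacing $\nabla\uhom$ by its piecewise average on mesoscopic cubes of side $3^n$ costs at most $3^{-2m}M^2$ in mean square, which is the origin of the $3^{-2m}$ term in \eqref{e.exhale}.

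Next, partition the inner region $U^{(l)}:=\{x\in U:\dist(x,\partial U)>3^l\}$ by those triadic cubes $Q_i:=\cu_n(x_i)\subseteq U^{(l)}$. On each $Q_i$, set $p_i:=\fint_{Q_i}\nabla\uhom$, $q_i:=\overline\a(p_i)=\nabla_q \overline F(p_i,\overline\a(p_i))$, and let $(v_i,\h_i)$ be the minimizing pair for $\mu_0(Q_i,p_i,q_i)$ and $(u_i,\g_i)$ the minimizing pair for $\mu(Q_i,\nabla\overline F(p_i,q_i))$. By duality between $\overline\mu$ and $\overline F$ established in Section~\ref{s.mu}, the mesoscopic error on $Q_i$ is exactly $\mathcal E(Q_i,p_i,q_i)$, which by the choice of $(p_i,q_i)$ and the $L^{2+\delta}$ control above is dominated (after averaging over $i$) by $\mathcal E'_{n,m,M}$ through H\"older's inequality with the Meyers exponent $\rho$. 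Then glue the local pairs together across the mesoscopic cubes using a partition of unity $\{\eta_i\}$ subordinate to a slight thickening of the $Q_i$, with an additional smooth cutoff $\chi$ that equals $1$ on $U^{(l)}$ and vanishes on $\partial U$, so that $|\nabla\chi|\lesssim 3^{-l}$ and $|\nabla\eta_i|\lesssim 3^{-n}$. Concretely, construct
\[
\tilde u(x) \;:=\; \chi(x)\,\Bigl(\uhom(x)+\sum_i\eta_i(x)\bigl(v_i(x)-p_i\cdot x\bigr)\Bigr)+\bigl(1-\chi(x)\bigr)\,\uhom(x),
\]
and an analogous $\tilde\g$ built from $\overline\a(\nabla\uhom)+\sum_i\eta_i(\h_i-q_i)$, with a Bogovskii-type divergence correction to restore $-\nabla\cdot\tilde\g=0$. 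This correction, localized to the cutoff strip, contributes only a lower-order error.

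The heart of the proof is then a ``two-energy'' comparison. Since $\J[u,0]=0$, the pair $(u,\a(\nabla u,\cdot))$ makes the integrand of $\J$ vanish pointwise in $U$; by Lemma~\ref{l.unifconv} (and Remark~\ref{r.mu0lemma}) applied with $w_1=u$, $w_2=\tilde u$, $\mathbf f_1=\a(\nabla u,\cdot)$, $\mathbf f_2=\tilde\g$,
\[
\fint_U\Bigl(|\nabla u-\nabla\tilde u|^2+|\a(\nabla u,\cdot)-\tilde\g|^2\Bigr)\,dx \;\lesssim\; \fint_U\!\bigl(F(\nabla\tilde u,\tilde\g,\cdot)-\nabla\tilde u\cdot\tilde\g\bigr)\,dx.
\]
The right side decomposes into three contributions: (i) on each $Q_i$, the integrand equals exactly the dual defect $F(p_i+\nabla v_i,q_i+\h_i,\cdot)-(p_i+\nabla v_i)\cdot(q_i+\h_i)$ plus cross-terms from $\uhom-(p_i\cdot x+\text{cst})$, which is bounded by $\mathcal E(Q_i,p_i,q_i)$ plus the $3^{-2m}M^2$ piecewise-constant approximation error; (ii) on the boundary strip $U\setminus U^{(l)}$, cutoff gradients contribute $\lesssim 3^{-2l}\fint_{U\setminus U^{(l)}}|v_i-p_i\cdot x|^2+|\uhom|^2$, which an interior Caccioppoli (Proposition~\ref{p.caccioppoli}) and Meyers estimate convert into $M^2 \cdot(3^l/3^{n+m})^\alpha = M^2\cdot 3^{-\alpha(n+m-l)}$ for some $\alpha(d,\Lambda,\delta)>0$; (iii) the Bogovskii correction, of the same order as (ii). Running the same argument with the roles of $u$ and $\uhom$ swapped, using Lemma~\ref{l.upperunifconv} to control the test-function energy, yields the analogous bound with $\tilde u$ replaced by an analogous lift $\tilde{\tilde u}$ of $\uhom$. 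The two test functions differ by a function supported in the boundary layer (by construction they coincide on $U^{(l)}$ up to mesoscopic oscillations of mean zero), so Poincar\'e on $U$ gives
\[
3^{-2(n+m)}\fint_U|u-\uhom|^2\,dx\;\lesssim\;\fint_U\bigl(|\nabla u-\nabla\tilde u|^2+|\nabla\tilde u-\nabla\tilde{\tilde u}|^2+|\nabla\tilde{\tilde u}-\nabla\uhom|^2\bigr)\,dx,
\]
from which the claimed estimate \eqref{e.exhale} follows.

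The main obstacles are two. The first is the flux-side gluing: one must preserve the solenoidal constraint on $\tilde\g$ to stay inside the admissible class for $\J$, which requires a careful Bogovskii correction localized to the cutoff strip, and a Meyers-type improved integrability is needed to absorb this correction with an algebraic gain $3^{-\alpha(n+m-l)}$ rather than merely a logarithmic one. The second obstacle, more delicate bookkeeping than conceptual, is ensuring uniformity of the bound in the choice of $(p_i,q_i)$ as $i$ ranges over cubes: this is where the $L^\rho$-averaged definition of $\mathcal E'_{n,m,M}$ and the $L^{2+\delta}$ regularity of $\nabla\uhom$ combine, via H\"older's inequality at exponent $\rho$, to bound the averaged mesoscopic error by $M^2\mathcal E'_{n,m,M}$ uniformly.
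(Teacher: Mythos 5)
Your proposal inverts the paper's construction: you build an oscillating test pair for $\J$ by adding $\mu_0$-minimizers to $\uhom$ on mesoscopic cubes (a two-scale ``fine-graining''), whereas the paper instead builds a test pair for $\overline\J$ by mesoscopically spatially averaging the heterogeneous pair $(u,\g)$ (a ``coarse-graining''). The paper's choice is not cosmetic. When you glue the $\mu_0$-minimizers $v_i\in H^1_0(Q_i)$ with a partition of unity $\{\eta_i\}$ at scale $3^n$, the gradient of the glued function picks up terms $(\nabla\eta_i)v_i$ with $|\nabla\eta_i|\sim 3^{-n}$ and $\|v_i\|_{L^2(Q_i)}\sim 3^n M|Q_i|^{1/2}$ by Poincar\'e, so $\|(\nabla\eta_i)v_i\|_{L^2}$ is of order $M|Q_i|^{1/2}$ --- not small, and not absorbable into either $\mathcal E'_{n,m,M}$ or the $3^{-2m}M^2$ term. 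You cannot improve this via an interior Meyers or Caccioppoli estimate for $v_i$ restricted to the overlap strips, because (as the paper remarks) the $\mu_0$-minimizer does not solve the PDE, so no improved integrability is available for it. Nor can you invoke mixing to control the glued potential, as is done in the proof of Lemma~\ref{l.iterable} (Steps 4--6), because Proposition~\ref{p.blackbox} is purely deterministic. The paper sidesteps all of this: its approximating pair $\tilde u=\eta\, u_{\mathrm{sa}}+(1-\eta)u$ involves a \emph{single} cutoff at the outer scale $3^l\geq 3^n$, so the only cutoff term is $(\nabla\eta)(u_{\mathrm{sa}}-u)$ of size $3^{n-l}M$, which is small; there is no mesoscopic partition of unity at all. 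The energy inequality \eqref{e.easyway} is then obtained by the soft device of testing $\mu(y+\cu_n,p^*(y),q^*(y))$ with the restriction of $(u,\g)$, which is admissible precisely because $\mu$ imposes no boundary conditions --- something your construction, based on $\mu_0$ with its clamped Dirichlet boundary data, cannot mimic.

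Beyond this central gap there are several smaller defects: the formula $\tilde u=\chi\big(\uhom+\sum_i\eta_i(v_i-p_i\cdot x)\big)+(1-\chi)\uhom$ confuses the corrector $v_i$ with an approximation of the affine function; the proposed second pass ``with the roles of $u$ and $\uhom$ swapped'' is both unnecessary and ill-posed (the two functions solve different equations, so swapping them has no meaning); and your final inequality replaces the triangle inequality on $|u-\uhom|$ by one on $|\nabla u-\nabla\uhom|$, but $\nabla\tilde u$ and $\nabla\tilde{\tilde u}$ differ by $O(M)$ microscopic oscillations, so that inequality would immediately give an $O(M^2)$ bound with no smallness. The paper's single-direction argument (uniform convexity of $\overline\J[\cdot,0]$ on $f+H^1_0(U)$, then Poincar\'e on $u-\uhom$ via $\|u-u_{\mathrm{sa}}\|_{L^2}\lesssim 3^n\|\nabla u\|_{L^2}$) is cleaner and avoids both issues. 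If you want to pursue the oscillating-test-function route, the essential missing idea is a cancellation mechanism for the $(\nabla\eta_i)v_i$ terms, which in the periodic setting uses boundedness and $p$-differentiability of the corrector and in the random setting requires quantitative corrector growth bounds --- tools that are not available at this stage of the paper's development.
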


\subsection{The proof of Theorem~\ref{t.subopt}}

We next show that Theorem~\ref{t.subopt} is a consequence of the previous two statements. 

\begin{proof}[{Proof of Theorem~\ref{t.subopt}}]

\smallskip

\emph{Step 1.} We verify the weaker estimate, i.e.~\eqref{e.Xsst} and~\eqref{e.wkee}. We allow~$C(d,\Lambda,\beta,C_3,\delta,U_0,\theta)\geq1$ to vary in each occurrence. 
 
\smallskip

Fix $R\geq 1$. Also set $\theta' := \theta + \frac12 (\beta - \theta)$ and fix $\tau \geq 1$ and $s\geq s_0$ to be selected below. We may suppose without loss of generality that $R=3^k$ for some $k\in\N$. Write $k=n+m$ with $n,m\in\N$ and $m$ chosen as large as possible subject to the constraint 
\begin{equation*} \label{}
\frac{md}{2} \leq \frac{n\tau}s,
\end{equation*}
and pick~$l:= n + \lfloor m/2 \rfloor$. Observe that, with these choices of $n,m$ and $l$, the last factor in the second term on the right side of~\eqref{e.exhale} is estimated by
\begin{equation*} \label{}
3^{-2m} + 3^{-\alpha(n+m-l)} \leq CR^{-\theta'/s},
\end{equation*}
provided that $\tau$ and $s_0$ are chosen sufficiently large. Indeed, since 
$$
\alpha (n+m - l) \sim \frac{\alpha \tau}{2sd} n , \qquad \frac{\theta'}{s} (n+m) \sim \Ll( 1+ \frac{\tau}{sd}   \Rr)  \frac{\theta'}{s} n \qquad (n \to \infty), 
$$
it suffices to check that
$$
\frac{\alpha \tau}{2d} > \Ll( 1+\frac{\tau}{sd}   \Rr) \theta' ,
$$
and this is satisfied provided that $\tau > 2d\theta'/\alpha$ and $s_0\geq C$ is sufficiently large.

\smallskip

Therefore, Proposition~\ref{p.blackbox} gives 
\begin{equation*} \label{}
R^{-2} \fint_U  \left| u(x) - \uhom(x) \right|^2\, dx  \leq C M^2 \mathcal E'_{n,m,M} + C M^2 R^{-\theta'/s},
\end{equation*}
where~$\mathcal{E}'_{n,m,M}$ is defined in~\eqref{e.Eprimestat}.
By Jensen's inequality and stationarity, we find that, for every $s\geq \rho$, where $\rho(d,\Lambda,\delta)>1$ is the exponent in the statement of~Proposition~\ref{p.blackbox}, we have 
\begin{align*}
\E \left[ \Ll( \mathcal E'_{n,m,M_0} \Rr)^s\right] & = \E \left[ \left( \fint_U \left( \sup_{p,q\in B_{CM_03^{md/2}}} \,\frac{\mathcal E(x+\cu_n,p,q)}{K_0^2+|p|^2+|q|^2}\right)^{\rho}\,dx \right)^{\frac{s}\rho} \right]  \\
& \leq  \E \left[ \fint_U \left( \sup_{p,q\in B_{CM_03^{md/2}}}\, \frac{\mathcal E(x+\cu_n,p,q)}{K_0^2+|p|^2+|q|^2}\right)^{s}\,dx  \right] \\
& \leq \E \left[ \left( \sup_{x\in B_{\sqrt{d}}}\, \sup_{p,q\in B_{CM_03^{md/2}}} \, \frac{\mathcal E(x+\cu_n,p,q)}{K_0^2+|p|^2+|q|^2}\right)^{s} \right]. 
\end{align*}
We next use the fact that, for a nonnegative random variable $X$,  $$\E[X^s] = \int_0^\infty t^{s-1} \P[X > t] \, dt,$$ and apply Theorem~\ref{t.muconv} (replacing $s$ by $s+1$ and $\theta$ by $\theta'$ there, and increasing $s_0$ as necessary) to obtain, for every $s\geq s_0(d,\Lambda,\beta,\theta,\delta)$,
\begin{equation*}
\E \left[ \left( \sup_{x\in B_{\sqrt{d}}}\,  \sup_{p,q\in B_{CM_03^{md/2}}} \,\frac{\mathcal E(\cu_n,p,q)}{K_0^2+|p|^2+|q|^2}\right)^{s} \right] \leq CM_0^{2d} 3^{-n\theta'}.
\end{equation*} 
Setting
\begin{equation*} \label{}
\X_{s,n,M_0}:= C\left(1+ 3^{n\theta'/s} \mathcal{E}'_{n,m,M_0}\right),
\end{equation*}
we therefore obtain, under the assumption that $M\leq M_0$, the estimate
\begin{equation*} \label{}
R^{-2} \fint_U  \left| u(x) - \uhom(x) \right|^2\, dx  \leq  \X_{s,n,M_0} M^{2} 3^{-n\theta'/s},
\end{equation*}
with $\X_{s,n,M_0}$ satisfying $\E[\X_{s,n,M_0}^s] \leq CM_0^{2d}$. We now define 
\begin{equation*} \label{}
{\X}_{s,M_0} := \sup_{j\in\N} 3^{-j(\beta-\theta)/4s} \X_{s,j,M_0}.
\end{equation*}
Clearly, $\E[{\X}_{s,M_0}^s] \leq CM_0^{2d}$. Note also that the definition of~${\X}_{s,M_0}$ does not depend on~$R$, and
\begin{equation} \label{e.almostthere}
R^{-2} \fint_U  \left| u(x) - \uhom(x) \right|^2\, dx  \leq  {\X}_{s,M_0} M^{2} 3^{-n\theta''/s},
\end{equation}
where $\theta'':= \theta+\frac14(\beta-\theta)$. If $s_0$ is large enough (depending on the appropriate quantities), then by our choice of $n$ and $m$, we have
$3^{-n\theta''} \leq R^{-\theta}$. To remove the dependence of ${\X}_{s,M_0}$ on $M_0$, we define
\begin{equation*}
{\X}_s:= \sup_{k\in\N} 2^{-2dk/s}\X_{s+1,2^k}.
\end{equation*}
It is clear that~\eqref{e.Xsst} holds and, translating~\eqref{e.almostthere} in terms of~$\X_s$ yields~\eqref{e.wkee}. The proof is now complete.

\smallskip

\emph{Step 2.} Under assumption~(P4), we verify the stronger estimates~\eqref{e.Xsexp},~\eqref{e.stree}. Here we fix an exponent~$s\in (0,d\gamma/(d+\gamma))$ and allow $C$ and $c$ to depend additionally on $(\gamma,C_4,s)$. We pick $s_1$ and $s_2$ such that $s<s_1<s_2<d\gamma/(d+\gamma)$ with the gaps between these bounded by $c$. As above we may suppose that $R=3^k$ for some $k\in\N$. We write $k=n+m$ where $n,m\in\N$ are chosen so that $m$ is as large as possible such that $n\geq m$, and 
\begin{equation}\label{e.mnconditions}
(m+n)s_1 < ns_2 \quad \mbox{and} \quad md\gamma /(d+\gamma) \leq n\alpha \left(d\gamma/(d+\gamma)-s_2\right).
\end{equation}
Note that, in addition to $R$, the integers $m$ and $n$ depend only on $s$. Choosing $l = n + \lfloor m/2 \rfloor$ in~\eqref{e.exhale} yields
\begin{multline} \label{e.almostthereY}
R^{-2} \fint_U  \left| u(x) - \uhom(x) \right|^2\, dx  \leq C M^2 \mathcal E'_{n,m,M} + C M^2 R^{-\alpha/s} \\
\leq CM^2\left( 1+ \Y_{s,M_0} R^{-s} \right)R^{-\alpha/s},
\end{multline}
provided that $M\leq M_0$, where we have defined 
\begin{equation*}
\Y_{s,M_0}:= \sup_{n\in\N} \left( R^{s} \left( 3^{md\gamma/(d+\gamma)} \mathcal E'_{n,m,M_0} -C  \right)_+ \right).
\end{equation*}
(Here and in what follows we think of $R$ and $m$ as functions of $n$.) We next study the integrability of~$\Y_{s,M_0}$. According to~\eqref{e.muconvboom0}, for every $t \ge 1$,
\begin{align*}
\P \left[   \mathcal E'_{n,m, M_0} \geq  C3^{-(n+m)\alpha (d\gamma/(d+\gamma)-s_2)} t \right] \leq CM_0^{2d} \exp\left( -3^{(n+m)s_2}t \right).
\end{align*}
Using~\eqref{e.mnconditions} and rearranging this leads to the bound
\begin{equation*}
\P \left[ R^{s}\left( 3^{md\gamma/(d+\gamma)} \mathcal E'_{n,m,M_0} -C  \right)_+ \geq  Ct \right] \leq CM_0^{2d} \exp\left( -c3^{-(s_1-s)} t\right).
\end{equation*}
Taking a union bound and summing this over $n\in\N$ yields
\begin{equation*}
\P \left[ \Y_{s,M_0} \geq Ct \right] \leq CM_0^{2d} \exp\left( -ct \right).
\end{equation*}
Defining $\Y_s:=c\sup_{k} k \Y_{s,2^k}$ and integrating the previous inequality, we obtain~\eqref{e.Xsexp}. The inequality~\eqref{e.stree} is obtained by expressing~\eqref{e.almostthereY} in terms of $\Y_s$. This completes the proof of the theorem. 
\end{proof}

\section{Convergence of the subadditive quantities}
\label{s.mu}

In this section, we prove Theorem~\ref{t.muconv}. We focus the majority of our effort to obtain the following slightly weaker statement. In the final subsection, we derive Theorem~\ref{t.muconv} from it. 

\begin{proposition}
\label{p.muconv}
Fix $(q^*,p^*)\in \Rd\times \Rd$. There exist a unique pair $$(\overline P(q^*,p^*),\overline Q(q^*,p^*)) \in \Rd \times \Rd$$ such that
\begin{equation} \label{e.dualatlimit}
\overline \mu(q^*,p^*) + p^*\cdot \overline Q + q^*\cdot \overline P = \overline \mu_0(\overline P,\overline Q)
\end{equation}
and, for every $\theta \in (0,\beta)$, an exponent $s_0(d,\Lambda,\beta,\theta)>0$ and~$C(d,\Lambda,\beta,C_3,\theta)>0$ 
such that, for all $s\in [s_0,\infty)$, $R\geq 1$, $n\in\N$ and $t\geq 1$,
\begin{equation}
\label{e.muconverge}
\P \left[ \sup_{y\in B_R} \mathcal E_*(y+\cu_n,q^*,p^*)\geq C(K_0^2+|p^*|^2+|q^*|^2) 3^{-n\theta / s}t \right] \leq C R^d t^{-s},
\end{equation}
where we denote
\begin{equation} \label{e:def:Estar}
\mathcal E_*(U,q^*,p^*) := \left| \mu(U,q^*,p^*) - \overline \mu(q^*,p^*) \right| + \left| \mu_0(U,\overline P,\overline Q) - \overline \mu_0(\overline P,\overline Q)\right|.
\end{equation}
Moreover, if we assume~(P4), then, for every exponent $s\in (0,d\gamma/(d+\gamma))$ and with $C$ depending additionally on $(s,\gamma,C_4)$, we have the following stronger estimate: for all $R\geq 1$, $n\in\N$ and $t\geq 1$,
\begin{multline}
\label{e.muconvboom}
\P \left[ \sup_{y\in B_R} \mathcal E_*(y+\cu_n,q^*,p^*)\geq C(K_0^2+|p^*|^2+|q^*|^2) 3^{-n\min\{ \alpha, d\gamma/(d+\gamma)-s\}}t \right] \\
\leq CR^d\exp\left( - 3^{ns} t \right).
\end{multline}
\end{proposition}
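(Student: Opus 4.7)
The plan is to establish the proposition through three main ingredients: identification of the dual pair $(\overline P, \overline Q)$ as limits of spatial averages of minimizers; concentration of $\mu$ and $\mu_0$ around their expected values via the mixing hypothesis; and an iterative closure of the duality defect between the expectations of $\mu$ and $\mu_0$ at mutually dual points.

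First I would define $\overline P(q^*,p^*)$ and $\overline Q(q^*,p^*)$ as the $n\to\infty$ limits of $\E\bigl[\fint_{\cu_n} \nabla u(x, \cu_n, q^*, p^*)\,dx\bigr]$ and $\E\bigl[\fint_{\cu_n} \g(x, \cu_n, q^*, p^*)\,dx\bigr]$. Existence of the limits follows from the monotonicity in~\eqref{e.muconv} combined with Lemma~\ref{l.unifconv}, which converts convergence of energies into $L^2$-convergence of the minimizing pair toward its spatial average. The duality identity~\eqref{e.dualatlimit} will emerge by passing to the limit in an approximate finite-scale relation, which becomes exact once the duality defect (controlled in the third step) is shown to vanish. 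In parallel, using superadditivity on trimmed cubes~\eqref{e.trimmono}, the near-independence of sub-cubes guaranteed by~\eqref{e.separate} together with (P3) or (P4), and the mixing apparatus developed in Appendix~\ref{s.mixing}, one obtains concentration estimates of the form $|\mu(\cut_N, q^*, p^*) - \E[\mu(\cut_N, q^*, p^*)]| \lesssim (K_0^2+|p^*|^2+|q^*|^2)\,3^{-N\alpha}$ with polynomial tails of the required form under (P3) and stretched-exponential tails under (P4). Analogous concentration holds for $\mu_0(\cut_N, \overline P, \overline Q)$.

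The heart of the proof is the third step: quantifying the closure of the duality defect $\delta_n := \E[\mu_0(\cu_n, \overline P, \overline Q)] - \E[\mu(\cu_n, q^*, p^*)] - q^*\cdot \overline P - p^*\cdot \overline Q$, which is nonnegative by~\eqref{e.musord} and essentially monotone in $n$ modulo the trimming error of~\eqref{e.Etrimmono}--\eqref{e.Etrimmono0}. By Remark~\ref{r.dontbotherus} applied to Lemmas~\ref{l.unifconv} and~\ref{l.upperunifconv}, smallness of $\delta_n$ forces the minimizer of $\mu(\cu_n, q^*, p^*)$ to coincide in $L^2$ with an appropriately affine-shifted minimizer of $\mu_0(\cu_n, \overline P, \overline Q)$. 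The iteration then uses the minimizer of $\mu$ at scale $n$, restricted to each sub-cube of $\cu_{n+k}$, as a candidate in the variational problem defining $\mu_0$ at scale $n+k$ — after a Hodge-type correction to enforce membership in $\Lso(\cu_{n+k})$ and matching of traces across sub-cube boundaries. Applying~\eqref{e.upperunifconvmu0} to this construction yields a recursion of the form $\delta_{n+k} \le (1-c)\delta_n + C\,3^{-n\beta/(1+\beta)}$, from which algebraic decay $\delta_n \lesssim 3^{-n\alpha}$ follows.

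The main obstacle will be this iteration: constructing admissible competitors that simultaneously respect the $H^1_0$ and $\Lso$ constraints while exhibiting a strict improvement in energy, and extracting a quantitative contraction factor. Uniform convexity is essential here, converting energy proximity into $L^2$ proximity of the entire minimizing pair. Once $\delta_n \lesssim 3^{-n\alpha}$ is in place, combining it with the concentration of Step~2 and taking a union bound over the $\lesssim R^d$ translates $y \in B_R$ (the supremum over $y$ being handled via a discretization argument together with the continuity estimate~\eqref{e.mucont}) yields~\eqref{e.muconverge}, and under~(P4) the improved estimate~\eqref{e.muconvboom}.
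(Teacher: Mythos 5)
Your high-level roadmap — identify a dual pair, run a multi-scale iteration that closes a duality defect via a patching construction with Helmholtz--Hodge corrections, upgrade to stochastic moment bounds with the mixing apparatus, then union-bound over translates — is essentially the architecture the paper follows. The individual ingredients you cite (trimmed cubes, the subadditivity in \eqref{e.trimmono}, the convexity lemmas via Remark~\ref{r.dontbotherus}, the lemmas in Appendix~\ref{s.mixing}) are the right ones. However, there are two logical gaps in how you sequence and use them.

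First, you propose to define $\overline P, \overline Q$ as the $n\to\infty$ limits of $\E[P(\cu_n)]$, $\E[Q(\cu_n)]$, asserting existence ``from the monotonicity in~\eqref{e.muconv} combined with Lemma~\ref{l.unifconv}.'' This does not hold: monotonicity only gives that the telescoping differences $\E[\mu(\cu_{n+1})]-\E[\mu(\cu_n)]$ are summable, and Lemma~\ref{l.unifconv} converts that into $\|\nabla u(\cdot,\cu_{n+1})-\nabla u(\cdot,\cu_n)\|_{L^2}^2 \lesssim \tau_n$, whence $|\E[P(\cu_{n+1})]-\E[P(\cu_n)]|\lesssim \tau_n^{1/2}$. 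Summability of $\tau_n$ does not imply summability of $\tau_n^{1/2}$, so the Cauchy property — and thus the existence of the limit — requires the algebraic rate $\tau_n \lesssim 3^{-n\alpha}$, which you propose to derive only afterward (Step~3). You then feed the not-yet-defined $(\overline P,\overline Q)$ into your duality defect $\delta_n$, compounding the circularity. The fix is the paper's: run the iteration with the finite-scale averages $\overline P_n,\overline Q_n$, and extract the limit a posteriori using the rate together with the continuity estimate~\eqref{e.mu0cont}.

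Second, the concentration step cannot be run ``in parallel'' with the iteration, and the claimed conclusion is stronger than $\alpha$--mixing alone delivers. Lemmas~\ref{l.finitemoments} and \ref{l.concentration} bound high moments of averages of $X_z=(\overline\mu-\mu(\cut_n(z)))_+$ in terms of $\E[X]$ and $\phi(3^n)$; they do not produce the decaying prefactor $3^{-N\alpha}$ without already knowing that $\E[X]\lesssim 3^{-n\alpha}$ — that is, without the $L^1$ rate from the iteration. Moreover, under (P3), which is a uniform $\alpha$--mixing bound (not normalized by probabilities), there is no generic concentration around the mean with tails $t^{-s}$ for arbitrarily large $s$; the paper manufactures the high moments from the $L^\infty$ bound $|\mu|\leq CK_0^2$ after first controlling a low moment, and even the second-moment control hinges on the alternative argument (Steps~1--4 of Lemma~\ref{l.stochasticupgrade}) that exploits $\mu_0-\mu\geq 0$ to show that, at each scale, at least one of the two quantities has the desired one-sided concentration. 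None of this is anticipated in your plan; as written, Step~2 would fail under (P3).

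Finally, a minor point: your proposed ``discretization together with the continuity estimate~\eqref{e.mucont}'' for the supremum over $y\in B_R$ conflates the wrong kind of continuity. The estimate~\eqref{e.mucont} is Lipschitz continuity in $(q^*,p^*)$, which is used in Theorem~\ref{t.muconv} for the supremum over $(p,q)$, not here. The supremum over $y$ is handled instead by noting that for every $y$, the cube $y+\cu_n$ contains the trimmed cube $[y]+\cut_n$ with the discarded volume controlled by~\eqref{e.sculpt}, which together with superadditivity and~\eqref{e.mubound2} reduces to a union bound over lattice translations.
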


\subsection{Reduction to the case \texorpdfstring{$p^*=q^*=0$}{p*=q*=0}}
\label{ss.reducpush}

It suffices to prove Proposition~\ref{p.muconv} for $p^* = q^* = 0$. Indeed, given $p^*,q^* \in \R^d$, let
$$
F_{p^*,q^*}(p,q,x) := F(p+p^*,q+q^*,x) - p \cdot q^* - q\cdot p^*  -   p^*\cdot q^*.
$$
Note that~$F_{p^*,q^*}$ belongs to $\Omega$ and satisfies the same assumptions as $F$, except that the constant $K_0$ in (P1) must be replaced by $K_0 + |p^*|+|q^*|$. Applying Proposition~\ref{p.muconv} to $F_{p^*,q^*}$ (to be more precise, to the pushforward of $\P$ under the map $F\mapsto F_{p^*,q^*}$) at $(0,0)$ then yields the general result.

\subsection{Flatness of minimizers}

The purpose of this subsection is to prove Lemma~\ref{l.iterable} (stated below), which is the key step in the proof of Proposition~\ref{p.muconv}.  The main idea is to show that if the difference in the expectation of $\mu$ between two successive triadic scales is small, then the minimizers of $\mu$ must be very flat. This invites a comparison to corresponding minimizers of $\mu_0$, allowing us to show that the expectation of $\mu$ is close to its limit. 

%

\smallskip

In order to lighten notation, we simply write $\mu(U)$ for $\mu(U,0,0)$, $\overline \mu$ for $\overline \mu(0,0)$, and $(u,\g)(\cdot,U)$ for $(u,\g)(\cdot,U,0,0)$. We denote the spatial averages of the minimizing pair $(u,\g)(\cdot,U)$ of $\mu(U)$ by
\begin{equation*} \label{}
P(U):= \fint_{U} \nabla u(x,U)\, dx \quad \mbox{and} \quad Q(U):= \fint_U \g(x,U)\, dx,
\end{equation*}
and we use the shorthand notation
\begin{equation*}
\overline P_n:= \E \left[ P(\cut_n) \right] \quad \mbox{and} \quad \overline Q_n:= \E \left[ Q(\cut_n) \right].
\end{equation*}
Throughout this section, we denote the difference between the expected values of~$\mu$ at the two successive triadic scales $n+1$ and $n$ (with the smaller cube trimmed) by 
\begin{equation} \label{e.taun}
\tau_n := \E \left[ \mu(\cu_{n+1}) \right] - \E \left[ \mu(\cut_n) \right]. 
\end{equation}
and we denote errors which will accumulate due to mixing and trimming at scale~$n$ by
\begin{equation*}
\kappa_n:= K_0^2 3^{-n\beta/8(1+\beta)}+ K_0^23^{-n/3}.
\end{equation*}

\smallskip

The following adaptation of~\cite[Lemma 3.2]{AS} is the first step in the argument for Proposition~\ref{p.muconv}. The lemma states that the variances of $P(\cut_n)$ and $Q(\cut_n)$ are controlled by a multiple of  $\tau_n+\kappa_n$. The proof uses the weak mixing condition~(P3) for the first time in the paper. In preparation for its application, we rephrase~(P3) in terms of the following covariance estimate (this is proved in Appendix~\ref{s.mixing}, see~\eqref{e.covalphamix}): for every $U,V\subseteq\Rd$, $\F_U$--measurable random variable $X$ and $\F_V$--measurable random variable $Y$, we have
\begin{equation}\label{e.howweuseP3}
\cov\left[ X,Y \right]  \leq 4 C_3 \| X \|_\infty \|Y\|_{\infty} (1+\dist(U,V))^{-\beta},
\end{equation}
where $\| X \|_{\infty}$ denotes the $\P$--essential supremum of $|X|$: 
\begin{equation*}
\| X \|_\infty := \inf\left\{ \lambda > 0\,:\, \P \left[ |X| > \lambda \right] = 0 \right\}.
\end{equation*}

\begin{lemma}
\label{l.flat}
There exist $C(d,\Lambda,\beta) \geq 1$ and $C'(d,\Lambda,\beta,C_3)\geq 1$ such that, for every $n\in\N$,
\begin{equation*}
\E  \left[ \left| P(\cut_n) - \overline P_n \right|^2 \right] + \E\left[ \left| Q(\cut_n) - \overline Q_n \right|^2 \right] \leq C \tau_n+C'\kappa_n.
\end{equation*}
\end{lemma}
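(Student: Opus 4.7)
The plan is to extract quantitative flatness from the subadditivity defect $\tau_n$: when $\tau_n$ is small, uniform convexity forces the minimizer of $\mu(\cu_{n+1},0,0)$ to decouple into minimizers on the $3^d$ trimmed subcubes, and the mixing hypothesis~(P3) then forces the spatial averages to concentrate. Concretely, partition $\cu_{n+1}$ into its $3^d$ triadic subcubes $\cu_n(x_i)$ and, inside each, consider $\cut_n(x_i)$. The restriction of the unique minimizer $(u,\g)(\cdot,\cu_{n+1})$ to $\cut_n(x_i)$ is an admissible competitor for $\mu(\cut_n(x_i),0,0)$, so applying Lemma~\ref{l.unifconv} inside each trimmed subcube, summing over $i$, and using~\eqref{e.mubound2}--\eqref{e.sculpt} to absorb the $F$-integrals over the sculpted bands, I obtain a bound of the form $C\tau_n+C'\kappa_n$ on $\tfrac{1}{3^d}\sum_i$ of the $L^2(\cut_n(x_i))$ differences of $\nabla u$ and $\g$ between $(u,\g)(\cdot,\cu_{n+1})$ and $(u,\g)(\cdot,\cut_n(x_i))$. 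Setting $P_i:=\fint_{\cut_n(x_i)}\nabla u(\cdot,\cu_{n+1})$ and $Q_i:=\fint_{\cut_n(x_i)}\g(\cdot,\cu_{n+1})$, Jensen's inequality then converts this into $\sum_i\E[|P_i-P(\cut_n(x_i))|^2+|Q_i-Q(\cut_n(x_i))|^2]\le C\,3^d(\tau_n+\kappa_n)$, with $\tfrac{1}{3^d}\sum_i P_i=P(\cu_{n+1})+O(\kappa_n^{1/2})$ once trimming is absorbed.

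Second, I invoke~(P3) through the covariance form~\eqref{e.howweuseP3}: each $P(\cut_n(x_i))$ is $\F_{\cut_n(x_i)}$-measurable with $\|P(\cut_n(x_i))\|_\infty\le CK_0$ by~\eqref{e.boundminimizers}, and by~\eqref{e.separate} the cubes satisfy $\dist(\cut_n(x_i),\cut_n(x_j))\ge 3^{n/(1+\beta)}$ for $i\ne j$, whence
\begin{equation*}
\bigl|\cov[P(\cut_n(x_i)),P(\cut_n(x_j))]\bigr|\le CK_0^2\,3^{-n\beta/(1+\beta)}\le C\kappa_n,
\end{equation*}
and analogously for $Q$ and the mixed covariances. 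Setting $S_P:=\tfrac{1}{3^d}\sum_i P(\cut_n(x_i))$, stationarity gives $\E[S_P]=\overline P_n$ and the covariance decomposition yields $\var[S_P]=\tfrac{1}{3^d}\var[P(\cut_n)]+O(\kappa_n)$, and likewise $\var[S_Q]=\tfrac{1}{3^d}\var[Q(\cut_n)]+O(\kappa_n)$.

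Finally, I close the argument. Step~1 yields $\E[(S_P-P(\cu_{n+1}))^2]\le C(\tau_n+\kappa_n)$ together with the analogous estimate for $Q$; combined with the minimizer-localization bound~\eqref{e.cutlocalize} applied at $p^*=q^*=0$, which gives $\E[|P(\cut_n)-P(\cu_n)|^2]\le C(\tau_n+\kappa_n)$ (so that $\var[P(\cu_{n+1})]$ at scale $n+1$ and $\var[P(\cut_n)]$ at scale $n$ differ only by $O(\tau_n+\kappa_n)$ relative to the factor $3^{-d}$ already present in $\var[S_P]$), a direct absorption argument isolates $\var[P(\cut_n)]+\var[Q(\cut_n)]\le C\tau_n+C'\kappa_n$. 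The principal obstacle is this last absorption step: the chained bounds naively couple $\var[P(\cut_n)]$ to $\var[P(\cu_{n+1})]$ with constants too close to $1$ for a na\"ive cancellation, and the resolution requires carefully playing the prefactor $\tfrac{1}{3^d}$ gained from the mixing average in $\var[S_P]$ against the $3^d$ multiplier introduced when inverting the relation to recover $\var[P(\cut_n)]$, using the trimming bounds~\eqref{e.cutup}--\eqref{e.cutdown} to align trimmed and untrimmed variances at the same scale; balancing these is precisely what forces the net contraction factor to be strictly less than $1$ and allows the single-scale defect $\tau_n$ (rather than a tail sum) to control the variance.
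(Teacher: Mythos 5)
Your Step~1 (localization of the scale-$(n+1)$ minimizer to the trimmed subcubes via Lemma~\ref{l.unifconv}, absorbing the sculpted bands with~\eqref{e.mubound2} and~\eqref{e.sculpt}) correctly reproduces the paper's~\eqref{e.gradloc}, and your Step~2 correctly computes $\var[S_P]=3^{-d}\var[P(\cut_n)]+O(\kappa_n)$ from~(P3), \eqref{e.howweuseP3} and~\eqref{e.separate}. The gap is in the closing step: you have no \emph{anchor} for the variance.

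Your chain produces two facts --- $\E[|S_P-P(\cu_{n+1})|^2]\le C(\tau_n+\kappa_n)$ and $\var[S_P]=3^{-d}\var[P(\cut_n)]+O(\kappa_n)$ --- but these involve three unknowns ($\var[P(\cut_n)]$, $\var[S_P]$, $\var[P(\cu_{n+1})]$) and give no a~priori bound on any of them. The claim that ``$\var[P(\cu_{n+1})]$ and $\var[P(\cut_n)]$ differ only by $O(\tau_n+\kappa_n)$'' has no basis: \eqref{e.cutlocalize} relates $P(\cut_n)$ and $P(\cu_n)$ \emph{at the same scale} $n$, not $P(\cu_{n+1})$ at scale $n+1$ to $P(\cut_n)$ at scale $n$; and if anything, the decorrelation across subcubes in Step~2 suggests $\var[P(\cu_{n+1})]\approx 3^{-d}\var[P(\cut_n)]$, i.e.\ a contraction, so the two quantities should \emph{not} be close. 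Without a third relation the ``absorption argument'' cannot start: setting $V:=\var[P(\cut_n)]$, what you have is $V\le 3^d\var[S_P]+C\kappa_n$ and $\var[S_P]\le 2\var[P(\cu_{n+1})]+C(\tau_n+\kappa_n)$, and you are stuck with $\var[P(\cu_{n+1})]$ uncontrolled.

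What the paper does to break this impasse is to introduce a \emph{deterministic identity} as the anchor. It constructs a smooth, compactly supported, divergence-free vector field $\mathbf{f}$ in $\cut_{n+1}$ with $\mathbf{f}\equiv e$ on the central cube $\cu_n$ (and a companion $\varphi$ with $\nabla\varphi=\mathbf{f}$ on $\cu_n$). Because $\mathbf{f}\in\Lso(\cu_{n+1})$ and $\g(\cdot,\cu_{n+1})\in\Ls(\cu_{n+1})$, the exact identity~\eqref{e.divfree} holds: $\int_{\cu_{n+1}}\mathbf{f}\cdot\nabla u(\cdot,\cu_{n+1})=\int_{\cu_{n+1}}\nabla\varphi\cdot\g(\cdot,\cu_{n+1})=0$, with variance \emph{exactly zero}. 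Localization (your Step~1) then shows that replacing $(\nabla u,\g)(\cdot,\cu_{n+1})$ by the cube-wise minimizers $(\nabla u,\g)(\cdot,\cut_n(y))$ perturbs this anchored quantity by an $L^2$ error of size $\sqrt{\tau_n+\kappa_n}\,|\cu_{n+1}|$, hence the variance of $\sum_y\int_{\cut_n(y)}\mathbf{f}\cdot\nabla u(\cdot,\cut_n(y))$ is at most $C(\tau_n+\kappa_n)|\cu_{n+1}|^2$ --- no bootstrap needed. Expanding the variance and using mixing to kill the cross-covariances gives a bound on each diagonal term, and since $\mathbf{f}\equiv e$ on $\cut_n$, the $y=0$ term is exactly $|\cut_n|\,e\cdot P(\cut_n)$, which yields $\var[e\cdot P(\cut_n)]\le C\tau_n+C'\kappa_n$; the flux estimate follows identically from the $\varphi$-identity. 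Without the anchoring vector field your approach cannot close, regardless of how the $3^{-d}$ prefactor is juggled.
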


\begin{proof}
In this argument, $C$ denotes a constant depending only on $(d,\Lambda,\beta)$ and $C'$ denotes a constant depending only on~$(d,\Lambda,\beta,C_3)$; these may vary in each occurrence. Fix $n\in\N$,  a unit direction $e\in \partial B_1$, a smooth vector field $\mathbf{f}:\Rd \to \Rd$ and a smooth function $\varphi:\Rd \to \R$ satisfying the following:
\begin{equation*}
\left\{ 
\begin{aligned}
& \mathbf{f} \ \ \mbox{and} \ \ \varphi \quad\mbox{have compact support in} \ \cut_{n+1},\\
& \mathbf{f} = \nabla \varphi = e \quad \mbox{in} \ \cu_n, \\
& \div \mathbf{f} = 0, \\
& \left|  \mathbf{f}  \right| + \left| \nabla \varphi \right| \leq C \quad \mbox{in} \ \Rd.
\end{aligned}
\right.
\end{equation*}
Since $\mathbf{f}$ and $\g(\cdot,\cu_{n+1})$ are divergence-free, we have
\begin{equation}
\label{e.divfree}
\fint_{\cu_{n+1}} \mathbf{f}(x) \cdot \nabla u(x,\cu_{n+1})\,dx  =   \fint_{\cu_{n+1}} \nabla \varphi(x) \cdot \g(x,\cu_{n+1})\, dx  = 0.
\end{equation}

\emph{Step 1.} We localize $\nabla u(\cdot,\cu_{n+1})$ and $\g(\cdot,\cu_{n+1})$ to the trimmed subcubes. The precise claim is that 
\begin{multline} \label{e.gradloc}
\E \left[ \fint_{U\cup V} \left(  \left| \nabla u(x,\cu_{n+1}) - \nabla u(x,U\cup V)\right|^2 +  \left| \g(x,\cu_{n+1}) - \g(x,U\cup V)\right|^2 \right)\,dx \right] \\
\leq C\tau_n+C\kappa_n,
\end{multline}
where
\begin{equation*} \label{}
U:= \bigcup_{z\in\{ -3^n,0,3^n\}^d} \left( z+\cut_n \right) \qquad \mbox{and} \qquad V:= \cu_{n+1} \setminus \overline U,
\end{equation*}
and $\overline U$ denotes the closure of $U$. 
There are $3^d+1$ connected components of $U\cup V$, which are $V$ and the cubes of the form $\cut_n(x) \subseteq \cu_{n+1}$. Thus for every $x\in \cu_{n+1}$, we have
\begin{equation}\label{e.restrict}
\left\{ \begin{aligned}
& (u,\g)(\cdot,U\cup V) \vert_{\cut_n(x) }  = (u,\g)(\cdot,U)\vert_{\cut_n(x) } = (u,\g)(\cdot,\cut_n(x)) , \\
& (u,\g) (\cdot,U\cup V)\vert_{V} =  (u,\g)(\cdot,V).
\end{aligned} \right.
\end{equation}
In particular, 
\begin{equation*}
 \mu(U)  = 3^{-d} \sum_{\cut_{n}(x)\subseteq \cu_{n+1}} \mu(\cut_{n}(x))
\end{equation*}
and
\begin{equation}
\label{e:addit}
 \mu(U \cup V)  = \frac{|V|}{|U\cup V|} \mu(V) + \frac{|U|}{|U\cup V|} \mu(U).
\end{equation}
By stationarity,
\begin{equation*}
\E\left[ \mu(U)\right]  =  \E\left[ \mu(\cut_{n}) \right].
\end{equation*}
Notice that, by~\eqref{e.sculpt}, the volume of~$V$ is small in proportion to that of~$\cu_{n+1}$:
\begin{equation} \label{e.Vsculpt}
\left| V \right| \leq C 3^{-n\beta/(1+\beta)} \left| \cu_{n+1} \right|.
\end{equation}
It follows from~\eqref{e:addit} and \eqref{e.mubound2} that
\begin{equation*}
\mu(U \cup V)  = \mu(U) + \frac{|V|}{|U\cup V|}\left(  \mu(V) - \mu(U)\right)   \geq \mu(U) - CK_0^2 3^{-n\beta/(1+\beta)}.
\end{equation*}
By Lemma~\ref{l.unifconv} and the previous inequality,
\begin{multline*}
\lefteqn{ \E \left[ \fint_{U\cup V} \left(  \left| \nabla u(x,\cu_{n+1}) - \nabla u(x,U\cup V)\right|^2 +  \left| \g(x,\cu_{n+1}) - \g(x,U\cup V)\right|^2 \right)\,dx \right]  } \\
\begin{aligned}
& \leq C \left(  \E \left[  \mu(\cu_{n+1}) \right] - \E \left[ \mu(U \cup V) \right] \right)  \\
& \leq C \left(  \E \left[  \mu(\cu_{n+1}) \right] - \E \left[ \mu(U) \right] + CK_0^2  3^{-n\beta/(1+\beta)} \right)  \\
& = C( \tau_n + C\kappa_n),
\end{aligned}
\end{multline*}
and this is~\eqref{e.gradloc}.

\smallskip

\emph{Step 2.} Using~\eqref{e.divfree},~\eqref{e.gradloc} and \eqref{e.restrict}, we compute
\begin{multline*}
\var\left[ \int_{V} \mathbf{f}(x) \cdot \nabla u(x,V)\,dx + \sum_{\cut_n(y) \subseteq \cu_{n+1}}  \int_{\cut_n(y)} \mathbf{f}(x) \cdot \nabla u(x,\cut_n(y))\,dx \right]  \\
\begin{aligned}
 & = \var\left[ \int_{U\cup V} \mathbf{f}(x) \cdot \nabla u(x,U\cup V)\,dx \right] \\
 &  \leq 2\var\left[ \int_{\cu_{n+1}} \mathbf{f}(x) \cdot \nabla u(x,\cu_{n+1})\,dx \right]  + C(\tau_n +C\kappa_n) \left|\cu_{n+1}\right|^2  \\
 & = C( \tau_n +C\kappa_n) \left|\cu_{n+1}\right|^2.
\end{aligned}
\end{multline*}
By~\eqref{e.boundminimizers} and~\eqref{e.Vsculpt}, we have 
\begin{equation*} \label{}
\int_{V} \left| \nabla u(x,V)\right|^2\,dx\leq C|V|K_0^2 \leq CK_0^2 3^{-n\beta/(1+\beta)} \left|\cu_{n+1}\right| = C \kappa_n  \left|\cu_{n+1}\right|.
\end{equation*}
The previous two inequalities yield
\begin{equation*} \label{}
\var\left[ \sum_{\cut_n(y) \subseteq \cu_{n+1}}  \fint_{\cut_n(y)} \mathbf{f}(x) \cdot \nabla u(x,\cut_n(y))\,dx  \right] \leq C ( \tau_n+C\kappa_n). 
\end{equation*}
Now we expand the variance using the identity
\begin{multline*}
\var\left[ \sum_{\cut_n(y) \subseteq \cu_{n+1}}  \fint_{\cut_n(y)} \mathbf{f}(x) \cdot \nabla u(x,\cut_n(y))\,dx  \right] \\ = \sum_{\cut_{n}(y),\cut_n(z) \subseteq \,\cu_{n+1} } \cov\bigg[ \fint_{\cut_n(y)} \mathbf{f}(x) \cdot \nabla u(x,\cut_n(y))\,dx \,, \\ \, \fint_{\cut_n(z)} \mathbf{f}(x) \cdot \nabla u(x,\cut_n(z))\,dx \bigg].
\end{multline*}
Recall that by \eqref{e.boundminimizers} and the definition of $\mathbf f$, the random variables appearing in the covariances above are $\P$-a.s.~bounded by $C K_0$. In view of~\eqref{e.separate}, we can apply the mixing condition~(P3) in the form of~\eqref{e.howweuseP3} to obtain, for every~$\cut_{n}(y) \neq \cut_n(z)$,
\begin{multline*}
\left| \cov\left[ \fint_{\cut_n(y)} \mathbf{f}(x) \cdot \nabla u(x,\cut_n(y))\,dx \ , \ \fint_{\cut_n(z)} \mathbf{f}(x) \cdot \nabla u(x,\cut_n(z))\,dx \right] \right| \\
 \leq C'K_0^2 3^{-n\beta/(1+\beta)} = C'K_0^2\kappa_n.
\end{multline*}
To sum up, we have proved the estimate
\begin{equation*} \label{}
\sum_{\cut_n(y) \subseteq \cu_{n+1}}  \var\left[  \fint_{\cut_n(y)} \mathbf{f}(x) \cdot \nabla u(x,\cut_n(y))\,dx  \right] \leq C(\tau_n+C'\kappa_n).
\end{equation*}
The previous inequality and the fact that $\mathbf{f}\equiv e$ in $\cut_n$ imply
\begin{align*}
\var\left[ e \cdot P(\cut_n) \right] & =  \var\left[ \fint_{\cut_n} \mathbf{f}(x) \cdot \nabla u(x,\cut_n)\,dx \right]  \\
& \leq \sum_{\cut_n(y) \subseteq \cu_{n+1}}  \var\left[  \fint_{\cut_n(y)} \mathbf{f}(x) \cdot \nabla u(x,\cut_n(y))\,dx  \right]  \leq C(\tau_n+C'\kappa_n).
\end{align*}
An almost identical argument, starting from the second equality of~\eqref{e.divfree} rather than the first and replacing each occurrence of $\mathbf{f}(x) \cdot \nabla u$ by $\nabla \varphi(x) \cdot \g$, gives the estimate 
\begin{equation*}
 \var\left[ e \cdot Q(\cut_n) \right] \leq C(\tau_n+C'\kappa_n).
\end{equation*}
Summing the previous two inequalities over $e\in \{ e_1,\ldots, e_d\}$ yields the lemma. 
\end{proof}

Note that~\eqref{e.cutlocalize} permits us to obtain the conclusion of Lemma~\ref{l.flat} for the untrimmed cubes: there exists $C(d,\Lambda,\beta) \geq 1$ and $C'(d,\Lambda,\beta,C_3)\geq 1$ such that, for every $n\in\N$,
\begin{equation} \label{e.flatuntrimmed}
\E  \left[ \left| P(\cu_n) - \overline P_n \right|^2 \right] + \E\left[ \left| Q(\cu_n) - \overline Q_n \right|^2 \right] \leq C \tau_n+C'\kappa_n.
\end{equation}

The next lemma is the key step in the proof of Proposition~\ref{p.muconv}. It allows us to estimate the expected difference between $\mu$ and its limit $\overline\mu$ by the expected difference between $\mu$ at two successive scales. 

\begin{lemma}
\label{l.iterable}
There exist $C(d,\Lambda,\beta) \geq 1$ and $C'(d,\Lambda,\beta,C_3)\geq 1$ such that, for every $n\in\N$,
\begin{equation}
\label{e.iterable}
\E \left[ \mu_0(\cut_{2n},\overline P_n,\overline Q_n ) \right] \leq \E \left[ \mu(\cu_n) \right]  + C(\tau_n+C'\kappa_n).
\end{equation}
\end{lemma}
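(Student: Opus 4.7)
My strategy is to construct a competitor for the variational problem defining $\mu_0(\cut_{2n}, \overline P_n, \overline Q_n)$ from the free-boundary minimizer of $\mu(\cu_n, 0, 0)$, and to control its expected energy via the convexity tools of Section~\ref{s.subadditive} combined with the flatness estimate in Lemma~\ref{l.flat}.

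\emph{Reduction to a single scale.} First, by the subadditivity \eqref{e.subadd} of $\mu_0$ applied to the partition of $\cut_{2n}$ into the $3^{nd}$ triadic subcubes $\cu_n(y) \subseteq \cut_{2n}$ together with the thin residual region $\cut_{2n}\setminus \bigcup_y \cu_n(y)$, and using stationarity together with~\eqref{e.mu0bound} and~\eqref{e.sculpt}, one obtains
\begin{equation*}
\E\bigl[\mu_0(\cut_{2n}, \overline P_n, \overline Q_n)\bigr] \le \E\bigl[\mu_0(\cu_n, \overline P_n, \overline Q_n)\bigr] + C\kappa_n.
\end{equation*}
It therefore suffices to establish the single-cube inequality
\begin{equation*}
\E\bigl[\mu_0(\cu_n, \overline P_n, \overline Q_n)\bigr] \le \E[\mu(\cu_n)] + C(\tau_n + C'\kappa_n).
\end{equation*}

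\emph{Algebraic core.} Let $(u, \g)$ denote the minimizer of $\mu(\cu_n, 0, 0)$, and set $\delta P := \overline P_n - P(\cu_n)$, $\delta Q := \overline Q_n - Q(\cu_n)$. The key algebraic bound comes from Lemma~\ref{l.upperunifconv} (extended via Remark~\ref{r.dontbotherus}) applied to the shifted pair $(u(x) + \delta P\cdot x,\, \g(x) + \delta Q) \in H^1(\cu_n) \times \Ls(\cu_n)$ tested against the minimizer itself: since $\fint_{\cu_n} F(\nabla u, \g, x)\,dx = \mu(\cu_n)$, the lemma yields
\begin{equation*}
\fint_{\cu_n} F(\nabla u + \delta P, \g + \delta Q, x)\,dx \le \mu(\cu_n) + \frac{\Lambda}{4}\bigl(|\delta P|^2 + |\delta Q|^2\bigr).
\end{equation*}
Taking expectations and using~\eqref{e.flatuntrimmed} to bound $\E[|\delta P|^2 + |\delta Q|^2]$ by $C\tau_n + C'\kappa_n$, the right-hand side is estimated by $\E[\mu(\cu_n)] + C(\tau_n + C'\kappa_n)$, which is the desired bound up to the boundary-condition issue addressed next.

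\emph{Passage to $\mu_0$.} The shifted pair above is admissible for $\mu$ but not for $\mu_0$, which additionally requires $v \in H^1_0(\cu_n)$ and $\h \in \Lso(\cu_n)$. I would promote the estimate by correcting $(u + \delta P\cdot x,\, \g + \delta Q)$ near $\partial\cu_n$ via a mesoscopic boundary-layer construction: a smooth $H^1_0$-cutoff on an annular strip delivers the scalar correction, and a solenoidal extension (solving an appropriate divergence equation on the same strip) delivers the flux correction into $\Lso(\cu_n)$. The cost of both corrections is bounded by $|\delta P|^2 + |\delta Q|^2 + \kappa_n$, using the improved Meyers integrability of $(u,\g)$ from Proposition~\ref{p.interiorMeyers} and the Euler--Lagrange identities $\partial_p F(\nabla u, \g, \cdot) \in \Lso(\cu_n)$ (Neumann condition) and $\partial_q F(\nabla u, \g, \cdot) = \nabla\lambda$ with $\lambda \in H^1_0(\cu_n)$ (Lagrange multiplier), both of which force the corresponding spatial means to vanish and thereby kill the linear cross-terms generated by the shift.

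\emph{Main obstacle.} The principal technical difficulty is the construction of the solenoidal correction to $\g + \delta Q$ near $\partial\cu_n$: it must lie in $\Lso(\cu_n)$ and have $L^2$-cost at most $O(|\delta Q|^2 + \kappa_n)$. This requires a careful solution of the divergence equation on a thin annular strip with prescribed normal trace, combined with trace estimates using the $L^{2+\delta}$-integrability of $\g$, to ensure that the boundary-layer energy is absorbed into the target error rather than blowing up with $K_0^2$.
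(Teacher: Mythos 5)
There is a genuine gap in your proposed ``passage to $\mu_0$'' step, and it is exactly the difficulty that forces the paper to go through a two-scale construction rather than a single-scale one. Your reduction to the single-cube claim $\E[\mu_0(\cu_n,\overline P_n,\overline Q_n)]\le \E[\mu(\cu_n)]+C(\tau_n+C'\kappa_n)$ is formally permissible, and the algebraic core (the application of Lemma~\ref{l.upperunifconv} to the shifted pair, with $\E[|\delta P|^2+|\delta Q|^2]$ controlled by~\eqref{e.flatuntrimmed}) is fine. But the boundary-layer correction you sketch cannot be made cheap. To turn the shifted pair into an admissible competitor for $\mu_0(\cu_n,\cdot,\cdot)$, you must replace $u-P(\cu_n)\cdot x$ by an $H^1_0(\cu_n)$ function, and the natural cutoff on an annular strip of width $\rho$ produces a gradient term of the form $\nabla\chi\,(u-P(\cu_n)\cdot x)$. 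Even though $\nabla u - P(\cu_n)$ has zero mean, Poincar\'e only gives $\|u-P(\cu_n)\cdot x\|_{L^2(\cu_n)}\lesssim 3^n\|\nabla u - P(\cu_n)\|_{L^2(\cu_n)}\sim 3^n K_0$; there is no pointwise bound on the oscillation of the free-boundary minimizer relative to its affine approximation (obtaining one is essentially the Lipschitz estimate being proved, so the argument would be circular). Normalizing the extra cutoff energy over $\cu_n$ gives a contribution of order $3^n/\rho$, which is $O(1)$ only when $\rho\sim 3^n$ -- i.e. the ``boundary layer'' is the whole cube. Neither Meyers' integrability nor the Euler--Lagrange identities (which do kill the linear cross-terms in the bulk, as you note) address this: the obstruction is the size of the function $u-P(\cu_n)\cdot x$ near $\partial\cu_n$, not the size of its gradient. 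The solenoidal correction to $\g+\delta Q$ has the analogous problem.

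The paper's proof avoids this by working at two scales and on the level of vector fields rather than potentials. It patches the gradients $\nabla u(\cdot,z+\cu_{n+1})-\overline P_n$ and fluxes $\g(\cdot,z+\cu_{n+1})-\overline Q_n$ across the overlapping mesoscopic cubes $z+\cu_{n+1}\subseteq\cut_{2n}$ using a partition of unity, then takes a Helmholtz--Hodge projection to recover a genuine gradient in $\nabla H^1_0(\cut_{2n})$ and a genuine solenoidal field in $\Lso(\cut_{2n})$. The key point is that the defect of $\mathbf f$ from being a gradient concentrates on the derivatives $\nabla\psi_z$ multiplied by \emph{differences of nearby minimizers}, and these differences are small in $L^2$ by Lemma~\ref{l.unifconv} and the flatness of $P(\cu_n)$ (Lemma~\ref{l.flat}); the macroscopic boundary cutoff lives in a strip whose volume fraction of $\cut_{2n}$ is $O(3^{-n\beta/(1+\beta)})$, which is genuinely small because the ambient cube has side $3^{2n}$, not $3^n$. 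Both of these mechanisms are lost in your single-scale reduction, which is why the residual error cannot be pushed below $O(1)$.
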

\begin{proof}
The convention for the constants $C$ and $C'$ is the same here as in the previous lemma. 


\smallskip

Fix $n\in\N$. In order to estimate the quantity $\mu_0(\cut_{2n},\overline P_n,\overline Q_n )$ from above, we construct a candidate for minimizing the energy which satisfies the appropriate affine boundary conditions. Precisely, it suffices to exhibit $(v,\h)\in H^1_0(\cut_{2n}) \times \Lso(\cut_{2n})$ satisfying 
\begin{equation} 
\label{e.exhibition}
\E \left[ \fint_{\cut_{2n}} F \left( \overline P_n+ \nabla v(x), \overline Q_n + \h(x),x\right) \, dx \right] \\
\leq \E \left[ \mu(\cu_n) \right]  
+C(\tau_n+C'\kappa_n).
\end{equation}

\smallskip

\emph{Step 1.} The construction of the candidate $(v,\h)\in H^1_0(\cut_{2n}) \times \Lso(\cut_{2n})$, which we build by patching the minimizers for $\mu$ on the overlapping family of cubes $\{ z+\cu_{n+1}\,:\, z\in 3^n\Zd\}.$  We first build a partition of unity subordinate to these cubes by setting
\begin{equation*}
\psi(y):= \int_{\cu_{n}} \psi_0(y-x)\,dx,
\end{equation*}
where $\psi_0\in C^\infty_c(\Rd)$ is a smooth, even function satisfying
\begin{equation*}
0\leq \psi_0\leq C3^{-dn}, \quad \int_{\Rd} \psi_0(x)\,dx=1, \quad |\nabla \psi_0|\leq C3^{-(d+1)n}, \quad \supp \psi_0 \subseteq \cu_n.
\end{equation*}
We note that $\psi$ is smooth, even and supported in $\cu_{n+1}$, satisfies $0\leq \psi\leq 1$, and the translates of it form a partition of unity: for each $x\in\Rd$,
\begin{equation}\label{e.partunity}
\sum_{z\in3^n\Zd} \psi(x-z) = 1.
\end{equation}
Moreover, we have 
\begin{equation} \label{e.psiderv}
\sup_{x\in\Rd} \left| \nabla \psi(x) \right| \leq C3^{-n}.
\end{equation}
We also introduce two smooth cutoff functions $\xi,\zeta \in C^\infty_c(\cu_{2n})$ satisfying
\begin{multline} \label{e.xicutoff}
0\leq \xi\leq 1, \quad \xi\equiv 1 \ \mbox{on} \ \{ x\in\cu_{2n}\,:\, \dist(x,\partial \cu_{2n}) > C 3^{2n/(1+\delta)} \}, \\
 \xi \equiv 0 \ \mbox{on} \  \left\{x \in z+\cu_{n} \,:\, z\in 3^n\Zd, \ z+\cu_{n+1} \not\subseteq  \cut_{2n} \right\}  , \quad |\nabla\xi|\leq C3^{-2n/(1+\delta)}
\end{multline}
where $\delta \in (0,\beta]$ will be selected below in Step~7, and
\begin{multline} \label{e.zetacutoff}
 0\leq \zeta \leq 1, \quad \zeta \equiv 1 \ \mbox{on} \  \left\{x \in  z+\cu_{n} \,:\, z\in 3^n\Zd, \ z+\cu_{n+3} \subseteq \cu_{2n}  \right\}, \\
 \zeta \equiv 0 \ \mbox{on} \  \left\{ x \in z+\cu_{n+1} \,:\, z\in 3^n\Zd, \ z+\cu_{n+1} \not\subseteq \cu_{2n} \right\}, \quad |\nabla\zeta|\leq C3^{-n}.
\end{multline}

\smallskip

To construct $v$, we first define a vector field $\mathbf{f} \in L^2 (\Rd;\Rd)$ by 
\begin{equation} \label{e.def.f}
\mathbf{f} (x):= \zeta(x) \sum_{z\in3^n\Zd} \psi(x-z) \left( \nabla u(x,z+\cu_{n+1}) - \overline P_n  \right).
\end{equation}
Since~$\mathbf{f}$ is not necessarily the gradient of an $H^1$ function, due to the errors made by introducing the partition of unity and the cutoff function, we need to take its Helmholtz-Hodge projection. We may write 
\begin{equation} \label{e.helmholtz}
\mathbf{f} = \overline{\mathbf{f}}+ \nabla w - \nabla \cdot \mathbf{S} \quad \mbox{in} \ \cu_{2n},
\end{equation}
where 
\begin{equation*}
\overline{\mathbf{f}}:= \fint_{\cu_{2n}} \mathbf{f}(x)\,dx,
\end{equation*}
$w\in H^1_{\mathrm{loc}}(\Rd)$ is defined as the unique solution of 
\begin{equation*} \label{}
\left\{ 
\begin{aligned}
& -\Delta w = -\nabla \cdot {\mathbf{f}} \ \  \mbox{in}  \ \cu_{2n}, \\
& \fint_{\cu_{2n}} w(x)\,dx =0, \\
& w \ \   \mbox{is \ $\cu_{2n}$--periodic},
\end{aligned}
\right.
\end{equation*}
and $\mathbf{S}$ is valued in the skew-symmetric matrices and has entries $S_{ij} \in H^1_{\mathrm{loc}}(\Rd)$ uniquely determined (up to an additive constant) by 
\begin{equation*} \label{}
\left\{ 
\begin{aligned}
& -\Delta S_{ij} = \partial_j f_i - \partial_i f_j \quad \mbox{in} \ \cu_{2n}, \\
& S_{ij}\ \   \mbox{is \ $\cu_{2n}$--periodic}.
\end{aligned}
\right.
\end{equation*}
Here $f_i$ denotes the $i$th entry of~$\mathbf{f}$ and $\nabla \cdot \mathbf{S}$ is the vector field with entries $\sum_{j=1}^d \partial_j S_{ij}$. Indeed,
one may check via a straightforward computation that each component of the vector field $\mathbf{f} - \nabla w + \nabla \cdot \mathbf{S}$ is harmonic and therefore constant by periodicity. This constant must be $\overline {\mathbf{f}}$ since $\nabla w$ and $\nabla \cdot \mathbf{S}$ have zero mean in~$\cu_{2n}$. This confirms~\eqref{e.helmholtz}. 

\smallskip

We finally define $v\in H^1_0(\cut_{2n})$ by setting
\begin{equation*} \label{}
v(x):= \xi(x) w(x), \quad x\in \cu_{2n}.
\end{equation*}
Note that the cutoff function~$\xi$ is supported in $\cut_{2n}$ and thus we indeed have $v\in H^1_0(\cut_{2n})$. Below we will argue that $\nabla v$ is expected to be close to $\mathbf{f}$ in $L^2(\cut_{2n})$ due to the fact that, as we will show,~$w$,~$\overline{\mathbf{f}}$ and~$\nabla \cdot \mathbf{S}$ each have a small expected~$L^2$ norm.

\smallskip

We proceed similarly to construct $\mathbf{h} \in \Lso(\cut_{2n})$. Define~$\mathbf{k}\in L^2 (\Rd;\Rd)$ by
 \begin{equation} \label{e.def.k}
 \mathbf{k} (x) := \zeta(x) \sum_{z\in3^n\Zd} \psi(x-z)\left( \g(x,z+\cu_{n+1} ) - \overline Q_n \right),
\end{equation}
Since~${\mathbf{k}}$ is not necessarily solenoidal, we remove its divergence part via the Helmholtz-Hodge projection. As above, we may write 
\begin{equation*} \label{}
 \mathbf{k} =  \overline{\mathbf{k}} + \nabla h - \nabla \cdot \mathbf{T}
\end{equation*}
where
\begin{equation*} \label{}
 \overline{\mathbf{k}}:= \fint_{\cu_{2n}} \mathbf{k}(x)\,dx,
\end{equation*}
$h\in H^1_{\mathrm{per}}(\cu_{2n})$ is the unique solution of
\begin{equation*} \label{}
\left\{ 
\begin{aligned}
& -\Delta h = -\nabla \cdot {\mathbf{k}} \ \  \mbox{in}  \ \cu_{2n}, \\
& \fint_{\cu_{2n}} h(x)\,dx =0, \\
& h \ \   \mbox{is \ $\cu_{2n}$--periodic},
\end{aligned}
\right.
\end{equation*}
and $\mathbf{T}$ is a skew-symmetric matrix-valued field with entries in $H^1_{\mathrm{per}}(\cu_{2n})$. We finally define $\mathbf{h} \in \Lso(\cut_{2n})$ by setting
\begin{equation*} \label{}
\mathbf{h}(x):= \xi(x)\left( \nabla \cdot \mathbf{T}\right)(x) - \nabla \tilde h,
\end{equation*}
where $\tilde h \in H^1(\cut_{2n})$ is defined (also uniquely up to a constant) as the solution of
\begin{equation*} \label{}
\left\{ 
\begin{aligned}
& -\Delta \tilde h = -\nabla \xi(x) \cdot (\nabla \cdot \mathbf{T}) &  \mbox{in} & \  \cut_{2n} , \\
& \partial_\nu \tilde h  = 0 &  \mbox{on} & \  \partial\cut_{2n}.
\end{aligned}
\right.
\end{equation*}
It is clear that $\mathbf{h} \in \Lso(\cut_{2n})$. Below we will argue that $\left| \mathbf{k} - \mathbf{h}\right|$ has small expected~$L^2(\cut_{2n})$ norm.

\smallskip

This completes the construction of $(v,\mathbf{h}) \in H^1_0(\cut_{2n}) \times \Lso(\cut_{2n})$. The rest of the argument is focused on the proof of~\eqref{e.exhibition}.

\smallskip

\emph{Step 2.} We show that, for every $z\in3^n\Zd \cap \cu_{2n}$,
\begin{multline}\label{e.gradlocalize}
\E \bigg[ \fint_{z+\cu_n} \Big(  \left| \mathbf{f}(x) - \zeta(x) \left( \nabla u(x,z+\cu_{n+1}) - \overline P_n \right) \right|^2  \\ + \left| \mathbf{k}(x) - \zeta(x) \left( \g(x,z+\cu_{n+1}) - \overline Q_n \right) \right|^2 \Big)\,dx  \bigg] \leq  C \tau_n.
\end{multline}
By Lemma~\ref{l.unifconv} we have, for every $z\in3^n\Zd$,
\begin{multline*}
\sum_{y\in  \{ -3^n,0,3^n\}^d} \fint_{z+y+\cu_n} \left| (\nabla u,\g)(x,z+\cu_{n+1}) - (\nabla u,\g)(x,z+y+ \cu_{n}) \right|^2 \, dx \\
\begin{aligned}
& \leq C \sum_{y\in  \{ -3^n,0,3^n\}^d} \left( \fint_{z+y+\cu_n} F\left( (\nabla u,\g)(x,z+\cu_{n+1}),x\right)\, dx - \mu(y+z+\cu_n) \right) \\
& = C \left( \mu(z+\cu_{n+1}) - \sum_{y\in  \{ -3^n,0,3^n\}^d} \mu(z+y+\cu_n) \right).
\end{aligned}
\end{multline*}
Taking expectations and using the triangle inequality, we find that, for every $z\in 3^n\Zd$ and $y\in \{ -3^n,0,3^n\}^d$,
\begin{equation}\label{e.gridtrap}
\E \left[ \fint_{z+\cu_n} \left| (\nabla u,\g)(x,z+y+\cu_{n+1}) - (\nabla u,\g)(x,z+\cu_{n+1}) \right|^2 \, dx \right] 
\leq C \tau_n.
\end{equation}
The bound on the first term on the left of~\eqref{e.gradlocalize} is obtained from the previous inequality and the following identity, which holds for every $z\in 3^n\Zd$ and $x\in z+\cu_{n}$ by the definition of $\mathbf{f}$:
\begin{multline*}
\mathbf{f}(x) - \zeta(x) \left( \nabla u(x,z+\cu_{n+1}) - \overline P_n \right) \\ 
= \zeta(x) \sum_{y\in \{ -3^n,0,3^n\}^d} \psi(x-y) \left( \nabla u(x,z+y+\cu_{n+1}) - \nabla u(x,z+\cu_{n+1}) \right).
\end{multline*}
The bound on the second term on the left of~\eqref{e.gradlocalize} follows from~\eqref{e.gridtrap} and a similar identity. 

\smallskip

\emph{Step 3.} We claim that 
\begin{equation}\label{e.whipfbar}
 \E \left[  \left|\overline{\mathbf{f}}\right|^2  \right] + \E \left[  \left|\overline{\mathbf{k}}\right|^2  \right] \leq C\tau_n + C'\kappa_n.
\end{equation}
In view of~\eqref{e.zetacutoff}, it is convenient to denote
\begin{equation} \label{e.Zn}
\mathcal Z_n:= \left\{ z\in 3^n\Zd\,:\, z+\cu_{n+3} \subseteq \cu_{2n} \right\}. 
\end{equation}
Observe that $( 3^n\Zd \cap \cu_{2n} ) \setminus \mathcal Z_n$ has $C 3^{n(d-1)}$ elements. 
By Lemma~\ref{l.flat} (or more precisely~\eqref{e.flatuntrimmed}),~\eqref{e.gradlocalize} and~\eqref{e.boundminimizers}, we have
\begin{align*}
\E \left[ \left|\overline{\mathbf{f}} \right|^2 \right] & \leq \E \left[ 3^{-nd} \sum_{z\in 3^n\Zd \cap \cu_{2n}} \left| \fint_{z+\cu_n} \mathbf{f}(x)\,dx  \right|^2 \right]  \\
& \leq 2 \E \left[ 3^{-nd} \sum_{z\in 3^n\Zd \cap \cu_{2n}} \left| \fint_{z+\cu_n} \zeta(x)\left( \nabla u(x,z+\cu_{n+1})\,dx-\overline P_n \right)  \right|^2 \right]  + C\tau_n  \\
& \leq 2\E \left[ 3^{-nd} \sum_{z\in \mathcal{Z}_n} \left| \fint_{z+\cu_n}  \nabla u(x,z+\cu_{n+1})\,dx-\overline P_n  \right|^2 \right] + CK_0^23^{-n} + C\tau_n \\
& \leq C \tau_n + C'\kappa_n.
\end{align*}
An analogous calculation gives the estimate for $ \E \left[  \left|\overline{\mathbf{k}}\right|^2  \right]$.

\smallskip

\emph{Step 4.} We show that 
\begin{equation} \label{e.boundonw2}
\E \left[ 3^{-4n} \fint_{\cu_{2n}} \left| w(x) \right|^2\,dx \right] \leq C'K_0^23^{-n\beta / (1+\beta)}.
\end{equation}
Let $\phi \in H^2_{\mathrm{loc}}(\Rd)$ denote the unique solution of
\begin{equation*} \label{}
\left\{ 
\begin{aligned}
& -\Delta \phi = w \ \  \mbox{in}  \ \Rd, \\
& \fint_{\cu_{2n}} \phi(x)\,dx =0, \\
& \phi \ \  \mbox{is $\cu_{2n}$--periodic}.
\end{aligned}
\right.
\end{equation*}
By integration by parts, we have the identities
\begin{multline}\label{e.softH2}
\int_{\cu_{2n}} \left| \nabla \nabla \phi(x) \right|^2\,dx = \int_{\cu_{2n}} \left| w(x) \right|^2\,dx = \int_{\cu_{2n}} \nabla \phi(x) \cdot \mathbf f(x)  \, dx \\ 
= \int_{\cu_{2n}} \nabla \phi(x) \cdot \left( \mathbf f(x) - \E \left[ \int_{\cu_n} \mathbf{f}(x)\,dx \right]  \right)  \, dx .
\end{multline}
We need a second mesoscale, given by an integer $k \in (n,2n)$ to be selected below. In what follows, we denote $ (\nabla \phi)_z:= \fint_{z+\cu_k} \nabla \phi(x)\,dx$ and $\sum_z = \sum_{z \in3^k\Zd\cap \cu_{2n}}$ as well as $\tilde{ \mathbf{f}}:=\mathbf{f}- \E \left[ \int_{\cu_n} \mathbf{f}(x)\,dx \right] $. Now we estimate, by the Poincar\'e inequality:
\begin{align*}
\int_{\cu_{2n}} \left| w(x) \right|^2\,dx & = \int_{\cu_{2n}}  \nabla \phi(x) \cdot \tilde{\mathbf {f}}(x) \, dx \\
& = \sum_z \left( \int_{z+\cu_k} \left( \nabla \phi(x)    - (\nabla \phi)_z  \right) \cdot  \tilde{\mathbf {f}}(x) \,dx +  (\nabla \phi)_z \cdot \int_{z+\cu_k} \tilde{\mathbf {f}}(x)\,dx \right) \\
& \begin{multlined}[.77\textwidth]
\leq C \sum_z 3^k \left( \int_{z+\cu_k} \left| \nabla\nabla \phi(x) \right|^2\,dx \right)^{\frac12} \left( \int_{z+\cu_k}  \left| \mathbf{f}(x) \right|^2\,dx \right)^{\frac12} \\
 +  C\sum_z(\nabla \phi)_z \cdot \int_{z+\cu_k}  \tilde{\mathbf {f}}(x)\,dx.\end{multlined}
\end{align*}
To estimate the first sum on the right side of the previous inequality, we use the discrete H\"older inequality,~\eqref{e.softH2} and~\eqref{e.boundminimizers}, and then Young's inequality:
\begin{align*}
\lefteqn{ \sum_z 3^k \left( \int_{z+\cu_n} \left| \nabla\nabla \phi(x) \right|^2\,dx \right)^{\frac12} \left( \int_{z+\cu_n}  \left|  \tilde{\mathbf {f}}(x) \right|^2\,dx \right)^{\frac12} } \qquad & \\
& \leq 3^k \left(\sum_z \int_{z+\cu_n} \left| \nabla\nabla \phi(x) \right|^2\,dx \right)^{\frac12} \left(\sum_z \int_{z+\cu_n}  \left|  \tilde{\mathbf {f}}(x) \right|^2\,dx\right)^{\frac12} \\
& = 3^k \left( \int_{\cu_{2n}} \left| \nabla\nabla \phi(x) \right|^2 \,dx \right)^{\frac12} \left( \int_{\cu_{2n}} \left|  \tilde{\mathbf {f}}(x)\right|^2 \,dx \right)^{\frac12} \\
& \leq 3^k \left( \int_{\cu_{2n}} \left| w(x) \right|^2\,dx \right)^{\frac12} \left( CK_0^2\left| \cu_{2n} \right|\right)^{\frac12} \\
& \leq \frac14 \int_{\cu_{2n}} \left| w(x) \right|^2\,dx  + CK_0^2\left| \cu_{2n} \right| 3^{2k}.
\end{align*}
We next estimate the expectation of the second sum. Using H\"older's inequality in two different forms, we get
\begin{align}\label{e.holderapps}
\lefteqn{\E \left[ \sum_z (\nabla \phi)_z \cdot \int_{z+\cu_k} \tilde{\mathbf{f}}(x)\,dx \right] } \qquad & \\
& = \sum_z \E \left[ (\nabla \phi)_z \cdot \int_{z+\cu_k} \tilde{\mathbf{f}}(x)\,dx \right] \notag \\
& \leq \sum_z \E \left[ \left| \left( \nabla \phi \right)_z \right|^2  \right]^{\frac12} \E \left[ \left( \int_{z+\cu_k} \tilde{\mathbf{f}}(x)\,dx \right)^2 \right]^{\frac12}\notag  \\
& \leq \left( \sum_z \E \left[ \left| \left( \nabla \phi \right)_z \right|^2  \right]  \right)^{\frac12} \left(\sum_z \E \left[ \left( \int_{z+\cu_k} \tilde{\mathbf{f}}(x)\,dx \right)^2 \right]  \right)^{\frac12}.  \notag
\end{align}
For the first factor on the right side of the previous inequality, we have, by the Poincar\'e inequality and~\eqref{e.softH2}, 
\begin{align}\label{e.ingred1}
 \sum_z \E \left[ \left| \left( \nabla \phi \right)_z \right|^2  \right]  = \E \left[ \sum_z \left| \left( \nabla \phi \right)_z \right|^2 \right] & \leq  C3^{-kd}\,\E \left[ \int_{\cu_{2n}} \left| \nabla \phi(x) \right|^2\,dx \right]  \\
& \leq C3^{-kd+4n}\,\E \left[ \int_{\cu_{2n}} \left| \nabla \nabla \phi(x) \right|^2\,dx \right] \notag\\
& =  C3^{-kd+4n}\,\E \left[ \int_{\cu_{2n}} \left| w(x) \right|^2\,dx \right].\notag
\end{align}
In preparation to estimate the second factor, we use the mixing condition in the form of~\eqref{e.howweuseP3} to get
\begin{align*}
\E \left[ \left( \int_{\cu_k} \tilde{\mathbf{f}}(x)\,dx \right)^2 \right] & = \E\left[ \sum_{y,y'\in 3^n\Zd\cap \cu_k}  \int_{y+\cu_n} \tilde{\mathbf{f}}(x)\,dx \int_{y'+\cu_n} \tilde{\mathbf{f}}(x)\,dx\right] \\
& \leq  C' \left( CK_0 \left| \cu_n \right| \right)^2  \sum_{y,y'\in 3^n\Zd\cap \cu_k} \left( 1 + |y-y'|\right)^{-\beta} \\
& = C' K_0^2 3^{2dk - \beta (k-n)}
\end{align*}
By stationarity, the same estimate holds with $z+\cu_k$ in place of $\cu_k$ provided that the cube $z+\cu_k$ does not touch $\partial \cu_{2n}$. For the cubes which do touch the  boundary of the macroscopic cube (and thus intersect the support of~$\zeta$), we use the following cruder, deterministic bound given by~\eqref{e.boundminimizers}:
\begin{equation*}
\left( \int_{\cu_k}  \tilde{\mathbf{f}}(x) \,dx \right)^2 \leq  C3^{dk} \int_{\cu_k}  \left| \tilde{\mathbf{f}}(x) \right|^2 \,dx \leq C K_0^2 3^{2dk} 
\end{equation*}
Combining these, using that there are at most $C3^{(2n-k)(d-1)}$ cubes of the form $z+\cu_k$ which touch the boundary of $\cu_n$, we get 
\begin{equation}\label{e.ingred2}
\sum_z \E \left[ \left( \int_{z+\cu_k} \tilde{\mathbf{f}}(x)\,dx \right)^2 \right] \leq C'K_0^2 3^{2dn + dk - \beta (k-n)} + CK_0^2 3^{2dn + dk - (2n-k)}.
\end{equation}
We may now estimate the right side of~\eqref{e.holderapps} using applying~\eqref{e.ingred1},~\eqref{e.ingred2} and Young's inequality. The result is:
\begin{multline}
\E \left[ \sum_z (\nabla \phi)_z \cdot \int_{z+\cu_k} \tilde{\mathbf{f}}(x)\,dx \right] \\ \leq \frac14 \E \left[ \int_{\cu_{2n}} \left| w(x) \right|^2\,dx \right] + K_0^2 \left| \cu_{2n}\right| 3^{4n}\left( C' 3^{-\beta (k-n)} + C3^{-(2n-k)} \right) 
\end{multline}
Combining the above inequalities now yields
\begin{equation*}
\E \left[ \int_{\cu_{2n}} \left| w(x) \right|^2\,dx \right]  \leq K_0^2 3^{4n} \left| \cu_{2n} \right| \left( C'3^{-\beta (k-n)} + C3^{-(2n-k)} \right).
\end{equation*}
Taking finally $k$ to be the nearest integer to $(2n+n\beta)/(1+\beta)$, we obtain~\eqref{e.boundonw2}.

\smallskip

\emph{Step 5.}
We estimate the expected contribution of $|\nabla h|^2$ by a computation which bears a resemblance to the one in the previous step. The claim is that
\begin{equation} \label{e.bangbang}
\E \left[ \fint_{\cu_{2n}} \left| \nabla h(x) \right|^2 \, dx \right] \leq  C(\tau_n + \kappa_n).
\end{equation}
Here we use the abbreviations $\mathbf{g}_z:= \mathbf{g}(\cdot,z+\cu_{n+1})$, $\psi_z:= \psi(\cdot-z)$, $\sum_z:= \sum_{z\in 3^n\Zd \cap \cu_{2n} }$ and $\int_z:= \int_{z+\cu_{n+1}}$. Observe that, in the sense of distributions, we have
\begin{equation}
\label{e.divkL2}
\nabla \cdot \mathbf{k} = \sum_{z}\zeta  \nabla \psi_z \cdot\left( \mathbf{g}_z - \overline Q_n \right) 
+ \sum_{z} \psi_z \nabla \zeta\cdot \left( \mathbf{g}_z - \overline Q_n\right) \quad \mbox{in} \ \cu_{2n}.
\end{equation}
In particular, the right-hand side belongs to $L^2(\cu_{2n})$ and thus $h \in H^2_{\mathrm{per}}(\cu_{2n})$. 
Using the identity
\begin{equation} \label{e.sillyidentity}
 \sum_{z} \zeta(x)\nabla \psi_z(x)  = 0,\quad x\in\Rd,
\end{equation}
which follows from~\eqref{e.partunity} and~\eqref{e.zetacutoff}, we may re-express the previous identity, for $x\in \cu_{2n}$, as
\begin{multline}
\label{e.divkL22}
(\nabla \cdot \mathbf{k}) (x)= \sum_{z} \zeta(x) \nabla \psi_z(x) \cdot\left( \mathbf{g}_z(x) - \overline Q_n - \mathbf{k}(x) \right) \\
+ \sum_{z} \psi_z(x) \nabla \zeta(x) \cdot \left( \mathbf{g}_z(x) - \overline Q_n\right).
\end{multline}
For each $i\in\{ 1,\ldots,d\}$, let $\phi_i\in H^3_{\mathrm{loc}}(\Rd)$ denote the unique solution of
\begin{equation*} \label{}
\left\{ 
\begin{aligned}
& -\Delta \phi_i = \partial_i h \ \  \mbox{in}  \ \cu_{2n}, \\
& \fint_{\cu_{2n}} \phi_i(x)\,dx =0, \\
&  \phi_i \ \  \mbox{is $\cu_{2n}$--periodic}.
\end{aligned}
\right.
\end{equation*}
Then integrating by parts, we find
\begin{equation}\label{e.softH22}
\int_{\cu_{2n}} \left| \nabla \nabla \phi_i(x) \right|^2\,dx  = \int_{\cu_{2n}} \left| \partial_i h (x) \right|^2\,dx =  \int_{\cu_{2n}} \partial_i \phi_i(x) \left(\nabla \cdot \mathbf{k}\right)(x) \, dx.
\end{equation}
We continue the computation by substituting~\eqref{e.divkL22} and using the notation $(\partial_i \phi_i)_z:=\fint_{z+\cu_{n+1}} \partial_i \phi_i(x)\,dx$:
\begin{align*}
\lefteqn{ \int_{\cu_{2n}} \partial_i \phi_i(x) \left(\nabla \cdot \mathbf{k}\right)(x) \, dx } \qquad &  \\
& = \sum_{z} \int_{z} \left( \partial_i \phi_i(x) - \left( \partial_i \phi_i\right)_z \right) \zeta(x) \nabla \psi_z(x) \cdot \left( \mathbf{g}_z(x) - \overline Q_n - \mathbf{k}(x) \right)   \,dx \\
& \qquad + \sum_{z}  \left( \partial_i \phi_i\right)_z \int_{z} \zeta(x) \nabla \psi_z(x) \cdot \left( \mathbf{g}_z(x) - \overline Q_n - \mathbf{k}(x) \right)   \,dx \\
& \qquad + \sum_{z}  \int_{z} \partial_i\phi_i(x)\psi_z(x) \nabla \zeta(x) \cdot \left( \mathbf{g}_z(x) - \overline Q_n \right) \,dx. 
\end{align*}
We put the second sum on the right side of the previous expression into a more convenient form using~\eqref{e.def.k}, \eqref{e.partunity}, integration by parts and \eqref{e.sillyidentity} twice:
\begin{align*}
\lefteqn{ \sum_{z}  \left( \partial_i \phi_i\right)_z \int_{z} \zeta(x) \nabla \psi_z(x) \cdot \left( \mathbf{g}_z(x) - \overline Q_n - \mathbf{k}(x) \right)   \,dx } \qquad & \\
& = \sum_{y,z}  \left( \partial_i \phi_i\right)_z \int_{z} \zeta(x)\psi_y(x) \nabla \psi_z(x) \cdot \left( \mathbf{g}_z(x) - \overline Q_n - \zeta(x) \left( \mathbf{g}_y(x) -\overline Q_n\right) \right)   \,dx \\
& = \sum_{y,z} \left(  \left( \partial_i \phi_i\right)_z - \left( \partial_i \phi_i\right)_y \right) \int_{z} - (\zeta(x))^2 \psi_y(x) \nabla \psi_z(x) \cdot  \left( \mathbf{g}_y(x) -\overline Q_n\right)   \,dx  \\
& \qquad + \sum_z \left( \partial_i \phi_i\right)_z \int_{z} -\psi_z(x) \nabla \zeta(x) \cdot \left( \mathbf{g}_z(x) - \overline Q_n\right)\,dx.
\end{align*}
Combining the previous two identities yields
\begin{align} \label{e.splittingk3}
\lefteqn{ \int_{\cu_{2n}} \partial_i \phi_i(x) \left(\nabla \cdot \mathbf{k}\right)(x) \, dx } \qquad &  \\
&  = \sum_{z} \int_{z} \left( \partial_i \phi_i(x) - \left( \partial_i \phi_i\right)_z \right) \zeta(x) \nabla \psi_z(x) \cdot \left( \mathbf{g}_z(x) - \overline Q_n - \mathbf{k}(x) \right)   \,dx \notag\\
& \qquad + \sum_{y,z} \left(  \left( \partial_i \phi_i\right)_y - \left( \partial_i \phi_i\right)_z \right) \int_{z} (\zeta(x))^2 \psi_y(x) \nabla \psi_z(x) \cdot  \left( \mathbf{g}_y(x) -\overline Q_n\right)   \,dx  \notag\\
& \qquad + \sum_z \int_{z} \left( \partial_i\phi_i(x)- \left( \partial_i \phi_i\right)_z \right)\psi_z(x) \nabla \zeta(x) \cdot \left( \mathbf{g}_z(x) - \overline Q_n\right)\,dx.\notag
\end{align} 
We estimate the three sums on the right side of~\eqref{e.splittingk3} in a similar fashion, each in turn. For the first sum, we use~\eqref{e.psiderv}, the H\"older, discrete H\"older and Poincar\'e inequalities,~\eqref{e.softH22} and Young's inequality to deduce that
\begin{align*}
\lefteqn{\sum_{z} \int_{z} \left( \partial_i \phi_i(x)    - (\partial_i \phi_i)_z  \right)\zeta(x) \left( \nabla \psi_z(x) \cdot \left( \mathbf{g}_z(x) - \overline Q_n- \mathbf{k}(x) \right) \right)  \,dx } \qquad & \\
& \leq C \left(  \sum_z  \int_z \left| \nabla \nabla \phi_i(x) \right|^2\,dx  \right)^{\frac12}  \left(  \sum_z  \int_z \left| \mathbf{g}_z(x) - \overline Q_n- \mathbf{k}(x) \right|^2\,dx  \right)^{\frac12} \\
& \leq \frac14 \int_{\cu_{2n}} \left| \partial_i h(x) \right|^2\,dx + C\sum_z \int_z \left| \mathbf{g}_z(x) - \overline Q_n- \mathbf{k}(x) \right|^2\,dx.
\end{align*}
Taking expectations and using~\eqref{e.gradlocalize}, we get
\begin{multline} \label{e.firstsumnailedk}
\E \left[ \sum_{z} \int_{z} \left( \partial_i \phi_i(x)    - (\partial_i \phi_i)_z  \right) \left( \nabla \psi_z(x) \cdot \left( \mathbf{g}_z(x) - \overline Q_n- \mathbf{k}(x) \right) \right)  \,dx \right] \\
 \leq \frac14 \E \left[ \int_{2n} \left| \partial_ih(x) \right|^2\,dx \right] + C\left| \cu_{2n} \right| \tau_n.
\end{multline}
For the second sum, we notice that each entry vanishes unless $y \in z+\cu_{n+2}$, there are at most $C$ such entries $y$ in the sum for any given entry $z$, and for such $y$ and $z$, the Poincar\'e inequality gives
\begin{equation*} \label{}
 \left|  \left( \partial_i \phi_i\right)_y - \left( \partial_i \phi_i\right)_z \right|^2 \leq C3^{2n}  |\cu_n|^{-1}  \int_{z+\cu_{n+3}} \left|\nabla\nabla \phi_i(x) \right|^2\,dx.
\end{equation*}
Using this,~\eqref{e.psiderv},~\eqref{e.softH22} and~\eqref{e.boundminimizers}, the H\"older and Young inequalities and the fact that $3^n\Zd\cap \cu_{2n}$ has $C3^{nd}$ elements, we get 
\begin{align*}
\lefteqn{ \sum_{y,z} \left(  \left( \partial_i \phi_i\right)_y - \left( \partial_i \phi_i\right)_z \right) \int_{z} (\zeta(x))^2 \psi_y(x) \nabla \psi_z(x) \cdot  \left( \mathbf{g}_y(x) -\overline Q_n\right)   \,dx  } \qquad & \\
& \leq C \left(  3^{2n}|\cu_{n}|^{-1} \sum_{z} \int_{z+\cu_{n+3}} \left| \nabla\nabla \phi_i(x) \right|^2\,dx \right)^{\frac12}\left(  \sum_{z} 3^{-2n} CK_0^2|\cu_n| \right)^{\frac12} \\
& \leq \frac14 \int_{\cu_{2n}} \left| \partial_i h(x) \right|^2\,dx + C K_0^23^{nd}.
\end{align*}
For the third sum on the right side of~\eqref{e.splittingk3}, we proceed in almost the same way as for the first two, except that rather than use~\eqref{e.psiderv} we use the estimate for~$\nabla \zeta$ in~\eqref{e.zetacutoff} and the fact that $\nabla \zeta$ vanishes except if $z\not\in \mathcal Z_n$ (recall that $\mathcal Z_n$ is defined in~\eqref{e.Zn}) and there are at most $C3^{n(d-1)}$ such elements in the sum. We obtain:
\begin{align*}
\lefteqn{ \sum_z \int_{z} \left( \partial_i\phi_i(x)- \left( \partial_i \phi_i\right)_z \right)\psi_z(x) \nabla \zeta(x) \cdot \left( \mathbf{g}_z(x) - \overline Q_n\right)\,dx } \qquad & \\
& \leq C \left(  3^{2n} \sum_{z} \int_{z} \left| \nabla\nabla \phi_i(x) \right|^2\,dx \right)^{\frac12}\left(  \sum_{z\notin\mathcal Z_n} 3^{-2n} CK_0^2|\cu_n| \right)^{\frac12} \\
& \leq \frac14 \int_{\cu_{2n}} \left| \partial_i h(x) \right|^2\,dx + C K_0^2 3^{n(2d - 1)}.
\end{align*}
Combining the previous two inequalities with~\eqref{e.softH22}, ~\eqref{e.splittingk3}  and~\eqref{e.firstsumnailedk} yields
\begin{equation*}
\E \left[  \int_{\cu_{2n}} \left| \partial_i h (x) \right|^2\,dx \right] \leq CK_0^2 3^{n(2d-1)} + C\left| \cu_{2n} \right| \tau_n.
\end{equation*}
Dividing by~$\left| \cu_{2n} \right|$ gives~\eqref{e.bangbang}. 

\smallskip

\emph{Step 6.} We estimate the expected size of $\left| \nabla \cdot \mathbf{S} \right|^2$, using a computation which is closely analogous to the one in Step~5. Indeed, the main difference from Step~5 is essentially notational. The estimate we will show is  
\begin{equation}\label{e.whipsolenoidal}
\E \left[ \fint_{\cu_{2n}} \left|\nabla \cdot \mathbf{S}(x) \right|^2\,dx \right] \leq C(\tau_n+ \kappa_n).
\end{equation}
As in the previous step, we use the abbreviations $u_z:= u(\cdot,z+\cu_{n+1})$, $\psi_z:= \psi(\cdot-z)$, $\sum_z:= \sum_{z\in 3^n\Zd \cap \cu_{2n} }$ and $\int_z:= \int_{z+\cu_{n+1}}$
Observe that, in the sense of distributions, for every $i,j\in\{1,\ldots,d\}$, we have
\begin{multline*}
\partial_j f_i - \partial_i f_j 
= \sum_{z} \zeta \left(\partial_j\psi_z ( \partial_iu_z- \overline P_{n,i})  - \partial_i\psi_z (\partial_ju_z -\overline P_{n,j})\right) \\
+ \sum_z \psi_z \left(\partial_j\zeta (\partial_iu_z -\overline P_{n,i}))  - \partial_i\zeta (\partial_ju_z -\overline P_{n,j})\right) \quad \mbox{in} \ \cu_{2n}.
\end{multline*}
The right side belongs to $L^2(\cu_{2n})$, thus  $\partial_j f_i - \partial_i f_j\in L^2(\cu_{2n})$ and $S_{ij}\in H^2_{\mathrm{loc}} (\Rd)$. Using~\eqref{e.sillyidentity}, we may also express the previous identity slightly differently as
\begin{multline}
\label{e.curlfL2}
\left( \partial_j f_i - \partial_i f_j \right)(x)
 = \sum_{z} \zeta(x) \left[\nabla \psi_z(x), \nabla u_z(x) - \overline P_n - \mathbf{f}(x) \right]_{ij}  \\
+ \sum_z \psi_z \left[ \nabla \zeta, \nabla u_z  -\overline P_{n} \right]_{ij} \quad \mbox{in} \ \cu_{2n},
\end{multline}
where we henceforth use the notation 
\begin{equation*}
\left[ \mathbf{v},\mathbf{w} \right]_{ij} := v_j w_i - v_iw_j
\end{equation*}
for indices $i,j\in\{ 1,\ldots,d\}$ and vectors $\mathbf{v},\mathbf{w}\in\Rd$ with entries $(v_i)$ and $(w_i)$, respectively. 
Next we define, for each~$i \in \{ 1,\ldots,d\}$,
\begin{equation*}
\sigma_i := -\left( \nabla \cdot \mathbf{S} \right)_i = -\sum_{j=1}^d \partial_jS_{ij}.
\end{equation*}
It is evident that $\sigma_i \in H^1_{\mathrm{per}}(\cu_{2n})$ and $\sigma_i$ is a solution of the equation
\begin{equation*}
-\Delta \sigma_i = -\sum_{j=1}^d \partial_j \left( \partial_j f_i - \partial_i f_j \right) \quad \mbox{in} \ \cu_{2n}.
\end{equation*}
Since $\sigma_i$ has zero mean in $\cu_{2n}$, there exists~$\rho_i \in H^3_{\mathrm{loc}} (\Rd)$, which is unique up to an additive constant, satisfying 
\begin{equation*} \label{}
\left\{ 
\begin{aligned}
& -\Delta \rho_i = \sigma_i \quad \mbox{in} \ \Rd, \\
& \rho_i \ \   \mbox{is \ $\cu_{2n}$--periodic}.
\end{aligned}
\right.
\end{equation*}
We have the identities 
\begin{equation} \label{e.H2equal}
\int_{\cu_{2n}} \left| \nabla \nabla \rho_i(x) \right|^2\,dx =  \int_{\cu_{2n}} \left| \sigma_i(x)\right|^2\,dx =  \int_{\cu_{2n}} \nabla \rho_i(x) \cdot \nabla \sigma_i(x)\,dx.
\end{equation}
Integrating by parts and using the equation for $\sigma_i$, we obtain
\begin{equation*}
\int_{\cu_{2n}} \left| \sigma_i(x)\right|^2\,dx = \int_{\cu_{2n}} \nabla \rho_i(x) \cdot \nabla \sigma_i(x)\,dx = \sum_{j = 1}^d \int_{\cu_{2n}} \partial_j \rho_i(x)\left( \partial_jf_i - \partial_i f_j \right)\,dx.
\end{equation*}
To further shorten the notation, in each of the following expressions we keep the sum over~$j$ implicit (note that $i$ is not summed over) and set $\left( \partial_j \rho_i \right)_z:= \fint_{z+\cu_{n+1}} \partial_j\rho_i(x) \,dx$. Continuing then the computation by substituting~\eqref{e.curlfL2}, we obtain
\begin{align*}
\lefteqn{ \int_{\cu_{2n}} \partial_j \rho_i(x)\left( \partial_jf_i - \partial_i f_j \right)\,dx } \qquad & \\
& =  \sum_z \int_{z} \left( \partial_j \rho_i(x)- \left( \partial_j \rho_i \right)_z \right) \zeta(x) \left[\nabla \psi_z(x), \nabla u_z(x) - \overline P_n - \mathbf{f}(x) \right]_{ij} \,dx \\
& \quad +  \sum_z \left( \partial_j \rho_i \right)_z  \int_{z} \zeta(x)\left[\nabla \psi_z(x), \nabla u_z(x) - \overline P_n - \mathbf{f}(x) \right]_{ij}\,dx \\
& \quad +  \sum_z\int_{z} \partial_j \rho_i(x)  \psi_z(x) \left[\nabla \zeta(x), \nabla u_z(x) - \overline P_n  \right]_{ij} \,dx.
\end{align*}
We put the second sum on the right side into a more convenient form via \eqref{e.def.f}, \eqref{e.partunity}, integration by parts and~\eqref{e.sillyidentity}: 
\begin{align*}
\lefteqn{ \sum_z \left( \partial_j \rho_i \right)_z  \int_{z} \zeta(x)\left[\nabla \psi_z(x), \nabla u_z(x) - \overline P_n - \mathbf{f}(x) \right]_{ij}\,dx} \quad & \\
& = \sum_{y,z}  \left( \partial_j \rho_i \right)_z \int_z \zeta(x) \psi_y(x) \left[\nabla \psi_z(x), \nabla u_z(x) - \overline P_n - \zeta(x)( \nabla u_y(x) - \overline P_{n} ) \right]_{ij}\,dx \\
& = \sum_{y,z}  \left( \left( \partial_j \rho_i \right)_z - \left( \partial_j \rho_i \right)_y \right) \int_z -(\zeta(x))^2 \psi_y(x) \left[\nabla \psi_z(x), \nabla u_y(x) - \overline P_n  \right]_{ij}\,dx \\
& \qquad + \sum_{z}  \left( \partial_j \rho_i \right)_z \int_z - \psi_z(x) \left[\nabla \zeta(x), \nabla u_z(x) - \overline P_n  \right]_{ij}\,dx.
\end{align*}
Combining this with the previous identity, we get 
\begin{align*}
\lefteqn{\int_{\cu_{2n}} \left| \sigma_i(x)\right|^2\,dx } \quad & \\
& =  \sum_z \int_{z} \left( \partial_j \rho_i(x)- \left( \partial_j \rho_i \right)_z \right) \zeta(x) \left[\nabla \psi_z(x), \nabla u_z(x) - \overline P_n - \mathbf{f}(x) \right]_{ij} \,dx \\
& \quad + \sum_{y,z}  \left( \left( \partial_j \rho_i \right)_y - \left( \partial_j \rho_i \right)_z \right) \int_z (\zeta(x))^2 \psi_y(x) \left[\nabla \psi_z(x), \nabla u_y(x) - \overline P_n  \right]_{ij}\,dx \\
&  \quad + \sum_{z}  \int_z \left( \partial_j \rho_i (x)- \left( \partial_j \rho_i \right)_z \right)  \psi_z(x) \left[\nabla \zeta(x), \nabla u_z(x) - \overline P_n  \right]_{ij}\,dx.
\end{align*}
We may now compare this identity with~\eqref{e.splittingk3} and observe that the three sums on the right side are similar to those on the right side of~\eqref{e.splittingk3}. In fact, following the arguments in Step~5 (with obvious substitutions, changing for instance $\phi_i$ to $\rho_i$, $\mathbf{g}_z - \overline Q_n$ to $\nabla u_z - \overline P_n$ and $\mathbf{k}$ to $\mathbf{f}$), we may bound these three sums in the same way. This completes the argument for~\eqref{e.whipsolenoidal}. 

\smallskip

\emph{Step 7.} We show that the effect of the cutoff $\xi$ in the definitions of $v$ and $\mathbf{h}$ is expected to be small: precisely,
\begin{multline}\label{e.closenvnw}
\E \left[ \fint_{\cu_{2n}} \left| \nabla v(x) - \nabla w(x) \right|^2\,dx \right] +  \E \left[ \fint_{\cu_{2n}} \left| \mathbf{h}(x) - \nabla\cdot \mathbf{T}(x) \right|^2\,dx \right] \\
\leq  C\tau_n+C'\kappa_n. 
\end{multline}
We use the identity
\begin{equation*}
\nabla v(x) - \nabla w(x) = w(x) \nabla \xi(x) + (\xi(x)-1) \nabla w(x)
\end{equation*}
and~\eqref{e.xicutoff} to obtain
\begin{multline} \label{e.nvnwsplit}
\fint_{\cu_{2n}} \left| \nabla v(x) - \nabla w(x) \right|^2\,dx \\
 \leq C3^{-4n/(1+\delta)} \fint_{\cu_{2n}} \left| w(x) \right|^2\,dx + C\fint_{\cu_{2n}} \left| \xi(x) - 1 \right|^2 \left| \nabla w(x) \right|^2\,dx.
\end{multline}
The expectation of the first integral on the right side is controlled by~\eqref{e.boundonw2}:
\begin{equation*}
\E \left[3^{-4n/(1+\delta)} \fint_{\cu_{2n}} \left| w(x) \right|^2\,dx \right] \leq CK_0^2 3^{-n\beta/(1+\beta) + 4n\delta/(1+\delta)} \leq C\kappa_n, 
\end{equation*}
provided we select
$$\delta := \frac1{16}\beta.$$ For the expectation of the second integral on the right side of~\eqref{e.nvnwsplit}, we recall from~\eqref{e.xicutoff} that $\xi \equiv 1$ except in $$D:= \left\{ x\in \cu_{2n} \,:\, \dist(x,\partial \cu_{2n}) > C3^{2n/(1+\delta)} \right\}.$$ Therefore, using that $D$ intersects at most $C3^{n(d-2\delta/(1+\delta))}$ subcubes of the form $z+\cu_{n+1}$, with $z\in 3^n\Zd$, and applying~\eqref{e.boundminimizers},~\eqref{e.whipfbar} and~\eqref{e.whipsolenoidal}, we obtain
\begin{align} \label{e.cutboundarysbcs}
\lefteqn{ \E \left[\fint_{\cu_{2n}} \left| \xi(x) - 1 \right|^2 \left| \nabla w(x) \right|^2\,dx \right]  \leq \frac1{\left| \cu_{2n} \right|} \E\left[ \int_{D} \left| \nabla w(x) \right|^2\,dx\right] } \qquad & \\
& \leq \frac{C}{\left| \cu_{2n} \right|} \E \left[ \int_{D} \left| \mathbf{f}(x) \right|^2\,dx + \int_{\cu_{2n}} \left| \mathbf{f}(x) - \nabla w(x) \right|^2\,dx  \right] \notag \\
& \leq \frac{C}{\left| \cu_{2n} \right|} \left( CK_0^2 3^{n(d-2\delta/(1+\delta))}\left| \cu_{n} \right| + C| \cu_{2n} |( \tau_n+C'\kappa_n)  \right) \notag \\
&  \leq C  \tau_n+C'\kappa_n. \notag
\end{align}
Combining the previous inequality with~\eqref{e.boundonw2} and~\eqref{e.nvnwsplit}, we obtain the desired estimate for the first term on the left of~\eqref{e.closenvnw}. 

\smallskip

The argument for estimating the second term on the left side of~\eqref{e.closenvnw} is similar. We start from the identity 
\begin{equation*} \label{}
\mathbf{h}(x) - \nabla\cdot \mathbf{T}(x)= \left( \xi(x) - 1 \right) \nabla \cdot \mathbf{T}(x) - \nabla \tilde h(x)
\end{equation*}
and observe that the equation for $\tilde h$ can be written as
\begin{equation*} \label{}
- \Delta \tilde h = -\nabla \cdot \left( (\xi-1)\nabla\cdot\mathbf{T} \right) \quad \mbox{in} \ \cut_{2n},
\end{equation*}
and therefore we have 
\begin{equation*} \label{}
\int_{\cut_{2n}} \left| \nabla \tilde h(x) \right|^2\,dx \leq C \int_{\cut_{2n} } \left| \xi(x) - 1 \right|^2 \left| \nabla \cdot \mathbf{T}(x) \right|^2\,dx. 
\end{equation*}
It thus suffices to show that 
\begin{equation*} \label{}
\E \left[ \int_{\cu_{2n} } \left| \xi(x) - 1 \right|^2 \left| \nabla \cdot \mathbf{T}(x) \right|^2\,dx \right] \leq C\tau_n + C'\kappa_n. 
\end{equation*}
The proof of this estimate is very similar to~\eqref{e.cutboundarysbcs}, and so we omit the details. 

\smallskip

\emph{Step 8.}
We estimate the expected difference in $L^2(z+\cu_{n})$ between $(\nabla v,\h)$ and $(\nabla u,\g)(\cdot,z+\cu_{n+1})$ for each $z \in 3^n\Zd \cap \cut_{2n} $. The claim is that 
\begin{multline} \label{e.patchcomp}
\E \left[ 3^{-dn} \sum_{z \in 3^n\Zd \cap \cut_{2n} }  \fint_{z+\cu_n}  \left| \nabla v(x) - \nabla u(x,z+\cu_{n+1}) +\overline P_n \right|^2\, dx\right] \\
+ \E \left[ 3^{-dn} \sum_{z \in 3^n\Zd \cap \cut_{2n}}  \fint_{z+\cu_n}  \left| \h(x) - \g(x,z+\cu_{n+1}) +\overline Q_n \right|^2   \,dx  \right] \leq C(\tau_n+C'\kappa_n).
\end{multline}
Indeed, for each $z \in  3^n\Zd\cap \cut_{2n}$ and $x\in z+\cu_n$, we have 
\begin{multline*} \label{}
\nabla v(x) - \nabla u(x,z+\cu_{n+1}) +\overline P_n \\
= \left( \nabla v(x) - \nabla w(x) \right) + \left( \mathbf{f}(x) -\nabla u(x,z+\cu_{n+1}) + \overline P_n \right) - \left( \overline{\mathbf{f}} - \nabla \cdot \mathbf{S} \right).
\end{multline*}
Thus the estimate for the first term on the left of~\eqref{e.patchcomp} is a consequence of~\eqref{e.gradlocalize} (note that $\zeta \equiv 1$ on $z+\cu_{n+1}$ for every $z\in 3^n\Zd \cap \cut_{2n}$),~\eqref{e.whipfbar},~\eqref{e.whipsolenoidal} and~\eqref{e.closenvnw}. The estimate for the other term follows immediately from~\eqref{e.gradlocalize},~\eqref{e.whipfbar},~\eqref{e.bangbang} and~\eqref{e.closenvnw}.

\smallskip

\emph{Step 9.}
We complete the argument by deriving~\eqref{e.exhibition}. By~Lemma~\ref{l.upperunifconv}, we have, for each $z \in 3^n\Zd\cap \cut_{2n}$,
\begin{align*} \label{}
\lefteqn{ \fint_{z+\cu_n} F(\overline P_n+\nabla v(x),\overline Q_n+\h(x),x) \, dx } \qquad &  \\
&   \leq 2 \fint_{z+\cu_n} F(\nabla u(x,z+\cu_{n+1}), \g(x,z+\cu_{n+1}),x) \, dx - \mu(z+\cu_{n}) \\
&  \qquad 
\begin{multlined}[.8\textwidth]
+ C \fint_{z+\cu_n} \Big( \left| \nabla v(x) - \nabla u(x,z+\cu_{n+1})+\overline P_n \right|^2  \\  
+ \left| \h(x) - \g(x,z+\cu_{n+1})+\overline Q_n \right|^2 \Big) \, dx.\end{multlined}
\end{align*}
In view of~\eqref{e.xicutoff}, it is convenient to denote $\mathcal Z_n':=  \left\{ z\in 3^n\Zd \,:\, \ z+\cu_{n+1} \not\subseteq  \cut_{2n} \right\}$ and  $U:= \cup_{z\in\mathcal Z_n'} (z+\cu_n)$. Note that $\xi$ vanishes on $\cut_{2n} \setminus U$ and thus $v$ and $\h$ do as well. Observe also that 
\begin{equation*} \label{}
\left| \cut_{2n} \setminus U \right| \leq C3^{-n} \left| \cut_{2n} \right| \quad \mbox{and} \quad \left| \left| \mathcal Z_n' \right| \left| \cu_n \right| - \left|\cut_{2n}\right| \right| \leq C3^{-n} \left| \cut_{2n} \right|.
\end{equation*}
Now we take the expectation of the previous inequality, sum over $z\in \mathcal Z_n'$, using~\eqref{e.FK0ineq},~\eqref{e.mubound2},~\eqref{e.boundminimizers},~\eqref{e.gridtrap}, Lemma~\ref{l.upperunifconv},~\eqref{e.patchcomp}, stationarity and the above observations to obtain
\begin{align*}
\lefteqn{ \E\Ll[\int_{\cut_{2n}} F(\overline P_n+\nabla v(x),\overline Q_n+\h(x),x) \, dx \Rr] } \qquad & \\ 
& \begin{multlined}[.8\textwidth]
\leq \sum_{z \in\mathcal Z_n'} \E\Ll[\int_{z+\cu_n} F(\overline P_n+\nabla v(x),\overline Q_n+\h(x),x) \, dx \Rr] \\
+ \E \left[ \int_{\cut_{2n} \setminus U} F(\overline P_n,\overline Q_n,x) \, dx \right] \end{multlined} \\
& \leq \left| \cut_{2n} \right|\left(  \E \left[ \mu(\cu_n) \right]  +C(\tau_n+C'\kappa_n) \right).
\end{align*}
Dividing by~$|\cut_{2n}|$ yields~\eqref{e.exhibition} and completes the proof of the lemma.
\end{proof}

\subsection{Proof of Proposition~\ref{p.muconv}}

We use the flatness theory developed in the previous subsection to prove Proposition~\ref{p.muconv}.
We begin by iterating Lemma~\ref{l.iterable} to obtain a rate of convergence for $\E\left[ \mu(\cut_n) \right]$ and $\E \left[ \mu_0(\cut_n, \overline P, \overline Q) \right]$ as $n\to \infty$ to their common limit $\overline \mu$. 

\begin{lemma}
\label{l.iteration}
There exist $\alpha(d,\Lambda,\beta)>0$, $C(d,\Lambda,\beta,C_3) \geq 1$ and $\overline P,\overline Q\in\Rd$ such that 
\begin{equation}\label{e.primitivedual}
\overline \mu = \overline \mu_0(\overline P,\overline Q)
\end{equation}
and, for every $n\in\N$,
\begin{equation} \label{e.muexpconv3}
\left| \E \left[ \mu(\cut_n) \right] - \overline\mu \right| + \left| \E \left[ \mu_0(\cut_n,\overline P,\overline Q) \right] - \overline\mu \right| \leq CK_0^2 3^{-n\alpha}. 
\end{equation}
\end{lemma}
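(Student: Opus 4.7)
The plan to prove Lemma~\ref{l.iteration} is to extract, from Lemma~\ref{l.iterable}, a geometric rate of convergence of $\E[\mu(\cu_n)]$ to $\overline\mu$, and then to identify $(\overline P, \overline Q)$ as the minimizer of $\overline\mu_0$ and transfer all estimates to the form demanded by the statement. The key starting observation is that combining Lemma~\ref{l.iterable} with the elementary inequalities $\overline\mu \le \overline\mu_0(\overline P_n, \overline Q_n)$ (from \eqref{e.musord} at $(q^*,p^*)=(0,0)$) and $\overline\mu_0(\overline P_n, \overline Q_n) \le \E[\mu_0(\cut_{2n}, \overline P_n, \overline Q_n)]$ (from the monotonicity in \eqref{e.nuconv}) yields
\begin{equation*}
E_n := \overline\mu - \E[\mu(\cu_n)] \le C\tau_n + C'\kappa_n.
\end{equation*}

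The main step is then to convert this into a geometric-decay recursion by controlling $\tau_n = a_{n+1} - b_n$, where $a_n := \E[\mu(\cu_n)]$ and $b_n := \E[\mu(\cut_n)]$. Telescoping gives $a_{n+1} - a_n = E_n - E_{n+1}$, so the difficulty is to bound $a_n - b_n$ from above. The elementary comparison \eqref{e.cutup} only bounds $a_n - b_n$ from below (the wrong direction); to bound it from above I will use the less trivial \eqref{e.cutdown}, which controls $a_n - b_n$ by $C(b_{n+1} - a_n)$ plus a small error, and then combine with \eqref{e.cutup} applied at scale $n+1$ to rewrite $b_{n+1} - a_n$ in terms of $a_{n+1} - a_n = E_n - E_{n+1}$ plus a small error. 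This combined bookkeeping of the trimming errors is the main analytic obstacle, but it yields $\tau_n \le C(E_n - E_{n+1}) + CK_0^2 3^{-n\alpha_0}$ for some $\alpha_0 > 0$. Substituting back gives a linear recursion of the form $E_{n+1} \le r E_n + CK_0^2 3^{-n\alpha_0}$ with $r<1$; iterating and using $E_0 \le CK_0^2$ from \eqref{e.mubound2} produces $E_n \le CK_0^2 3^{-n\alpha}$ for some $\alpha > 0$. The analogous bound for $b_n = \E[\mu(\cut_n)]$ is then recovered by feeding this rate back into \eqref{e.cutup} and \eqref{e.cutdown}.

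Finally, to identify $(\overline P, \overline Q)$ and obtain the $\mu_0$ half of \eqref{e.muexpconv3}, I define $(\overline P, \overline Q)$ as the unique minimizer of $(p,q) \mapsto \overline\mu_0(p,q)$, which exists and is uniquely determined by the uniform convexity and coercivity of $\overline\mu_0$ (see \eqref{e.FbarUC} and \eqref{e.Fbarbound}). The same argument that produced $E_n \le CK_0^2 3^{-n\alpha}$ also yields $\overline\mu_0(\overline P_n, \overline Q_n) - \overline\mu \le CK_0^2 3^{-n\alpha}$; together with $\overline\mu \le \overline\mu_0(\overline P, \overline Q) \le \overline\mu_0(\overline P_n, \overline Q_n)$ this forces $\overline\mu_0(\overline P, \overline Q) = \overline\mu$, which is \eqref{e.primitivedual}. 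Uniform convexity of $\overline\mu_0$ at its minimum then gives $|\overline P_n - \overline P| + |\overline Q_n - \overline Q| \le CK_0 3^{-n\alpha/2}$, and the continuity estimate \eqref{e.mu0cont} (together with the bound $|\overline P_n|+|\overline Q_n| \le CK_0$ inherited from \eqref{e.boundminimizers}) lets us replace $(\overline P_n, \overline Q_n)$ by $(\overline P, \overline Q)$ in the bound furnished by Lemma~\ref{l.iterable}, producing the desired rate for $\E[\mu_0(\cut_{2n}, \overline P, \overline Q)]$. The passage from even indices $2n$ to arbitrary $m$ is handled by the near-monotonicity \eqref{e.Etrimmono0} for the upper bound, while the lower bound $\E[\mu_0(\cut_m,\overline P, \overline Q)] \ge \overline\mu - CK_0^2 3^{-m\beta/(1+\beta)}$ follows directly from \eqref{e.cutup0}, the monotonicity in \eqref{e.nuconv}, and the identity $\overline\mu_0(\overline P, \overline Q) = \overline\mu$ just established.
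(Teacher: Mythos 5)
Your proposal is correct, and for the most part it parallels the paper's own proof: Step~1 of each derives the geometric recursion for $\overline\mu - \E[\mu(\cdot)]$ from Lemma~\ref{l.iterable} by bookkeeping with the trimming inequalities \eqref{e.cutup} and \eqref{e.cutdown} (your version works with the untrimmed quantity $E_n = \overline\mu - \E[\mu(\cu_n)]$, the paper's with $\tilde\mu_n = \overline\mu - \E[\mu(\cut_n)]$, but this is immaterial). The one genuine departure concerns how $(\overline P,\overline Q)$ is identified. The paper constructs them as the Cauchy limits of $\E[P(\cu_n)]$ and $\E[Q(\cu_n)]$, proving consecutive differences decay geometrically via Lemma~\ref{l.unifconv} and \eqref{e.gridtrap}, and then closes the loop by comparing $\E[P(\cu_n)]$ to $\overline P_n = \E[P(\cut_n)]$ through \eqref{e.flatuntrimmed}. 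You instead define $(\overline P,\overline Q)$ as the unique argmin of the uniformly convex, coercive function $\overline\mu_0$, and then deduce $|\overline P_n - \overline P| + |\overline Q_n - \overline Q| \lesssim K_0 3^{-n\alpha/2}$ purely from the quadratic growth of $\overline\mu_0$ above its minimum value, given the already-established decay of $\overline\mu_0(\overline P_n,\overline Q_n) - \overline\mu$. Your route is conceptually cleaner and shorter: it sidesteps the $L^2$ localization estimates in the paper's Step 2 and makes the identity \eqref{e.primitivedual} almost immediate, whereas the paper proves existence of the limits first and only afterwards establishes the duality relation. What you lose is the direct information about the decay of $\E[P(\cu_{n+1})] - \E[P(\cu_n)]$, which is not needed for this lemma. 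The remaining details (passing from scale $2n$ to arbitrary $m$ via \eqref{e.Etrimmono0} and \eqref{e.cutup0}, using \eqref{e.mu0cont} to swap $(\overline P_n, \overline Q_n)$ for $(\overline P, \overline Q)$) are sound.
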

\begin{proof}

We split the proof into three steps. The convention for constants is different from the previous subsection. Here $C\geq 1$ denotes a constant depending only on $(d,\Lambda,\beta,C_3)$, while $\tilde C\geq 1$ and $0< \tilde c <1$, which are used only in~Step~1, are allowed to depend on only $(d,\Lambda,\beta)$. These may vary in each occurrence. 

\smallskip

\emph{Step 1.} We show that there exists $\alpha(d,\Lambda,\beta)>0$ such that, for every $n\in\N$,
\begin{equation}
\label{e.muexpconv}
\overline \mu - \E \left[ \mu(\cut_n) \right] \leq C K_0^2 3^{-n\alpha}.
\end{equation}
By~\eqref{e.nuconv}, \eqref{e.musord} and Lemma~\ref{l.iterable}, we have
\begin{equation}\label{e.muexpconvwith0}
\overline \mu \leq \E \left[ \mu_0(\cut_{2n},\overline P_n,\overline Q_n) \right] \leq \E\Ll[\mu(\cu_n) \Rr] + \left( \tilde C \tau_n + C\kappa_n\right),
\end{equation}
which by \eqref{e.cutdown} can be upgraded to
$$
\overline \mu \leq \E \left[ \mu(\cut_n) \right] +  \left( \tilde C \tau_n + C\kappa_n\right).
$$
Let $\mu_n:= \E \left[ \mu(\cut_n) \right]$. Recalling the definition of $\tau_n$ in \eqref{e.taun} and using \eqref{e.cutup} yields
$$
\overline{\mu} \le \E \left[ \mu(\cut_n) \right] + \tilde C\Ll(\E \left[ \mu(\cut_{n+1}) \right] - \E \left[ \mu(\cut_n) \right]\Rr) + C \kappa_n.
$$
Denoting $\tilde\mu_n: = \overline \mu - \E\left[ \mu(\cut_n) \right]$, we can rewrite this as
$$
\tilde\mu_{n+1} - \tilde\mu_n \le- \tilde C^{-1} \, \tilde\mu_n + CK_0^2 3^{-n\beta/(1+\beta)}.
$$
Letting $\tilde c: = 1- \tilde C^{-1} < 1$, we arrive at
$$
\frac{\tilde\mu_{n+1}}{\tilde c^{n+1}} - \frac{\tilde\mu_n}{\tilde c^n} \le \frac C{\tilde c^{n+1}}K_0^2 3^{-n\beta/(1+\beta)}.
$$
Summing the telescopic series, we get the discrete Duhamel formula
$$
\tilde\mu_{n} \le \tilde c^{n}\,  \tilde\mu_0 +  CK_0^2 \sum_{k = 0}^{n-1} \tilde c^{n-k-1} 3^{-k\beta/(1+\beta)},
$$
and \eqref{e.muexpconv} follows since $\tilde \mu_0 \leq CK_0^2$.

\smallskip

Notice that by~\eqref{e.Etrimmono} and~\eqref{e.cutup}, \eqref{e.muexpconv} implies that for every $n\in\N$,
\begin{equation} \label{e.muexpconv2}
\left| \E \left[ \mu(\cut_n) \right] - \overline\mu \right| + \left| \E \left[ \mu(\cu_n) \right] - \overline\mu \right| \leq CK_0^2 3^{-n\alpha}. 
\end{equation}
In particular,~$\tau_n \leq CK_0^2 3^{-n\alpha}$ by the triangle inequality.

\smallskip

\emph{Step 2.} We argue that there exist $\overline P,\overline Q\in\Rd$ such that 
\begin{equation}\label{e.PQbar}
\left| \overline P -  \overline P_n \right| + \left| \overline Q -\overline Q_n  \right| \leq CK_0^2 3^{-n\alpha}.
\end{equation}
By Lemma~\ref{l.unifconv},
\begin{multline*}
\int_{\cu_n} |\nabla u(x,\cu_{n+1}) - \nabla u(x,\cu_{n})|^2 \, dx \\
\le C \Ll(\int_{\cu_n} F(\nabla u(x,\cu_{n+1}), \g(x,\cu_{n+1}),x) \, dx - \mu(\cu_n)\Rr).
\end{multline*}
By \eqref{e.gridtrap}, the latter is bounded by a multiple of $\tau_n$. By \eqref{e.gridtrap} and \eqref{e.muexpconv2}, we thus have
\begin{equation*} \label{}
\left| \E \left[ P(\cu_{n+1}) \right] - \E \left[ P(\cu_{n})\right] \right| + \left| \E \left[ Q(\cu_{n+1}) \right] - \E \left[ Q(\cu_{n})\right] \right| \leq CK_0^2 3^{-n\alpha}. 
\end{equation*}
This implies that $\{ \E \left[ P(\cu_{n})\right] \}_{n\in\N}$ and $\{ \E \left[ Q(\cu_{n})\right] \}_{n\in\N}$ are Cauchy sequences in~$\Rd$, and thus there exist $\overline P,\overline Q\in\Rd$ such that 
\begin{equation} \label{e.salims}
\E \left[ P(\cu_{n})\right] \rightarrow \overline P \quad \mbox{and} \quad \E \left[ Q(\cu_{n})\right] \rightarrow \overline Q \quad \mbox{as} \ n\to \infty.
\end{equation}
Moreover, we have
\begin{equation*} \label{}
\left| \overline P  - \E \left[ P(\cu_{n})\right] \right| \leq \sum_{k = n}^{+\infty} \left| \E \left[ P(\cu_{k+1}) \right] - \E \left[ P(\cu_{k})\right] \right| \leq CK_0^2 3^{-n\alpha},
\end{equation*}
and similarly
\begin{equation*} \label{}
\left| \overline Q  - \E \left[ Q(\cu_{n})\right] \right| \leq CK_0^2 3^{-n\alpha}.
\end{equation*}
To complete the proof of~\eqref{e.PQbar}, it remains to show that 
\begin{equation*} \label{}
\left| \overline P_n -  \E \left[ P(\cu_{n})\right] \right| + \left| \overline Q_n  - \E \left[ Q(\cu_{n})\right] \right|  \leq CK_0^2 3^{-n\alpha}.
\end{equation*}
This follows from~\eqref{e.flatuntrimmed} and~\eqref{e.muexpconv2}.

\smallskip

For future reference, we notice that~\eqref{e.boundminimizers} also implies that
\begin{equation}\label{e.boundsPnQn}
\left| \overline P \right| + \left| \overline Q \right| \leq CK_0^2.
\end{equation}

\smallskip

\emph{Step 3.} By redefining $\alpha(d,\Lambda,\beta)>0$, if necessary, we argue that, for every $n\in\N$, 
\begin{equation} \label{e.convexpboth}
\left| \E \left[ \mu_0(\cut_n,\overline P,\overline Q) \right] - \E\left[ \mu(\cut_n) \right] \right| \leq CK_0^2 3^{-n\alpha}.
\end{equation}
We have
\begin{align*}
\lefteqn{ \E \left[ \mu_0(\cut_{2n},\overline P,\overline Q) \right]  - \E \left[ \mu(\cut_{2n}) \right] } \qquad & \\
& \leq \left| \E \left[ \mu_0(\cut_{2n},\overline P,\overline Q) \right] - \E \left[ \mu_0(\cut_{2n},\overline P_n,\overline Q_n) \right]\right| \\
& \qquad + \left( \E \left[ \mu_0(\cut_{2n},\overline P_n,\overline Q_n) \right] - \E \left[ \mu(\cut_{n}) \right] \right) 
 + \left( \E \left[ \mu(\cut_n) \right] -  \E \left[ \mu(\cut_{2n}) \right] \right).
\end{align*}
The first term is estimated by~\eqref{e.mu0cont},~\eqref{e.PQbar} and~\eqref{e.boundsPnQn}:
\begin{equation*} \label{}
\Ll|\E \left[ \mu_0(\cut_{2n},\overline P,\overline Q) \right] - \E \left[ \mu_0(\cut_{2n},\overline P_n,\overline Q_n) \right]\right| \leq C K_0^2 3^{-n\alpha}.
\end{equation*}
The second term is estimated by~\eqref{e.muexpconvwith0} and~\eqref{e.muexpconv2} and the third term by~\eqref{e.muexpconv2}. Combining these, we deduce
\begin{align*} \label{}
\left| \E \left[ \mu_0(\cut_{2n},\overline P,\overline Q) \right]  - \E \left[ \mu(\cut_{2n}) \right] \right|  = \E \left[ \mu_0(\cut_{2n},\overline P,\overline Q) \right]  - \E \left[ \mu(\cut_{2n}) \right]  \leq CK_0^2 3^{-n\alpha}. 
\end{align*}
Replacing $\alpha$ with $\alpha/2$ yields~\eqref{e.convexpboth}.  

\smallskip

Notice that~\eqref{e.convexpboth} implies $\overline \mu = \overline \mu_0(\overline P,\overline Q)$, that is, we have proved~\eqref{e.primitivedual}. It also implies, by~\eqref{e.convexpboth} and the triangle inequality, that
\begin{equation} \label{e.convexpmu0}
\left| \E \left[ \mu_0(\cut_n,\overline P,\overline Q)\right] - \overline \mu \right| \leq CK_0^2 3^{-n\alpha}.
\end{equation}
This completes the proof of the lemma.
\end{proof}

We next upgrade the convergence from the previous lemma, using the mixing conditions, to obtain estimates on stochastic moments of $\left|\mu-\overline \mu\right|$ and $\left| \mu_0-\overline \mu \right|$ in the triadic cubes. Recall that $\mathcal E_*$ is defined in~\eqref{e:def:Estar}. 

\begin{lemma}
\label{l.stochasticupgrade}
For every $\theta \in (0,\beta)$, there exist an exponent~$s_0(d,\Lambda,\beta,\alpha)\geq 1$ and a constant~$C(d,\Lambda,\beta,C_3,\alpha)\geq 1$ such that, for every $n\in\N$ and $s \geq s_0$,
\begin{equation} 
\label{e.Lsbound}
\E \left[ \left| \mathcal E_*(\cut_n) \right|^s \right] \leq \left(CK_0^2\right)^s 3^{-\theta n}.
\end{equation}
Under the additional assumption that~(P4) holds, there exist~$\alpha(d,\Lambda,\beta,\gamma)>0$ and~$C(d,\Lambda,\beta,C_3,\gamma,C_4)\geq1$ such that, for every $n\in\N$,
\begin{equation} \label{e:expon-mom}
 \E \left[ \exp\left( 3^{nd\gamma /(d+\gamma)} \frac{\mathcal E_*(\cut_{n})}{CK_0^2} \right) \right] \leq \exp\left( C 3^{n \left( d\gamma /(d+\gamma)-\alpha\right)} \right).
\end{equation}
\end{lemma}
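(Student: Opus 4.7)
\emph{Plan.} The strategy is to bootstrap the expectation bound of Lemma~\ref{l.iteration} to an $L^s$ (or exponential moment) bound, by exploiting the approximate independence of $\mu(\cut_k(x))$ and $\mu_0(\cut_k(x), \overline P, \overline Q)$ across the separated trimmed subcubes $\cut_k(x) \subseteq \cut_n$, for a well-chosen mesoscale $k < n$. First, I would set $m := n-k$ and use the super/subadditivity inequalities~\eqref{e.trimmono}--\eqref{e.trimmono0} together with the pointwise bound $\mu(U) \leq \mu_0(U,\overline P,\overline Q)$ from~\eqref{e.mulessnu} (with $p^*=q^*=0$) to obtain the sandwich
\begin{equation*}
3^{-dm}\!\!\sum_{x}\! \mu(\cut_k(x)) - C\kappa_k \,\leq\, \mu(\cut_n) \,\leq\, \mu_0(\cut_n,\overline P,\overline Q) \,\leq\, 3^{-dm}\!\!\sum_{x}\!  \mu_0(\cut_k(x),\overline P,\overline Q) + C\kappa_k.
\end{equation*}
Subtracting $\overline\mu$ and invoking Lemma~\ref{l.iteration} to replace it by $\E[\mu(\cut_k)]$ or by $\E[\mu_0(\cut_k,\overline P,\overline Q)]$, up to a deterministic error $CK_0^2 3^{-k\alpha}$, reduces the lemma to concentration estimates for the empirical fluctuations
\begin{equation*}
S := 3^{-dm}\sum_x \bigl(\mu(\cut_k(x))-\E\mu(\cut_k)\bigr), \qquad S_0 := 3^{-dm}\sum_x \bigl(\mu_0(\cut_k(x),\overline P,\overline Q)-\E\mu_0(\cut_k,\overline P,\overline Q)\bigr),
\end{equation*}
up to a bias of order $K_0^2(3^{-k\alpha}+3^{-k\beta/(1+\beta)})$.

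\emph{Step 2 (Mixing under (P3)).} The summands in $S$ and $S_0$ are $\F_{\cut_k(x)}$-measurable and bounded in $L^\infty$ by $CK_0^2$ in view of~\eqref{e.mubound2}--\eqref{e.mu0bound}, while the cubes $\cut_k(x)$ are pairwise separated by at least $3^{k/(1+\beta)}$. Iterating the covariance form~\eqref{e.howweuseP3} of (P3), in the same spirit as the variance computation in the proof of Lemma~\ref{l.flat} but extended to higher-order moments by a Rosenthal/Marcinkiewicz--Zygmund-type inequality for $\alpha$-mixing sequences, yields, for every integer $s\geq s_0(d,\beta)$, a bound of the form $\E[|S|^s]+\E[|S_0|^s]\leq (CK_0^2)^s\,3^{-m\eta(s)}$, where $\eta(s)\to\eta_\infty(\beta,d)>0$ as $s\to\infty$. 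Choosing $k=n-m$ so that $m\eta(s)$ and $k\alpha$ are both at least $n\theta$ (which is possible for every $\theta<\beta$ provided $s_0$ is large enough) and combining with the sandwich of Step~1 yields~\eqref{e.Lsbound}.

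\emph{Step 3 (Exponential concentration under (P4)), and main obstacle.} Under the stretched exponential mixing condition (P4), the appropriate covariance bound becomes $|\cov(\cdot,\cdot)|\leq CK_0^4 \exp(-\mathrm{dist}^\gamma)$, which is strong enough to support a Bernstein-type tail estimate $\P[|S|+|S_0|\geq t] \leq C\exp(-c\,3^{dm}t^2) + C\exp(-c\,3^{k\gamma})$ on the relevant range of $t$, the second term accounting for the residual cross-correlations. Optimizing the two exponential rates against each other via $dm\sim k\gamma$ produces the familiar critical exponent $d\gamma/(d+\gamma)$, which matches~\eqref{e:expon-mom} after integrating the tail against the sandwich of Step~1 and absorbing the $3^{-n\alpha}$ bias into the exponent $\alpha$. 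The main technical obstacle lies in Step~2: establishing a Rosenthal-type moment inequality under the weak polynomial mixing (P3) with a rate sharp enough to reach any $\theta<\beta$. The difficulty is that the separation $3^{k/(1+\beta)}$ between trimmed cubes is only marginal, so a direct application of~\eqref{e.howweuseP3} gives only a decay of order $3^{-k\beta/(1+\beta)}$ per pair; controlling the $s$-th moment requires a careful combinatorial bookkeeping of which pairs of indices are ``close'' in $\ell^\infty$ distance (and therefore contribute at rate $3^{-k\beta/(1+\beta)}$) versus ``far apart'' (where the covariance is much smaller because of the true large separation). Enlarging $s_0$ is precisely what compensates for this loss, and explains why the statement is an $L^s$ bound valid only for $s$ large and with $\theta$ strictly below $\beta$.
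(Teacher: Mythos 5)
Your overall strategy---replacing the paper's one-sided subadditive quantities $(\overline\mu-\mu)_+$ and $(\mu_0-\overline\mu)_+$ with the centered empirical average $S$, and then bootstrapping via a Rosenthal-type moment inequality for centered, bounded, $\alpha$-mixing summands---is a genuinely different route from the paper's. In particular the paper does not proceed this way: before invoking any moment inequality, Steps 1--5 of its proof establish the first-moment estimate $\E[\mathcal E_*(\cut_n)]\leq CK_0^2\,3^{-n\alpha}$ by an ``alternative'' argument that rests on the pointwise ordering $\mu(\cut_n)\leq\mu_0(\cut_n,\overline P,\overline Q)$ and a Chebyshev bound on the nonnegative gap $\mu_0-\mu$; only then does Step~6 bootstrap via Lemma~\ref{l.finitemoments}, a moment inequality for averages of \emph{nonnegative} bounded $\alpha$-mixing random variables, whose entire strength derives from the smallness of $\E[X]$ (which is exactly what Steps 1--5 provide). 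Your centered approach would bypass Steps 1--5 altogether, at the cost of needing a Rosenthal-type bound instead of Lemma~\ref{l.finitemoments}; since Lemma~\ref{l.iteration} supplies only the \emph{bias} $|\E[\mu(\cut_k)]-\overline\mu|$ (not an $L^1$ bound on the fluctuation), the centering of $S$ must absorb all of the concentration work that the paper's Steps 1--5 do.

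There is, however, a concrete error in your Step~2 that prevents the argument from reaching the full range $\theta<\beta$. You state that the trimmed subcubes are separated by $3^{k/(1+\beta)}$ and that ``enlarging $s_0$ is precisely what compensates for this loss.'' That is not so. In the $2k$-th moment expansion of $S$, the mixing-error contribution comes from tuples in which some index occurs with multiplicity one; by~\eqref{e.howweuseP3}, each such term is bounded by $CK_0^{4k}\,\phi(D)$ where $D$ is the separation, and---crucially---this factor is \emph{independent of the moment order} $2k$ and hence of $s_0$. Taking $D=3^{k/(1+\beta)}$ under~(P3) gives $\phi(D)=C_3\,3^{-k\beta/(1+\beta)}$, and since the mesoscale satisfies $k\leq n$, this term alone caps the achievable exponent below $\beta$ (the close-pair contribution is suppressed by at most one power of the number $N=3^{dm}$ of subcubes, and optimizing over $m$ still falls short of $\theta\to\beta$, e.g.\ whenever $\beta\geq d$). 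The remedy the paper uses---implicitly, inside the proof of Lemma~\ref{l.finitemoments}---is to first split the index set $3^k\Zd\cap\cut_n$ into the $3^d$ subfamilies indexed by $3^{k+1}\Zd$ and control $\|S\|_{L^{2k}}$ by the triangle inequality over subfamilies: within a single subfamily, any two distinct trimmed cubes are separated by at least $2\cdot3^k$, so the mixing error improves to $\phi(3^k)=C_3\,3^{-k\beta}$, and this is precisely what is needed to reach every $\theta<\beta$. Relatedly, your assertion that ``$\eta(s)\to\eta_\infty(\beta,d)$ as $s\to\infty$'' misstates the structure of the moment bound: the pairings (variance-type) term has an exponent that grows \emph{linearly} in $s$, whereas the mixing-error term $\phi(3^k)$ does not depend on $s$ at all; these cannot be combined into a single power $3^{-m\eta(s)}$.
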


\begin{proof}
In this argument, we drop the dependence of $\mu_0$ on $(\overline P,\overline Q)$ for convenience and denote
\begin{equation}\label{e.sigman}
\sigma_n:= K_0^23^{-n\alpha},
\end{equation}
where the exponent $\alpha > 0$ implicit in $\sigma_n$ depends only on $(d,\Lambda,\beta)$ and may change in each occurrence. Here $C$ depends only on $(d,\Lambda,\beta,C_3)$ and may vary in each occurrence. 

\smallskip

The first four steps are concerned with the proof of the first statement. Note that~\eqref{e.Lsbound} is much easier to prove under slightly stronger (although still relatively weak) mixing conditions than~(P3), such as for example ``$\psi$-mixing." We have to work a bit because ``$\alpha$--mixing" is a very weak condition. 

\smallskip

\emph{Step 1.} We claim that, for every $n\in\N$, at least one of the following two estimates must hold:
\begin{equation}\label{e.alter1}
\E \left[ \left( \overline \mu - \mu(\cut_n) \right)_+ \right]^2 \leq  \frac{5}{8} \E \left[ \left( \overline \mu - \mu(\cut_n) \right)_+^2 \right] + C\sigma_n^2,
\end{equation}
or
\begin{equation} \label{e.alter2}
\E \left[ \left(  \mu_0(\cut_n) -\overline \mu \right)_+ \right]^2 \leq  \frac{5}{8} \E \left[ \left(  \mu_0(\cut_n) -\overline \mu \right)_+^2 \right] + C\sigma_n^2.
\end{equation}
(In fact, one can replace $5/8$ by any number larger than $1/2$.)
We proceed with the proof of this alternative by noting that, since $\mu_0(\cut_n) -\mu(\cut_n) \geq 0$, Chebyshev's inequality yields, for every $t>0$, 
\begin{equation*}
\P \left[ \mu_0(\cut_n) -\mu(\cut_n) \geq t\sigma_n \right] \leq (t\sigma_n)^{-1} \E \left[ \mu_0(\cut_n) -\mu(\cut_n) \right] .
\end{equation*}
By Lemma~\ref{l.iteration}, taking $C$ sufficiently large, we obtain that 
\begin{equation*}
\P \left[ \mu_0(\cut_n) -\mu(\cut_n) \geq C\sigma_n \right] \leq \frac{1}{8}. 
\end{equation*}
It follows that 
\begin{equation*}
\P\left[ \mu(\cut_n) \leq \overline \mu - C\sigma_n \ \ \mbox{and} \ \  \mu_0(\cut_n) \geq \overline \mu + C\sigma_n \right] \leq \frac{1}{8}. 
\end{equation*}
This implies
\begin{equation} \label{e.oneduh}
\min\left\{ \P\left[ \mu(\cut_n) \leq \overline \mu - C\sigma_n \right] ,\, \P\left[ \mu_0(\cut_n) \geq \overline \mu + C\sigma_n \right]  \right\} \leq \frac{9}{16}.
\end{equation}
From~\eqref{e.oneduh}, we obtain the desired claim observing that for every nonnegative random variable $X$ and $s> 0$, 
\begin{equation*}
\E\left[ X \right] \leq \E\Ll[X \mathbf{1}_{X > s}\Rr] + s \leq \Ll(\P \left[ X>s \right] \E\left[ X^2 \right]\Rr)^{1/2} + s
\end{equation*}
and applying Young's inequality.

\smallskip

\emph{Step 2.}
We show using the mixing condition~(P3) that
\begin{multline}
\label{e.vardecay}
\E\Ll[(\mu-\mu(\cut_{n+1}))_+^2\Rr] \\
\le \frac65 \Ll(3^{-d} \var\left[ \left( \overline \mu - \mu(\cut_{n})\right)_+  \right]  + \E\Ll[(\mu - \mu(\cut_{n}))_+\Rr]^2   \Rr) + C \,  \sigma_n^2,
\end{multline}
and
\begin{multline}
\label{e.vardecay0}
\E \left[  \left( \mu_0(\cut_{n+1}) - \overline \mu \right)_+^2  \right] \\ 
\leq \frac65 \Ll(3^{-d} \,\var\left[\Ll(\mu_0(\cut_{n}) - \overline \mu \right)_+  \right] + \E\left[ \left( \mu_0(\cut_{n}) - \overline \mu \right)_+  \right]^2 \Rr) + C \, \sigma_n^2.
\end{multline}
(In fact, one can replace $6/5$ by any number larger than $1$.)
The arguments for~\eqref{e.vardecay} and~\eqref{e.vardecay0} are almost identical, so we only prove~\eqref{e.vardecay}. 
Recall from \eqref{e.trimmono} that
$$
\mu(\cut_{n+1}) \ge 3^{-d} \sum_{\cut_n(x) \subset \cut_{n}} \mu(\cut_n(x)) - C \sigma_n,
$$
hence
$$
(\mu - \mu(\cut_{n+1}))_+ \le  \underbrace{3^{-d} \sum_{\cut_n(x) \subset \cut_{n+1}} (\mu - \mu(\cut_{n}(x)))_+}_{=:\mathcal S_n} + C \sigma_n.
$$
We have
$$
\E\Ll[ \mathcal S_n^2 \Rr] = \var\Ll[ \mathcal S_n\Rr] + \E\Ll[\mathcal S_n\Rr]^2,
$$
and by stationarity, $\E[\mathcal S_n] = \E\Ll[(\mu - \mu(\cut_{n}))_+\Rr]$. In order to estimate the variance, we use~(P3) in the form given by \eqref{e.howweuseP3}, \eqref{e.mubound2} and~\eqref{e.separate} to find that, for every $\cut_n(x) \neq\cut_n(y)$,
\begin{equation*}
\left| \cov\left[ \left( \overline \mu - \mu(\cut_{n}(x))\right)_+ ,\, \left( \overline \mu - \mu(\cut_{n}(y))\right)_+  \right]  \right| \leq CK_0^4 3^{-n\beta/(1+\beta)} \leq C \sigma_n^2,	
\end{equation*}
and therefore
\begin{multline*}
\var[\mathcal S_n] = 3^{-2d} \sum_{\cut_n(x),\cut_n(y) \subseteq \cut_{n+1}}  \cov\left[ \left( \overline \mu - \mu(\cut_{n}(x))\right)_+ ,\, \left( \overline \mu - \mu(\cut_{n}(y))\right)_+  \right] \\
 \leq 3^{-d} \var\left[ \left( \overline \mu - \mu(\cut_{n})\right)_+  \right] + C\sigma_n^2.
\end{multline*}
Summarizing and using Young's inequality, we obtain \eqref{e.vardecay}.

\smallskip

\emph{Step 3.} 
We show that for every $n\in\N$, at least one of the following two inequalities holds:
\begin{equation}\label{e.altdecay}
\E \left[  \left( \overline \mu - \mu(\cut_{n+1}) \right)_+^2  \right]  \leq \frac9{10} \, \E \left[  \left( \overline \mu - \mu(\cut_{n}) \right)_+^2  \right]   + C\sigma_n^2
\end{equation}
or 
\begin{equation}\label{e.altdecay0}
\E \left[  \left( \mu_0(\cut_{n+1}) - \overline \mu  \right)_+^2  \right]  \leq \frac9{10} \, \E \left[  \left( \mu_0(\cut_{n+1}) - \overline \mu  \right)_+^2  \right]   + C\sigma_n^2.
\end{equation}
In fact, we claim that~\eqref{e.alter1} implies~\eqref{e.altdecay} and~\eqref{e.alter2} implies~\eqref{e.altdecay0}, and thus the alternative follows from the one in Step~1. Indeed, by \eqref{e.vardecay}, we have
\begin{align*}
& \E\Ll[(\mu-\mu(\cut_{n+1}))_+^2\Rr] \\
& \quad \le \frac65 \Ll(3^{-d} \var\left[ \left( \overline \mu - \mu(\cut_{n})\right)_+  \right]  + \E\Ll[(\mu - \mu(\cut_{n}))_+\Rr]^2   \Rr) + C \,  \sigma_n^2 \\
& \quad = \frac65 \Ll(3^{-d} \E\left[ \left( \overline \mu - \mu(\cut_{n})\right)_+^2  \right]  + (1-3^{-d})\E\Ll[(\mu - \mu(\cut_{n}))_+\Rr]^2   \Rr) + C \,  \sigma_n^2 \\
& \quad \le \frac65 \Ll(3^{-d} + \frac{5}{8}(1-3^{-d}) \Rr) \E\left[ \left( \overline \mu - \mu(\cut_{n})\right)_+^2  \right]  + C \,  \sigma_n^2,
\end{align*}
where we assumed \eqref{e.alter1} to hold in the last step. This gives \eqref{e.altdecay}. We omit the proof of the second implication, which is very similar.

\smallskip

\emph{Step 4.} We show that, for each fixed $n\in\N$, 
\begin{equation}\label{e.altdecay2}
\min\left\{ \E \left[  \left( \overline \mu - \mu(\cut_{n}) \right)_+^2  \right],\, \E \left[  \left( \mu_0(\cut_{n}) - \overline \mu  \right)_+^2  \right]\right\}  \leq C\sigma_n^2.
\end{equation}
Let $A_{1}$ and $A_{2}$, respectively, be the subset of $\N$ consisting of~$n$ for which~\eqref{e.altdecay} and~\eqref{e.altdecay0} hold. We have $\N = A_1 \cup A_2$ and thus, for each $n\in\N$, at least one of the set $A_1\cap \{ 1,\ldots,n\}$ and $A_2\cap \{ 1,\ldots,n\}$ has at least $n/2$ elements. If the former, then by~\eqref{e.Etrimmono},~\eqref{e.altdecay},~\eqref{e.mubound2} and a simple computation, we have 
\begin{align*}
\E \left[  \left( \overline \mu - \mu(\cut_{n}) \right)_+^2  \right]  & \leq \Ll(\frac9{10}\Rr)^{n/2} \E \left[  \left( \overline \mu - \mu(\cut_{0}) \right)_+^2  \right] + C\sigma_n^2 \\
& \leq CK_0^4\Ll(\frac9{10}\Rr)^{n/2}   + C\sigma_n^2.
\end{align*}
If it is rather~$A_2\cap \{1,\ldots,n\}$, then we have a similar bound for~$\E \left[  \left( \mu_0(\cut_{n}) - \overline \mu  \right)_+^2  \right] $. Since the first term on the right side is bounded by~$C\sigma_n^2$ after a redefinition of~$\alpha$, we obtain~\eqref{e.altdecay2}.

\smallskip

\emph{Step 5.} We show that 
\begin{equation} \label{e.Lsbounds1}
\E \left[  \mathcal E_* (\cut_n) \right] \leq CK_0^2 3^{-n\alpha}. 
\end{equation}
Observe, using $\mu_0(U) \leq \mu(U)$ again, that we have, for any bounded domain $U\subseteq \Rd$, 
\begin{equation*} \label{}
\left| \mu_0(U) - \overline \mu \right|  \leq \mu_0(U) - \mu(U) + \left|\overline \mu- \mu(U)  \right|
\end{equation*}
and
\begin{equation*}
\left| \overline \mu - \mu(U) \right| \leq \mu_0(U) - \mu(U) + \left| \mu_0(U) - \overline \mu \right|.
\end{equation*}
Combining the previous two inequalities for $U=\cut_n$, taking their expectation and applying Lemma~\ref{l.iteration} (in particular~\eqref{e.convexpboth}), we get
\begin{multline*}
\max\left\{\E\left[ \left| \mu_0(\cut_n) - \overline \mu \right|\right] ,\, \E\left[ \left| \overline \mu - \mu(\cut_n) \right| \right]   \right\} \\ \leq  \min\left\{ \E\left[ \left| \mu(\cut_n) - \overline \mu \right| \right],\, \E\left[  \left| \mu_0(\cut_n) - \overline \mu \right| \right] \right\}  
+ \E\left[ \mu_0(\cut_n) - \mu(\cut_n) \right].
\end{multline*}
Note that a centered random variable $X$ satisfies $\E[X_+] = \E[X_-]$, so that $\E[|X|] = 2 \E[X_+]$. We use this observation and apply Lemma~\ref{l.iteration} twice to obtain
\begin{align*}
\E \left[ \left| \overline \mu - \mu(\cut_n) \right| \right] & \leq \E \left[ \left| \E \left[ \mu(\cut_n) \right]  - \mu(\cut_n) \right|\right] + C\sigma_n \\
& = 2 \E \left[ \left( \E \left[ \mu(\cut_n) \right]  - \mu(\cut_n) \right)_+\right] + C\sigma_n \\
& \leq 2 \E \left[ \left( \overline\mu  - \mu(\cut_n) \right)_+\right] + C\sigma_n.
\end{align*}
Similarly, we also have 
\begin{equation*}
\E \left[ \left| \mu_0(\cut_n) - \overline \mu \right| \right] \leq 2 \E \left[ \left(  \mu_0(\cut_n) - \overline \mu \right)_+\right] + C\sigma_n.
\end{equation*}
The previous three inequalities and~\eqref{e.altdecay2} yield 
\begin{align*}
\E\left[ \mathcal E_*(\cut_n) \right] & \leq 2\max\left\{\E\left[ \left| \mu_0(\cut_n) - \overline \mu \right|\right] ,\, \E\left[ \left| \overline \mu - \mu(\cut_n) \right| \right]   \right\} \\
& \leq 4\min\left\{ \E \left[  \left( \overline \mu - \mu(\cut_{n}) \right)_+  \right],\, \E \left[  \left( \mu_0(\cut_{n}) - \overline \mu  \right)_+  \right]\right\} + C\sigma_n  \\
& \leq C\sigma_n.
\end{align*}
This completes the proof of~\eqref{e.Lsbounds1}.

\smallskip

\emph{Step 6.} We upgrade the convergence rate in~\eqref{e.Lsbounds1} to the optimal one for large moments, using Lemma~\ref{l.finitemoments}, thereby completing the proof of the first statement of the lemma. The claim is that, for every $\theta \in (0,\beta)$, there exist an exponent $s_0(d,\Lambda,\beta,\theta)\geq 1$ and $C(d,\Lambda,\beta,\theta,C_3)\geq 1$ such that, for every $s\geq s_0$,
\begin{equation} \label{e.finitemomupgrade}
\E \left[\left| \mathcal E_*(\cut_n) \right|^s  \right]   \leq  \left( CK_0^2\right)^s 3^{-\theta n}.
\end{equation}
In this step, we allow the constant $C$ to depend additionally on~$\theta$. We also allow $\alpha(d,\Lambda,\beta)>0$ to vary in each occurrence, as usual. 

\smallskip

Fix $n,m\in\N$. Also fix $k\in\N$, which will be selected below and only depends on $(d,\Lambda,\beta,\theta)$. In particular, $k\leq C$. Using the subadditivity of $(\overline \mu - \mu(\cdot))_+$ in the form of~\eqref{e.trimmono} and then applying Lemma~\ref{l.finitemoments} and~(P3), we obtain
\begin{align*}
\lefteqn{ \E \left[ \left( \overline \mu - \mu(\cut_{n+m}) \right)_+^{2k}  \right]  } \qquad & \\
& \leq \E \left[ \left( 3^{-dm} \sum_{\cut_n(x) \subseteq \cut_{n+m}}  \left( \overline \mu - \mu(\cut_{n}(x))   \right)_+  + CK_0^2 3^{-n\beta/(1+\beta)}  \right)^{2k} \right]  \\
& \leq2 \E \left[ \left( 3^{-dm} \sum_{\cut_n(x) \subseteq \cut_{n+m}}  \left( \overline \mu - \mu(\cut_{n}(x))   \right)_+   \right)^{2k} \right] + (CK_0^2)^{2k} 3^{-2kn\beta/(1+\beta)} \\
& \leq CK_0^{4k} \left(  \max\left\{  \E\left[ \frac{\left( \overline \mu - \mu(\cut_{n}(x))   \right)_+}{CK_0^2} \right], \, 3^{-dm}  \right\}^{2k} + 3^{-\beta n}  + 3^{-kn\alpha}\right) .
\end{align*}
Inserting~\eqref{e.Lsbounds1}, we obtain
\begin{equation*} \label{}
\E \left[ \left( \overline \mu - \mu(\cut_{n+m}) \right)_+^{2k}  \right]   \leq  CK_0^{4k} \left(   \left( 3^{-n\alpha} + 3^{-dm}  \right)^{2k} +   3^{-\beta n} \right).
\end{equation*}
A slight reformulation of the previous inequality (replace~$n+m$ by~$n$ and~$n$ by~$n-m$) yields, for every~$n,m\in\N$ with~$m \leq n$,
\begin{equation*} \label{}
\E \left[ \left( \overline \mu - \mu(\cut_{n}) \right)_+^{2k}  \right]   \leq  CK_0^{4k} \left(   \left( 3^{-(n-m)\alpha} + 3^{-dm} \right)^{2k} +   3^{-\beta (n-m)} \right).
\end{equation*}
We may extend this inequality to $m\in\R_+$ with $m\leq n$ by adjusting the constant~$C$. We now select~$m$ as a function of~$n$ so that~$\beta(n-m) = \theta n$, that is,~$m(n):=(\beta-\theta)n/\beta$ and we thereby obtain, for every $n\in\N$, 
\begin{equation*} \label{}
\E \left[ \left( \overline \mu - \mu(\cut_{n}) \right)_+^{2k}  \right]   \leq  CK_0^{4k} \left(   \left( 3^{-n\theta\alpha/\beta} + 3^{-nd(\beta-\theta)/\beta}  \right)^{2k} +   3^{-\theta n} \right).
\end{equation*}
We now select $k\in\N$ to be the smallest positive integer satisfying
\begin{equation*} \label{}
2k \max\left\{ \frac{\theta \alpha}{\beta}, \, \frac{d(\beta-\theta)}{\beta} \right\} \geq \theta.
\end{equation*}
Thus, as we had promised, $k \leq C$. We deduce that, for every $n\in\N$,
\begin{equation*} \label{}
\E \left[ \left( \overline \mu - \mu(\cut_{n}) \right)_+^{2k}  \right] \leq  CK_0^{4k}Ck 3^{-\theta n} \leq CK_0^{4k} 3^{-\theta n}.
\end{equation*}
Fix $s_0:= 2k$ and observe that we may use the previous inequality and~\eqref{e.mubound2} to obtain, for every $s\geq s_0$, 
\begin{equation*} \label{}
\E \left[ \left( \overline \mu - \mu(\cut_{n}) \right)_+^{s}  \right] \leq (CK_0^2)^{s-2k} \, \E \left[ \left( \overline \mu - \mu(\cut_{n}) \right)_+^{2k}  \right] \leq (CK_0^2)^s 3^{-\theta n}.
\end{equation*}
By a very similar argument, we also obtain the bound 
\begin{equation*} \label{}
\E \left[  \left(  \mu_0(\cut_{n+m},\overline P,\overline Q) - \overline \mu \right)_+^{s}  \right] \leq (CK_0^2)^s 3^{-\theta n}.
\end{equation*}
We next use the fact that, due to $\mu(U) \leq \mu_0(U,\overline P,\overline Q)$, we have
\begin{equation} \label{e.trickwithplus}
\mathcal E_*(U)= \left| \overline \mu - \mu(U) \right| + \left| \mu_0(U,\overline P,\overline Q) - \overline \mu \right|  \leq 2\left( \overline \mu - \mu(U) \right)_+ + 2\left( \mu_0(U,\overline P,\overline Q) - \overline \mu \right)_+
\end{equation}
and combining this with the previous inequalities to obtain~\eqref{e.finitemomupgrade}.

\smallskip

\emph{Step 7.} In this last step, we upgrade the convergence to exponential moments under the additional assumption that~(P4) holds. This is the first and only place in the paper that we use~(P4), and is analogous to the previous step except that we use the much stronger Lemma~\ref{l.concentration} in place of~Lemma~\ref{l.finitemoments}.

\smallskip

The convention for the constants in this step is different from the rest of the proof: here we allow~$C$ to depend on the parameters~$(\gamma,C_4)$ in~(P4) in addition to~$(d,\Lambda,\beta,C_3)$, and~$\alpha$ may depend also on~$\gamma$ in addition to~$(d,\Lambda,\beta)$. 

\smallskip

Fix $n,m\in\N$ and $0<t \leq (C_1 K_0^2)^{-1} 3^{dm}$, where the previous~$C_1:=C\geq1$ is fixed large enough that, for all bounded Lipschitz domains $U\subseteq\Rd$,
\begin{equation} \label{e.yesbounded}
\P \left[ (\overline \mu - \mu(U))_+ \leq  C_1 K_0^2 \right]=1.
\end{equation}
Now we compute, using~\eqref{e.trimmono} and Lemma~\ref{l.concentration}:
\begin{multline*}
\log \E \left[  \exp\left( t \left( \overline \mu - \mu(\cut_{n+m}) \right)_+ \right)   \right]  \\
\begin{aligned}
& \leq \log \E \left[ \exp\left( t3^{-dm}  \sum_{\cut_n(x) \subseteq \cut_{n+m}}  \left( \overline \mu - \mu(\cut_n(x))  \right)_+ \right)\right] + CtK_0^23^{-n\beta/(1+\beta)} \\
& = \log \E \left[ \prod_{\cut_n(x) \subseteq \cut_{n+m}} \exp\left( t 3^{-dm} \left( \overline \mu - \mu(\cut_n(x)) \right)_+ \right)  \right] + CK_0^2t3^{-n\alpha}   \\
&\leq C\left( t\E\left[  \left( \overline \mu - \mu(\cut_n) \right)_+ \right] +  K_0^23^{dm} \exp\left( -3^{n\gamma} \right) \exp\left( CK_0^2t \right) \right)+ CK_0^2t3^{-n\alpha}.
\end{aligned}
\end{multline*}
We now apply~\eqref{e.Lsbounds1} to obtain, for every $n,m\in\N$ and $0<t \leq (C K_0^2)^{-1} 3^{dm}$,
\begin{align*} \label{}
\frac1t \log \E \left[  \exp\left( t \left( \overline \mu - \mu(\cut_{n+m}) \right)_+ \right)   \right] 
 & \leq CK_0^2 3^{-n\alpha} + CK_0^2t^{-1}3^{dm} \exp\left( CK_0^2t -3^{n\gamma}\right).
\end{align*}
A very similar computation yields
\begin{multline*} \label{}
\frac1t \log \E \left[  \exp\left( t \left(  \mu_0(\cut_{n+m},\overline P,\overline Q) - \overline \mu \right)_+ \right) \right] \\
  \leq CK_0^2 3^{-n\alpha} + CK_0^2t^{-1}3^{dm} \exp\left( CK_0^2t -3^{n\gamma}\right).
\end{multline*}
Combining these inequalities and using~\eqref{e.trickwithplus}, we obtain
\begin{equation*} \label{}
\frac1t \log \E \left[ \exp\left( t\mathcal E_*(\cut_{n+m}) \right) \right] \leq CK_0^2 3^{-n\alpha} + CK_0^2t^{-1}3^{dm} \exp\left( CK_0^2 t -3^{n\gamma}\right).
\end{equation*}
A reformulation of this inequality (as in Step~6, we replace~$n+m$ with~$n$ and~$n$ with~$n-m$) yields, for every~$n,m\in\N$ with~$m\leq n$ and~$t>0$ as above,
\begin{equation*} \label{}
\frac1t \log \E \left[ \exp\left( t\mathcal E_*(\cut_{n}) \right) \right] \leq CK_0^2 3^{-(n-m)\alpha} + CK_0^2t^{-1}3^{dm} \exp\left( CK_0^2t -3^{(n-m)\gamma}\right).
\end{equation*}
We may extend the previous inequality to~$m\in\R_+$ with~$m\leq n$ by adjusting the constants~$C$. We now select $m:= n \gamma /(d+\gamma)$ and $t:= (CK_0^2)^{-1} 3^{dm}$, with $C \geq C_1$ chosen sufficiently large that the expression inside the exponential on the right side of the previous inequality above can be estimated by
\begin{equation*}
CK_0^2t - 3^{(n-m)\gamma} \leq \frac12 3^{dm} - 3^{(n-m)\gamma} = -\frac12 3^{(n-m)\gamma}.
\end{equation*}
Substituting, we get
\begin{align*} \label{}
 \log \E \left[ \exp\left( 3^{dm} \frac{\mathcal E_*(\cut_{n})}{CK_0^2} \right) \right] & \leq  C3^{dm -(n-m)\alpha} + C3^{dm}\exp\left( -\frac12 \cdot 3^{nd\gamma / (d+\gamma)} \right) \\
 & \leq C 3^{dm-(n-m)\alpha}.
\end{align*}
This is~\eqref{e:expon-mom}. 
\end{proof}

We next complete the proof of Proposition~\ref{p.muconv}. Most of the heavy lifting has already been accomplished, and the statement of Lemma~\ref{l.stochasticupgrade} is already quite close to that of Proposition~\ref{p.muconv}. What is left is to allow for arbitrary translations of the cubes.

\begin{proof}[{Proof of Proposition~\ref{p.muconv}}]
Here $C$ is a positive constant depending only on $(d,\Lambda,\beta,C_3)$ while $\alpha$ depends only on $(d,\Lambda,\beta)$, and these may vary in each occurrence. As above, we drop the dependence of $\mu_0$ on $(\overline P,\overline Q)$ and let $\sigma_n$ be denoted by~\eqref{e.sigman}, where the exponent $\alpha$ implicit in $\sigma_n$ may change in each occurrence.

\smallskip

For $y\in\Rd$, let $[y]$ be the nearest element of $\Zd$ to $y$ (where we resort to the lexicographical ordering by the indices in case of nonuniqueness). Observe that
\begin{equation*}
[y]+\cut_n \subseteq y+\cu_n \quad \mbox{and} \quad \left| (y+\cu_n) \setminus ([y]+\cut_n) \right| \leq C3^{-n\beta/(1+\beta)} |\cu_n|.
\end{equation*}
Similar to the proof of~\eqref{e.cutup}, it follows from~\eqref{e.subadd} and~\eqref{e.mubound2} that, for each $y\in \Rd$ and $n\in\N$, 
\begin{equation*}
\mu(y+\cu_n) \geq \mu([y]+\cut_n) - CK_0^23^{-n\beta/(1+\beta)} \geq \mu([y]+\cut_n) - C\sigma_n  \quad \mbox{$\P$--a.s.}
\end{equation*}
In particular, for every $R\geq 1$,
\begin{equation*}
\sup_{y\in B_R} \left( \overline \mu - \mu(y+\cu_n) \right)_+ \leq \sup_{z\in B_{CR}} \left( \overline \mu - \mu(z+\cut_n) \right)_+ + C\sigma_n  \quad \mbox{$\P$--a.s.}
\end{equation*}
Similarly,
\begin{equation*}
\sup_{y\in B_R} \left( \mu_0(y+\cu_n) -\overline \mu \right)_+ \leq \sup_{z\in B_{CR}} \left( \mu_0(z+\cut_n) - \overline \mu\right)_+  + C\sigma_n
\quad \mbox{$\P$--a.s.}
\end{equation*}
The previous two inequalities and~\eqref{e.trickwithplus} imply
\begin{equation*}
\sup_{y\in B_R} \mathcal E_*(y+\cu_n) \leq C\sup_{z\in B_{CR}}\mathcal E_*(z+\cut_n) + C\sigma_n \quad  \mbox{$\P$--a.s.}
\end{equation*}
Hence by a union bound and stationarity, we get, for any $\zeta\geq C\sigma_n$,
\begin{equation*}
\P\left[ \sup_{y\in B_R} \mathcal E_*(y+\cu_n) \geq \zeta  \right]  \leq \left| B_{CR} \right|\, \P\left[ \mathcal E_*(\cut_n) \geq C\zeta \right] \leq CR^d\, \P\left[ \mathcal E_*(\cut_n) \geq C\zeta \right].
\end{equation*}

\smallskip

To prove the first statement of the proposition, it remains to show that, for any $\theta \in (0,\beta)$, there exists $s_0(d,\Lambda,\beta,\theta)\geq 1$ such that, for every $s\in[s_0,\infty)$ and $t\geq 1$,
\begin{equation}\label{e.almdoneP3}
\P \left[ \mathcal E_*(\cut_n) \geq C K_0^23^{-n\theta / s} t\right] \leq Ct^{-s}.
\end{equation}
This follows at once from Chebyshev's inequality and~\eqref{e.Lsbound}.

\smallskip

To prove the second assertion of the proposition, we have left to show that, under assumption~(P4), that for any $t\geq 1$ and $s\in (0,d\gamma/(d+\gamma))$, we have
\begin{equation}\label{e.almdoneP4}
\P \left[ \mathcal E_*(\cut_n) \geq C K_0^23^{-n\min\{ \alpha, d\gamma/(d+\gamma)-s\}} t\right] \leq C\exp\left( -\frac12 3^{ns} t \right),
\end{equation}
where the constants $\alpha$ and $C$ have appropriate dependence as stated in the proposition. Fix $s\in (0,d\gamma/(d+\gamma))$, $n\in\N$ and $t\geq 1$ and compute, using Chebyshev's inequality and~\eqref{e:expon-mom}:
\begin{align*}
\P \left[ 3^{nd\gamma /(d+\gamma)} \frac{\mathcal E_*(\cut_{n})}{CK_0^2}  \geq 3^{ns} t \right] 
& \leq \exp\left( -3^{ns} t \right) \E \left[ \exp\left( 3^{nd\gamma /(d+\gamma)} \frac{\mathcal E_*(\cut_{n})}{CK_0^2} \right) \right] \\
& \leq \exp\left( -3^{ns} t + C3^{n \left( d\gamma /(d+\gamma)-\alpha\right)}  \right).
\end{align*}
Suppose that $s \geq (d\gamma /(d+\gamma) - \alpha) + c$.  Then the expression in the exponential in the last expression is at most $-\frac12 3^{ns} t$ for $n\geq C$, and thus, for such~$s$, we obtain, for every $n\in\N$ and $t\geq 1$,
\begin{equation*}
\P \left[ \frac{\mathcal E_*(\cut_{n})}{CK_0^2}  \geq 3^{-n(d\gamma /(d+\gamma) - s)} t \right] 
 \leq C \exp\left( -\frac12 3^{ns} t \right). 
\end{equation*}
By applying this inequality for $s':=(d\gamma /(d+\gamma) - \alpha) + c$ and adjusting the constant~$C$, we obtain, for every $0< s \leq s'$,
\begin{equation*}
\P \left[ \frac{\mathcal E_*(\cut_{n})}{CK_0^2}  \geq 3^{-nc} t \right] 
 \leq C \exp\left( -\frac12 3^{ns'} t \right) \leq C \exp\left( -\frac12 3^{ns} t \right). 
\end{equation*}
After combining the previous two inequalities and possibly redefining~$\alpha$, we obtain~\eqref{e.almdoneP4}. 
\end{proof}

\subsection{The proof of Theorem~\ref{t.muconv}}

We next give the proof of Theorem~\ref{t.muconv}, which follows from Proposition~\ref{p.muconv} and some union bounds. 

\begin{proof}[{Proof of Theorem~\ref{t.muconv}}]

According to~\eqref{e.mulessnu} and~\eqref{e.dualatlimit}, we have 
\begin{equation*}
\overline F(p,q) = \overline \mu_0(p,q) = \inf_{p^*,q^*\in\Rd} \left( \overline \mu(q^*,p^*) - p\cdot q^*-p^*\cdot q \right).
\end{equation*}
That is, $\overline F$ is the Legendre-Fenchel transform of $-\overline{\mu}$. It follows that, if $(q^*,p^*) = \nabla \overline F(p,q)$, then
\begin{equation*}
\mathcal E_*(U,q^*,p^*) = \mathcal E(U,p,q) = \left| \mu(U,q^*,p^*) - \overline \mu(q^*,p^*) \right| + \left| \mu_0(U,p,q) - \overline \mu_0(U,p,q)\right|.
\end{equation*}
Thus we see that the error estimate~\eqref{e.muconverge} is already close to the desired conclusion~\eqref{e.muconv0}, we just need to obtain some uniformity in $(p,q)$ and in large scales. 

\smallskip

\emph{Step 1.} We prove~\eqref{e.muconv0}. Fix $\theta \in (0,\beta)$ and $\tau \geq 1$. According to~\eqref{e.mu0cont}, \eqref{e.mucont}, \eqref{e.Fbarcont} and~\eqref{e.nablaFbarcont}, the quantity~$\mathcal E$ is continuous in $(p,q)$: that is, for every $p_1,p_2,q_1,q_2\in\Rd$ and bounded domain $U\subseteq \Rd$,
\begin{multline} 
\label{e.calEcont}
\left| \mathcal E(U,p_1,q_1) - \mathcal E(U,p_2,q_2) \right| \\
\leq C\left( K_0+|p_1|+|q_1|+|p_2|+|q_2| \right) \left( \left| p_1-p_2 \right| + \left| q_1-q_2 \right| \right) \quad \mbox{$\P$--a.s.}
\end{multline}
This yields that 
\begin{multline}
\sup_{p,q\in B_{M3^{n\tau/s}}} \,\frac{\mathcal E(\cu_n,p,q)}{(K_0^2 + |p|^2+|q|^2)3^{-n\theta/s}} \\
\leq C+\, \sup_{p,q\in B_{M3^{n\tau/s}}\cap G_{n,s}}\,\frac{\mathcal E(\cu_n,p,q)}{(K_0^2 + |p|^2+|q|^2)3^{-n\theta/s}} \quad \mbox{$\P$--a.s.,}
\end{multline}
where we have set
\begin{equation*}
G_{n,s}:= 3^{-n\theta/s}\Zd \subseteq \Rd.
\end{equation*}
Observe that the number of elements in the set $B_{M3^{n\tau/s}}\cap G_{n,s}$ is 
\begin{equation*}
\left| B_{M3^{n\theta/s}}\cap G_{n,s} \right| = C M^{d} 3^{nd(\theta+\tau)/s}.
\end{equation*}
By a union bound and Proposition~\ref{p.muconv} applied with exponent $\theta + \frac12 (\beta-\theta)$, we have, for every $n\in\N$,  $t\geq 1$ and $s \geq s_0(d,\Lambda,\beta,\theta) \geq1$,
\begin{align*}
\lefteqn{\P \left[ \sup_{p,q\in B_{M3^{n\tau/s}}} \ \sup_{y\in B_R}\frac{\mathcal E(y+\cu_n,p,q)}{(K_0^2 + |p|^2+|q|^2)3^{-n\theta/s}} \geq Ct \right]} \qquad & \\
& \leq \left| B_{M3^{n\theta/s}} \cap G_{n,s} \right|^2 \cdot \sup_{p,q\in \Rd} \P\left[ \sup_{y\in B_R} \frac{\mathcal E(y+\cu_n,p,q)}{(K_0^2 + |p|^2+|q|^2)3^{-n\theta/s}} \geq t \right]  \\
& \leq C R^d  M^{2d} 3^{2nd(\theta+\tau)/s} \left( 3^{n(\beta-\theta)/2s} t \right)^{-s} \\
& = CR^dM^{2d} 3^{2nd(\theta+\tau)/s - n(\beta-\theta)/2} t^{-s}. 
\end{align*}
By making $s_0(d,\Lambda,\beta,\theta,\tau)$ larger,  we may assume that $2d(\theta+\tau)/s \leq \frac12 (\beta-\theta)$ and then we get, for every~$n\in\N$,~$t\geq 1$ and~$s \geq s_0(d,\Lambda,\beta,\theta,\tau)$,
\begin{equation*}
\P \left[ \sup_{p,q\in B_{M3^{n\tau/s}}} \ \sup_{y\in B_R}\frac{\mathcal E(y+\cu_n,p,q)}{(K_0^2 + |p|^2+|q|^2)3^{-n\theta/s}} \geq Ct \right] \leq CR^dM^{2d} t^{-s}.
\end{equation*}
This is~\eqref{e.muconv0}.

\smallskip

\emph{Step 2.} We prove~\eqref{e.muconvboom0} under assumption~(P4). The argument is almost the same as in Step~1, only easier. Following the argument there, the result of the first union bound can be improved by using~\eqref{e.muconvboom} rather than~\eqref{e.muconverge}: we get, for every $s\in (0,d\gamma/(d+\gamma))$, $R\geq 1$, $n\in\N$ and $t\geq 1$,
\begin{align*}
\lefteqn{\P \left[ \sup_{p,q\in B_{M3^{n}}} \,\sup_{y\in B_{R3^n}}\frac{\mathcal E(y+\cu_n,p,q)}{(K_0^2 + |p|^2+|q|^2)3^{-n\alpha}} \geq Ct \right]} \qquad & \\
& \leq \left| B_{M3^{n}} \cap G_{n} \right|^2 \cdot \sup_{p,q\in \Rd} \P\left[ \sup_{y\in B_{R3^n}} \frac{\mathcal E(y+\cu_n,p,q)}{(K_0^2 + |p|^2+|q|^2)3^{-n\alpha_s}} \geq t \right]  \\
& \leq C R^d M^{2d} 3^{3nd} \exp\left( -3^{ns} t \right),
\end{align*}
where we have set $\alpha_s:= \min\left\{ \alpha, \, d\gamma/(d+\gamma) -s \right\}$ and $G_n:= 3^{-n\alpha_s/2} (\Zd\times\Zd)$. For convenience we may assume that $\alpha_s\leq 1$  so that $\left| B_{M3^{n}} \cap G_{n} \right| \leq 3^{3nd}$. Finally, we may absorb the factor~$3^{3nd}$ into the constant $C$ at the cost of slightly shrinking the exponent~$s$. Since $\alpha_s \geq \alpha( d\gamma/(d+\gamma) -s )$ for some $\alpha(d,\Lambda,\beta)>0$, we thereby obtain the result. 
\end{proof}

\subsection{Construction of the homogenized coefficients \texorpdfstring{$\overline \a$}{}}
\label{s.variationalbar}
We conclude this section by verifying that $\overline F$ is the representative of a Lipschitz, uniformly monotone vector field~$\overline\a$.

\begin{proposition}
\label{p.variationalbar}
There exists a Lipschitz, uniformly monotone vector field $\overline \a$ which is variationally represented by $\overline F$ and satisfies, for some $C(\Lambda)\geq 1$,
\begin{equation} \label{e.abarprops}
\left\{ \begin{aligned} 
& \left| \overline\a(0) \right| \leq C K_0,  \\
& \left| \overline\a(p_1) -\overline \a(p_2) \right| \leq 4\Lambda \left| p_1-p_2 \right|, \\
& \left(\overline \a(p_1) -\overline \a(p_2) \right) \cdot(p_1-p_2) \geq \frac1{4\Lambda}\left|p_1-p_2\right|^2.
\end{aligned} \right.
\end{equation}
\end{proposition}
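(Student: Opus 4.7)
The plan is to apply Lemma~\ref{l.werepresent} to $\overline F$: the uniform convexity hypothesis \eqref{e:rep-conv} is already recorded in \eqref{e.FbarUC}, so the whole task reduces to verifying \eqref{e.Ftouch}, namely
\begin{equation*}
\inf_{q \in \Rd}\bigl(\overline F(p, q) - p \cdot q\bigr) = 0 \quad \text{for every } p \in \Rd.
\end{equation*}
Once this is established, Lemma~\ref{l.werepresent} produces $\overline \a$ variationally represented by $\overline F$ and satisfying the last two lines of \eqref{e.abarprops} with $\lambda = 4\Lambda$, and Lemma~\ref{l.K0stuff} applied to $\overline F$ yields the first line via $|\overline \a(0)|^2 \le -C\inf \overline F \le CK_0^2$, using \eqref{e.Fbarbound} for the lower bound $\inf \overline F \ge -CK_0^2$.

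The inequality $\ge 0$ in the display above is immediate from \eqref{e.Fuppq}. For the opposite inequality, the main step is the $\P$--a.s. deterministic prelimit bound
\begin{equation*}
\mu(\cu_n, q^*, p^*) \le -q^* \cdot p^*, \quad \text{valid for every } n \in \N \text{ and every } (q^*, p^*) \in \R^d\times\R^d.
\end{equation*}
I would prove this by inserting into the variational definition of $\mu(\cu_n, q^*, p^*)$ the specific candidate $(u,\g) = (p^*\cdot x + w,\ \a(p^* + \nabla w,\cdot))$, where $w \in H^1_0(\cu_n)$ is the unique solution of the Dirichlet cell problem $-\div\,\a(p^* + \nabla w, x) = 0$ (which is uniquely solvable by uniform monotonicity). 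Then $\g \in \Ls(\cu_n)$ by construction, and the identity $F(p',\a(p',x),x) = p'\cdot\a(p',x)$ (from \eqref{e.Frepa}) combined with the vanishing of $\fint_{\cu_n}\nabla w\,dx$ and $\fint_{\cu_n}\nabla w\cdot\g\,dx$ (the latter by integration by parts from $w \in H^1_0$ and $\div\g = 0$) collapses the energy of this candidate to exactly $-q^*\cdot p^*$.

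Passing to expectation and letting $n \to \infty$ via \eqref{e.muconv} yields $\overline \mu(q^*, p^*) \le -q^*\cdot p^*$ for every $(q^*, p^*)$. Fix now $p \in \Rd$: uniform convexity of $\overline F$ in $q$ gives a unique minimizer $q_*(p)$ of $q \mapsto \overline F(p, q) - p\cdot q$, characterized by $\partial_q \overline F(p, q_*(p)) = p$. Setting $q^* := \partial_p \overline F(p, q_*(p))$ and $p^* := p$, so that $\nabla \overline F(p, q_*(p)) = (q^*, p)$, the Fenchel equality \eqref{e.dualatlimit} of Proposition~\ref{p.muconv} at the dual point $(p, q_*(p)) = (\overline P(q^*, p), \overline Q(q^*, p))$ rewrites as
\begin{equation*}
\overline F(p, q_*(p)) - p\cdot q_*(p) = p\cdot q^* + \overline \mu(q^*, p) \le 0,
\end{equation*}
which together with the lower bound from \eqref{e.Fuppq} forces equality, proving \eqref{e.Fminpq} and completing the argument. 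The only delicate point is the careful verification of the integration-by-parts cancellations leading to the prelimit identity; once that deterministic bound is secured, the rest of the proof is a direct application of the Fenchel duality packaged in Proposition~\ref{p.muconv} together with the general representation result Lemma~\ref{l.werepresent}.
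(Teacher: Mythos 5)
Your proposal is correct and follows essentially the same route as the paper. Both reduce the problem to verifying $\inf_q(\overline F(p,q)-p\cdot q)=0$ via Lemma~\ref{l.werepresent}, derive the prelimit bound $\mu(\cu_n,q^*,p^*)+q^*\cdot p^*\le 0$, pass to the limit, identify the dual pair via $\nabla\overline F(p,q_*(p))=(q^*,p)$, and invoke the Fenchel equality~\eqref{e.dualatlimit}; the only (cosmetic) difference is that you make the near-optimal candidate explicit as the solution of the Dirichlet cell problem, whereas the paper invokes the abstract null-infimum statement of Proposition~\ref{p.yesvariational} on $\cu_n$, which amounts to the same thing.
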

\begin{proof}
According to Lemma~\ref{l.werepresent}, to prove the existence of a vector field~$\overline \a$ represented by $\overline F$, it suffices to show that, for each $p\in\Rd$,
\begin{equation} \label{e.dualitywts}
\inf_{q\in\Rd} (\overline F(p,q) - p\cdot q) = 0. 
\end{equation}
Fix $p_0\in\Rd$. We first claim that there exists unique $q,q^*\in\Rd$ such that 
\begin{equation} \label{e.qqsidenty}
\nabla \overline F(p_0,q) = (q^*,p_0). 
\end{equation}
To see this, observe that $q\mapsto D_q F(p_0,q)$ is bijective on $\Rd$ by~\eqref{e.Fbarbound} and the uniform convexity of $\overline F$, where $D_q\overline F$ denotes the gradient of $\overline F$ in the variable~$q$. Thus there exists a unique $q\in\Rd$ for which $D_qF(p_0,q) = p_0$. We may then identify~$q^* = D_pF(p_0,q)$. 

\smallskip

According to Proposition~\ref{p.yesvariational}, we have
\begin{multline*}
0 = \inf \bigg\{  \fint_{\cu_n} \left( F(p_0+\nabla w(x),\mathbf{f} (x),x) - (p_0+\nabla w(x))\cdot \mathbf{f}(x) \right)\,dx \\  
:\, w\in H^1_0(\cu_n),\, \mathbf{f} \in \Ls(\cu_n) \bigg\}.
\end{multline*}
It is immediate from this expression that 
\begin{equation*}
\mu(\cu_n,q^*,p_0)+q^*\cdot p_0 \leq 0 \leq \mu_0(\cu_n,p_0,q) -p_0\cdot q.
\end{equation*}
Passing to the limit $n\to \infty$ yields
\begin{equation}\label{e.point1}
\overline \mu(q^*,p_0)+q^*\cdot p_0 \leq 0 \leq \overline \mu_0(p_0,q) -p_0\cdot q.
\end{equation}
In the notation of Proposition~\ref{p.muconv}, the identify~\eqref{e.qqsidenty} asserts that 
\begin{equation*} \label{}
(p_0,q) = \left( \overline P(p_0,q^*) ,\overline Q(p_0,q^*) \right). 
\end{equation*}
Thus according to~\eqref{e.dualatlimit},
\begin{equation} \label{e.point2}
\overline F(p_0,q) = \overline \mu_0(p_0,q) = \overline \mu(q^*,p_0) + (q+q^*)\cdot p_0.
\end{equation}
Combining~\eqref{e.point1} and~\eqref{e.point2} yields
\begin{equation*}
\overline F(p_0,q) = \overline \mu_0(p_0,q) = p_0\cdot q.
\end{equation*}
This gives~\eqref{e.dualitywts}, as the other inequality was proved already in~\eqref{e.Fuppq}.

\smallskip

To conclude, we notice that the second and third lines of~\eqref{e.abarprops} are immediate consequences of the second assertion of~Lemma~\ref{l.werepresent} and~\eqref{e.FbarUC}. The first line of~\eqref{e.abarprops} follows from~\eqref{e.Fbarbound} and Lemma~\ref{l.K0stuff}.
\end{proof}

\section{The proof of Proposition~\ref{p.blackbox}}
\label{s.blackbox}

In this section, we show that the $L^2$ difference between the solutions of the Dirichlet problem (on a given macroscopic domain) for the heterogeneous and homogeneous equations is controlled by the convergence rate of $\mu$ to its limit $\overline\mu$ on mesoscopic subdomains. The argument is a generalization of the one for~\cite[Proposition 4.1]{AS}, although here the proof is much simpler due to the fact that the variational formulation we use is very convenient to work with -- even compared to the more classical variational formulation (for equations which admit one). In particular, since the variational problem for $\overline \J$ is always null, we have only to prove the analogue of the easier of the two energy estimates from~\cite[Appendix A]{AS}.

\smallskip

For the rest of the section, we fix a bounded Lipschitz domain $U_0\subseteq \Rd$, $m,n\in\N$ and set $U:=3^{n+m}U_0$. We also fix $F\in \Omega$, $\delta > 0$, $f\in W^{1,2+\delta}(U)$ and let $M$ be defined by~\eqref{e.M}. We denote by $(u,\g)$ and $(\uhom, \ghom)$ the unique elements of $(f+H^1_0(U)) \times \Ls(U)$ such that
\begin{equation}
\label{e.def.g}
0 = \int_{U}
\left( F\left( \nabla u(x),\g(x) ,x\right) -\nabla u(x) \cdot \g(x)\right) \,dx 
\end{equation}
and
\begin{equation}
\label{e.def.ghom}
0 = \int_{U}
\left( \overline F\left( \nabla \uhom(x),\ghom(x) ,x\right) -\nabla \uhom(x) \cdot \ghom(x)\right) \,dx.
\end{equation}
In particular, $u$ and $\uhom$ satisfy~\eqref{e.Dirmins}. Note that for $u, \uhom \in f + H^1_0(U)$, we can use the substitutions
\begin{align*}
\fint_U\nabla u(x) \cdot \g(x) \,dx & = \fint_U\nabla f(x) \cdot \g(x) \,dx, \\ 
\fint_U\nabla  \uhom(x) \cdot  \ghom(x) \, dx & = \fint_U\nabla f(x) \cdot  \ghom(x) \, dx,
\end{align*}
which reveal the uniform convexity of the minimization problems.
As in the statement of Proposition~\ref{p.blackbox}, we consider~$n+m$ to be the macroscopic (triadic) scale and~$n$ to denote a mesoscopic scale.

\smallskip

The convention for constants in this section is as follows: $\alpha>0$ is an exponent depending on $(d,\Lambda,\beta,\delta)$, while the constants $C\geq 1$ and $0<c\leq 1$ depend on $(d,\Lambda,U_0,\beta,C_3,\delta)$. We allow each of these parameters to vary in each occurrence. 

\smallskip

We prove Proposition~\ref{p.blackbox} by an energy comparison argument.  The idea is to remove the microscopic fluctuations from $(u,\g)$ by a mesoscopic spatial average, thereby obtaining $(\tilde u,\tilde \g)$. The main step is to show that the $\overline F$--energy of~$(\tilde u,\tilde \g)$ is not much larger than the $F$--energy of~$(u,\g)$. This is stated precisely below in~\eqref{e.easyway}. Since by construction, we will also have that $\g - \tilde\g \in \Lso(U)$,
we will be able to deduce that
\begin{multline*} \label{}
0 = \fint_U \left( F(\nabla u(x),\g(x),x)-\nabla f(x) \cdot \g(x) \right)\,dx \\
 = \fint_U \left( \overline F(\nabla  \tilde u(x), \tilde \g(x))-\nabla  f(x) \cdot  \tilde \g(x) \right)\,dx \ - \ \mbox{a small error.}
\end{multline*}
This allows us to conclude that $\nabla \tilde u$ and $\nabla \uhom$ are close in $L^2(U)$, as are $\tilde \g$ and $\ghom$. By construction, $u$ and $\tilde u$ are also close in $L^2(U)$ since the latter is the spatial average of the former, which belongs to $W^{1,2+\delta}$ for some $\delta > 0$ by the Meyers estimate. Hence, by the triangle inequality, we deduce that~$u$ and~$\uhom$ are close in $L^2(U)$. 

\subsection{Construction of the approximating pair}
To prove Proposition~\ref{p.blackbox}, it suffices to consider the case in which $m \ge 5$ and $n+3 \leq l < m+n$, since we can then recover the general case by adjusting the constant $C$ in \eqref{e.exhale}. The integer~$l$ is used to measure an additional mesoscopic scale that gives the thickness of a boundary strip to be removed from $U$ in the construction. 
Note that 
\begin{equation} \label{e.sizeU}
c 3^{d(n+m)} \leq |U| \leq C3^{d(n+m)}.
\end{equation}
We define Lipschitz subdomains $V^\circ \subseteq V \subseteq W \subseteq U$ by 
\begin{equation*} \label{}
 \begin{aligned}
& V^\circ : = \bigcup \left\{ z+ \cu_n \,:\, z\in 3^n\Zd,\ z+\cu_{l+4} \subseteq U \right\},\\ &V:= \bigcup \left\{ z+ \cu_n\,:\, z\in 3^n\Zd,\ z+\cu_{l+2} \subseteq U \right\},  \\&  W:=  \bigcup \left\{ z+ \cu_n\,:\, z\in 3^n\Zd,\ z+\cu_{n+3} \subseteq U \right\}.
\end{aligned} 
\end{equation*}
Since $U_0$ is Lipschitz, we have
\begin{equation} \label{e.mesobndry}
\left| U \setminus V^\circ \right| \leq C 3^{l-(n+m)} |U|.
\end{equation}
Since $n+3\leq l$, we have
\begin{equation}
\label{e.VWfit}
V \subseteq \bigcup_{z\in 3^n\Zd\cap W} (z+\cu_{n}).
\end{equation}
Observe that $\dist(V^\circ,\partial V) \geq 3^l$ and select a cutoff function $\eta \in C^\infty_c(U)$ satisfying
\begin{equation} \label{e.etacutoff}
0 \leq \eta \leq 1, \quad \eta \equiv 1 \ \mbox{on} \  V^\circ + \cu_n, \quad \eta \equiv 0 \ \mbox{on} \ U\setminus V, \quad \sup_{x\in U} \left| \nabla \eta(x) \right| \leq C3^{-l}.
\end{equation}

\smallskip

We next construct the approximating pair $(\widetilde u,\widetilde \g) \in (f+H^1_0(U)) \times \Ls(U)$ by modifying $(u,\g)$. Define the mesoscopic spatial averages of $(u,\g)$ by 
\begin{equation} \label{e.sa}
u_{\mathrm{sa}} (y):= \fint_{y+\cu_n} u(x)\,dx \quad \mbox{and} \quad \g_{\mathrm{sa}} (y):= \fint_{y+\cu_n} \g(x)\,dx, \quad y\in V.
\end{equation}
Observe that $(u_{\mathrm{sa}} , \g_{\mathrm{sa}}) \in H^1(V)\times \Ls(V)$. We get our approximate pair by smoothly interpolating between $(u_{\mathrm{sa}} , \g_{\mathrm{sa}})$ and $(u,\g)$ near the boundary of $V$:
\begin{equation*} \label{}
\left\{ \begin{aligned}
& \widetilde u(x) := \eta(x)u_{\mathrm{sa}} (x)+ (1-\eta(x)) u(x),  \\
&  \tilde\g(x) :=\eta(x)\g_{\mathrm{sa}}(x) + (1-\eta(x)) \g(x) - \nabla h(x),
\end{aligned} \right.
\end{equation*}
where $h\in H^1(U)$ is the (unique, up to a constant) solution of the Neumann problem 
\begin{equation} \label{e.h}
\left\{ 
\begin{aligned}
& \Delta h = \nabla \cdot \left( \eta\g_{\mathrm{sa}} + (1-\eta) \g \right)  & \mbox{in} & \ U, \\
& \partial_\nu h = 0 & \mbox{on} & \ \partial U.
\end{aligned}
\right.
\end{equation}
Observe that we have $(\tilde u,\tilde \g) \in (f+H^1_0(U)) \times \Ls(U)$ and $\tilde \g - \g \in \Lso(U)$.

\smallskip

We conclude this subsection by giving some $L^p$ estimates needed below. First, according to the Meyers estimate (Proposition~\ref{p.meyersbndry}), there exists an exponent~$\delta_0(d,\Lambda,\delta) \in (0,\delta]$ such that 
\begin{equation} \label{e.Meyers}
\left( \fint_U \left| \nabla u(x) \right|^{2+\delta_0} \,dx  \right)^{1/(2+\delta_0)} + \left( \fint_U \left| \nabla \uhom (x) \right|^{2+\delta_0} \,dx  \right)^{1/(2+\delta_0)} \leq CM.
\end{equation}
Note that $\g(x) = \a(\nabla u(x),x)$ and $\ghom = \overline \a(\nabla \uhom)$ by Lemma~\ref{l.werepresent} and Proposition~\ref{p.yesvariational}. By~\eqref{e.UC},~(P1), and~\eqref{e.abarprops}, we have that
\begin{equation*} \label{}
\left| \a(p,x) \right| \leq C\left( K_0 + |p| \right) \quad \mbox{and} \quad \left| \overline\a(p) \right| \leq C\left( K_0 + |p| \right).
\end{equation*}
We deduce that, a.e.~in~$U$,
\begin{equation} \label{e.fluxbnds}
|\g| \leq C\left( K_0+ |\nabla u| \right) \quad \mbox{and} \quad |\ghom| \leq C\left( K_0+ |\nabla \uhom| \right)
\end{equation}
and thus 
\begin{equation} \label{e.Meyersg}
\left( \fint_U \left| \g(x) \right|^{2+\delta_0} \,dx  \right)^{1/(2+\delta_0)} + \left( \fint_U \left| \ghom (x) \right|^{2+\delta_0} \,dx  \right)^{1/(2+\delta_0)} \leq CM.
\end{equation}

\smallskip

The inequalities above, together with the H\"older inequality,  give a bound on the energy density of $(u,\g)$ and $(\uhom,\ghom)$ in the boundary strip $U\setminus V^\circ$. Indeed, we have
\begin{multline}\label{e.strips}
\frac{1}{|U|}\int_{U\setminus V^\circ} \left( \left| \nabla u(x)\right|^2 + \left| \nabla \uhom(x)\right|^2   + \left| \g(x)\right|^2 + \left| \ghom(x)\right|^2 \right)\,dx \\
\leq CM^2 \left( \frac{|U\setminus V^\circ|}{|U|} \right)^{\delta_0/(2+\delta_0)} \leq CM^2 3^{-\alpha(n+m-l)}.
\end{multline}
They also imply, by the Cauchy-Schwarz inequality, that for every $y\in V$,
\begin{equation} \label{e.Linftybymeso}
\left| \nabla u_{\mathrm{sa}} (y)\right|^2 \leq \left( \fint_{y+\cu_n}\left| \nabla u(x) \right|\,dx \right)^2 \leq \frac{|U|}{|\cu_n|}  \fint_{U}\left| \nabla u(x) \right|^2\,dx \leq C3^{dm} M^2,
\end{equation}
and similarly, using \eqref{e.Meyersg},
\begin{equation} \label{e.Linftybymeso2}
\left| \g_{\mathrm{sa}} (y)\right| \leq  C3^{dm/2}M.
\end{equation}

\subsection{The energy estimate}

As we will see, Proposition~\ref{p.blackbox} is essentially a consequence of the uniform convexity of~$\overline F$ and the following estimate, which we prove in this subsection: 
\begin{multline}\label{e.easyway}
\fint_U \overline F\left(\nabla \tilde u(x),\tilde \g(x) \right)\,dx - \fint_U F\left( \nabla u(x),\g(x),x  \right)\,dx \\ \leq  CM^2 \left( \fint_{U} \, \left( \sup_{p,q \in B_{CM3^{dm/2}}} \,\frac{\mathcal E(x+\cu_n,p,q) }{K_0^2+|p|^2+|q|^2}  \right)^{\frac{2+\delta_0}{\delta_0}}\,dx  \right)^{\frac{\delta_0}{2+\delta_0}} + CM^2 3^{-\alpha(n+m-l)}.
\end{multline}
Here $\delta_0(d,\Lambda,\delta)>0$ is defined in the previous subsection and given by the Meyers estimate. 

\smallskip

We now present the derivation of the claimed energy estimate.

\begin{proof}[{Proof of~\eqref{e.easyway}}]
We break the argument into several steps. 

\smallskip

\emph{Step 1.} We begin with the main part of the argument for~\eqref{e.easyway}. For $x\in V$, let $(p^*(x),q^*(x))$ denote the dual pair (via $\overline F$) to $(\nabla u_{\mathrm{sa}}(x), \g_{\mathrm{sa}}(x) - \nabla h(x))$, that is:
\begin{equation*}
\left( p^*(x) , q^*(x) \right) := \nabla \overline F \left( \nabla u_{\mathrm{sa}}(x) , \g_{\mathrm{sa}}(x) -\nabla h(x)\right). 
\end{equation*}
Using $(u,\g)$ as a minimizer candidate for $\mu(y+\cu_n,p^*(y),q^*(y))$, we have 
\begin{align*}
\lefteqn{  \mu(y+\cu_n,p^*(y),q^*(y)) }\qquad & \\
& \leq \fint_{y+\cu_n} \left( F \left( \nabla u(x),\g(x),x \right) - q^*(y) \cdot \nabla u(x) - p^*(y) \cdot \g(x) \right)\,dx    \\
& = \fint_{y+\cu_n}  F \left( \nabla u(x),\g(x),x \right)\,dx - \left( q^*(y),p^*(y) \right) \cdot  \left( \nabla u_{\mathrm{sa}}(y) , \g_{\mathrm{sa}}(y)  \right).
\end{align*}
By~\eqref{e.dualatlimit}, 
\begin{multline*}
\left( q^*(y),p^*(y) \right) \cdot  \left( \nabla u_{\mathrm{sa}}(y) , \g_{\mathrm{sa}}(y) \right) = \overline F\left( \nabla u_{\mathrm{sa}}(y) , \g_{\mathrm{sa}}(y) -\nabla h(y)\right) \\
+ p^*(y)\cdot \nabla h(y) - \overline \mu(p^*(y),q^*(y)) 
\end{multline*}
and therefore we deduce that, for every $y\in V$,
\begin{align}\label{e.easyheart}
\lefteqn{ \overline F\left( \nabla u_{\mathrm{sa}}(y) , \g_{\mathrm{sa}}(y)  - \nabla h(y) \right) - \fint_{y+\cu_n}  F \left( \nabla u(x),\g(x),x \right)\,dx } \qquad & \\
 & \leq - p^*(y) \cdot \nabla h(y) + \overline \mu(p^*(y),q^*(y)) - \mu(y+\cu_n,p^*(y),q^*(y)) \notag \\
 & \leq   - p^*(y) \cdot \nabla h(y) + \mathcal E(y+\cu_n,p^*(y),q^*(y)).\notag
\end{align}
Integrating this over $V^\circ$, we obtain
\begin{multline*}
\int_{V^\circ} \overline F\left( \nabla u_{\mathrm{sa}}(x) , \g_{\mathrm{sa}}(x)  - \nabla h(x)\right)\, dx - \int_{V^\circ} \fint_{x+\cu_n}  F \left( \nabla u(y),\g(y),y \right)\,dy\,dx \\
 \leq \int_{V^\circ} \left| p^*(x) \right| \left| \nabla h(x) \right| \, dx+ \int_{V^\circ} \mathcal E(x+\cu_n,p^*(x),q^*(x)) \,dx.
\end{multline*}
This is already quite close to~\eqref{e.easyway}. The rest of the argument is mainly concerned with using~\eqref{e.Meyers} to show that the contributions to the energy in the mesoscopic boundary strip~$U\setminus V^\circ$ are negligible, to bound~$|\nabla h|$ and to put the last term involving~$\mathcal E(x+\cu_n,p^*(x),q^*(x))$ into a form resembling the right side of~\eqref{e.easyway}.

\smallskip

In fact, to complete the proof of~\eqref{e.easyway}, it suffices to verify the following three inequalities:
\begin{multline} \label{e.boundarycleanup}
\left| \fint_U  F\left(\nabla  u(x), \g(x),x\right)\,dx- \frac{1}{|U|}\int_{V^\circ} \fint_{x+\cu_n}  F \left( \nabla u(y),\g(y),y \right)\,dy\,dx \right| \\ +  \left| \fint_U \overline F\left(\nabla \tilde u(x),\tilde \g(x)\right)\,dx - \frac{1}{|U|}\int_{V^\circ} \overline F\left( \nabla u_{\mathrm{sa}}(x) , \g_{\mathrm{sa}}(x)  - \nabla h(x) \right)\,dx \right|\\
 \leq C M^2 3^{-\alpha(n+m-l)},
\end{multline}
\begin{equation} \label{e.boundnablah}
\fint_U \left| p^*(x) \right| \left| \nabla h(x) \right| \, dx \leq CM^2 3^{-\alpha(n+m-l)},
\end{equation}
and
\begin{multline}\label{e.funnyhaha}
\frac{1}{|U|} \int_{V^\circ} \mathcal E(x+\cu_n,p^*(x),q^*(x)) \,dx \\
\leq CM^2 \left( \fint_{U} \, \left( \sup_{p,q \in B_{CM3^{dm/2}}} \frac{\mathcal E(x+\cu_n,p,q) }{K_0^2+|p|^2+|q|^2}    \, \right)^{\frac{2+\delta}{\delta}}\,dx  \right)^{\frac\delta{2+\delta}}.
\end{multline}

\smallskip

\emph{Step 2.} 
We next show that 
\begin{equation} \label{e.poin}
\fint_V \left( 3^{-n}\left| u_{\mathrm{sa}}(x) - u(x)\right|+ \left|  \nabla u_{\mathrm{sa}}(x)\right| + \left| \g_{\mathrm{sa}}(x)\right|  \right)^{2+\delta_0} \,dx  \leq C M^{2+\delta_0}.
\end{equation}
We first estimate the second term in the integrand using~\eqref{e.Meyers}: 
\begin{multline*}
\fint_V \left| \nabla u_{\mathrm{sa}}(x) \right|^{2+\delta_0} \,dx  = \fint_V \left| \fint_{x+\cu_n}  \nabla u(y)\,dy\right|^{2+\delta_0} \,dx \leq \fint_V \fint_{x+\cu_n} \left| \nabla u(y)\right|^{2+\delta_0} \,dy\,dx \\
 \leq \frac{|U|}{|V|} \fint_U  \left| \nabla u(x)\right|^{2+\delta_0} \,dx \leq CM^{2+\delta_0}.
\end{multline*}
The third term in the integrand is handled similarly.

To estimate the first term in the integrand, we use the expression
\begin{align*}
\lefteqn{ \fint_{y+\cu_n} \left| u_{\mathrm{sa}}(x) - u(x)\right|^{2+\delta_0} \,dx } \qquad  & \\
& = \fint_{y+\cu_n} \left| u(x) - \fint_{x+\cu_n} u(z)\,dz \right|^{2+\delta_0} \,dx \\
& \begin{multlined}[.8\textwidth]
\leq C\fint_{y+\cu_n} \left| u(x) - \fint_{y+\cu_n} u(z)\,dz \right|^{2+\delta_0} \,dx \\
 + C\fint_{y+\cu_n} \left| \fint_{x+\cu_n} u(z)\,dz - \fint_{y+\cu_n} u(z)\,dz \right|^{2+\delta_0} \,dx.\end{multlined}
\end{align*}
We use the Poincar\'e inequality to bound the first term on the right side:
\begin{equation*} \label{}
\fint_{y+\cu_n} \left| u(x) - \fint_{y+\cu_n} u(z)\,dz \right|^{2+\delta_0} \,dx \leq C3^{n(2+\delta_0)} \fint_{y+\cu_n} \left| \nabla u(x) \right|^{2+\delta_0}\,dx,
\end{equation*}
and to bound the second term on the right side, we compute, for all $y\in V$ and $x\in y+\cu_n$,
\begin{align*}
\left| \fint_{x+\cu_n} u(z)\,dz - \fint_{y+\cu_n} u(z)\,dz \right| & = \left| \fint_{\cu_n} \int_0^1 (x-y) \cdot \nabla u(tx+(1-t)y+z)\,dt\,dz \right| \\
& \leq C3^{n} \fint_{y+\cu_{n+1}} \left| \nabla u(z) \right| \, dz.
\end{align*}
Assembling these yields
\begin{equation*} \label{}
\fint_{y+\cu_n} \left| u_{\mathrm{sa}}(x) - u(x)\right|^{2+\delta_0} \,dx \leq C3^{n(2+\delta_0)} \fint_{y+\cu_{n+2}} \left| \nabla u(x) \right|^{2+\delta_0}\,dx
\end{equation*}
and then integrating over $y\in V$ yields the desired estimate of the first term in the integrand in~\eqref{e.poin}. This completes the proof of~\eqref{e.poin}. 

\smallskip

\emph{Step 3.}  We use~\eqref{e.Meyers} and~\eqref{e.poin} to obtain
\begin{equation} \label{e.Meyerstilde}
\fint_U \left( \left| \nabla \tilde u(x) \right|^{2+\delta_0} + \left| \tilde\g(x)\right|^{2+\delta_0} \right)\,dx \leq CM^{2+\delta_0}.
\end{equation}
Differentiating the expression for $\tilde u$, we get
\begin{equation*} \label{}
\nabla \tilde u(x) = \nabla \eta(x) \left( u_{\mathrm{sa}}(x)-u(x) \right) + \eta(x) \left( \nabla u_{\mathrm{sa}}(x)-\nabla u(x)\right) + \nabla u(x).
\end{equation*}
Thus the desired estimate for $\nabla \tilde u$ follows from~\eqref{e.Meyers},~\eqref{e.etacutoff} (recall that $n\leq l$) and~\eqref{e.poin}. The estimate for $\tilde \g$ is similar, but we have the extra term~$\nabla h$. To estimate this, we note that 
\begin{equation*} \label{}
\nabla \cdot\left( \eta \g_{\mathrm{sa}} + (1-\eta) \g \right) = \nabla \cdot \left( (1-\eta) \left( \g_{\mathrm{sa}} - \g \right) \right)
\end{equation*}
and apply for instance~\cite[Theorem 1.2]{GS}, in view of~\eqref{e.poin}, to get
\begin{equation} \label{e.nablaheasy}
\fint_U \left| \nabla h(x)  \right|^{2+\delta_0} \, dx \leq C \fint_U \left|\g_{\mathrm{sa}} (x)- \g(x) \right|^{2+\delta_0} \,dx \leq C M^{2+\delta_0}.
\end{equation}
This completes the proof of~\eqref{e.Meyerstilde}. For future reference we observe also that, by~\eqref{e.Fbarnablabnd},~\eqref{e.poin} and~\eqref{e.nablaheasy}, we have 
\begin{equation} \label{e.popsicle}
\fint_{V} \left( \left| p^*(x) \right| + \left| q^*(x) \right| \right)^{2+\delta_0}  \,dx \leq CM^{2+\delta_0}.
\end{equation}

\emph{Step 4.} We prove~\eqref{e.boundnablah}. First we notice that we can bound $\nabla h$ more brutally (compared to~\eqref{e.nablaheasy}) in $L^2$ by exploiting that $(1-\eta)$ vanishes in $V$ and using the H\"older inequality. Using~\cite[Theorem 1.2]{GS} with exponent~$2$ rather than~$2+\delta_0$ and~\eqref{e.mesobndry}, we get
\begin{align} \label{e.nablahbrutality}
\fint_U \left| \nabla h(x)  \right|^{2} \, dx & \leq C \frac{1}{|U|} \int_{U\setminus V} \left|\g_{\mathrm{sa}} (x)- \g(x) \right|^{2} \,dx \\
& \leq  C \left(\frac{|U\setminus V|}{|U|} \right)^{\delta_0/(2+\delta_0)} \left( \fint_U \left|\g_{\mathrm{sa}} (x)- \g(x) \right|^{2+\delta} \,dx \right)^{2/(2+\delta_0)} \notag\\
& \leq C 3^{ -\alpha(n+m-l)} M^{2}. \notag
\end{align}
The  previous inequality,~\eqref{e.popsicle} and the H\"older inequality yield~\eqref{e.boundnablah}.

\smallskip

\emph{Step 5.} We give the proof of~\eqref{e.funnyhaha}, which follows from an application of the H\"older inequality and~\eqref{e.popsicle}:
\begin{align*}
\lefteqn{ \frac1{|U|} \int_{V^\circ} \mathcal E(x+\cu_n,p^*(x),q^*(x)) \,dx } \qquad & \\
& \leq  \frac1{|U|}  \int_{V^\circ}\frac{\mathcal E(x+\cu_n,p^*(x),q^*(x))}{K_0^2+|p^*(x)|^2+|q^*(x)|^2} \left( K_0^2+|p^*(x)|^2+|q^*(x)|^2 \right)\,dx \\
& \leq C  \frac1{|U|}  \left( \int_{V^\circ} \left( K_0^2+|p^*(x)|^2+|q^*(x)|^2 \right)^{\frac{2+\delta}{2}}\,dx \right)^{\frac2{2+\delta}} \\
& \qquad \qquad \times \left(  \int_{V^\circ}\left( \frac{\mathcal E(x+\cu_n,p^*(x),q^*(x))}{K_0^2+|p^*(x)|^2+|q^*(x)|^2} \right)^{\frac{2+\delta}{\delta}} \,dx \right)^{\frac\delta{2+\delta}} \\
& \leq CM^2 \left( \fint_{U} \, \left( \sup_{p,q \in B_{CM3^{dm/2}}} \frac{\mathcal E(x+\cu_n,p,q) }{K_0^2+|p|^2+|q|^2}    \, \right)^{\frac{2+\delta}{\delta}}\,dx  \right)^{\frac\delta{2+\delta}}.
\end{align*}
In the last line, we introduced the supremum with the help of~\eqref{e.Linftybymeso} and~\eqref{e.Linftybymeso2}, but we also of course need a similar estimate for $|\nabla h|$. To get the latter, we use the fact that $h$ is harmonic in $V^\circ+\cu_n$ and then apply Cauchy-Schwarz with the help of~\eqref{e.nablaheasy}, similar to the derivation of~\eqref{e.Linftybymeso} and~\eqref{e.Meyersg} above, to obtain, for every $y\in V^\circ$,
\begin{equation*}
\left| \nabla h(y) \right|^2  \leq C\left( \fint_{y+\cu_n} \left| \nabla h(x) \right| \,dx \right)^2 \leq CM^2 3^{md}.
\end{equation*}
This completes the proof of~\eqref{e.funnyhaha}.

\smallskip

\emph{Step 6.} We give the proof of~\eqref{e.boundarycleanup}. To estimate the second term on the left side of~\eqref{e.boundarycleanup}, we use the identity
\begin{multline*}
\fint_U \overline F\left(\nabla \tilde u(x),\tilde \g(x)\right)\,dx - \frac{1}{|U|} \int_{V^\circ} \overline F\left( \nabla u_{\mathrm{sa}}(x) , \g_{\mathrm{sa}}(x) - \nabla h(x) \right)\,dx  \\
 = \frac{1}{|U|} \int_{U\setminus V^\circ} \overline F\left(\nabla \tilde u(x),\tilde \g(x)\right)\,dx
\end{multline*}
and by~(P1),~\eqref{e.Meyerstilde} and the H\"older inequality,
\begin{equation*} \label{}
\frac{1}{|U|} \left| \int_{U\setminus V^\circ} \overline F\left(\nabla \tilde u(x),\tilde \g(x)\right)\,dx \right| 
\leq CM^2\left( \frac{\left| U\setminus V^\circ \right|}{|U|} \right)^{\delta_0/(2+\delta_0)} = CM^23^{-\alpha(n+m-l)}.
\end{equation*}
The first term on the left side of~\eqref{e.boundarycleanup} is handled similarly, using~\eqref{e.Meyers} in place of~\eqref{e.poin} and~\eqref{e.Meyerstilde}. Set 
\begin{equation}
\label{e:def:Vcc}
V^{\circ\circ}:= \{ x\in V^\circ \,:\, x+\cu_n \subseteq V^\circ\},
\end{equation}
and observe that $|U\setminus V^{\circ\circ}| \leq C3^{-(n+m-l)}|U|$.
We obtain
\begin{align*}
& \left| \fint_U  F\left(\nabla  u(x), \g(x),x\right)\,dx - \frac{1}{|U|}\int_{V^\circ} \fint_{x+\cu_n}  F \left( \nabla u(y),\g(y),y \right)\,dy\,dx \right|  \\
&\qquad  \leq  \frac{1}{|V^{\circ}|} \int_{U\setminus V^{\circ\circ}} \left| F\left( \nabla u(x) , \g(x),x \right) \right|\,dx \\
& \qquad \leq  CM^23^{-\alpha(n+m-l)},
\end{align*}
which completes the proof of~\eqref{e.boundarycleanup} and hence of~\eqref{e.easyway}.
\end{proof}

\subsection{The proof of Proposition~\ref{p.blackbox}}

We now derive Proposition~\ref{p.blackbox} as a consequence of~\eqref{e.easyway}.
\begin{proof}[{Proof of Proposition~\ref{p.blackbox}}]
By  construction, $\g-\tilde\g \in \Lso(U)$ and $u,\tilde u \in f+H^1_0(U)$.
This implies that 
\begin{equation*} \label{}
\int_{U} \nabla u(x) \cdot \g(x)\,dx = \int_{U} \nabla \tilde u(x) \cdot \tilde\g(x)\,dx = \int_U \nabla f(x) \cdot \g(x)=\int_U \nabla f(x) \cdot \tilde\g(x).
\end{equation*}
According to the previous line, \eqref{e.def.g} and~\eqref{e.easyway}, 
\begin{multline}\label{e.bbtest1}
\fint_U  \left( \overline F\left( \nabla\tilde u(x),\tilde\g(x)\right) -\nabla f(x)\cdot\tilde\g(x) \right) \,dx \\ \leq \fint_U \left( F\left( \nabla u(x),\g(x),x \right) - \nabla f(x) \cdot \g(x) \right)\,dx+ C\mathcal E'' = C\mathcal E'',
\end{multline}
where $\mathcal E'_{n,m,M}$ is as in~\eqref{e.Eprimestat} with $\rho:= (2+\delta_0) / \delta_0$ and we denote
\begin{equation*}
\mathcal E'':=  M^2\left(\mathcal E'_{n,m,M} +3^{-\alpha(m+n-l)} \right).
\end{equation*}
By~\eqref{e.def.ghom}, Proposition~\ref{p.yesvariational} and the fact that $\uhom \in f+H^1_0(U)$, we have
\begin{align*}
0 
&= \fint_U  \left( \overline F\left( \nabla\uhom(x),\ghom(x)\right) -\nabla \uhom(x)\cdot\ghom(x) \right) \,dx \\
& = \fint_U  \left( \overline F\left( \nabla\uhom(x),\ghom(x)\right) -\nabla f(x)\cdot\ghom(x) \right) \,dx  \\
& = \inf_{(u',\g') \in (f+H^1_0(U))\times \Lso(U)} \,  \fint_U  \left( \overline F\left( \nabla u'(x),\g'(x)\right) -\nabla f(x)\cdot\g'(x) \right) \,dx.
\end{align*}
The functional inside the infimum on the last line is uniformly convex in the variable $(u',\g')\in (f+H^1_0(U))\times \Lso(U)$, and therefore the previous display and~\eqref{e.bbtest1}  imply that
\begin{equation*}
\fint_{U} \left(   \left| \nabla \uhom(x) -\nabla \tilde u(x) \right|^2 +\left| \ghom(x) - \tilde \g(x) \right|^2\right)\,dx \leq C\mathcal E''
\end{equation*}
(c.f.~the proof of Lemma~\ref{l.unifconv}). The Poincar\'e inequality then yields
\begin{equation*}
3^{-2(n+m)} \fint_{U} \left|  \uhom(x) - \tilde u(x) \right|^2 \,dx \leq C\mathcal E''.
\end{equation*}
Using the identity $u-\tilde u = \eta \left( u_{\mathrm{sa}} - u \right)$ and applying~\eqref{e.poin}, we also have 
\begin{equation*}
3^{-2n}\fint_{U} \left|  u(x) - \tilde u(x) \right|^2 \,dx \leq CM^2.
\end{equation*}
The previous two lines and the triangle inequality imply
\begin{equation*}
3^{-2(n+m)} \fint_{U}  \left|  u(x) - \uhom(x) \right|^2 \,dx \leq C\mathcal E'' + CM^2 3^{-2m},
\end{equation*}
which is~\eqref{e.exhale}. This completes the proof of the proposition.
\end{proof}

\smallskip

\appendix

\section{Consequences of the mixing conditions}
\label{s.mixing}

In this section we review some basic consequences of the mixing conditions which are used in several key arguments in the paper. So that we may handle both mixing conditions at once, we assume that $\P$ is $\alpha$-mixing with a rate given by a decreasing function $\phi:[0,\infty) \to [0,1]$: for all Borel subsets $U,V\subseteq \Rd$ and events $A\in \F_U$ and $B\in \F_V$, we have 
\begin{equation}
\label{e.mixing}
\left| \P \left[A \cap B \right] - \P \left[A \right] \P \left[B \right] \right| \leq \phi\left(\dist(U,V) \right).
\end{equation}
We next reformulate this condition equivalently as a statement about the difference between the expectation of a product and the product of expectations. Throughout, we denote the $\P$-essential supremum of a random variable~$|X|$ by
\begin{equation*}
\| X \|_\infty := \inf\left\{ \lambda > 0\,:\, \P \left[ |X| > \lambda \right] = 0 \right\}.
\end{equation*}

\begin{lemma}
\label{l.equivmixing}
Assume that $\P$ satisfies~\eqref{e.mixing}. Fix $M,D>0$. Consider Borel subsets $U_1,\ldots,U_k\subseteq \Rd$ such that $\dist(U_i,U_j) \geq D$ for every $i\neq j$. 
Let $X_1,\ldots,X_k$ be random variables on $\Omega$ such that, for each $i\in \{1,\ldots,k\}$, $X_i$ is $\F_{U_i}$--measurable. Then
\begin{equation*}
\left| \E \left[ \prod_{i=1}^k X_i \right] -  \prod_{i=1}^k \E\left[ X_i \right] \right| \leq 4(k-1) \left( \prod_{i=1}^k \| X_i\|_\infty\right) \phi(D).
\end{equation*}
\end{lemma}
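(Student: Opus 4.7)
The plan is to prove Lemma~\ref{l.equivmixing} by induction on $k$, with the base case $k=2$ being a standard covariance estimate derived from~\eqref{e.mixing} via a layer-cake decomposition.

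For the base case $k=2$, I would first reduce to nonnegative random variables by writing $X_i = X_i^+ - X_i^-$ with $\|X_i^{\pm}\|_\infty \le \|X_i\|_\infty$, noting that $X_i^{\pm}$ remain $\F_{U_i}$-measurable. For nonnegative bounded $Y_1, Y_2$, the layer-cake identities
\begin{equation*}
\E[Y_1 Y_2] = \int_0^{\|Y_1\|_\infty}\!\!\int_0^{\|Y_2\|_\infty} \P[Y_1 > t_1,\, Y_2 > t_2]\,dt_1\,dt_2, \quad \E[Y_i] = \int_0^{\|Y_i\|_\infty} \P[Y_i > t_i]\,dt_i,
\end{equation*}
together with~\eqref{e.mixing} applied to the events $\{Y_1 > t_1\} \in \F_{U_1}$ and $\{Y_2 > t_2\} \in \F_{U_2}$, yield $|\cov(Y_1,Y_2)| \le \|Y_1\|_\infty\|Y_2\|_\infty \phi(D)$. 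Expanding $\cov(X_1,X_2)$ as a signed sum of four such covariances of nonnegative parts produces the factor $4$ and establishes the base case.

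For the inductive step, assuming the result for $k-1$, I would set $Y := \prod_{i=1}^{k-1} X_i$. Since $Y$ is $\F_{U_1 \cup \cdots \cup U_{k-1}}$-measurable with $\|Y\|_\infty \le \prod_{i=1}^{k-1}\|X_i\|_\infty$, and the distance between $U_1 \cup \cdots \cup U_{k-1}$ and $U_k$ is still at least $D$, the $k=2$ case gives
\begin{equation*}
\left| \E[Y X_k] - \E[Y]\,\E[X_k] \right| \le 4\prod_{i=1}^{k}\|X_i\|_\infty\, \phi(D).
\end{equation*}
The inductive hypothesis gives $\bigl|\E[Y] - \prod_{i=1}^{k-1}\E[X_i]\bigr| \le 4(k-2)\prod_{i=1}^{k-1}\|X_i\|_\infty\,\phi(D)$, which upon multiplication by $|\E[X_k]| \le \|X_k\|_\infty$ combines with the previous inequality via the triangle inequality to yield the constant $4(k-1)$ claimed.

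I do not anticipate a real obstacle: the only mild subtlety is making sure the $\sigma$-algebra generated by $Y = \prod_{i<k} X_i$ is contained in $\F_{U_1 \cup \cdots \cup U_{k-1}}$, which follows because each $\F_{U_i}$ is contained in $\F_{U_1 \cup \cdots \cup U_{k-1}}$ by monotonicity of the family $\{\F_U\}$ in $U$, and products of measurable functions are measurable. The identity for $\E[Y_1 Y_2]$ used in the base case is a direct application of Fubini to $Y_i = \int_0^\infty \mathbf{1}_{Y_i > t_i}\,dt_i$, valid since the $Y_i$ are bounded and nonnegative.
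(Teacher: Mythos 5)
Your proof is correct and follows the same overall strategy as the paper: reduce to $k=2$ by induction, then bound $\cov(X_1,X_2)$ via a layer-cake/Fubini argument applied to the indicator events $\{X_i > t\}$. The only real difference is in how the factor $4$ emerges in the base case. The paper uses the Hoeffding covariance identity directly on the signed variables, integrating over the symmetric intervals $[-\|X\|_\infty,\|X\|_\infty]\times[-\|Y\|_\infty,\|Y\|_\infty]$, so that the factor $4$ comes from the Lebesgue measure of that rectangle; you instead decompose $X_i = X_i^+ - X_i^-$, prove the one-sided estimate $|\cov(Y_1,Y_2)|\le\|Y_1\|_\infty\|Y_2\|_\infty\phi(D)$ for nonnegative variables, and collect the factor $4$ from the four terms in the bilinear expansion of the covariance. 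These are essentially the same computation carried out in two different bookkeeping schemes, and both are valid (since $X_i^\pm$ remain $\F_{U_i}$-measurable). Your induction step, which the paper dismisses with \emph{``it suffices by induction,''} is written out explicitly and correctly, and your remark about $\prod_{i<k}X_i$ being $\F_{U_1\cup\cdots\cup U_{k-1}}$-measurable addresses exactly the point one needs to verify for the induction to close.
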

\begin{proof}
It suffices by induction to prove the result for $k=2$. We need to show that
\begin{equation}\label{e.covalphamix}
\cov\left[ X,Y \right]  \leq 4 \| X \|_\infty \|Y\|_{\infty} \phi\left(\dist(U,V)\right),
\end{equation}
provided that $X$ is $\F_U$--measureable and $Y$ is $\F_V$--measureable. To get this, we compute
\begin{multline*}
\cov\left[ X,Y \right] = \E \left[ XY \right] - \E\left[ X \right]\E\left[Y\right] \\
= \int_{-\| X\|_{\infty}}^{\| X\|_{\infty}}\int_{-\| Y\|_{\infty}}^{\| Y\|_{\infty}} \left( \P \left[ X>s\, \mbox{and} \, Y>t \right] -\P \left[ X>s\right]\P \left[ Y>t\right] \right) \, dt \,ds
\end{multline*}
and observe that the integrand is bounded by $\phi(\dist(U,V))$. 
\end{proof}

In the next two lemmas, we put Lemma~\ref{l.equivmixing} in a more convenient form for its application in the proof of Lemma~\ref{l.stochasticupgrade}. The notation here for the cubes is the same as in Section~\ref{ss.cubes}.

\begin{lemma}
\label{l.finitemoments}
Assume that $\P$ satisfies~(P2) and~\eqref{e.mixing}. Fix $n,k\in\N$ and let $X$ be an $\F_{\cut_n}$--measurable random variable satisfying $0 \le X \leq 1$. Let $X_z$ denote the translation of $X$ by $z\in\Zd$. 
Then there exists $C(d,k) \geq 1$ such that, for every $m\in\N$, 
\begin{equation*} \label{}
\E \left[ \left( 3^{-dm}\sum_{z\in 3^n\Zd\cap\, \cut_{n+m}} X_z \right)^{2k} \right]  \leq  C \left(  \max\left\{ \E \left[ X \right], 3^{-dm} \right\} ^{2k} +  \phi(3^n) \right).
\end{equation*}
\end{lemma}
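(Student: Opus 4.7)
My approach is to expand $\E[S^{2k}]$ with $S := 3^{-dm}\sum_{z\in Z} X_z$, where $Z := 3^n\Zd \cap \cut_{n+m}$, as a multi-index sum and apply the covariance bound of Lemma~\ref{l.equivmixing} to products of $X_{z_i}$ with well-separated indices. The main obstacle is that adjacent lattice points $z,z' \in 3^n\Zd$ with $|z-z'|=3^n$ produce trimmed cubes separated only by $3^{n/(1+\beta)}$ (see \eqref{e.separate}), so a direct application yields only $\phi(3^{n/(1+\beta)})$ rather than the claimed $\phi(3^n)$. To bypass this, I would decompose $3^n\Zd$ into $3^d$ sub-lattices indexed by $j \in \{0,1,2\}^d$ via the residues mod~$3$ of the coordinates, so each sub-lattice has spacing $3^{n+1}$. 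Writing $S = \sum_{j} S_j$ with $S_j := 3^{-dm}\sum_{z \in Z_j} X_z$ and $Z_j := Z \cap (3^n j + 3^{n+1}\Zd)$, the power-mean inequality gives $\E[S^{2k}] \leq 3^{d(2k-1)}\sum_j \E[S_j^{2k}]$; within each sub-lattice distinct points lie at distance at least $3^{n+1}$, so the associated trimmed cubes are separated by at least $2\cdot 3^n \geq 3^n$.

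For fixed $j$, I would expand $\E[S_j^{2k}] = 3^{-2kdm}\sum_{\mathbf z \in Z_j^{2k}} \E[\prod_{i} X_{z_i}]$ and group tuples $\mathbf z$ by their number $r \in \{1,\ldots,2k\}$ of distinct entries. Using $0\leq X\leq 1$ to absorb multiplicities, $\prod_i X_{z_i} \leq \prod_{w\in D(\mathbf z)} X_w$ where $D(\mathbf z)$ is the set of distinct values; combined with stationarity ($\E[X_w] = \E[X]$) and Lemma~\ref{l.equivmixing} applied with separation parameter $D = 3^n$, this gives $\E\bigl[\prod_{w\in D(\mathbf z)} X_w\bigr] \leq \E[X]^{r} + C(k)\,\phi(3^n)$. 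Since the number of tuples in $Z_j^{2k}$ with exactly $r$ distinct entries is at most $C(k)|Z_j|^r \leq C(k) 3^{rdm}$, summing yields $\E[S_j^{2k}] \leq C(k)\sum_{r=1}^{2k} 3^{(r-2k)dm}\bigl(\E[X]^r + \phi(3^n)\bigr)$.

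The $\phi$-contribution is $\leq C(k)\phi(3^n)$ (dominant at $r=2k$). For the main term, setting $a := 3^{dm}\E[X]$, the elementary bound $\sum_{r=1}^{2k}a^r \leq 2k\,\max(a,a^{2k})$ gives $3^{-2kdm}\sum_{r=1}^{2k} 3^{rdm}\E[X]^r \leq 2k\,\max(3^{-(2k-1)dm}\E[X],\E[X]^{2k})$. A short case analysis according to whether $\E[X]\geq 3^{-dm}$ or $\E[X]<3^{-dm}$ shows that this is bounded by $C(k)\max\{\E[X],3^{-dm}\}^{2k}$ in either case. Summing over the $3^d$ sub-lattices then yields the claimed inequality.
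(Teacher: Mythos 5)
Your proposal is correct and follows essentially the same route as the paper: both split $3^n\Zd$ into $3^d$ sub-lattices of spacing $3^{n+1}$ to secure separation $\geq 3^n$ between the supports before invoking Lemma~\ref{l.equivmixing}, then expand the $2k$-th moment by the number of distinct indices and balance $\E[X]^r$ against $\phi(3^n)$. The only cosmetic differences are that the paper uses the $L^{2k}$ triangle inequality rather than the power-mean inequality for the sub-lattice split, and tracks the combinatorial count via $\binom{N}{j}j^{2k}$ and Stirling rather than your cruder (but equally adequate, since constants may depend on $k$) bound $C(k)|Z_j|^r$.
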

\begin{proof}
We first separate the subcubes into $3^{d}$ distinct groups. We have
\begin{align*} \label{}
\E \left[ \left( \sum_{z\in 3^n\Zd\cap\, \cut_{n+m}} X_z \right)^{2k} \right]^{\frac1{2k}} & = \E \left[ \left( \sum_{y\in \{ -3^n, 0, 3^n\}^d}  \sum_{z\in 3^{n+1}\Zd\cap\, \cut_{n+m}} X_{y+z} \right)^{2k} \right]^{\frac1{2k}} \\
& \leq \sum_{y \in\{ -3^n, 0, 3^n\}^d }  \E \left[ \left( \sum_{z\in 3^{n+1}\Zd\cap\, \cut_{n+m}} X_{y+z} \right)^{2k} \right]^{\frac1{2k}} \\
& = 3^d \,  \E \left[ \left( \sum_{z\in 3^{n+1}\Zd\cap\, \cut_{n+m}} X_{z} \right)^{2k} \right]^{\frac1{2k}}.
\end{align*}
Therefore it suffices to analyze the sum on the right side. The benefit of the previous computation is that each of the distinct cubes in the collection $\{ z+\cu_{n} \,:\, z\in 3^{n+1}\Zd \}$ are separated by a distance of more than~$3^n$. 

\smallskip

We follow the classic method of moments: we expand the sum by writing
\begin{equation*} \label{}
\E \left[ \left( \sum_{z\in 3^{n+1}\Zd\cap\, \cut_{n+m}} X_z \right)^{2k} \right]  
= \sum_{z_1,\ldots,z_{2k} \in 3^{n+1}\Zd\cap\, \cut_{n+m}} \E \left[ X_{z_1} \cdots X_{z_{2k}} \right]
\end{equation*}
and proceed by analyzing each term in the sum according to how many distinct entries it has. We write $(z_{1},\ldots,z_{2k}) \in H_j$ if it contains exactly $j$ distinct entries. For convenience, set $N:= \left| 3^{n+1}\Zd\cap \cut_{n+m} \right|$ and note that $c3^{dm} \leq N \leq C3^{dm}$. 

\smallskip

We next apply Lemma~\ref{l.equivmixing} to each element $(z_{1},\ldots,z_{2k}) \in H_j$ to separate its~$j$ distinct entries and use the crude bound $\E\left[ X^s \right] \leq \E\left[ X \right]$, for $s\geq 1$, which follows from the~$\P$--a.s.~bound $|X|\leq 1$. We get: 
\begin{equation*} \label{}
\E \left[ X_{z_1} \cdots X_{z_{2k}} \right] \leq \E[ X ]^{j} + 4j \phi(3^n) \quad \mbox{for every} \ (z_{1},\ldots,z_{2k}) \in H_j.
\end{equation*}
We next estimate the number of elements in~$H_j$, which is a simple combinatorics exercise. A crude upper bound is 
\begin{equation*} \label{}
\left| H_j \right| \leq  \binom{N}{j}  \, j^{2k}   \leq  \frac{N^{j} j^{2k}}{j!},
\end{equation*}
which we see from the fact that there are $\binom{N}{j}$ different ways to select $j$ distinct elements from a finite set of size~$N$, and then at most $j^{2k}$ ways to make a $2k$-tuple from them. Using Stirling's inequality 
\begin{equation*} \label{}
j! \geq j^j \exp(-j),
\end{equation*}
we obtain
\begin{equation*} \label{}
\left| H_j \right| \leq \exp(j) N^j (2k)^{2k-j}.
\end{equation*}
We deduce that 
\begin{align*} \label{}
 \sum_{z_1,\ldots,z_{2k} \in 3^{n+1}\Zd\cap\, \cut_{n+m}} \E \left[ X_{z_1} \cdots X_{z_{2k}} \right]  & =\sum_{j=1}^{2k} \, \sum_{(z_1,\ldots,z_{2k}) \in H_j} \E \left[ X_{z_1} \cdots X_{z_{2k}} \right] \\
& \leq \sum_{j=1}^{2k} |H_j| \left( \E[ X ]^{j} + 4j\phi(3^n) \right) \\
& \leq  \sum_{j=1}^{2k} \exp(j) N^j (2k)^{2k-j}\left( \E[ X ]^{j} + 4j\phi(3^n) \right).
\end{align*}
Set $\theta:= \max\left\{ 4kN^{-1} \exp(-1),\E \left[ X \right] \right\}$ and observe that 
\begin{align*}
\lefteqn{ \sum_{j=1}^{2k} \exp(j) N^j (2k)^{2k-j}\left( \E[ X ]^{j} + 4j\phi(3^n)  \right) } \qquad \qquad & \\
& \leq \sum_{j=1}^{2k} \exp(j) N^j (2k)^{2k-j} \left( \theta^{j} +4(2k) \phi(3^n) \right) \\
& \leq 2 N^{2k} \exp(2k) \left( \theta^{2k} + 4k\phi(3^n) \right).
\end{align*}
This completes the argument.
\end{proof}

We next give the analogous statement to Lemma~\ref{l.finitemoments} for exponential moments.

\begin{lemma}
\label{l.concentration}
Assume that $\P$ satisfies~(P2) and~\eqref{e.mixing}. Fix $n\in\N$ and let $X$ be an $\F_{\cut_n}$--measurable random variable satisfying $0 \le X\le 1$. Let $X_z$ denote the translation of $X$ by $z\in\Zd$. 
Then there exists $C(d) \geq 1$ such that, for every $m\in\N$ and $t\in [0,1]$, 
\begin{equation*} \label{}
 \log \E \left[ \prod_{z\in 3^n\Zd\cap\, \cut_{n+m}} \exp\left( t X_z\right) \right]  \leq C3^{dm} \left( t\E\left[ X \right] + \phi(3^n) \exp\left( Ct3^{dm} \right) \right).
 \end{equation*}
\end{lemma}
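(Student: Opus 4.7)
The approach parallels that of Lemma~\ref{l.finitemoments}, but replaces the moment expansion of $(\sum X_z)^{2k}$ by a multiplicative decoupling adapted to exponential moments. The goal is to place oneself in position to invoke Lemma~\ref{l.equivmixing} with argument $\phi(3^n)$; since the cubes $\{z+\cut_n\}_{z\in 3^n\Zd}$ are only separated by $3^{n/(1+\beta)}$ (cf.~\eqref{e.separate}), one must first pass to the sublattice $3^{n+1}\Zd$, within which the cubes are pairwise separated by at least $3^n$.

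First I would decompose $3^n\Zd = \bigsqcup_{y\in\{-3^n,0,3^n\}^d} (y + 3^{n+1}\Zd)$ into $3^d$ cosets and apply the generalized H\"older inequality to write
\begin{equation*}
\E\Ll[\prod_{z\in 3^n\Zd\cap\cut_{n+m}}e^{tX_z}\Rr] \le \prod_{y\in\{-3^n,0,3^n\}^d} \E\Ll[\prod_{z\in\Lambda_y}e^{s X_z}\Rr]^{1/3^d},
\end{equation*}
where $\Lambda_y := (y + 3^{n+1}\Zd)\cap \cut_{n+m}$ and $s := 3^d t \in [0,3^d]$. This is the sole non-routine maneuver of the proof; the price paid is the dimensional factor $3^d$ inside $s$, which will be absorbed into the constant $C(d)$ in the final bound.

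Next, for each fixed $y$, I would apply Lemma~\ref{l.equivmixing} to the $N_y := |\Lambda_y|$ random variables $\{e^{sX_z}\}_{z\in\Lambda_y}$, each $\F_{z+\cut_n}$-measurable and bounded almost surely by $e^s$. Combined with stationarity~(P2), this yields
\begin{equation*}
\E\Ll[\prod_{z\in\Lambda_y} e^{s X_z}\Rr] \le \E[e^{sX}]^{N_y} + 4 N_y \, e^{s N_y} \, \phi(3^n).
\end{equation*}
Since $x\mapsto e^{sx}$ lies below its secant on $[0,1]$, one has $\E[e^{sX}] \le 1 + (e^s-1)\E[X]$, so $\log\E[e^{sX}] \le (e^s - 1)\E[X] \le C(d)\,t\,\E[X]$ uniformly for $s \in [0, 3^d]$. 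Taking logarithms via $\log(a+b)\le\log a + b$ (valid for $a\ge 1$, $b \ge 0$) and using $N_y \le C 3^{dm}$ then gives
\begin{equation*}
\log\E\Ll[\prod_{z\in\Lambda_y}e^{s X_z}\Rr] \le C N_y \bigl(t\,\E[X] + e^{C N_y t}\,\phi(3^n)\bigr).
\end{equation*}

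Finally, taking the logarithm of the H\"older bound, summing over $y$ with weight $1/3^d$, and using $\sum_y N_y = |3^n\Zd \cap \cut_{n+m}| \le C 3^{dm}$ delivers the desired estimate. The main obstacle is really just the recognition at the outset that Lemma~\ref{l.equivmixing} requires cubes to be strictly separated rather than merely disjoint, and that the separation scale is precisely what enters the bound via~$\phi$; once the H\"older decoupling is in place, the remainder is a mechanical chaining of Lemma~\ref{l.equivmixing}, stationarity, and the pointwise inequality $e^{sx} \le 1 + (e^s-1)x$.
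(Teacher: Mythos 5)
Your proof is correct and follows essentially the same approach as the paper: coset decoupling via H\"older's inequality to pass to the well-separated sublattice $3^{n+1}\Zd$, then Lemma~\ref{l.equivmixing} with the a.s.\ bound $e^{sX_z}\le e^s$, then the elementary inequalities $\log(a+b)\le\log a+b$ and $\E[e^{sX}]\le 1+(e^s-1)\E[X]$. The only difference is presentational: the paper first proves the per-coset bound for general $t$, then applies H\"older to reduce the full lattice to a single coset at parameter $3^dt$, whereas you apply H\"older at the outset and treat all cosets in parallel; the two orderings produce the same chain of estimates.
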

\begin{proof}
As in the previous lemma, we use that the family $\{ z+\cut_n\,:\, z\in 3^{n+1} \Zd \}$ consists of disjoint cubes separated by a distance greater than~$3^n$. Fix $m\in\N$ and $t\ge0$ and compute:
\begin{multline*}
\log \E \left[   \prod_{z\in 3^{n+1}\Zd\cap\, \cut_{n+m}} \exp\left( t X_z \right)   \right] \\
\begin{aligned}
& \leq\log \left( \prod_{z\in 3^{n+1}\Zd\cap\, \cut_{n+m}} \E \left[ \exp(tX_z) \right] + C3^{dm} \exp\left(Ct3^{dm} \right) \phi(3^n) \right) \\
& \leq  \sum_{z\in 3^{n+1}\Zd\cap\, \cut_{n+m}} \log \E \left[ \exp(tX_z) \right] + C3^{dm} \exp\left(Ct3^{dm} \right) \phi(3^n) \\
& = C3^{dm} \log\E \left[ \exp(tX) \right] + C3^{dm} \exp\left(Ct3^{dm} \right) \phi(3^n).
\end{aligned}
\end{multline*}
In the above string of inequalities, we used Lemma~\ref{l.equivmixing} in the first line, the elementary inequality~$\log (s+t) \leq t+ \log s$ (which is valid for every~$s\geq 1$ and~$t\ge0$) in the second line and stationarity in the third line. Next we use H\"older's inequality and stationarity once more to obtain, for every~$t\ge0$,
\begin{align*}
\log \E \left[   \prod_{z\in 3^{n}\Zd\cap\, \cut_{n+m}} \exp\left( t X_z \right)   \right] & = \log \E \left[ \prod_{y\in \{ -3^n,0,3^{n} \}^d} \ \prod_{z\in 3^{n+1}\Zd\cap\, \cut_{n+m}} \exp\left( t X_{y+z} \right)   \right]  \\
& \leq \log \prod_{y\in \{ -3^n,0,3^{n} \}^d} \E \left[ \prod_{z\in 3^{n+1}\Zd\cap\, \cut_{n+m}} \exp\left( 3^d t X_{y+z} \right)   \right]^{3^{-d}} \\
& =  \log \E \left[ \prod_{z\in 3^{n+1}\Zd\cap\, \cut_{n+m}} \exp\left( 3^d t X_{z} \right)   \right].
\end{align*}
Combining the above inequalities yields, for every $m\in\N$ and $t\ge0$, 
\begin{equation*} \label{}
 \log \E \left[ \prod_{z\in 3^n\Zd\cap\, \cut_{n+m}} \exp\left( t X_z\right) \right]  \leq  C3^{dm} \left( \log \E \left[ \exp\left( t3^d X\right) \right] + \phi(3^n) \exp\left( Ct3^{dm} \right) \right).
\end{equation*}
We now use the elementary inequalities
\begin{equation*} \label{}
\left\{ \begin{aligned} 
& \exp(s) \leq 1  + Cs & & \mbox{for every} \ 0\leq s \leq 3^d, \\
& \log(1+s) \leq s  & & \mbox{for every} \ s \geq 0, 
\end{aligned} \right.
\end{equation*}
and the fact that $0 \le X \leq 1$ to obtain, for every $m\in\N$ and $0\le t \leq1$, 
\begin{equation*}
 \log \E \left[ \prod_{z\in 3^n\Zd\cap\, \cut_{n+m}} \exp\left( t X_z\right) \right]  \leq C3^{dm} \left( t\E\left[ X \right] + \phi(3^n) \exp\left( Ct3^{dm} \right) \right).
\end{equation*}
This completes the proof. 
\end{proof}

\section{Basic energy estimates}
\label{s.reg}

In this appendix we record some regularity estimates needed in the paper, in particular the Caccioppoli inequality and both local and global versions of the Meyers estimate.

\smallskip

The statements here are entirely deterministic, so throughout we fix $F\in \Omega$ such that $F$ satisfies the inequality in~(P1), that is,
\begin{align} \label{e.P1again}
 \ \quad \frac1{2\Lambda}\left( |p|^2+|q|^2 \right)- K_0(1+|p|+|q|) & \leq F(p,q,x) \\
& \leq \frac\Lambda2\left(|p|^2+|q|^2\right) + K_0(1+|p|+|q|). \notag
\end{align}
We also fix a bounded Lipschitz domain $U$ and let $\J:H^1(U) \times H^{-1}(U)\to\R$ be the functional defined in~\eqref{e.J}.

\smallskip

We begin with a simple $L^2$ energy estimate. 

\begin{proposition}
\label{p.L2estimate}
Suppose $u,v\in H^1(U)$ and $u^*,v^*\in H^{-1}(U)$ are such that $u-v\in H^1_0(U)$ and
\begin{equation} \label{e.cmptminim}
 \J\left[ u,u^* \right] = \J\left[ v,v^* \right]= 0.
\end{equation}
Then there exists a constant~$C(d,\Lambda,U)\geq 1$ such that
\begin{equation*} \label{}
\|  u-v \|_{H^1(U)} \leq C \left\| u^*-v^* \right\|_{H^{-1}(U)}.
\end{equation*}
\end{proposition}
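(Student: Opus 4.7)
The plan is to reduce the statement to the PDE formulation via Proposition~\ref{p.yesvariational} and then exploit uniform monotonicity of the vector field $\a$ represented by $F$.

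First I would invoke Proposition~\ref{p.yesvariational} to translate the hypothesis $\J[u,u^*]=\J[v,v^*]=0$ into the statements that $u,v$ solve
\begin{equation*}
-\nabla \cdot \a(\nabla u,x) = u^*, \qquad -\nabla \cdot \a(\nabla v,x) = v^* \quad \text{in} \ U,
\end{equation*}
where, by Lemma~\ref{l.werepresent} (applied to $F(\cdot,\cdot,x)$ for each fixed $x$), the vector field $p\mapsto \a(p,x)$ is uniformly monotone with constant $4\Lambda$, uniformly in $x$. Equivalently, denoting the fluxes $\g_u(x):=\a(\nabla u(x),x)$ and $\g_v(x):=\a(\nabla v(x),x)$ (these are the $L^2$ vector fields attaining the infimum in~\eqref{e.J} for $\J[u,u^*]$ and $\J[v,v^*]$, respectively), one has $-\nabla\cdot(\g_u-\g_v) = u^*-v^*$ in $U$ in the distributional sense.

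Next I would test this identity against $u-v$, which is admissible since by hypothesis $u-v\in H^1_0(U)$. This yields
\begin{equation*}
\int_U \bigl(\a(\nabla u(x),x) - \a(\nabla v(x),x)\bigr) \cdot \nabla(u-v)(x)\,dx = \langle u-v, u^*-v^* \rangle.
\end{equation*}
By the uniform monotonicity of $\a(\cdot,x)$, the left side is bounded below by $(4\Lambda)^{-1}\int_U |\nabla(u-v)|^2\,dx$, while the right side is controlled by $\|u-v\|_{H^1_0(U)}\|u^*-v^*\|_{H^{-1}(U)}$ by the definition of the $H^{-1}$ dual norm.

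Combining these two bounds with the Poincar\'e inequality (valid on $H^1_0(U)$ since $U$ is a bounded Lipschitz domain), which provides $\|u-v\|_{H^1(U)} \leq C(d,U)\|\nabla(u-v)\|_{L^2(U)}$, one obtains
\begin{equation*}
\|\nabla(u-v)\|_{L^2(U)}^{2} \leq C(d,\Lambda,U)\, \|\nabla(u-v)\|_{L^2(U)}\, \|u^*-v^*\|_{H^{-1}(U)},
\end{equation*}
from which the conclusion follows by dividing through and applying Poincar\'e once more. There is no substantive obstacle here: the argument is the standard monotone-operator energy estimate, and the only step one must be slightly careful about is invoking Lemma~\ref{l.werepresent} to extract uniform monotonicity of $\a$ with constant depending only on $\Lambda$.
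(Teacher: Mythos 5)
Your proof is correct, but it follows a genuinely different route from the paper's. You translate the hypothesis into the PDE via Proposition~\ref{p.yesvariational}, extract the uniform monotonicity of $\a(\cdot,x)$ from Lemma~\ref{l.werepresent}, and then run the classical monotone-operator energy estimate: test the difference equation against $u-v\in H^1_0(U)$, use monotonicity for the lower bound, and pair with the $H^{-1}$ norm for the upper bound. The paper instead stays entirely inside the variational framework: it picks the flux $\h$ attaining $\J[v,v^*]=0$, constructs $h$ solving a Neumann problem with data $u^*-v^*$, and uses the competitor $\h\pm\nabla h$ to bound $\J[v,u^*]\le C\|u^*-v^*\|^2_{H^{-1}(U)}$; the conclusion then follows from the uniform convexity of $w\mapsto\J[w,u^*]$ on $u+H^1_0(U)$, since $u$ is the null minimizer. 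Your argument is somewhat more familiar and avoids the auxiliary Neumann problem, but it shifts work onto the PDE interpretation; the paper's argument is more in keeping with the paper's philosophy of treating $\J$ as the fundamental object, and illustrates a technique (shifting the flux by a gradient to change the right-hand side) that is reused elsewhere in the paper. Both are of comparable length and rigor, and both ultimately invoke the same structural inputs (the equivalence in Proposition~\ref{p.yesvariational} and the uniform convexity encoded in~\eqref{e.UC}).
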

\begin{proof}
Select $\h \in L^2(U;\Rd)$ to satisfy
\begin{equation}
\label{e.chh}
-\nabla \cdot \h = v^*  \quad \mbox{and} \quad  0 = \fint_{U} \left( F\left( \nabla v(x),\h(x),x \right) - \nabla v(x) \cdot \h(x) \right)\,dx
\end{equation}
and recall that, by Proposition~\ref{p.yesvariational}, 
\begin{equation}
\label{e.chh2}
F\left( \nabla v(x),\h(x),x \right) - \nabla v(x) \cdot \h(x) \quad \mbox{a.e. in} \ U.
\end{equation}
Let $h\in H^1(U)$ denote the (unique up to a constant) solution of 
\begin{equation*}
\left\{ \begin{aligned}
& -\Delta h = u^*-v^* & \mbox{in} & \ U, \\
& \partial_\nu h = |\partial U|^{-1} \langle 1, u^*-v^*\rangle  & \mbox{on} & \ \partial U.
\end{aligned} \right.
\end{equation*}
According to~\cite[Theorem 1.2]{GS}, we have
\begin{equation}\label{e.nablaagain}
\| \nabla h \|_{L^2(U)} \leq C \left\| u^*-v^* \right\|_{H^{-1}(U)}.
\end{equation}
Using~\eqref{e.P1again},~\eqref{e.infFpq} and~\eqref{e.chh2}, we have that 
\begin{align*}
\J\left[ v,u^* \right] & \leq \fint_{U} \left( F\left( \nabla v(x),\h(x)-\nabla h(x) ,x \right) - \nabla v(x) \cdot \left( \h(x)-\nabla h(x) \right) \right) \,dx  \\
& \leq C\fint_U \left| \nabla h(x) \right|^2\,dx \leq  C\left\| u^*-v^* \right\|_{H^{-1}(U)}^2. \notag
\end{align*}
Now we use~\eqref{e.P1again}, the uniform convexity of~$w \mapsto \J\left[ w,u^* \right]$ on $u+H^1_0(U)$, the assumption~$\J\left[ u,u^*\right]=0$ and the Poincar\'e inequality to obtain the lemma. 
\end{proof} 

We next verify the interior Caccioppoli inequality. The variational formulation yields a particularly simple proof.

\begin{proposition}
\label{p.caccioppoli}
Fix a domain $V$ such that $\overline V\subseteq U$. Suppose that $u\in H^1(U)$ and $u^*\in H^{-1}(U)$ satisfy
\begin{equation*} \label{}
\J\left[ u,u^* \right] = 0.
\end{equation*}
Then there exists a constant~$C(d,\Lambda,U,V)\geq 1$ such that
\begin{equation} \label{e.catch}
\| \nabla u \|_{L^2(V)} \leq C\left( K_0+ \| u \|_{L^2(U)} + \| u^* \|_{H^{-1}(U)} \right).
\end{equation}
\end{proposition}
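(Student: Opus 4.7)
The plan is a standard Caccioppoli argument carried out on the dual (flux) side of the variational formulation. Fix a cutoff function $\eta \in C^\infty_c(U)$ with $0 \le \eta \le 1$, $\eta \equiv 1$ on $V$, and $|\nabla \eta| \le C$ where $C$ depends on $(U,V)$. By the equivalence (ii)$\Leftrightarrow$(i) in Proposition~\ref{p.yesvariational} applied to the hypothesis $\J[u,u^*]=0$, there exists $\g \in L^2(U;\R^d)$ attaining the infimum in~\eqref{e.J}, so that $-\nabla\cdot\g = u^*$ in $H^{-1}(U)$ and
\begin{equation*}
F(\nabla u(x),\g(x),x) - \nabla u(x)\cdot\g(x) = 0 \quad \mbox{a.e. in } U,
\end{equation*}
the latter equality following because the integrand in the definition of $\J$ is pointwise nonnegative (this is~\eqref{e.Fpq0}) and integrates to zero.

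The next step is to test the equation $-\nabla\cdot\g = u^*$ with $\varphi := \eta^2 u \in H^1_0(U)$. Expanding $\nabla\varphi = \eta^2 \nabla u + 2\eta u \nabla\eta$, this yields
\begin{equation*}
\int_U \eta^2\, \g\cdot\nabla u\,dx = \langle \eta^2 u, u^*\rangle - 2\int_U \eta u\, \g\cdot\nabla\eta\,dx.
\end{equation*}
On the other hand, from the pointwise identity $\nabla u\cdot\g = F(\nabla u,\g,\cdot)$ together with the lower bound~\eqref{e.P1again} on $F$,
\begin{equation*}
\nabla u(x)\cdot\g(x) \ge \frac{1}{2\Lambda}\bigl(|\nabla u(x)|^2 + |\g(x)|^2\bigr) - K_0\bigl(1 + |\nabla u(x)| + |\g(x)|\bigr)\quad \text{a.e.}
\end{equation*}
Multiplying by $\eta^2$, integrating over $U$, and combining with the previous identity gives
\begin{equation*}
\frac{1}{2\Lambda}\int_U \eta^2\bigl(|\nabla u|^2 + |\g|^2\bigr)dx \le \langle \eta^2 u, u^*\rangle - 2\int_U \eta u\, \g\cdot\nabla\eta\,dx + K_0\int_U \eta^2\bigl(1 + |\nabla u| + |\g|\bigr)dx.
\end{equation*}

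It then remains to dispatch each term on the right by Young's inequality. For the pairing with $u^*$, the bound $\eta^2 \le \eta \le 1$ gives $\|\nabla(\eta^2 u)\|_{L^2(U)} \le C\|u\|_{L^2(U)} + \bigl(\int_U \eta^2|\nabla u|^2\bigr)^{1/2}$, so
\begin{equation*}
\bigl|\langle \eta^2 u, u^*\rangle\bigr| \le \|u^*\|_{H^{-1}(U)} \|\eta^2 u\|_{H^1_0(U)} \le \varepsilon\int_U \eta^2|\nabla u|^2\,dx + \frac{C}{\varepsilon}\bigl(\|u^*\|_{H^{-1}(U)}^2 + \|u\|_{L^2(U)}^2\bigr).
\end{equation*}
The product $\eta u\,\g\cdot\nabla\eta$ is controlled by $\varepsilon \eta^2|\g|^2 + C\varepsilon^{-1} u^2|\nabla\eta|^2$, and the $K_0$-terms by $\varepsilon\eta^2(|\nabla u|^2+|\g|^2) + C\varepsilon^{-1} K_0^2$. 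Choosing $\varepsilon > 0$ sufficiently small (depending only on $\Lambda$) to absorb the $\eta^2(|\nabla u|^2 + |\g|^2)$ contributions into the left-hand side yields
\begin{equation*}
\int_V |\nabla u|^2\,dx \le \int_U \eta^2|\nabla u|^2\,dx \le C\bigl(K_0^2 + \|u\|_{L^2(U)}^2 + \|u^*\|_{H^{-1}(U)}^2\bigr),
\end{equation*}
which is~\eqref{e.catch}. No serious obstacle arises; the only point requiring attention is that the upper bound on $\|\nabla(\eta^2 u)\|_{L^2}$ used to estimate $\langle \eta^2 u, u^*\rangle$ must produce exactly $\eta^2|\nabla u|^2$ (not $\eta|\nabla u|^2$) so as to be absorbable, which is why we use the $\eta^2$ cutoff rather than $\eta$.
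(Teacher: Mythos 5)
Your proof is correct. It follows the same core mechanism as the paper's proof: obtain the flux $\g$ from Proposition~\ref{p.yesvariational}, use the pointwise identity $F(\nabla u,\g,\cdot)=\nabla u\cdot\g$, test against $\eta^2 u$, invoke the coercivity lower bound~\eqref{e.P1again}, and absorb by Young's inequality. The only genuine difference is organizational: the paper first reduces to the case $u^*=0$ by invoking Proposition~\ref{p.L2estimate}, so that $\g$ is solenoidal and the test identity is simply $\int_U\g\cdot\nabla(\eta^2 u)\,dx=0$; you instead keep $u^*$ general and pick up the extra pairing $\langle\eta^2 u,u^*\rangle$, which you then control by the $H^{-1}$--$H^1_0$ duality and absorb. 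Your route is marginally more self-contained (it avoids the dependency on Proposition~\ref{p.L2estimate}), at the cost of one additional term to estimate; the paper's route keeps the Caccioppoli computation minimal by pushing the $u^*$-dependence into a preliminary comparison estimate. Both are valid and yield the stated constant dependence.
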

\begin{proof}
By Proposition~\ref{p.L2estimate}, it suffices to consider the case $u^*=0$. Select a solenoidal vector field $\g \in \Ls(U)$ to satisfy
\begin{equation*}
0 = \fint_{U} \left( F\left( \nabla u(x),\g(x),x \right) - \nabla u(x) \cdot \g(x) \right) \,dx.
\end{equation*}
By Proposition~\ref{p.yesvariational},
\begin{equation} \label{e.pinpoint1}
F \left( \nabla u(x),\g(x),x\right)  =  \nabla u(x) \cdot \g(x) \quad \mbox{a.e. in} \ U.
\end{equation}
Set $v:= (1+\eta^2) u$, where $\eta\in C^\infty_c(\Rd)$ is a smooth test function satisfying 
\begin{equation} \label{e.etaagain}
0 \leq \eta \leq 1, \quad \eta \equiv 1 \ \mbox{on} \ \overline V,  \quad \supp \eta \subseteq U,  \quad \sup_{x\in U} \left| \nabla \eta(x) \right| \leq C.
\end{equation}
Using that $\g$ is solenoidal and $u-v\in H^1_0(U)$ as well as~\eqref{e.pinpoint1}, we obtain
\begin{align*}
0 & = \int_U \g(x) \cdot \left( \nabla v(x) - \nabla u(x) \right) \, dx \\
& =  \int_U \g(x) \cdot  \left( \eta^2(x) \nabla u(x) + 2u(x)\eta(x) \nabla \eta(x) \right)\,dx \\
& \geq  \int_U \left( F(\nabla u(x),\g(x),x) \eta^2(x) - C\eta(x)\left| \g(x) \right| |u(x)| \right)\,dx.
\end{align*}
Using that $F$ satisfies~\eqref{e.P1again} and applying Young's inequality, we get
\begin{align*}
\int_U \left( \left| \nabla u(x)\right|^2+\left|\g(x)\right|^2\right)  \eta^2(x) \,dx & \leq CK_0^2 +C \int_U  F(\nabla u(x),\g(x),x) \eta^2(x) \,dx \\
& \leq CK_0^2+C\int_U \eta(x) \left| \g(x) \right| |u(x)| \,dx \\
& \leq CK_0^2 + \frac12\int_U\left|\g(x)\right|^2 \eta^2(x) + C\int_U \left| u(x) \right|^2\,dx.
\end{align*}
A rearrangement yields the proposition. 
\end{proof}

The proof of the Meyers estimate follows the argument of Giaquinta and Giusti~\cite{GG} which requires a version of the Gehring lemma in order to obtain some improvement of integrability. We use the one given in Giaquinta and Modica~\cite[Proposition 5.1]{GiaMod} which can also be found in~\cite[Theorem 6.6]{Giu}.

\begin{lemma}
\label{l.Gehring}
Fix a domain $V$ such that $\overline V\subseteq U$. 
Let $K>0$, $\delta > 0$, $0<m<1$,  $f \in L^{1}(U)$ and $g\in L^{1+\delta}(U)$. Suppose that, for every $r>0$ and $x\in \Rd$ such that $B_{2r}(x) \subseteq U$, we have
\begin{equation*}
\fint_{B_r(x)} \left|f(x) \right| \,dx \leq K \Ll( \left( \fint_{B_{2r}(x)} \left|f(x) \right|^m \,dx\right)^{\frac1m} +\fint_{B_{2r}(x)} |g(x)| \, dx \Rr). 
\end{equation*}
Then there exist~$\delta_0(d,m,K,\delta)\in (0,\delta]$ and a constant $C(d,m,K,U,V)\geq 1$ such that~$f\in L^{1+\delta_0}(V)$ and 
\begin{equation*}
\int_{V} \left|f(x) \right|^{1+\delta_0} \,dx \leq C \left(   \left( \int_{U} \left|f(x) \right|   \,dx \right)^{1+\delta_0} + \int_U \left| g(x) \right|^{1+\delta_0} \,dx \right).
\end{equation*}
\end{lemma}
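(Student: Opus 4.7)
The plan is to follow the classical Gehring self-improvement argument in the form developed by Giaquinta--Modica, which is based on a Calderón--Zygmund stopping-time decomposition of the superlevel sets of $|f|^m$ combined with a good-$\lambda$ absorption. First I would fix an intermediate domain $V \Subset V^* \Subset U$ with $\dist(V,\partial V^*)$ and $\dist(V^*,\partial U)$ bounded below by a geometric constant $c(U,V)>0$, so that for every $x\in V$ and every radius $r\leq c/2$, the doubled ball $B_{2r}(x)$ lies inside $U$ and the reverse-Hölder hypothesis applies.

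Next, I would fix a baseline threshold
\[
t_0 := C_0\left( \left(\fint_U |f|^m\,dx\right)^{1/m} + \fint_U |g|\,dx  \right)
\]
for a sufficiently large $C_0 = C_0(d,m,K,U,V)$. For each $t\geq t_0$, I would decompose the superlevel set $\{x\in V\,:\,|f(x)|^m > t^m\}$ via the Calderón--Zygmund stopping time: by Lebesgue differentiation, for a.e.\ such $x$ there exists a largest radius $r(x)\leq c$ with $\fint_{B_{2r(x)}(x)} |f|^m\,dy = t^m$ (the choice of $t_0$ ensures that stopping indeed occurs before the radius $c$), and a Vitali subcovering then yields disjoint balls $\{B_{r_i}(x_i)\}$ such that $\{|f|^m > t^m\}\cap V \subseteq \bigcup_i B_{5r_i}(x_i)$ up to a null set, with $\fint_{B_{2r_i}}|f|^m\leq 2^d t^m$ on every selected ball. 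Applying the hypothesis on each such ball gives
\[
\fint_{B_{r_i}}|f|\,dy \leq K\,t + K\fint_{B_{2r_i}}|g|\,dy,
\]
and splitting $|f|$ and $|g|$ on $B_{2r_i}$ according to whether they exceed a small threshold $\eta t$, Hölder's inequality (with conjugate pair $(1/m,\,1/(1-m))$) produces a good-$\lambda$ inequality of the schematic form
\[
\int_{\{|f|>At\}\cap V} |f|^m\,dx \leq \theta \int_{\{|f|>\eta t\}\cap V^*} |f|^m\,dx + C_1 \int_{\{|g|>\eta t\}\cap V^*} |g|^m\,dx,
\]
with $\theta\in(0,1)$, $A$ and $\eta$ depending only on $(d,m,K)$.

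To conclude, I would multiply this estimate by $t^{\delta_0-1}$ and integrate in $t$ over $(t_0,\infty)$. Fubini (the layer-cake formula) converts each side into $L^{m+\delta_0}$-type quantities over the corresponding truncated superlevel sets, and the first term on the right is absorbed into the left provided $\delta_0>0$ is chosen small enough that $\theta\,\eta^{-\delta_0}<\tfrac12$. The baseline contribution from $t\leq t_0$ is handled by Chebyshev and produces the $(\int_U |f|)^{1+\delta_0}$ term in the statement, while the $|g|$-term on the right transforms into $\int_U |g|^{m+\delta_0}\,dx \leq \int_U |g|^{1+\delta_0}\,dx$ since $|g|$ has small support in $\{|g|>\eta t\}$ for large $t$. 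Since the hypothesis is stable under enlarging $m$ towards $1$, I would in fact carry out the argument starting from $m$ itself (rather than iterating from $m=1$), which directly yields integrability of $|f|$ at exponent $m+\delta_0'$ for some $\delta_0'>0$; a second, entirely analogous application, now with exponent slightly below $1$, produces the stated $L^{1+\delta_0}$ bound.

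The main point requiring care, rather than a genuine obstacle, is bookkeeping: one must track the dependence of $\eta$, $A$, and $\theta$ on $(d,m,K)$ through the good-$\lambda$ estimate, and verify that doubled stopping-time balls always remain inside $V^*$ (which is precisely why the two-domain setup $V\Subset V^*\Subset U$ is used). Since the argument is entirely standard, I would limit the written proof to the outline above and refer to \cite[Proposition~5.1]{GiaMod} or \cite[Theorem~6.6]{Giu} for the detailed computations.
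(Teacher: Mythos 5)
The paper does not prove this lemma; it states it and cites Giaquinta--Modica~\cite[Proposition~5.1]{GiaMod} (equivalently~\cite[Theorem~6.6]{Giu}) as the source, and your proposal ends by deferring to the same references, so for practical purposes you are taking the paper's approach.

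If you do keep the sketch, one exponent error should be corrected. Weighting the good-$\lambda$ inequality for $\int_{\{|f|>At\}}|f|^m\,dx$ by $t^{\delta_0-1}$ and integrating in $t$ bounds $\int |f|^{m+\delta_0}$, not $\int|f|^{1+\delta_0}$, and since $m+\delta_0$ may still fall below $1$ this does not yet give the claim; the ``second, entirely analogous application'' you propose does not obviously repair it, because the underlying reverse-H\"older hypothesis is unchanged from one pass to the next, so each pass lands back at an exponent of the form $m'+\delta'$ for some $m'<1$. The fix is simply to use the weight $t^{(1-m)+\delta_0-1}$; the extra factor $1-m>0$ is exactly the gain that the assumption $m<1$ provides. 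Cleanest of all is a change of variables: set $h:=|f|^m$, $G:=|g|^m$ and $p:=1/m>1$, so the hypothesis reads $\fint_{B_r}h^p\,dx \le K\bigl( (\fint_{B_{2r}}h\,dx)^p + \fint_{B_{2r}}G^p\,dx \bigr)$ with $G\in L^{(1+\delta)/m}$ and $(1+\delta)/m>p$. This is precisely the $p>1$ form of Gehring's lemma established in the references you cite; it yields $h\in L^{p+\epsilon}_{\mathrm{loc}}$, i.e.\ $|f|\in L^{1+m\epsilon}_{\mathrm{loc}}$, and one then takes $\delta_0=\min(m\epsilon,\delta)$.
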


We next present a version of the interior Meyers estimate.

\begin{proposition}
\label{p.interiorMeyers}
Suppose that $u\in H^1(U)$  satisfies
\begin{equation*} \label{}
\J\left[ u,0 \right] = 0.
\end{equation*}
Fix a domain $V\subseteq\Rd$ such that $\overline V\subseteq U$.  Then there exist $\delta_0(d,\Lambda)>0$ and~$C(d,\Lambda,U,V)\geq 1$ such that
\begin{equation*} \label{}
\| \nabla u \|_{L^{2+\delta_0}(V)} \leq C \left( K_0 +  \| u \|_{L^2(U)} \right).
\end{equation*}
\end{proposition}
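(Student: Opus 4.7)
The plan is to follow the classical Giaquinta--Giusti scheme: combine a rescaled interior Caccioppoli inequality with the Sobolev--Poincar\'e inequality to produce a reverse H\"older estimate for $|\nabla u|^2$ on interior balls, then invoke the Gehring-type self-improvement provided by Lemma~\ref{l.Gehring} to upgrade $L^2$-integrability to $L^{2+\delta_0}$ for some $\delta_0(d,\Lambda)>0$.

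First I would record a scaled interior Caccioppoli inequality on balls. Because $F(p,q,x)$ depends on $u$ only through $\nabla u$ and the flux $\g = \a(\nabla u, x)$ is unchanged by additive constants, the translate $u-c$ is also a null minimizer of $\J[\cdot,0]$ for every $c \in \R$. Re-running the argument of Proposition~\ref{p.caccioppoli} with a cutoff $\eta \in C^\infty_c(B_{2r}(x_0))$ satisfying $\eta \equiv 1$ on $B_r(x_0)$ and $|\nabla \eta| \le C/r$, applied to $u - (u)_{B_{2r}(x_0)}$, yields
\begin{equation*}
\fint_{B_r(x_0)} |\nabla u|^2 \, dx \le C\left(K_0^2 + r^{-2} \fint_{B_{2r}(x_0)} \left|u - (u)_{B_{2r}(x_0)}\right|^2 \, dx\right)
\end{equation*}
for every ball $B_{2r}(x_0) \subseteq U$, with $C$ depending only on $(d,\Lambda)$.

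Second, I apply the Sobolev--Poincar\'e inequality on $B_{2r}(x_0)$: with $2_* := 2d/(d+2)$,
\begin{equation*}
\left(\fint_{B_{2r}(x_0)} |u - (u)_{B_{2r}(x_0)}|^2 \, dx\right)^{1/2} \le Cr \left(\fint_{B_{2r}(x_0)} |\nabla u|^{2_*} \, dx\right)^{1/2_*}.
\end{equation*}
Inserting this into the scaled Caccioppoli inequality yields the reverse H\"older inequality
\begin{equation*}
\fint_{B_r(x_0)} f \, dx \le C\left( \left(\fint_{B_{2r}(x_0)} f^{m} \, dx\right)^{1/m} + \fint_{B_{2r}(x_0)} g \, dx\right),
\end{equation*}
where $f := |\nabla u|^2$, $m := 2_*/2 = d/(d+2) < 1$, and $g := K_0^2$. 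Since $f \in L^1(U)$ and $g \in L^\infty$, this is exactly the hypothesis of Lemma~\ref{l.Gehring}.

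To conclude, I fix an intermediate Lipschitz domain $W$ with $\overline V \subseteq W \subseteq \overline W \subseteq U$. Applying Lemma~\ref{l.Gehring} on the pair $V \subset W$ produces an exponent $\delta_0(d,\Lambda)>0$ and a constant $C(d,\Lambda,U,V) \ge 1$ such that
\begin{equation*}
\int_V |\nabla u|^{2(1+\delta_0)} \, dx \le C\left(\left(\int_W |\nabla u|^{2} \, dx\right)^{1+\delta_0} + K_0^{2(1+\delta_0)}\right).
\end{equation*}
Proposition~\ref{p.caccioppoli} applied on the pair $W \subset U$ gives $\|\nabla u\|_{L^2(W)} \le C(K_0 + \|u\|_{L^2(U)})$, and substituting this in and taking the $2(1+\delta_0)$-th root yields the proposition, after renaming $2\delta_0$ as $\delta_0$.

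The main obstacle is a bookkeeping one: one must verify that the Caccioppoli inequality holds with the correct $r$-scaling and a universal constant on arbitrary interior balls. This requires re-examining the proof of Proposition~\ref{p.caccioppoli} using a cutoff with $|\nabla \eta| = O(1/r)$ and tracking the fact that the lower bound in~\eqref{e.P1again} is not two-homogeneous, so that the $K_0$ contribution enters additively (and not multiplied by $r^{-2}$) in the final reverse H\"older inequality. Once this scaling is pinned down, the rest of the argument is a direct application of Sobolev--Poincar\'e and Gehring's lemma.
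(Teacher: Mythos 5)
Your proposal is correct and follows essentially the same Giaquinta--Giusti scheme as the paper: rescaled Caccioppoli plus Sobolev--Poincar\'e to obtain a reverse H\"older inequality, then Gehring's lemma. The only differences are cosmetic (balls vs.\ cubes, applying Gehring to $|\nabla u|^2$ with $g=K_0^2$ rather than to $K_0^2+|\nabla u|^2$), and your explicit use of the intermediate domain $W$ together with Proposition~\ref{p.caccioppoli} to control $\|\nabla u\|_{L^2(W)}$ by $\|u\|_{L^2(U)}$ spells out a step the paper leaves implicit.
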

\begin{proof}
Fix $x\in U$ and $r>0$ such that $x+2r\cu_0\subseteq U$. By (a properly rescaled version of) Proposition~\ref{p.caccioppoli} and the Poincar\'e-Sobolev inequality (cf.~\cite[Theorem 3.15]{Giu}), 
\begin{align*}
\fint_{x+r\cu_0} \left|  \nabla u(x) \right|^2\,dx & \leq C\left( K_0^2+ r^{-2} \fint_{x+2r\cu_0} \left| u(x) - \fint_{x+2r\cu_0} u(y)\,dy \right|^2\,dx \right)\\
& \leq C\left( K_0^2+ \left(  \fint_{x+2r\cu_0} \left| \nabla u(x) \right|^{\frac{2d}{d+2}} \,dx  \right)^{\frac{d+2}{d}} \right).
\end{align*}
Note that the constant $C$ here depends only on $(d,\Lambda)$. Thus the hypotheses of Lemma~\ref{l.Gehring} hold for the function $(K_0^2+\left| \nabla u\right|^2)$ with $m=d/(d+2)$ and $K=C(d,\Lambda)$, and so an application of the lemma yields the result. 
\end{proof}

The rest of this appendix is concerned with obtaining a version of Meyers' estimate which is valid up to the boundary of a Lipschitz domain. For this we adapt the classical argument which can be found for example in Giusti~\cite{Giu}. We begin with a variant of the Caccioppoli inequality which holds for balls centered in $U$ but which may intersect~$\partial U$.

\begin{proposition}
\label{p.caccioppolibdry}
Suppose that $f\in H^1(U)$ and that $u\in f+H^1_0(U)$ satisfies
\begin{equation*} \label{}
\J\left[ u, 0 \right] = 0.
\end{equation*}
Then there exists a constant~$C(d,\Lambda,U)\geq 1$ such that, for every $z\in U$ and $r>0$ 
\begin{equation} \label{e.catchbdry}
\| \nabla u \|_{L^2(B_r(z)\cap U)} \leq C\left( K_0+ r^{-1} \| u - f \|_{L^2(B_{2r}(z)\cap U)} +  \| \nabla f \|_{L^2(B_{2r}(z) \cap U)} \right).
\end{equation}
\end{proposition}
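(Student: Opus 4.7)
\medskip

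The plan is to adapt the variational argument used for the interior Caccioppoli inequality (Proposition~\ref{p.caccioppoli}), replacing the cutoff-multiplication $v = (1+\eta^2) u$ with a cutoff perturbation that respects the boundary condition. Concretely, I would fix $z\in U$ and $r>0$ and choose a smooth cutoff $\eta\in C^\infty_c(\Rd)$ satisfying
\begin{equation*}
0\le \eta \le 1, \quad \eta \equiv 1 \text{ on } B_r(z), \quad \supp \eta \subseteq B_{2r}(z), \quad |\nabla \eta|\le C/r,
\end{equation*}
and then set $v := u - \eta^2(u-f)$. Since $u - f \in H^1_0(U)$, we have $\eta^2(u-f)\in H^1_0(U)$, hence $v \in f + H^1_0(U)$, so $v-u \in H^1_0(U)$ and is an admissible perturbation of $u$.

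The next step is to bring in the flux. By Proposition~\ref{p.yesvariational}, there exists $\g \in L^2(U;\Rd)$ with $-\nabla\cdot \g = 0$ such that $F(\nabla u(x),\g(x),x) = \nabla u(x)\cdot \g(x)$ a.e.\ in $U$. Since $v-u = -\eta^2(u-f) \in H^1_0(U)$ and $\g$ is solenoidal,
\begin{equation*}
0 = \int_U \g(x) \cdot \nabla(\eta^2 (u-f))(x)\,dx = \int_U \eta^2(x)\,\g(x)\cdot \nabla u(x)\,dx - \int_U \eta^2(x)\,\g(x)\cdot \nabla f(x)\,dx + 2\int_U \eta(x)(u-f)(x)\,\g(x)\cdot \nabla \eta(x)\,dx.
\end{equation*}
Rearranging and invoking $\g\cdot \nabla u = F(\nabla u,\g,\cdot)$ together with the lower bound~\eqref{e.P1again} gives
\begin{equation*}
\int_U \eta^2\Ll( \frac{1}{2\Lambda}(|\nabla u|^2 + |\g|^2) - K_0(1+|\nabla u|+|\g|)\Rr)\,dx \le \int_U \eta^2\,\g\cdot\nabla f\,dx - 2\int_U \eta(u-f)\,\g\cdot \nabla \eta\,dx.
\end{equation*}

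Finally, I would apply Young's inequality to each of the three right-hand side terms involving $\g$, absorbing factors of $|\g|^2\eta^2$ into the left-hand side with small prefactors (this uses the fact that $|\nabla \eta|\le C/r$ to generate the $r^{-1}\|u-f\|_{L^2}$ term). The terms linear in $|\nabla u|$ and $|\g|$ coming from the $K_0$ contribution are absorbed similarly, leaving an additive $CK_0^2$. Combining everything and using $\eta \equiv 1$ on $B_r(z)\cap U$ and $\supp \eta \cap U \subseteq B_{2r}(z)\cap U$ yields~\eqref{e.catchbdry}. There is no real obstacle here; the only point demanding care is to ensure the dependence of constants is only on $(d,\Lambda,U)$ and not on $(z,r,f)$, which is automatic since all inequalities used are scale- and translation-invariant, and no global Poincaré or geometric property of $U$ enters beyond the statement itself.
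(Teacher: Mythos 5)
Your proposal is correct and follows essentially the same argument as the paper: perturbing $u$ by $\pm\eta^2(u-f)$ (the sign is immaterial since $\int \g\cdot\nabla\phi=0$ for every $\phi\in H^1_0(U)$), invoking the flux identity $F(\nabla u,\g,\cdot)=\nabla u\cdot\g$ a.e.\ from Proposition~\ref{p.yesvariational}, applying the lower bound~\eqref{e.P1again}, and absorbing the terms via Young's inequality. The paper's proof uses $v=u+\eta^2(u-f)$ where you use $v=u-\eta^2(u-f)$, but this is a cosmetic difference with no bearing on the estimate.
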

\begin{proof}
Select a solenoidal vector field $\g \in \Ls(U)$ to satisfy
\begin{equation*}
0 = \fint_{U} \left( F\left( \nabla u(x),\g(x),x \right) - \nabla u(x) \cdot \g(x) \right) \,dx
\end{equation*}
and recall that, by Proposition~\ref{p.yesvariational},
\begin{equation} \label{e.pinpoint1b}
F \left( \nabla u(x),\g(x),x\right)  =  \nabla u(x) \cdot \g(x) \quad \mbox{a.e. in} \ U.
\end{equation}
We consider the test function
$
v:=  u + \eta^2 (u-f),
$
where $\eta\in C^\infty_c(B_{2r}(z))$ is a smooth cutoff function satisfying 
\begin{equation*} 
0 \leq \eta \leq 1, \quad \eta \equiv 1 \ \mbox{on} \ \overline B_r(z),  \quad \supp \eta \subseteq B_{2r}(z),  \quad \sup_{x\in B_{2r}(z)} \left| \nabla \eta(x) \right| \leq Cr^{-1}.
\end{equation*}
Using that $\g$ is solenoidal, $u-v\in H^1_0(B_{2r}(z)\cap U)$ and~\eqref{e.pinpoint1b}, we get
\begin{align*}
0 & = \int_{B_{2r}(z)\cap U} \g(x) \cdot \left( \nabla v(x) - \nabla u(x) \right) \, dx \\
& =  \int_{B_{2r}(z)\cap U} \g(x) \cdot  \left( \eta^2(x) ( \nabla u(x)-\nabla f(x)) + 2(u(x)-f(x))\eta(x) \nabla \eta(x) \right)\,dx \\
& \geq \int_{B_{2r}(z)\cap U} F(\nabla u(x),\g(x),x) \eta^2(x)\,dx  - \int_{B_{2r}(z) \cap U} \eta^2(x) \left| \g(x) \right| |\nabla f(x)|\,dx \\
& \qquad  - Cr^{-1} \int_{B_{2r}(z)\cap U}  \eta(x)\left| \g(x) \right| |u(x)-f(x)| \,dx.
\end{align*}
Using~\eqref{e.P1again} and Young's inequality, we get
\begin{align*}
\lefteqn{ \int_{B_{2r}(z)\cap U} \left( \left| \nabla u(x)\right|^2+\left|\g(x)\right|^2\right)  \eta^2(x) \,dx } \qquad  & \\
& \leq CK_0^2 +C \int_{B_{2r}(z)\cap U}  F(\nabla u(x),\g(x),x) \eta^2(x) \,dx \\
& \leq CK_0^2+C \int_{B_{2r}(z) \cap U} \eta^2(x) \left| \g(x) \right| |\nabla f(x)|\,dx\\
& \qquad +Cr^{-1}\int_{B_{2r}(z)\cap U} \eta(x) \left| \g(x) \right| |u(x)-f(x)| \,dx \\
& \leq CK_0^2 + C\int_{B_{2r}(z)\cap U}\left|\g(x)\right|^2 \eta^2(x) \, dx + C\int_{B_{2r}(z) \cap U} \eta^2(x) |\nabla f(x)|^2\,dx  \\ 
& \qquad + Cr^{-2} \int_{B_{2r}(z) \cap U} \left| u(x)-f(x) \right|^2\,dx.
\end{align*}
This completes the proof of the proposition. 
\end{proof}

\begin{proposition}
\label{p.meyersbndry}
Fix $\delta > 0$, $f\in W^{1,2+\delta}(U)$ and~$u\in f+H^1_0(U)$ such that
\begin{equation*} \label{}
\J\left[ u,0 \right] = 0.
\end{equation*}
Then there exist $\delta_0(d,\Lambda,\delta)\in (0,\delta]$ and~$C(d,\Lambda,\delta,U)\geq 1$ such that
\begin{equation*} \label{}
\| \nabla u \|_{L^{2+\delta_0}(U)} \leq C \left( \| u \|_{L^2(U)} + \| f\|_{W^{1,2+\delta}(U)} \right).
\end{equation*}

\end{proposition}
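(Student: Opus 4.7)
The plan is to run the Giaquinta–Giusti–Meyers scheme up to the boundary, using the boundary Caccioppoli inequality of Proposition~\ref{p.caccioppolibdry} in place of the interior one, combined with a Sobolev–Poincar\'e estimate adapted to functions that vanish on the exterior of $U$, and then to invoke the Gehring-type improvement of integrability (Lemma~\ref{l.Gehring}) on a reverse H\"older inequality that holds uniformly over all small balls centered in~$\overline U$.

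Concretely, I would first extend $f$ to $\tilde f \in W^{1,2+\delta}(\Rd)$ with $\|\tilde f\|_{W^{1,2+\delta}(\Rd)} \leq C(U,\delta)\|f\|_{W^{1,2+\delta}(U)}$ by a Stein-type extension, and extend $u - f \in H^1_0(U)$ by zero to $\tilde w \in H^1(\Rd)$. Setting $\tilde u := \tilde w + \tilde f$, we have $\tilde u = u$ on $U$ and $\tilde u = \tilde f$ on $\Rd \setminus U$. Fix $z \in \overline U$ and $r \leq r_0(U)$ small enough that the Lipschitz structure of $\partial U$ yields $|B_r(z) \cap U| \geq c|B_r(z)|$ and, whenever $B_{2r}(z) \not\subseteq U$, also $|B_{2r}(z) \setminus U| \geq c |B_{2r}(z)|$. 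Squaring and normalizing Proposition~\ref{p.caccioppolibdry} gives
\begin{equation*}
\fint_{B_r(z)} |\nabla \tilde u|^2 \,dx \leq C\Ll( K_0^2 + r^{-2} \fint_{B_{2r}(z)} |\tilde w|^2 \,dx + \fint_{B_{2r}(z)} |\nabla \tilde f|^2 \,dx \Rr),
\end{equation*}
after using $\nabla \tilde u = \nabla \tilde f$ outside $U$ and the uniformly bounded ratio $|B_{2r}(z) \cap U|/|B_r(z)| \leq C$. The Sobolev–Poincar\'e inequality applied to $\tilde w$ then gives
\begin{equation*}
r^{-2} \fint_{B_{2r}(z)} |\tilde w|^2 \,dx \leq C \Ll( \fint_{B_{2r}(z)} |\nabla \tilde w|^{2_*} \,dx \Rr)^{\! 2/2_*}, \quad 2_* := \tfrac{2d}{d+2},
\end{equation*}
valid because $\tilde w$ vanishes on a set of positive proportional measure in $B_{2r}(z)$: on boundary balls this is the Lipschitz measure-density estimate, while on interior balls $B_{2r}(z) \subseteq U$ one first subtracts the mean $(\tilde w)_{B_{2r}(z)}$, which is harmless since Proposition~\ref{p.caccioppolibdry} is insensitive to additive constants in the reference function~$f$.

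Using $|\nabla \tilde w| \leq |\nabla \tilde u| + |\nabla \tilde f|$, Jensen's inequality $(\fint |\nabla \tilde f|^{2_*})^{2/2_*} \leq \fint |\nabla \tilde f|^2$, and setting $\Phi := |\nabla \tilde u|^2 + K_0^2$ and $\Psi := |\nabla \tilde f|^2 + K_0^2$, the two displays combine into the reverse H\"older inequality
\begin{equation*}
\fint_{B_r(z)} \Phi \,dx \leq K \Ll[ \Ll( \fint_{B_{2r}(z)} \Phi^{d/(d+2)} \,dx \Rr)^{\!(d+2)/d} + \fint_{B_{2r}(z)} \Psi\,dx \Rr],
\end{equation*}
with $K = K(d,\Lambda,U)$, valid for every $B_{2r}(z)$ with $z \in \overline U$ and $r \leq r_0$ (and trivially on balls disjoint from $\overline U$, since $\tilde u = \tilde f$ there). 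A finite covering argument extends the inequality to all balls $B_{2r}(z)$ with $B_{4r}(z)$ contained in a fixed neighborhood $U'$ of $\overline U$. Lemma~\ref{l.Gehring} applied with $m = d/(d+2)$, $f = \Phi$ and $g = \Psi \in L^{1+\delta/2}$ then produces an exponent $\delta_0(d,\Lambda,\delta) \in (0,\delta]$ and an $L^{1+\delta_0}$ bound on $\Phi$, hence an $L^{2+2\delta_0}$ bound on $|\nabla \tilde u|$, from which the statement follows (after renaming $\delta_0$), with $K_0$ absorbed into $C$ since it is a fixed parameter of the problem.

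The main obstacle is the uniform assembly of the reverse H\"older inequality across balls that cross $\partial U$ and balls that do not. The Lipschitz hypothesis on $\partial U$ is essential at two points: it provides the measure-density estimate $|B_{2r}(z)\setminus U|\geq c|B_{2r}(z)|$ that powers the Sobolev–Poincar\'e step on boundary balls, and it guarantees the existence of a bounded $W^{1,2+\delta}$ extension operator used to control $\tilde f$ in a neighborhood of~$U$. Once these two ingredients are in place, the remaining steps mirror the classical Giaquinta–Giusti argument for boundary Meyers estimates; the variational formulation of~\eqref{e.pde} does not create any additional difficulty, as the relevant convexity and growth of $F$ enter only through Proposition~\ref{p.caccioppolibdry}.
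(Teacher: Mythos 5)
Your proposal is correct and follows essentially the same route as the paper's proof: Sobolev extension of $f$, extension of $u - f$ by zero, a reverse H\"older inequality assembled from the boundary Caccioppoli estimate together with a Sobolev--Poincar\'e inequality powered by the Lipschitz measure-density bound, and Gehring's lemma (Lemma~\ref{l.Gehring}). The only cosmetic difference is that you treat interior and boundary balls uniformly via Proposition~\ref{p.caccioppolibdry}, observing that its proof tolerates replacing $f$ by $f + c$ on interior balls, whereas the paper dispatches the interior case with a rescaled Proposition~\ref{p.caccioppoli} exactly as in Proposition~\ref{p.interiorMeyers}.
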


\begin{proof}
According to the Sobolev extension theorem for bounded Lipschitz domains (cf.~\cite[Theorem 4.32]{Adams}), we may assume that $f\in W^{1,2+\delta}(\Rd)$ and
\begin{equation}\label{e.extend}
\| f \|_{W^{1,2+\delta}(\Rd)} \leq C \| f \|_{W^{1,2+\delta}(U)}.
\end{equation}
We may also extend $u$ to $\Rd$ by setting $u:=f$ in $\Rd \setminus U$.

\smallskip

We first claim that, for every $z\in\Rd$ and $r>0$, we have
\begin{multline}\label{e.setupGehring}
\fint_{B_r(z)} \left|  \nabla u(x) -\nabla f(x) \right|^2\,dx \\ 
 \leq C\left( K_0^2+ \left(  \fint_{B_{2r}(z)} \left| \nabla u(x) -\nabla f(x) \right|^{\frac{2d}{d+2}} \,dx  \right)^{\frac{d+2}{d}}  + \fint_{B_{2r}(z)} \left| \nabla f(x) \right|^2\,dx \right),
\end{multline}
We split the argument into two cases: $B_{2r}(z) \setminus U = \emptyset$ or $B_{2r}(z) \setminus U \neq \emptyset$. In the former case, we use the interior Caccioppoli estimate to obtain, as in the proof of Proposition~\ref{p.interiorMeyers}, that
\begin{equation*}
\fint_{B_r(z)} \left|  \nabla u(x) \right|^2\,dx  \leq C\left( K_0^2+ \left(  \fint_{B_{2r}(z)} \left| \nabla u(x) \right|^{\frac{2d}{d+2}} \,dx  \right)^{\frac{d+2}{d}} \right),
\end{equation*}
which of course implies~\eqref{e.setupGehring}. If $B_{2r}(z) \setminus U \neq \emptyset$, then, since $U$ is Lipschitz, we have 
\begin{equation*} \label{}
\left| B_{4r}(z) \setminus U \right| \geq c \left|B_{4r}(z)\right|.
\end{equation*}
Since $u-f \equiv 0$ on $\Rd\setminus U$, the Sobolev-Poincar\'e inequality (the version we need can be found in~\cite[Theorem 3.16]{Giu}) gives
\begin{equation*}
 r^{-2} \fint_{B_{4r}(z) } \left| u(x) - f(x) \right|^2\,dx \leq C \left( \fint_{B_{4r}(z)} \left| \nabla u(x) - \nabla f(x)  \right|^{\frac{2d}{d+2}}   \right)^{\frac{d+2}{2d}}.
\end{equation*}
Then according to~\eqref{e.catchbdry}, for every $z\in\Rd$ and $r>0$,
\begin{equation*}
\| \nabla u-\nabla f \|_{L^2(B_r(z))} \leq C\left( K_0+ r^{-1} \| u - f \|_{L^2(B_{2r}(z))} +  \| \nabla f \|_{L^2(B_{2r}(z))} \right).
\end{equation*}
The combination of the previous two inequalities gives~\eqref{e.setupGehring}, except that the balls on the right side have radius $4r$ rather than $2r$. This can of course be removed by a simple covering argument. 

\smallskip

Applying Lemma~\ref{l.Gehring} (with $V=U$ and $U=U+B_1$), in view of~\eqref{e.extend} and~\eqref{e.setupGehring}, now gives the result. 
\end{proof}

\bibliographystyle{plain}
\bibliography{monotone}
\end{document}